\definecolor{darkblue}{rgb}{0.0,0.0,0.3}
\newcommand*{\myfnsymbolsingle}[1]{%
  \ensuremath{%
    \ifcase#1
    \or 
      \dagger
    \or 
      \ddagger
    \or 
      \dagger\dagger
    \else 
      \@ctrerr
    \fi
  }%
}
\newtheorem{theorem}{Theorem}[section]
\newtheorem*{theorem*}{Theorem}
\newtheorem{lemma}[theorem]{Lemma}
\newtheorem*{lemma*}{Lemma}
\newtheorem{proposition}[theorem]{Proposition}
\newtheorem*{proposition*}{Proposition}
\newtheorem{corollary}[theorem]{Corollary}
\newtheorem*{corollary*}{Corollary}
\newtheorem{claim}[theorem]{Claim}
\newtheorem{conjecture}[theorem]{Conjecture}
\theoremstyle{definition}
\newtheorem{remark}[theorem]{Remark}
\newtheorem*{remark*}{Remark}
\makeatletter\@addtoreset{equation}{section}\makeatother
\renewcommand{\Im}{\operatorname{Im}}
\renewcommand{\Re}{\operatorname{Re}}
\DeclareMathOperator{\SL}{SL}
\DeclareMathOperator{\GL}{GL}
\DeclareMathOperator*{\Res}{Res}
\DeclareMathOperator*{\const}{const}
\DeclareMathOperator*{\Vol}{Vol}
\newcommand{\discrete}{\operatorname{discrete}}
\newcommand{\kron}[2]{\left(\frac{#1}{#2}\right)}
\title{On Some Variants of the Gauss Circle Problem}
\author{David Lowry-Duda}
\begin{document}


\begin{preliminaries}
\maketitle

\copyrightpage

\begin{abstract}
  
The Gauss Circle Problem concerns finding asymptotics for the number of lattice point
lying inside a circle in terms of the radius of the circle.
The heuristic that the number of points is very nearly the area of the circle is
surprisingly accurate.
This seemingly simple problem has prompted new ideas in many areas of number theory and
mathematics, and it is now recognized as one instance of a general phenomenon.
In this work, we describe two variants of the Gauss Circle problem that exhibit similar
characteristics.

The first variant concerns sums of Fourier coefficients of $\GL(2)$ cusp forms.
These sums behave very similarly to the error term in the Gauss Circle problem.
Normalized correctly, it is conjectured that the two satisfy essentially the same
asymptotics.
We introduce new Dirichlet series with coefficients that are squares of partial sums of
Fourier coefficients of cusp forms.
We study the meromorphic properties of these Dirichlet series and use these series to give
new perspectives on the mean square of the size of sums of these Fourier coefficients.
These results are compatible with current conjectures.

The second variant concerns the number of lattice points of bounded size on one-sheeted
hyperboloids.
This problem is very similar to counting the number of lattice points
within a spheres of various dimensions, except with the additional constraint of lying on
a hyperboloid.
It turns out that this problem is equivalent to estimating sums of the shape
$\sum r_d(n^2 + h)$, where $r_d(m)$ is the number of representations of $m$ as a sum of
$d$ squares.
We prove improved smoothed and sharp estimates of the second moment of these sums,
yielding improved estimates of the number of lattice points.

In both variants, the problems are related to modular forms and, in particular, to shifted
convolution sums associated to these modular forms.
We introduce new techniques and new Dirichlet series which are quite general.
At the end of this work, we describe further extensions and questions for further
investigation.


  \thispagestyle{empty}
\end{abstract}

\begin{dedication}
  
To a string of educators with big hearts, nurturing a childish curiosity in all things:

{\centering
Julie Gray,\\
Dr.\ Marge Counts,\\
Richard O'Brien,\\
Dr.\ Bob Covel,\\
Dr.\ Steve Clark,\\
Dr.\ Matt Baker.\\}

\end{dedication}

\begin{acknowledgments}

Before all others, I thank my extended mathematical family for making me believe in the
family metaphor.
Tom, Chan, Mehmet, and Li-Mei created such a fun and friendly atmosphere that I couldn't
help but fall in, immediately at home.
Thank you to my advisor Jeff, whose advice and worldview has had profound positive effects
on my experience as a graduate student.

Much of the work in this thesis was either directly or indirectly inspired by
collaboration with my mathematical family.
Chapter~\ref{c:sums} was born out of a question asked by Jeff Hoffstein at his 61st
birthday conference concerning the sign changes of sums of coefficients of cuspforms,
which I thought about immediately afterwards with Tom Hulse, Chan Ieong Kuan, and Alex
Walker.
This eventually led to both the ideas of Chapter~\ref{c:sums} and the papers briefly
described in Chapter~\ref{c:sums_apps}.
The main question of Chapter~\ref{c:hyperboloid} was brought to my attention by Chan Ieong
Kuan and Mehmet K{\i}ral several years ago.
During our first (somewhat misguided) attempts, I learned a lot about mathematics and
collaboration, and I owe them much.

Thank you to Paul and Miles, for the countless hours both in and outside of work together,
and for talking endlessly about the Mean Value Theorem with me.
I would also like to thank all my other collaborators, for helping make math fun.

Thank you to Jill, Joe, Mike, Reinier, and Dinakar for serving on my committees.

Thank you in particular to my little brother Alex for the companionship and collaboration,
and to my soon-to-be little sister-in-law-in-math Sara for tolerating our many months
living together.

Finally, I want to thank Joanna, my parents, and my family for their love and support.
They don't always understand what I think about, but they care anyway.

I love you all.

\noindent
\begin{tabular*}{\textwidth}{c}
  \midrule
\end{tabular*}
This material is based upon work supported by the National Science Foundation Graduate
Research Fellowship Program under Grant No. DGE 0228243.

\end{acknowledgments}

\begin{spacing}{1}
  \tableofcontents
  \clearpage{\pagestyle{empty}\cleardoublepage}

  \footnotesize
  \fontsize{11.5pt}{12.5pt}\selectfont

  \normalsize
\end{spacing}

\end{preliminaries}

\pagestyle{myheadings}



\chapter{Introduction}\label{c:introduction}

\hfill
\begin{minipage}{4in}
  \small
  \singlespacing{}
  Charles Darwin had a theory that once in a while one should perform a damn-fool
  experiment.
  It almost always fails, but when it does come off is terrific.\\

  Darwin played the trombone to his tulips.
  The result of this particular experiment was negative.
  \vspace{.3ex}
  \begin{flushright}
    {Littlewood, \textit{A Mathematician's Miscellany}}
  \end{flushright}
\end{minipage}

In this thesis, we study and introduce new methods to study problems that are closely
related to the Gauss Circle problem.
In this introductory chapter, we motivate and explain the Gauss Circle problem and how it
relates to the other problems described in later chapters.
In Section~\ref{sec:intro:GaussCircleExplanation}, we give a historical overview of the
classical Gauss Circle and Dirichlet Hyperbola problems.

\section{Gauss Circle Problem}\label{sec:intro:GaussCircleExplanation}

The Gauss Circle problem is the following seemingly innocent question:
\index{Gauss Circle problem}
\begin{quote}
  How many integer lattice points lie inside or on the circle of radius $\sqrt R$ centered
  at the origin? That is, how many points $(x,y) \in \mathbb{Z}^2$ satisfy $x^2 + y^2 \leq
  R$?
\end{quote}
We will use $S_2(R)$ to denote the number of lattice points inside or on the circle of
radius $\sqrt R$ centered at the origin.
Intuitively, it is clear that $S_2(R)$ should be approximately the area of the circle,
$\Vol B(\sqrt R) = \pi R$.
This can be made rigorous by thinking of each lattice point as being the center of a $1
\times 1$ square in the plane and counting those squares fully contained within the circle
and those squares lying on the boundary of the circle.
Using this line of thought, Gauss proved (in 1798) that
\begin{equation}
  \lvert S_2(R) - \Vol B(\sqrt R) \rvert \ll \sqrt R,
\end{equation}
so that the \emph{lattice point discrepancy}\index{lattice point discrepancy} between the
number of lattice points within the circle and the area of the circle is bounded by some
constant times the circumference.

It may appear intuitive that this is the best one could hope for.\footnote{Indeed, I am
  not currently aware of an intuitive heuristic that explains \emph{why} the actual error
term is so much better.}
No further progress was made towards Gauss Circle problem until 1906, when
Sierpi\'{n}ski~\cite{sierpinski1906pewnem} showed that
\begin{equation}
  \lvert S_2(R) - \Vol B(\sqrt R) \rvert \ll R^{\frac{1}{3}},
\end{equation}
a significant improvement over the bound from Gauss.
This is already remarkable --- somehow lattice points in the plane are distributed in the
plane in a way that a growing circle includes and omits points in a way that partially
offsets each other.

It is natural to ask: What is the correct exponent of growth on the error term?
Hardy, Littlewood, and Cram\'{e}r~\cite{cramer1922} proved that, \emph{on average}, the
correct exponent is $\frac{1}{4}$ when they proved
\begin{equation}\label{eq:intro:hardy_littlewood_meansquare}
  \int_0^X \lvert S_2(r) - \Vol B(\sqrt r) \rvert^2 dr = c X^{\frac{3}{2}} +
  O(X^{\frac{5}{4} + \epsilon})
\end{equation}
for some constant $c$.
At approximately the same time, Hardy~\cite{Hardy1917} showed that
\begin{equation}
  \lvert S_2(R) - \Vol B(\sqrt R) \rvert = \Omega(R^{\frac{1}{4}}),
\end{equation}
so that $\frac{1}{4}$ is both a lower bound and the average bound.

At the same time (and often, even within the same works, such as in Hardy's work on the
average and minimum values of the error in~\cite{Hardy1917}), mathematicians were studying
Dirichlet's Divisor problem.
Let $d(n)$ denote the number of positive divisors of $n$.
Then Dirichlet's Divisor problem is to determine the average size of $d(n)$ on integers
$n$ up to $R$.\index{Dirichlet Divisor problem}\index{Dirichlet Hyperbola problem | \\see
{Dirichlet Divisor problem} }
Noting that
\begin{equation}
  \sum_{n \leq R} d(n) =
  \sum_{n \leq R} \sum_{d \mid n} 1 =
  \sum_{d \leq R} \Big\lfloor \frac{R}{d} \Big\rfloor,
\end{equation}
we see that this is equivalent to counting the number of positive integer lattice points
under the hyperbola $xy = R$.
For this reason, some refer to this as Dirichlet's Hyperbola problem.

As noted already by Hardy~\cite{Hardy1917}, it's known that
\begin{equation}
  \sum_{n \leq R} d(n) = cR \log R + c' R + O(R^{\frac{1}{2}}),
\end{equation}
and that
\begin{equation}
  \int_1^X \Big\lvert \sum_{n \leq r} d(n) - c r \log r - c' r \Big\rvert^2 dr = c'' X^{\frac{3}{2}} +
  O(X^{\frac{5}{4} + \epsilon}),
\end{equation}
analogous to the Gauss Circle problem.
Attempts to further understand the error terms in the Gauss Circle and Dirichlet Hyperbola
problems have indicated that there is a strong connection between the two, and often an
improvement to one of the problems yields an improvement to the other.

It is interesting to note that much of the early work of Hardy and Littlewood on
the Gauss Circle and Dirichlet Hyperbola problems occurred from 1914 to 1919, which are
the years when Ramanujan studied and worked with them at Cambridge.
Of particular import is a specific identity inspired by Ramanujan (as noted by Hardy
in~\cite{Hardy1959onRamanujan}) that is now sometimes called ``Hardy's
Identity''\footnote{This is another instance of Stigler's Law of Eponymy.},
which states that
\begin{equation}\label{eq:intro:HardysIdentity}
  S_2(R) - \Vol B(\sqrt R) =  \sqrt{R} \sum_{n \geq 1} \frac{r_2(n)}{n^{1/2}} J_1(2\pi
  \sqrt{n R}),
\end{equation}
in which $r_2(n)$ is the number of ways of writing $n$ as a sum of $2$ squares and $J_\nu$ is
the ordinary Bessel function
\begin{equation}
  J_\nu(z) := \sum_{n \geq 0} \frac{(-1)^n}{\Gamma(n+1) \Gamma(\nu + n + 1)}
  (z/2)^{\nu + 2n}.
\end{equation}
Ivi\'c has noted in~\cite{ivic2004lattice} and~\cite{ivic1996} that  almost all
significant progress towards both the Gauss Circle and Dirichlet Hyperbola problems have
come from identities and approaches similar to~\eqref{eq:intro:HardysIdentity}, though
sometimes obscured through technical details.

\subsection{An Early Connection to Modular Forms}

One of the topics that Ramanujan devoted himself to was what we now call the ``Ramanujan
tau function,''\index{Ramanujan $\tau$}
which can be defined by matching coefficients in
\begin{equation}
  \sum_{n \geq 1} \tau(n) q^n = q \prod_{n \geq 1} (1 - q^n)^{24}.
\end{equation}
Ramanujan noticed or conjectured that this function satisfies many nice properties,
such as being multiplicative.
The individual $\tau(n)$ satisfy the bound
\begin{equation}
  \tau(n) \ll n^{\frac{11}{2} + \epsilon},
\end{equation}
and numerical experimentation might lead one to conjecture that
\begin{equation}
  \sum_{n \leq R} \tau(n) \ll R^{\frac{11}{2} + \frac{1}{4} + \epsilon}.
\end{equation}
Just like the Gauss Circle and Dirichlet Hyperbola problem, it appears that summing over
$R$ many terms contributes only $\frac{1}{4}$ to the exponent in the size.
Further, Chandrasekharan and Narasimhan~\cite{chandrasekharan1962functional,
chandrasekharan1964mean} showed a result analogous to the average estimate of Hardy and
Littlewood, proving that
\begin{equation}
  \int_0^X \big\lvert \sum_{n \leq r} \tau(n) \big\rvert^2 dr = c X^{11 + \frac{3}{2}} + O(X^{12 +
  \epsilon}).
\end{equation}
This also indicates that the average additional exponent is $\frac{1}{4}$.

We now recognize that this is another analogy to the Gauss Circle problem, except that in
this case there is no main term.
This analogy readily generalizes.

The Ramanujan $\tau$ function appears as the coefficients of the unique weight $12$
holomorphic cusp form on $\SL(2, \mathbb{Z})$, usually written
\begin{equation}
  \Delta(z) = \sum_{n \geq 1} \tau(n) e(nz),
\end{equation}
where here and throughout this thesis, $e(nz) = e^{2\pi i n z}$.
Generally, we can consider a holomorphic cusp form $f$ on a congruence subgroup $\Gamma
\subseteq \SL(2, \mathbb{Z})$ of weight $k$, with Fourier expansion
\begin{equation}
  f(z) = \sum_{n \geq 1} a(n) e(nz).
\end{equation}
Let $S_f(R)$ denote the partial sum of the first $R$ Fourier coefficients,
\begin{equation}
  S_f(R) := \sum_{n \leq R} a(n).
\end{equation}
Then Chandrasekharan and Narasimhan also show that
\begin{equation}
  \int_0^X \lvert S_f(r) \rvert^2 dr = c X^{k + \frac{3}{2}} + O(X^{k + \epsilon}),
\end{equation}
indicating that \emph{on average}, the partial sums satisfy
\begin{equation}
  S_f(R) \ll R^{\frac{k-1}{2} + \frac{1}{4} + \epsilon}.
\end{equation}
In other words, partial sums of coefficients of cusp forms appear to satisfy a Gauss
Circle problem type growth bound.

Further, the Gauss Circle problem can be restated as a problem estimating
\begin{equation}
  S_2(R) = \sum_{n \leq R} r_2(n),
\end{equation}
where $r_2(n)$ denotes the number of representation of $n$ as a sum of two squares as
above.
The coefficients $r_2(n)$ appear as the coefficients of a the modular form $\theta^2(z)$,
so the analogy between $S_f$ and $S_2$ is very strong.
However $\theta^2$ is not cuspidal, so there are some differences.

In Chapter~\ref{c:sums}, we consider this ``Cusp Form Analogy'' and study $S_f(n)$.
We introduce new techniques that are fundamentally different than most techniques employed
in the past.
Of particular interest is the introduction of Dirichlet series of the form
\begin{equation}
  \sum_{n \geq 1} \frac{S_f(n)^2}{n^s},
\end{equation}
including their meromorphic continuation and analysis.
In Chapter~\ref{c:sums_apps}, we give an overview of the completed and currently planned
applications of the analysis and techniques for studying $S_f(n)$ in Chapter~\ref{c:sums}.
This includes an overview of several recent papers of the author and his collaborators
where these techniques have been used successfully.

\subsection{Further Generalizations of the Gauss Circle Problem}

A very natural generalization of the Gauss Circle problem is to higher dimensions.
We will call the following the Gauss $d$-Sphere problem:

\begin{quote}
  How many integer lattice points lie inside or on the $d$-dimensional sphere of radius
  $\sqrt R$ centered at the origin? That is, how many points $\bm x \in \mathbb{Z}^d$
  satisfy $\| \bm x \|^2 \leq R$?
\end{quote}\index{Gauss $d$-Sphere problem}

We use $S_d(R)$ to denote the number of lattice points inside or on the $d$-sphere of
radius $\sqrt R$, as occurs in the Gauss $d$-Sphere problem.
As with the Gauss Circle problem, it is intuitively clear that $S_d(R) \approx \Vol
B_d(\sqrt R)$, where we use $B_d(\sqrt R)$ to denote a $d$-dimensional ball of radius
$\sqrt R$ centered at the origin.
Just as with the Gauss Circle problem, the true goal of the Gauss $d$-Sphere problem
is to understand the lattice point discrepancy $S_d(R) - \Vol B_d(\sqrt R)$.

The Gauss $d$-Sphere problem is most mysterious for low dimensions, but perhaps the
dimension $3$ Gauss Sphere problem is the most mysterious.
For an excellent survey on the status of this problem, see~\cite{ivic2004lattice}.

The Gauss $d$-Sphere problem is not considered in detail in this thesis, but in
Chapter~\ref{c:sums_apps} a new approach on aspects of the Gauss $d$-Sphere problem is
outlined which builds on the techniques of Chapter~\ref{c:sums}.
This is a project under current investigation by the author and his collaborators.

Closely related to the Gauss $d$-Sphere problem is the problem of determining the number
of lattice points that lie within $B_d(\sqrt R)$ \textbf{and} which lie on a one-sheeted
hyperboloid\index{one-sheeted hyperboloid problem}
\index{H@$\mathcal{H}_{d,h}$}
\begin{equation}
  \mathcal{H}_{d,h} := X_1^2 + \cdots + X_{d-1}^2 = X_d^2 + h
\end{equation}
for some $h \in \mathbb{Z}_{\geq 1}$.
We use $N_{d,h}(R)$ to denote the number of lattice points within $B_d(\sqrt R)$ and on
$\mathcal{H}_{d,h}$.\index{N@$N_{d,h}$}
This is a problem within the larger class of constrained lattice counting problems.

The dimension $3$ one-sheeted hyperboloid problem is the first nontrivial dimension, and
just as with the standard Gauss $d$-Sphere problem, the dimension $3$ hyperboloid is the
most enigmatic.
Very little is currently known.
A significant reason for the mystery comes from the fact that this is too small of a
dimension to apply the Circle Method of Hardy and Littlewood, and there doesn't seem to be
an immediate analogue of Hardy's Identify~\eqref{eq:intro:HardysIdentity}.
In fact, it's not quite clear what the correct separation between the main term and error
term should be.
The best known result is the recent result of Oh and Shah~\cite{ohshah2014}, which uses
ergodic methods to prove that
\begin{equation}\label{eq:intro:oh_shah}
  N_{3,h}(R) = c R^{\frac{1}{2}} \log R + O\big( R^{\frac{1}{2}} (\log R)^{\frac{3}{4}}
  \big)
\end{equation}
when $h$ is a positive square.

From the perspective of modular forms, the underlying modular object is the modular form
$\theta^{d-1}(z) \overline{\theta(z)}$.
But in contrast to the variants of the Gauss Circle problem discussed above, $N_{d,h}(R)$
is encoded within the $h$th Fourier coefficient of $\theta^{d-1}\overline{\theta}$, which
affects the shape of the analysis significantly.

In Chapter~\ref{c:hyperboloid}, we consider the problem of estimating the number
$N_{d,h}(R)$ of points on one-sheeted hyperboloids.
Along the way, we study the Dirichlet series
\begin{equation}
  \sum_{n \geq 1} \frac{r_{d-1}(n^2 + h)}{(2n^2 + h)^s}
\end{equation}
and its meromorphic properties, and use it to get improved estimates for $N_{d,h}(R)$.

\section{Outline and Statements of Main Results}

\subsubsection*{Chapter 2.}

Chapter~\ref{c:background} consists of background information used in later sections.
In particular, a variety of properties concerning Eisenstein series are discussed and
referenced from the literature.
The second half of Chapter~\ref{c:background} concerns a complete description of three
Mellin integral transforms.
It is shown that two smooth integral transforms can be used in tandem to establish a sharp
cutoff result, which is employed in Chapter~\ref{c:hyperboloid} and in the applications
described in Chapter~\ref{c:sums_apps}.

\subsubsection*{Chapters 3 and 4.}

Chapter~\ref{c:sums} is closely related to the journal article~\cite{hkldw}, which was
published in 2017.
In this Chapter, we introduce and study the meromorphic properties of the Dirichlet series
\begin{equation}\label{eq:intro:sums_dirichlet_series}
  \sum_{n \geq 1} \frac{S_f(n)}{n^s}, \quad \sum_{n \geq 1} \frac{S_f(n)
  \overline{S_g(n)}}{n^s}, \quad \text{and} \quad \sum_{n \geq 1}
  \frac{S_f(n)S_g(n)}{n^s},
\end{equation}
where $f$ and $g$ are weight $k$ cuspforms on a congruence subgroup $\Gamma \subseteq
\SL(2, \mathbb{Z})$ and where $S_f(n)$ and $S_g(n)$ denote the partial sums of the first
$n$ Fourier coefficients of $f$ and $g$, respectively.

The first major result is in Theorem~\ref{thm:Wsfgmero} and
Corollary~\ref{cor:DsSfSg_has_meromorphic}, which show that the series
in~\eqref{eq:intro:sums_dirichlet_series} have meromorphic continuation to the plane with
understandable analytic properties.

As a first application of this meromorphic continuation, we prove the major analytic
result of the chapter in Theorem~\ref{thm:second_moment_Sf}, which states that
\begin{equation}
  \sum_{n \geq 1} \frac{S_f(n) \overline{S_g(n)}}{n^{k-1}} e^{-n/X} = C X^{\frac{3}{2}} +
  O(X^{\frac{1}{2} + \epsilon}),
\end{equation}
and related results, for an explicit constant $C$.
In terms of the analogy to the Gauss Circle problem, this is a smoothed second moment and
is comparable in nature to the mean-square moment bound of Hardy and
Littlewood~\eqref{eq:intro:hardy_littlewood_meansquare}.
As the Dirichlet series in~\eqref{eq:intro:sums_dirichlet_series} are new, this smoothed
result is a new type of result.

Chapter~\ref{c:sums_apps} is a description of further work using the Dirichlet
series~\eqref{eq:intro:sums_dirichlet_series} and its meromorphic continuation.
This includes the work in the papers~\cite{hkldwShort} and~\cite{hkldwSigns}.
Further applications are described, as well as preliminary results from the author and his
collaborators on this topic.

\subsubsection*{Chapters 5 and 6}

Chapter~\ref{c:hyperboloid} focuses on studying the One-Sheeted Hyperboloid problem.
The Dirichlet series
\begin{equation}
  \sum_{n \in \mathbb{Z}} \frac{r_{d-1}(n^2 + h)}{(2n^2 + h)^s}
\end{equation}
is studied, and the first major result of the Chapter is in
Theorem~\ref{thm:hyperboloid:mero_summary}, which states that this Dirichlet series has
meromorphic continuation to the plane with understandable analytic behavior.

Using this meromorphic continuation, three applications are given.
Firstly, there is Theorem~\ref{thm:hyperboloid:smooth_full}, which gives a smoothed
analogue of the number of lattice points $N_{d,h}(R)$ on the one-sheeted hyperboloid
$\mathcal{H}_{d,h}$ and within $B_d(\sqrt R)$, including many second order main terms.
This Theorem can be interpreted as a long-interval smoothed average, with many secondary
main terms.

Secondly, there is Theorem~\ref{thm:hyp:concentrating_theorem_full}, which proves a
short-interval sharp average result.
But the most important result of the chapter is Theorem~\ref{thm:hyp:sharp_theorem_full},
which improves the state of the art estimate of Oh and Shah in~\eqref{eq:intro:oh_shah} by
showing that
\begin{equation}
  N_{3,h}(R) = C' R^{\frac{1}{2}} \log R + C R^{\frac{1}{2}} + O(R^{\frac{1}{2} -
  \frac{1}{44} + \epsilon})
\end{equation}
when $h$ is a positive square.
When $h$ is not a square, the $R^{\frac{1}{2}} \log R$ term does not appear.
In fact, the theorem is very general and gives results for any dimension $d \geq 3$.

In Chapter~\ref{c:hyperboloid_apps}, we describe an application of the methods and
techniques of Chapter~\ref{c:hyperboloid} to asymptotics of sums of the form
\begin{equation}
  \sum_{n \leq X} d(n^2 + h),
\end{equation}
where $d(m)$ denotes the number of positive divisors of $m$.


\clearpage{\pagestyle{empty}\cleardoublepage}

\chapter{Background}\label{c:background}

In this chapter, we review and describe some topics that will be used heavily in later
chapters of this thesis.
This chapter should be used as a reference.
The topics covered are classical and well-understood, but included for the sake of
presenting a complete idea.

\section{Notation Reference}

\index{Landau!$\ll$}
\index{Landau!$\Omega$}
We use the basic notation of Landau, so that
\begin{equation}
  F(x) \ll G(x)
\end{equation}
means that there are constants $C$ and $X$ such that for all $x > X$, we have that
\begin{equation}
  \lvert F(x) \rvert \leq C G(x).
\end{equation}
We use $F(x) \ll G(x)$ and $F(x) = O(G(x))$ interchangeably.
We also use
\begin{equation}
  F(x) = o(G(x))
\end{equation}
to mean that for any $\epsilon > 0$, there exists an $X$ such that for all $x > X$, we
have that $F(x) \leq \epsilon G(x)$.
On the other hand,
\begin{equation}
  F(x) = \Omega(G(x))
\end{equation}
means that $F$ and $G$ do not satisfy $F(x) = o(G(x))$.
Stated differently, $F(x)$ is as least as large as $G(x)$ (up to a constant) infinitely
often.

We will use $r_d(n)$ to denote the number of representations of $n$ as a sum of $d$
squares, i.e.\
\index{r@$r_d(n)$}
\begin{equation}
  r_d(n) := \# \{ \bm x \in \mathbb{Z}^d : x_1^2 + \cdots + x_d^2 = n \}.
\end{equation}
We will denote partial sums of $r_d(n)$ by $S_d$, i.e.\
\index{S@$S_d(X)$}
\begin{equation}
  S_d(X) := \sum_{n \leq X} r_d(n).
\end{equation}

\index{B@$B_d(R)$}
We use $B(R)$ to denote the disk of radius $R$ centered at the origin.
It is traditional to talk about the \emph{Gauss Circle problem} instead of the \emph{Gauss
Disk problem}, and so we will frequently refer to $B(R)$ as a circle.
We use $B_d(R)$ to denote the dimension $d$ ball of radius $R$ centered at the origin, and
for similar conventional reasons we will frequently refer to $B_d(R)$ as a sphere.
Throughout this work, we will never distinguish between points on the surface of a sphere
and those points contained within the sphere, so we use these terms interchangeably.

With respect to classical modular forms, we typically use the conventions and notations
of~\cite{Bump98, Goldfeld2006automorphic}.

Typically, $f$ will be a weight $k$ holomorphic modular form on a congruence subgroup of
$\SL(2, \mathbb{Z})$.\index{modular form (definition)}
By this, we mean the following.
A matrix $\gamma = \left(\begin{smallmatrix} a&b \\ c&d \end{smallmatrix}\right) \in
\GL(2, \mathbb{R})$ acts on $z$ in the upper half plane $\mathcal{H}$ by
\begin{equation}
  \gamma z = \frac{az + b}{cz + d}.
\end{equation}
A holomorphic modular form of (integral) weight $k$ on a congruence group
$\Gamma \subseteq \SL(2, \mathbb{Z})$ is a holomorphic function
$f: \mathcal{H} \longrightarrow \mathbb{C}$, which satisfies
\begin{equation}\label{eq:back:full_integer_law}
  f(\gamma z) = (cz+d)^k f(z) \qquad \text{for } \gamma =
  \left(\begin{smallmatrix} a&b \\ c&d \end{smallmatrix}\right) \in \Gamma,
\end{equation}
and which is holomorphic at $\infty$.
The latter condition requires some additional clarification, and an excellent overview of
this concept is in the first chapter of Diamond's introductory
text~\cite{diamondfirstcourse}.
In the context of this thesis, this latter condition guarantees that $f$ has a Fourier
expansion of the form
\begin{equation}
  f(z) = \sum_{n \geq 0} a(n) e(nz),
\end{equation}
where $e(nz) = e^{2\pi i n z}$.\index{e@$e(nz)$}
If in addition $a(0) = 0$, so that the Fourier expansion does not have a constant term,
then we call $f$ a holomorphic cusp form.

Note that some authors adopt the convention of writing the Fourier coefficients as $a(n) =
A(n) n^{\frac{k-1}{2}}$, which is normalized so that $A(n) \approx 1$ on average.
We do not use normalized coefficients in this thesis.

When it is necessary to use a second holomorphic modular form, we will denote it by $g$
and its Fourier expansion by
\begin{equation}
  g(z) = \sum_{n \geq 1} b(n) e(nz).
\end{equation}
Partial sums of these coefficients are denoted by
\index{S@$S_f(X)$}
\begin{equation}
  S_f(X) := \sum_{n \leq X} a(n), \qquad S_g(X) := \sum_{n \leq X} b(n).
\end{equation}

In Chapter~\ref{c:hyperboloid}, we will use half-integral weight modular forms
extensively.
Let $j(\gamma, z)$ be defined as
\begin{equation}
  j(\gamma, z) = \varepsilon_d^{-1} \kron{c}{d} (cz + d)^{\frac{1}{2}}, \qquad \gamma \in
  \Gamma_0(4),
\end{equation}
where $\varepsilon_d = 1$ if $d \equiv 1 \bmod 4$ and $\varepsilon_d = i$ if $d \equiv 3
\bmod 4$.
This is the same transformation law that is satisfied by the Jacobi theta function
$\theta(z) = \sum_{n \in \mathbb{Z}} e^{2\pi i n^2 z}$, which is the prototypical
half-integral weight modular form.
Then a half-integral weight modular form of weight $k$ on $\Gamma_0(4)$ is defined in the
same was as a full-integral weight modular form, except that it satisfies the
transformation law
\begin{equation}
  f(\gamma z) = j(\gamma, z)^{2k} f(z) \qquad \text{for } \gamma \in \Gamma_0(4)
\end{equation}
instead of~\eqref{eq:back:full_integer_law}.
Note that we will use $k$ to denote a full integer or a half integer in reference to
``modular forms of weight $k$.''
This is a different convention than some authors, who use $k/2$ in discussion of
half-integral weight forms.

Given two modular forms $f$ and $g$ defined on a congruence subgroup $\Gamma \subseteq
\SL(2, \mathbb{Z})$, we will let $\langle f, g \rangle$ denote the Petersson inner
product
\index{Petersson inner product}
\begin{equation}
  \langle f, g \rangle := \iint_{\Gamma \backslash \mathcal{H}} f(z) \overline{g(z)}
  d\mu(z).
\end{equation}
In this expression, $\mathcal{H}$ denotes the complex upper half plane and $d\mu(z)$
denotes the Haar measure, which in this case is given by
\begin{equation}
  d\mu(z) = \frac{dx\, dy}{y^2}.
\end{equation}

We will use $\kron{a}{n}$ to denote the Jacobi symbol.
Note that this sometimes may look very similar to a fraction.
Generally, the Jacobi symbol involves only arithmetic data, while fractions will have
complex valued arguments.

We also roughly follow some conventions concerning variable names.

The primary indices of summation will be $m$ and $n$.
If there is a shifted summation, we will usually use $h$ to denote the shift.
We will sometimes use $\ell$ to denote a distinguished index of summation (or more
generally a distinguished variable in local discussion).
The one major exception to this convention is the index $j$, which we reserve for spectral
summations (and in particular the discrete spectrum) or as an index for residues and
residual terms appearing from spectral analysis.

Complex variables will be denoted by $z,s,w$, and $u$.
Almost always $z = x + iy$ will be in the upper half plane $\mathcal{H}$, and denotes the complex
variable of the underlying modular form.
The other variables, $s,w$, and $u$ denote generic complex variables and usually appear as
the variables of various Dirichlet series.
We often follow the odd but classical convention of mixing Roman and Greek characters, and
write $s = \sigma + it$.
Note that we sometimes use $t$ to denote a generic real variable in integrals.

\index{L@$L^{(q)}(s)$}
If $L(s)$ is an $L$-function, then $L^{(q)}(s)$ denotes that $L$ -function, but with the
Euler factors corresponding to primes $p$ dividing $q$ removed.

\index{a@$\mathfrak{a}$}
We use $\mathfrak{a}$ to denote a cusp of a modular curve.
In this thesis, this almost always means that $\mathfrak{a}$ is one of the three cusps of
$\Gamma_0(4) \backslash \mathcal{H}$.

\section{On Full and Half-Integral Weight Eisenstein Series}\label{sec:Eisen_summary}
\index{Eisenstein series!weight $k$}
\index{Eisenstein series!half-integral weight}
\index{E@$E_\mathfrak{a}^k(z,w)$| \\see {Eisenstein series!weight $k$} }

The primary goal of this section is to describe some characteristics of the weight $k$
Eisenstein series for $\Gamma_0(4)$, both for full-integral weight and half-integral
weight $k$.
These details are often considered standard exercises in the literature, and are usually
tedious to compute.
We will emphasize the important properties of the Eisenstein series necessary for the
argument in Chapter~\ref{c:hyperboloid} --- some with complete proofs, and some with
descriptions of the method of proof.

\index{Eisenstein series!weight $0$}
Selberg defined the weight $0$ classical real analytic Eisenstein series $E(z,w)$ on
$\SL(2, \mathbb{Z})$ by
\begin{equation}
  E(z,w) = \sum_{\gamma \in \Gamma_\infty \backslash \SL(2, \mathbb{Z})} \Im (\gamma z)^s.
\end{equation}
The weight $0$ Eisenstein series $E(z,w)$ is very classical and very well-understood, and
both~\cite[Chapter 3]{Goldfeld2006automorphic} and~\cite[Chapter 13]{Iwaniec97} provide an
excellent description of its properties.

Weight $k$ Eisenstein series are also very classical, but they appear less often and
with much less exposition in the literature.
Half-integral weight Eisenstein series occupy an even smaller role in the literature,
although in recent years the study of metaplectic forms and metaplectic Eisenstein series
has grown in popularity.

We will use $E_\mathfrak{a}^k(z,w)$ to denote the weight $k$ Eisenstein series associated
to the cusp $\mathfrak{a}$.
In this expression, $k$ can be either a half-integer or a full-integer.
In this thesis, we use the half-integral weight Eisenstein series on the congruence
subgroup $\Gamma_0(4)$.
The quotient $\Gamma_0(4) \backslash \mathcal{H}$ has three cusps, at $\infty, 0$, and
$\frac{1}{2}$.

We shall describe the three Eisenstein series $E^k_\infty(z,w), E^k_0(z,w)$, and
$E^k_{\frac{1}{2}}(z,w)$, both for full-integral weight and half-integral weight $k$.
These Eisenstein series are defined as
\begin{align}
  E_\infty^k(z,w) &= \sum_{\gamma \in \Gamma_\infty \backslash \Gamma_0(4)} \Im(\gamma
  z)^w J(\gamma, z)^{-2k}
  =
  \sum_{\substack{\pm (c,d) \in \mathbb{Z}^2 \\ (c,d) \neq (0,0) \\ \gcd(4c,d) = 1}}
  \frac{y^w \varepsilon_d^{2k} \kron{4c}{d}^{2k}}{\lvert 4cz + d \rvert^{2w-k}(4cz+d)^k}
  \\
  E_0^k(z,w) &= \Big( \frac{z}{\lvert z \rvert} \Big)^{-k} E_\infty^k(\tfrac{-1}{4z}, w)
  =
  \sum_{\substack{\pm(c,d) \in \mathbb{Z}^2 \\ (c,d) \neq (0,0) \\ \gcd(4c, d) = 1}}
  \frac{(y/4)^w \varepsilon_d^{2k} \kron{4c}{d}^{2k}}{\lvert -c + dz \rvert^{2w - k} (-c +
  dz)^k}
  \\
  E_{\frac{1}{2}}(z,w) &= \Big( \frac{2z+1}{\lvert 2z+1 \rvert} \Big)^{-k}
  E_\infty^k(\tfrac{z}{2z+1}, w) =
  \sum_{\substack{\pm(c,d) \in \mathbb{Z}^2 \\ (c,d) \neq (0,0) \\ \gcd(2c, d) = 1}}
  \frac{y^w \varepsilon_d^{2k} \kron{2c}{d}^{2k}}{\lvert 2cz+d \rvert^{2w -
  k}(2cz+d)^k}.
\end{align}
In these expressions, $\varepsilon_d$ denotes the sign of the Gauss sum associated to the
primitive real quadratic Dirichlet character $\chi_d$, and is given by
\index{epsilon@$\varepsilon_d$}
\begin{equation}
  \varepsilon_d = \begin{cases}
    1 & d \equiv 1 \bmod 4, \\
    i & d \equiv 3 \bmod 4.
  \end{cases}
\end{equation}
We use $J(\gamma, z)$ to denote the normalized half-integral weight cocycle
\index{J@$J(\gamma, z)$}
$J(\gamma, z) = j(\gamma, z)/\lvert j(\gamma, z) \rvert$,
where $j(\gamma, z)$ is the standard $\theta$ cocycle\index{j@$j(\gamma, z)$}
\begin{equation}
  j(\gamma, z) :=
  \theta(\gamma z)/\theta(z) = \varepsilon_d^{-1} \kron{c}{d} (cz + d)^{\frac{1}{2}}.
\end{equation}
Here, $\theta(z)$ denotes the Jacobi theta function,\index{theta@$\theta(z)$}
which is a modular form of weight $\frac{1}{2}$ on $\Gamma_0(4)$.
Thus this cocycle law holds for $\gamma = \big( \begin{smallmatrix} a&b\\c&d
\end{smallmatrix} \big) \in \Gamma_0(4)$.

Notice that when $k$ is a full integer, the weighting factor simplifies to
\begin{equation}
  \varepsilon_d^{2k} \kron{4c}{d}^{2k} = \kron{-1}{d}^k,
\end{equation}
and when $k$ is a half-integer, the weighting factor simplifies slightly to
\begin{equation}
  \varepsilon_d^{2k} \kron{4c}{d}^{2k} = \varepsilon_d^{2k} \kron{4c}{d},
\end{equation}
(where in both cases the $4$ is replaced by $2$ for $E_\frac{1}{2}^k$).
This is a restatement of the fact that when $k$ is a full integer, the transformation law
is full-integral with character $\kron{-1}{\cdot}^k$, while for half-integers the
transformation law is a power of a standard normalized theta cocycle.

When $k$ is a half-integer, these three Eisenstein series are essentially the same three
Eisenstein series that appear in~\cite{goldfeld1985eisenstein}, only with some notational
differences.
Firstly, in~\cite{goldfeld1985eisenstein}, half-integer weights are denoted by
$\frac{k}{2}$, while in this work we denote all weights (both integral and half-integral)
by $k$.
Secondly, we shift the spectral argument, replacing $w$ with $w - \frac{k}{2}$ as compared
to~\cite{goldfeld1985eisenstein}.
This has the effect of using the normalized cocycle $J(\gamma,z)$ instead of
$j(\gamma,z)$, and also normalizes the arguments of the $L$-functions that appear in the
coefficients of the Eisenstein series.

\index{rho@$\rho_\mathfrak{a}^k$}
Each Eisenstein series has a Fourier-Whittaker expansion of the form
\begin{equation}
  E_\mathfrak{a}^k(\sigma_\mathfrak{b} z,w) = \delta_{[\mathfrak{a} = \mathfrak{b}]}y^w +
  \rho_{\mathfrak{a}, \mathfrak{b}}^k(0, w) y^{1-w} + \sum_{h \neq 0}
  \rho_{\mathfrak{a}, \mathfrak{b}}^k(h, w) W_{\frac{|h|}{h}\frac{k}{2}, w - \frac{1}{2}}(4\pi \lvert h
  \rvert y) e^{2\pi i h x}
\end{equation}
for some coefficients $\rho_{\mathfrak{a}, \mathfrak{b}}^k(h,w)$, and where
$\sigma_\mathfrak{b} z$ amounts to expanding at the cusp $\mathfrak{b}$.
When $\mathfrak{b} = \infty$, we will omit $\mathfrak{b}$ from the notation in the
coefficients and write instead $\rho_{\mathfrak{a}}^k(h,w)$.
Here, $W_{\alpha, \nu}(y)$ is the $\GL(2)$ Whittaker function.
This is given by~\cite[3.6.3]{GoldfeldHundleyI}
\index{Whittaker function}
\index{W@$W_{\alpha, \nu}(y)$ | see {Whittaker function } }
\begin{equation}
  W_{\alpha, \nu}(y) = \frac{y^{\nu + \frac{1}{2}}
  e^{-\frac{y}{2}}}{\Gamma(\nu - \alpha + \frac{1}{2})}
  \int_0^\infty e^{-yt} t^{\nu - \alpha - \frac{1}{2}}
  (1+t)^{\nu + \alpha - \frac{1}{2}} dt,
\end{equation}
valid for $\Re(\nu - \alpha) > \frac{-1}{2}$
and $\lvert \text{arg}(y) \rvert < \pi$.

When $\alpha = 0$, the Whittaker function is just the $K$-Bessel function
\index{K@$K$-Bessel function, $K_\nu(y)$}
\begin{equation}
  W_{0, \nu}(y) = \big( \frac{y}{\pi}\big)^{\frac{1}{2}} K_\nu\big( \frac{y}{2} \big),
  \quad
  K_\nu(y) = \frac{1}{2} \int_0^\infty e^{-\frac{1}{2} y(u + u^{-1})}
  u^\nu \frac{du}{u}.
\end{equation}
Thus in the weight $0$ case, the Whittaker functions simplify to $K$-Bessel functions (as
noted and employed in~\cite{Goldfeld2006automorphic} for instance).
This additional difficulty in weight $k$ is perhaps one reason why most expository
accounts stop at the weight $0$ case.

According to the general theory of Selberg (and described in~\cite[Theorem
13.2]{Iwaniec97}), the potential poles of $E_\mathfrak{a}^k(z,w)$ for $w > \frac{1}{2}$
can be recognized from the poles of the constant coefficient $\rho_{\mathfrak{a}}^k(0,w)$.

Although the individual expansions vary, they are usually of a very similar form.
For full integral weight, the coefficients have the shape
\index{rho@$\rho_\mathfrak{a}^k$!\\full integral weight}
\begin{align}
  \rho_\mathfrak{a}^k(0, w) &= (*) \frac{L^k_\mathfrak{a}(2w-1)}{L^k_\mathfrak{a}(2w)}
  \frac{\Gamma(2w-1)}{\Gamma(w + \frac{k}{2}) \Gamma(w - \frac{k}{2})} \\
  \rho_\mathfrak{a}^k(h, w) &= (*) \frac{\lvert h \rvert^{w-1}}{L^k_\mathfrak{a}(2w)}
  \frac{1}{\Gamma(w + \frac{\lvert h \rvert}{h} \frac{k}{2})} D_\mathfrak{a}^k(h,w)
\end{align}
where $(*)$ is a (possibly zero) constant times a collection of powers of $2$ and $\pi$,
$L^k_\mathfrak{a}(s)$ is a $\GL(1)$ $L$-function (maybe missing some Euler factors), and
$D_\mathfrak{a}^k$ is a finite Dirichlet sum.
For $\Re w > \frac{1}{2}$, the only potential pole of $E_\mathfrak{a}^k(z,w)$ is at $w = 1$.
This is shown in general in~\cite{Iwaniec97}, and the calculations are very similar to
those in~\cite[Chapter 3]{Goldfeld2006automorphic}.
For completeness and as a unifying reference, we recompute and state the exact
coefficients of $E_\infty^k$ in \S\ref{ssec:full_integral_eisenstein}.

For half-integral weight, the coefficients have similar shape,
\index{rho@$\rho_\mathfrak{a}^k$!\\half-integral weight}
\begin{equation}
  \begin{split}\label{eq:Eisenstein_halfweight_generalshape_coeffs}
    \rho_\mathfrak{a}^k(0, w) &= (*) \frac{L^k_\mathfrak{a}(4w-2)}{L^k_\mathfrak{a}(4w-1)}
    \frac{\Gamma(2w-1)}{\Gamma(w + \frac{k}{2}) \Gamma(w - \frac{k}{2})} \\
    \rho_\mathfrak{a}^k(h, w) &= (*) \lvert h \rvert^{w-1} \frac{1}{\Gamma(w +
    \frac{\lvert h \rvert}{h} \frac{k}{2})} D_\mathfrak{a}^k(h,w),
  \end{split}
\end{equation}
except that now $D_\mathfrak{a}^k(h,w)$ is a complete Dirichlet series formed from Gauss
sums, and which factors as
\begin{equation}\label{eq:Eisenstein_halfweight_Dirichletseries_factors}
  D_\mathfrak{a}^k(h,w) = \frac{L(2w - \frac{1}{2}, \chi_{k,h})}{\zeta(4w-1)}
  \widetilde{D}_\mathfrak{a}(h,w),
\end{equation}
where $\widetilde{D}_\mathfrak{a}(h,w)$ is a finite Dirichlet polynomial and $\chi_{k,h}$
\index{chi@$\chi_{k,h}$}
is the real quadratic character associated to the extension $\mathbb{Q}(\sqrt{\mu_k h})$,
and where $\mu_k = (-1)^{k - \frac{1}{2}}$.
In other words, these are almost the same as the full integral weight Eisenstein series,
except with an additional $L$-function in the numerator, and with slightly different
arguments of the involved $L$-functions.
For $\Re w > \frac{1}{2}$, the only potential pole of $E_\mathfrak{a}^k(z,w)$ is at $w
= \frac{3}{4}$.
This is all described in~\cite{goldfeld1985eisenstein}, including the factorization
in~\eqref{eq:Eisenstein_halfweight_Dirichletseries_factors}, which is given
in~\cite[Corollary 1.3]{goldfeld1985eisenstein}.
For completeness and as a unifying reference, we recompute and state the shape of the
coefficients of $E_\infty^k$ in \S\ref{ssec:half_integral_eisenstein}, including a proof
of the factorization~\eqref{eq:Eisenstein_halfweight_Dirichletseries_factors}.

The rest of this section consists of a complete description of these coefficients.
However, the general shapes of the coefficients are sufficient for the rest of this
thesis.

\subsection{Full Integral Weight}\label{ssec:full_integral_eisenstein}

\begin{claim}
  For $k \geq 1$ a full integer, the Fourier-Whittaker coefficients of $E_\infty^k(z,w)$
  are given by \begin{align}
    \rho_\infty^k(0,w) &= \begin{cases}
    0 & \text{if } k \; \text{odd}  \\
    \displaystyle y^{1-w} 2\pi 4^{1-3w} \frac{\zeta(2w-1)}{\zeta^{(2)}(2w)}
    \frac{\Gamma(2w-1)}{\Gamma(w + \frac{k}{2}) \Gamma(w - \frac{k}{2})}
    & \text{if } k \; \text{even}
    \end{cases}
    \\
    \rho_\infty^k(h,w) &= \frac{\pi^w  \lvert h \rvert^{w-1} e^{-\frac{i \pi k}{2}}}
    {L\big( 2w, \kron{-4}{\cdot}^k\big) \Gamma(w + \frac{\lvert h \rvert}{h}
    \frac{k}{2})} D_\infty^k(h,w)
  \end{align}
  where
  \begin{equation}
    D_\infty^k(h,w) = \sum_{c \mid h} \frac{c}{(4c)^{2w}}\Big( e^{\frac{\pi i h}{2c}} +
    (-1)^k e^{\frac{3\pi i h}{2c}}\Big)
\end{equation}
  is a finite Dirichlet sum.
\end{claim}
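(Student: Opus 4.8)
The plan is to compute the Fourier--Whittaker expansion of $E_\infty^k(z,w)$ directly in the region $\Re w$ sufficiently large, where the defining series converges absolutely, and then appeal to meromorphic continuation. First I would use the full-integer simplification $\varepsilon_d^{2k}\kron{4c}{d}^{2k}=\kron{-1}{d}^k=\kron{-4}{d}^k$ (legitimate since $\gcd(4c,d)=1$ forces $d$ odd), isolate the term $c=0$ --- which forces $d=\pm1$ and, after the $\pm$-identification, contributes exactly $y^w$, the expected $\delta_{[\infty=\infty]}y^w$ term --- and then for $c\neq0$ use the $\pm$ to restrict to $c\geq1$. For each fixed $c\geq1$ I would break the $d$-sum into residue classes $d=a+4c\ell$ with $a$ running over residues mod $4c$ coprime to $4c$ and $\ell\in\mathbb{Z}$, noting that $\kron{-4}{d}^k=\kron{-4}{a}^k$ depends only on $a$ since the character mod $4$ has period dividing $4c$, and factor out $(4c)^{-2w}$.

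Next I would apply Poisson summation in $\ell$. Writing $F(t)=y^w|z+t|^{-(2w-k)}(z+t)^{-k}$, the inner sum $\sum_\ell F(\ell+\tfrac{a}{4c})$ becomes $\sum_{m\in\mathbb{Z}}\widehat F(m)\,e^{2\pi i m a/(4c)}$, with $\widehat F(m)=e^{2\pi i m x}\int_{-\infty}^\infty y^w(v^2+y^2)^{-(w-k/2)}(v+iy)^{-k}e^{-2\pi i m v}\,dv$. Interchanging the $c$-, $a$-, and $m$-sums turns $E_\infty^k(z,w)-y^w$ into $\sum_m\bigl(\sum_{c\geq1}(4c)^{-2w}\sum^{\ast}_{a\bmod 4c}\kron{-4}{a}^k e^{2\pi i m a/(4c)}\bigr)\widehat F(m)\,e^{2\pi i m x}$, where $\sum^\ast$ is over reduced residues; this already has the shape of a Fourier expansion, and it remains to evaluate the integrals and the arithmetic sums.

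Then I would handle the two pieces. For $m=0$ the integral in $\widehat F(0)$ is a classical beta-type integral --- evaluated, for instance, via $v=-y\cot\phi$ and the formula for $\int_0^\pi\sin^{2w-2}\phi\,e^{ik\phi}\,d\phi$ --- giving a constant multiple of $y^{1-w}\Gamma(2w-1)/\bigl(\Gamma(w+\tfrac k2)\Gamma(w-\tfrac k2)\bigr)$, while the accompanying arithmetic sum $\sum^\ast_{a\bmod 4c}\kron{-4}{a}^k$ vanishes for $k$ odd by orthogonality of the nontrivial character mod $4$ (so $\rho_\infty^k(0,w)=0$) and equals $\phi(4c)$ for $k$ even, after which $\sum_{c\geq1}\phi(4c)(4c)^{-2w}$ is an elementary Euler-product computation equal to $\zeta(2w-1)/\zeta^{(2)}(2w)$ up to an explicit power of $4$. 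For $m=h\neq0$ the integral is the standard Whittaker integral, producing $|h|^{w-1}\Gamma(w+\tfrac{|h|}{h}\tfrac k2)^{-1}W_{\frac{|h|}{h}\frac k2,\,w-\frac12}(4\pi|h|y)$ up to powers of $2$ and $\pi$ and the phase $e^{-i\pi k/2}$ --- exactly the shape needed so that what is left over is $\rho_\infty^k(h,w)$ --- and the twisted Gauss-type sum $\sum^\ast_{a\bmod 4c}\kron{-4}{a}^k e^{2\pi i h a/(4c)}$, evaluated multiplicatively via CRT with the prime $2$ handled separately, makes the $c$-sum factor as $L(2w,\kron{-4}{\cdot}^k)^{-1}$ times the finite divisor sum $\sum_{c\mid h}c(4c)^{-2w}\bigl(e^{\pi i h/2c}+(-1)^k e^{3\pi i h/2c}\bigr)$, the two exponentials coming from the residues $\pm1\bmod 4$ weighted by $\kron{-4}{\pm1}^k$.

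The main obstacle is not conceptual but the bookkeeping of constants: pinning down every power of $2$ and $\pi$ and every phase in the beta and Whittaker integrals, and --- more delicately --- carrying out the multiplicative decomposition of the twisted Gauss sums so that the $2$-part separates cleanly into the removed Euler factor inside $\zeta^{(2)}(2w)$ on the one hand and the explicit finite sum $D_\infty^k(h,w)$ on the other. Everything else (absolute convergence for $\Re w$ large, the interchanges of summation, and the final meromorphic continuation) is routine and runs parallel to the weight-$0$ computation in \cite[Chapter 3]{Goldfeld2006automorphic} and the general treatment in \cite{Iwaniec97}.
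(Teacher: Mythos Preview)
Your approach is correct and essentially parallel to the paper's: separate the $c=0$ term, break $d$ into residues mod $4c$, unfold to an integral over $\mathbb{R}$, and quote the two standard integrals~\eqref{eq:IntegralIdentity_0th} and~\eqref{eq:IntegralIdentity_hth}. Where you use Poisson summation in $\ell$, the paper achieves the same effect by directly ``tiling'' the $x$-integral (writing $d=d'+4cq$ and letting the $q$-sum splice the unit intervals into $\int_{-\infty}^\infty$); these are the same mechanism.

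The one genuine organizational difference is the handling of the coprimality condition and the $L$-factor. You keep $\sum^\ast_{a\bmod 4c}$ throughout and plan to extract $L(2w,\kron{-4}{\cdot}^k)^{-1}$ at the end by a multiplicative/CRT analysis of the twisted Gauss sums. The paper instead multiplies the whole expression by $L(2w,\kron{-4}{\cdot}^k)$ at the outset, which \emph{removes} the condition $\gcd(4c,d)=1$ and replaces $\sum^\ast$ by a free sum over $d\bmod 4c$. This makes the arithmetic nearly trivial: for $h=0$ one just counts odd residues mod $4c$ (giving $2c$), and for $h\neq 0$ one writes $d=d'+4q$ with $0\le d'<4$, $0\le q<c$, so that the inner $q$-sum is $c\cdot\mathbf{1}_{c\mid h}$ and the $d'$-sum immediately produces $e^{\pi i h/2c}+(-1)^k e^{3\pi i h/2c}$. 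Your route works, but the paper's pre-multiplication trick is what collapses the bookkeeping you flagged as the ``main obstacle.''
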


This is a classical computation.
For completeness and ease of reference, we go through this computation completely.
The coefficients for other full integral weight Eisenstein coefficients are very similar,
and are achieved through (essentially) the same work.

\begin{proof}
  In the expression for $E_\infty^k(z,w)$, when $(c,d) = (0,1)$, there is the single term $y^w$.
  We compute the rest of the $0$th Fourier-Whittaker coefficient through
  \begin{align}
    \rho_\infty^k(0, w) &= \int_0^1 E_\infty^k(z,w) - y^w dx = \sum_{\substack{c > 0, d
    \in \mathbb{Z} \\ \gcd(4c,d) = 1}} \int_0^1 \frac{y^s \kron{-4}{d}^k}{\lvert 4cz + d
    \rvert^{2w - k} (4cz + d)^k}.
  \end{align}
  We write $\kron{-4}{d}$ in place of $\kron{-1}{d}$ to facilitate our next step, and as a
  way of reinforcing that $\gcd(d,4) = 1$.  Multiplying through by $L\big(2w,
  \kron{-4}{\cdot}^k\big)$ and distributing allows us to remove the $\gcd(4c, d) = 1$
  condition from the sum, so that
  \begin{align}
    \rho_\infty^k(0,w) &= \frac{y^w}{L\big(2w, \kron{-4}{\cdot}^k\big)} \sum_{c > 0}
    \sum_d \kron{-4}{d}^k \int_0^1  \frac{1}{\lvert 4cz+d \rvert^{2w-k}(4cz + d)^k} dx \\
    &= \frac{y^w}{L\big(2w, \kron{-4}{\cdot}^k\big)} \sum_{c > 0} \frac{1}{(4c)^{2w}} \sum_d
    \kron{-4}{d}^k \int_{\frac{d}{4c}}^{1 + \frac{d}{4c}}  \frac{1}{\lvert z
    \rvert^{2w-k}z^k} dx.
  \end{align}
  We can write $d = d' + 4cq$ for each $d' \bmod{4c}$ and $q \in \mathbb{Z}$ uniquely, and
  executing the resulting $q$ sum tiles the integral to the whole real line, giving
  \begin{equation}
    \rho_\infty^k(0,w) = \frac{y^w}{L\big(2w, \kron{-4}{\cdot}^k\big)} \sum_{c > 0}
    \frac{1}{(4c)^{2w}} \sum_{d \bmod 4c} \kron{-4}{d}^k \int_{-\infty}^{\infty}
    \frac{1}{\lvert z \rvert^{2w-k}z^k} dx.
  \end{equation}
  Notice that the $d$ sum is $0$ if $k$ is odd, and is $2c$ (the number of odd integers up
  to $4c$) when $k$ is even.  Further, when $k$ is even the $L$-function in the
  denominator simplifies to a zeta function missing its $2$-factor.

  It remains necessary to compute the integral.
  By~\cite[Section 13.7]{Iwaniec97}, we have the classical integral transform
  \begin{equation}\label{eq:IntegralIdentity_0th}
    \int_{-\infty}^\infty \frac{1}{\lvert z \rvert^{2w - k} z^k} = y^{1-2w} \pi 4^{1-w}
    \frac{\Gamma(2w-1)}{\Gamma(w + \frac{k}{2}) \Gamma(w - \frac{k}{2})}.
  \end{equation}
  Therefore, we have for $k$ even that
  \begin{equation}
    \rho_\infty^k(0,w) = y^{1-w} 2\pi 4^{1-3w} \frac{\zeta(2w-1)}{\zeta^{(2)}(2w)}
    \frac{\Gamma(2w-1)}{\Gamma(w + \frac{k}{2}) \Gamma(w - \frac{k}{2})}
  \end{equation}

  For the $h$th Fourier coefficient, the method begins similarly.
  Following the same initial steps, we compute
  \begin{align}
    \rho_\infty^k(h,w) &= \int_0^1 E_\infty^k(z,w) e^{-2\pi i h x} dx \\
    &=
    \frac{y^w}{L(2w, \kron{-4}{\cdot}^k)} \sum_{c > 0} \frac{1}{(4c)^{2w}} \sum_d
    \kron{-4}{d}^k e^{2\pi i \frac{hd}{4c}} \int_{\frac{d}{4c}}^{1 + \frac{d}{4c}}
    \frac{e^{-2\pi i h x}}{\lvert z \rvert^{2w - k} z^k} dx.
  \end{align}
  Writing $d = d' + 4cq$ as above again tiles out the integral, so that
  \begin{equation}
    \rho_\infty^k(h,w) = \frac{y^w}{L(2w, \kron{-4}{\cdot}^k)} \sum_{c > 0}
    \frac{1}{(4c)^{2w}} \sum_{d\bmod 4c} \kron{-4}{d}^k e^{2\pi i \frac{hd}{4c}}
    \int_{-\infty}^{\infty} \frac{e^{-2\pi i h x}}{\lvert z \rvert^{2w - k} z^k} dx.
  \end{equation}

  For $h \neq 0$, the integral can be evaluated as in~\cite[\S13.7]{Iwaniec97} or
  using~\cite[3.385.9]{GradshteynRyzhik07} to be
  \begin{equation}\label{eq:IntegralIdentity_hth}
    \int_{-\infty}^{\infty} \frac{e^{-2\pi i h x}}{\lvert z \rvert^{2w - k} z^k} dx =
    \frac{y^{-w} i^{-k} \pi^w \lvert h \rvert^{w-1}}{\Gamma(w + \frac{\lvert h \rvert}{h}
    \frac{k}{2})} W_{\frac{\lvert h \rvert}{h}\frac{k}{2}, \frac{1}{2} - w}(4\pi \lvert h
  \rvert y).
  \end{equation}
  To understand the arithmetic part,
  \begin{equation}
    \sum_{c > 0} \frac{1}{(4c)^{2w}} \sum_{d \bmod 4c} \kron{-4}{d}^k e^{\pi i \frac{hd}{2c}},
  \end{equation}
  note that each $d$ can be written uniquely in the form $d = d' + 4q$ for $0 \leq d' < 4$
  and $0 \leq q < c$.
  Then the arithmetic part is written
  \begin{equation}
    \sum_{c > 0} \frac{1}{(4c)^{2w}} \sum_{d=0}^3 \kron{-4}{d}^k e^{\pi i
  \frac{hd}{2c}} \sum_{q \bmod c} e^{2\pi i \frac{hq}{c}}.  \end{equation}
  The final sum over $q$ is
  \begin{equation}
    \sum_{q \bmod c} e^{2\pi i \frac{hq}{c}} = \begin{cases}
      0 & \text{if } c \nmid h \\
      c & \text{if } c \mid h.
    \end{cases}
  \end{equation}
  Therefore the arithmetic part simplifies down to
  \begin{equation}
    \sum_{c \mid h} \frac{c}{(4c)^{2w}} \big( e^{\pi i \frac{h}{2c}} + (-1)^k
  e^{\frac{3\pi i h}{2c}} \big).  \end{equation}
  Simplification completes the proof.
\end{proof}

\subsection{Half Integral Weight}\label{ssec:half_integral_eisenstein}

The pattern here is almost the exact same as with full-integral weight.
We will prove the expansion for $E_\infty^k(z,w)$ completely.
The primary difference is that there is now a Dirichlet series of Gauss sums, which can be
factored as a ratio of a Dirichlet $L$-function and a zeta function, up to a short
correction factor.

\begin{claim}
  For $k \geq \frac{1}{2}$ a half integer, the Fourier-Whittaker coefficients of $E_\infty^k(z,w)$ is given by
  \begin{align}
    \rho_\infty^k(0,w) &= \bigg( \frac{1+i^{2k}}{2} \bigg) \frac{\pi 4^{1-w}}{2^{4w - 1} - 1}
    \frac{\zeta(4w - 2)}{\zeta(4w - 1)}
    \frac{\Gamma(2w - 1)}{\Gamma(w + \frac{k}{2}) \Gamma(w - \frac{k}{2})}
    \\
    \rho_\infty^k(h,w) &= \frac{e^{\frac{-i\pi k}{2}} \pi^w \lvert h \rvert^{w-1}}
    {\Gamma(w + \frac{\lvert h \rvert}{h} \frac{k}{2})}
    \sum_{c > 0} \frac{g_h(4c)}{(4c^{2w})},
  \end{align}
  where\index{g@$g_h(c)$}
  \begin{equation}
    g_h(c) = \sum_{d \bmod c} \varepsilon_d^{2k} \kron{c}{d} e \big( \tfrac{hd}{c} \big)
  \end{equation}
  is a Gauss sum.
  The Dirichlet series associated to these Gauss sums can be written
  as\index{D@$\widetilde{D}_\infty^k(h,w)$}
  \begin{equation}
    \sum_{c \geq 1} \frac{g_h(4c)}{(4c)^{2w}}
    =
    \frac{L^{(2)} (2w - \frac{1}{2}, \chi_{k,h})}{\zeta^{(2h)}(4w-1)}
    \widetilde{D}_\infty^k(h,w),
  \end{equation}
  as proved in Proposition~\ref{prop:back:half_integral_L}.
\end{claim}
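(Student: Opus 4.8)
The plan is to follow the same path as in the full-integral weight claim above, carrying along the fourth-root-of-unity factors $\varepsilon_d^{2k}$ that come from the half-integral cocycle. Since $2k$ is an odd integer, $\kron{4c}{d}^{2k}=\kron{4c}{d}$, so each summand of the series for $E_\infty^k(z,w)$ carries the weight $\varepsilon_d^{2k}\kron{4c}{d}$ in place of the $\kron{-1}{d}^k$ of the integral case. Working in the half-plane $\Re w>1$, where the defining series converges absolutely, I would isolate the contribution of $(c,d)=(0,1)$, which is exactly $y^w$, and compute the remaining Fourier--Whittaker coefficients as $\rho_\infty^k(h,w)=\int_0^1\big(E_\infty^k(z,w)-\delta_{[h=0]}\,y^w\big)e^{-2\pi i h x}\,dx$, interchanging sum and integral and restricting to $c>0$, $\gcd(4c,d)=1$.

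For each fixed $c$ I would substitute $x\mapsto x+\tfrac{d}{4c}$, pull out $(4c)^{-2w}$ and the phase $e^{2\pi i hd/4c}$, and write $d=d_0+4cq$ with $d_0$ ranging over residues mod $4c$ and $q\in\mathbb{Z}$; summing over $q$ tiles the shifted unit intervals into all of $\mathbb{R}$. The step to check here is that $\varepsilon_d^{2k}\kron{4c}{d}e(hd/4c)$ depends only on $d\bmod 4c$ --- this uses $\kron{4c}{d}=\kron{c}{d}$ for odd $d$, the period of the Jacobi symbol, and the mod-$4$ periodicity of $\varepsilon_d$ --- so the residue sum collapses to exactly the Gauss sum $g_h(4c)$. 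What remains is the archimedean integral $\int_{-\infty}^\infty\frac{e^{-2\pi i h x}}{|z|^{2w-k}z^k}\,dx$, supplied by \eqref{eq:IntegralIdentity_0th} when $h=0$ and by \eqref{eq:IntegralIdentity_hth} when $h\neq0$. For $h\neq0$, dividing out the Whittaker function $W_{\frac{|h|}{h}\frac{k}{2},\,w-\frac12}(4\pi|h|y)\,e^{2\pi i hx}$ (using the symmetry $W_{\alpha,\nu}=W_{\alpha,-\nu}$) and writing $i^{-k}=e^{-i\pi k/2}$ produces exactly the asserted formula for $\rho_\infty^k(h,w)$.

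The case $h=0$ requires the closed form of $\sum_{c\ge1}g_0(4c)/(4c)^{2w}$. The Gauss sum $g_0(4c)=\sum_{d\bmod 4c}\varepsilon_d^{2k}\kron{4c}{d}$ has trivial additive character, so $d\mapsto\varepsilon_d^{2k}\kron{4c}{d}$ is a Dirichlet character modulo $4c$ and the sum vanishes unless this character is principal, which forces $c$ to be (essentially) a perfect square. I would make this precise using twisted multiplicativity of $g_0$ in $c$ together with the classical evaluation of quadratic Gauss sums; the surviving terms assemble, via an Euler-$\phi$-type Dirichlet series, into the ratio $\zeta(4w-2)/\zeta(4w-1)$, with the prime $2$ contributing the geometric factor $1/(2^{4w-1}-1)$ and the mod-$4$ behaviour of $\varepsilon_d^{2k}$ contributing $(1+i^{2k})/2$. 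Combined with \eqref{eq:IntegralIdentity_0th} and the leading $y^w$, this gives the stated $\rho_\infty^k(0,w)$. Finally, the factorization $\sum_{c\ge1}\frac{g_h(4c)}{(4c)^{2w}}=\frac{L^{(2)}(2w-\tfrac12,\chi_{k,h})}{\zeta^{(2h)}(4w-1)}\,\widetilde{D}_\infty^k(h,w)$ into an $L$-function, a zeta factor, and a finite Dirichlet polynomial is precisely Proposition~\ref{prop:back:half_integral_L}, which I would invoke rather than reprove. I expect the main obstacle to be the bookkeeping in the $h=0$ Gauss-sum evaluation --- tracking the $2$-adic part and the $\varepsilon_d^{2k}$ phases simultaneously through the multiplicativity --- rather than anything conceptual, since the archimedean piece and the non-archimedean $h\neq0$ piece are essentially automatic once \eqref{eq:IntegralIdentity_0th} and \eqref{eq:IntegralIdentity_hth} are in hand.
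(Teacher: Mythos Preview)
Your proposal is correct and follows essentially the same route as the paper: the tiling argument via $d=d_0+4cq$ to separate the archimedean integral from the arithmetic sum, the invocation of \eqref{eq:IntegralIdentity_0th} and \eqref{eq:IntegralIdentity_hth}, and for $h=0$ the key observation that $\sum_{d\bmod 4c}\varepsilon_d^{2k}\kron{4c}{d}$ vanishes unless $c$ is a perfect square. The paper handles the $h=0$ arithmetic part slightly more directly than your sketch --- it simply computes $\sum_{d\bmod 4c^2,\ \gcd(d,4c)=1}\varepsilon_d^{2k}=\tfrac{1+i^{2k}}{2}\,\varphi(4c^2)$ and recognizes $\sum_{c\ge1}\varphi(4c^2)/(4c^2)^{2w}$ as the stated zeta ratio via Euler products, rather than going through twisted multiplicativity of $g_0$ --- but this is a matter of bookkeeping, not strategy.
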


\begin{proof}
  As in the full-integral weight case, we compute the constant Fourier coefficient through
  \begin{equation}
    \rho_\infty^k(0,w) = \int_0^1 E_\infty^k(z,w) - y^w dx =
    \sum_{\substack{c > 0, d \in \mathbb{Z} \\ \gcd(4c, d) = 1}}
    \int_0^1 \frac{y^w \varepsilon_d^{2k} \kron{4c}{d}}
    {\lvert 4cz + d \rvert^{2w - k} (4cz + d)^k}.
  \end{equation}
  The only difference in comparison to the full-integral weight case is that the numerator
  has $\varepsilon_d^{2k} \kron{4c}{d}$.
  Notice that the character enforces $\gcd(4c, d) = 1$, so that this condition can be
  dropped from the summation indices.

  Factoring $(4c)^{-2w}$, writing $d = d' + 4cq$ for each $d' \bmod 4c$ and a unique
  $q \in \mathbb{Z}$, performing the change of variables $x \mapsto x - \frac{d}{4c}$, and
  tiling the integral functions exactly as in the full-integral proof, and leads to
  \begin{equation}
    y^w \sum_{c > 0} \frac{1}{(4c)^{2w}} \sum_{d \bmod 4c} \varepsilon_d^{2k} \kron{4c}{d}
    \int_{-\infty}^\infty \frac{1}{\lvert z \rvert^{2w-k} z^k} dx.
  \end{equation}
  Using the evaluation of this integral from~\eqref{eq:IntegralIdentity_0th}, we see that
  the $0$th coefficient can be written as
  \begin{equation}
    y^{1-w} \sum_{c > 0} \frac{1}{(4c)^{2w}} \sum_{d \bmod 4c} \varepsilon_d^{2k}
    \kron{4c}{d} \frac{\pi 4^{1-w} \Gamma(2s - 1)}{\Gamma(w + \frac{k}{2})
    \Gamma(w - \frac{k}{k})}.
  \end{equation}

  It is now necessary to understand the arithmetic part, given by
  \begin{equation}\label{eq:back:E_halfk_0_I}
    \sum_{c > 0} \frac{1}{(4c)^{2w}} \sum_{d \bmod 4c} \varepsilon_d^{2k} \kron{4c}{d}.
  \end{equation}
  Noting that we can rewrite $\varepsilon_d$ as
  \begin{equation}
    \varepsilon_d = \frac{1}{2}\big( \chi_{-1}^2(d) + \chi_{-1}(d) \big) +
    \frac{i}{2}\big( \chi_{-1}^2(d) - \chi_{-1}(d) \big),
  \end{equation}
  where $\chi_{-1}(d) = \kron{-1}{d}$ is the Legendre character associated to when $-1$ is
  a square, we see that the $d$ summation is trivial unless the character $\kron{4c}{d}$
  is trivial.
  (This follows from classical observation that summing a non-trivial character over a
  whole group gives zero).
  Thus the sum is nontrivial if and only if $c$ is a square, and when $c$ is a square we
  have that
  \begin{equation}
    \sum_{d \bmod 4c^2} \varepsilon_d^{2k}\kron{4c^2}{d}
    = \sum_{\substack{d \bmod 4c^2 \\ \gcd(d,4c) = 1}} \varepsilon_d^{2k}
    = \frac{1 + i^{2k}}{2} \varphi(4c^2).
  \end{equation}
  Therefore~\eqref{eq:back:E_halfk_0_I} can be written as
  \begin{equation}
     \frac{1+i^{2k}}{2} \sum_{c > 0} \frac{1}{(4c^2)^{2w}} \varphi(4c^2)
     = \frac{1+i^{2k}}{2} \frac{1}{2^{4w - 1} - 1} \frac{\zeta(4w - 2)}{\zeta(4w - 1)}.
  \end{equation}
  The last equality follows from comparing Euler products, correcting the missing
  $2$-factor, and simplifying.
  Combining the arithmetic part with the analytic part gives the $\rho_\infty^k(0,w)$.

  Computing the $h$th Fourier coefficient is very similar.
  After following the same initial steps to tile out the integral, we get the equality
  \begin{equation}
    \int_0^1 E_\infty^k(z,w) e^{-2\pi i h x} dx = \sum_{c > 0} \frac{1}{(4c)^{2w}}
    \sum_{d \bmod 4c} \varepsilon_d^{2k} \kron{4c}{d} e\big(\tfrac{hd}{4c}\big)
    \int_{-\infty}^\infty \frac{y^w e^{-2\pi i h x}}{\lvert z \rvert^{2w-k} z^k} dx.
  \end{equation}
  This integral was evaluated in~\eqref{eq:IntegralIdentity_hth}, so we see that this
  becomes
  \begin{equation}
     \frac{e^{-i\pi k/2} \pi^w \lvert h \rvert^{w-1}}
    {\Gamma(w + \frac{\lvert h \rvert}{h}\frac{k}{2})}
    W_{\frac{\lvert h \rvert}{h}\frac{k}{2}, \frac{1}{2} - w} (4 \pi \lvert h \rvert y)
    \sum_{c > 0} \frac{1}{(4c)^{2w}}
    \sum_{d \bmod 4c} \varepsilon_d^{2k} \kron{4c}{d} e\big(\tfrac{hd}{4c}\big).
  \end{equation}
  The arithmetic part is now a Dirichlet series of Gauss sums, and is not finite.
  Simplifiying completes the computation.
\end{proof}

\subsection*{On the $L$-series associated to half-integral weight Eisenstein series
coefficients}

We now consider the Dirichlet series
\begin{equation}
  \sum_{c \geq 1} \frac{g_h(4c)}{(4c)^{2w}}
\end{equation}
appearing in the $h$th Fourier coefficients of $E_\infty^k(z,w)$ when $k$ is a half
integer.
The goal of this section is to show the following proposition.

\begin{proposition}\label{prop:back:half_integral_L}
  \begin{equation}
    \sum_{c \geq 1} \frac{g_h(4c)}{(4c)^{2w}}
    =
    \frac{L^{(2)} (2w - \frac{1}{2}, \chi_{k,h})}{\zeta^{(2h)}(4w-1)}
    \widetilde{D}_\infty^k(h,w),
  \end{equation}
  where $\widetilde{D}_\infty^k(h,w)$ is a finite Dirichlet polynomial.
\end{proposition}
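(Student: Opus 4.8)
The plan is to follow the classical route (essentially that of~\cite{goldfeld1985eisenstein}): establish a twisted multiplicativity for the Gauss sums $g_h$, reduce to a prime‑by‑prime computation, evaluate the local factors by quadratic Gauss reciprocity, and read off the global shape from the resulting Euler product. Every identity below is first established for $\Re w$ large, where all the Dirichlet series converge absolutely, and then extended to all $w$ by meromorphic continuation of both sides.

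First I would isolate the prime $2$. Writing $c = 2^a b$ with $b$ odd, we have $4c = 2^{a+2} b$; since $\kron{4c}{d} = \kron{c}{d} = \kron{2}{d}^{a}\kron{b}{d}$ for $d$ odd and $\varepsilon_d^{2k}$ depends only on $d \bmod 4$, a Chinese Remainder Theorem splitting of the sum over $d \bmod 2^{a+2} b$ factors $g_h(2^{a+2} b)$ as a $2$-power Gauss sum times a Gauss sum of odd modulus, at the cost of an explicit quadratic‑reciprocity twist (the price of trading $\kron{b}{d}$ for $\kron{d}{b}$, which introduces a sign depending on $b,d \bmod 4$ and hence folds back into the $2$-power piece). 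Collecting terms this yields
\begin{equation}
  \sum_{c \geq 1} \frac{g_h(4c)}{(4c)^{2w}} = \mathcal{E}_2(w) \prod_{p\ \mathrm{odd}} \mathcal{E}_p(w),
\end{equation}
where $\mathcal{E}_2(w)$ collects the contribution of the powers of $2$ and $\mathcal{E}_p(w)$ is the (twisted) local Dirichlet series in $p^{-w}$ assembled from the $g_h(p^\beta)$.

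Next I would evaluate the local pieces. Since the character part $d \mapsto \varepsilon_d^{2k}$ has conductor dividing $4$, the $2$-power Gauss sums entering $\mathcal{E}_2(w)$ vanish once the exponent exceeds a bound depending only on $v_2(h)$, so $\mathcal{E}_2(w)$ is a finite Dirichlet polynomial in $2^{-w}$. For an odd prime $p \nmid h$, the standard evaluation of quadratic Gauss sums gives, up to a fixed unit, $g_h(p^\beta) = 0$ for $\beta \geq 2$ (a Ramanujan‑sum vanishing) and $g_h(p) = \chi_{k,h}(p)\,p^{1/2}$, where $\chi_{k,h}$ is the Kronecker symbol of $\mathbb{Q}(\sqrt{\mu_k h})$ and $\mu_k = (-1)^{k-1/2}$ records whether $\varepsilon_d^{2k}$ equals $\varepsilon_d$ or $\overline{\varepsilon_d}$; consequently one finds $\mathcal{E}_p(w) = 1 + \chi_{k,h}(p)\,p^{-(2w-1/2)}$. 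Using $\chi_{k,h}(p)^2 = 1$ this factors as
\begin{equation}
  \mathcal{E}_p(w) = \frac{1 - p^{-(4w-1)}}{1 - \chi_{k,h}(p)\,p^{-(2w-1/2)}},
\end{equation}
the $p$-th Euler factor of $L(2w-\tfrac12,\chi_{k,h})/\zeta(4w-1)$, whence $\prod_{p \nmid 2h}\mathcal{E}_p(w) = L^{(2h)}(2w-\tfrac12,\chi_{k,h})/\zeta^{(2h)}(4w-1)$. For the finitely many odd primes $p \mid h$ the sums $g_h(p^\beta)$ again vanish for $\beta$ large (beyond roughly $v_p(h)$), so $\mathcal{E}_p(w)$ is a finite Dirichlet polynomial; writing $\mathcal{E}_p(w) = L_p(2w-\tfrac12,\chi_{k,h})\,\widetilde{D}_p(w)$ with $\widetilde{D}_p$ a polynomial in $p^{-w}$ restores the Euler factors at $p \mid h$ to the $L$-function (turning $L^{(2h)}$ into $L^{(2)}$) and keeps the denominator $\zeta^{(2h)}(4w-1)$. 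Setting $\widetilde{D}_\infty^k(h,w) := \mathcal{E}_2(w)\prod_{p \mid h,\ p\ \mathrm{odd}}\widetilde{D}_p(w)$, a finite product of Dirichlet polynomials and hence itself a finite Dirichlet polynomial, gives the stated identity.

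The main obstacle is the bookkeeping in the prime‑power Gauss‑sum evaluation: one must keep careful track of the Gauss‑reciprocity root‑of‑unity factors together with the $\varepsilon_d^{2k}$ twist, so as to see that the character emerging at the good primes is exactly $\chi_{k,h}$ attached to the correct quadratic field $\mathbb{Q}(\sqrt{\mu_k h})$, and to pin down the (finite) local factors at $p = 2$ and at the odd primes dividing $h$. Once the character is correctly identified, the remaining manipulations — comparing Euler products and summing geometric series — are routine.
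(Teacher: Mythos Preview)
Your proposal is correct and follows essentially the same route as the paper: a CRT splitting to separate the $2$-power modulus from the odd part (the paper's Lemma on breaking $g_h(4c)$ into two pieces), multiplicativity of the odd-modulus Gauss sum (the paper's $H_h$), the evaluation $H_h(p)=\kron{-h}{p}\sqrt p$ and $H_h(p^\beta)=0$ for $\beta\ge 2$ at odd $p\nmid h$, and the vanishing for large $\beta$ at $p\mid h$ and at $p=2$ that forces the correction factor to be a finite Dirichlet polynomial. The only cosmetic difference is in the final packaging: you pull out the local $L$-factors at odd $p\mid h$ to turn $L^{(2h)}$ into $L^{(2)}$, whereas the paper simply absorbs those primes wholesale into $\widetilde D_\infty^k(h,w)$ and writes $L^{(2)}$ directly (relying on $\chi_{k,h}(p)=0$ at ramified primes); either way the content is the same.
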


We prove this proposition through a series of lemmata.\footnote{Yes, that is a real word.
And it's a fantastic word.}
First, we first consider just an individual Gauss sum
$g_h(4c) = \sum_{d \bmod 4c} \varepsilon_d^{2k} e(\frac{hd}{4c}) \kron{4c}{d}$.
It is necessary to break each Gauss sum into two pieces.

\begin{lemma}\label{lem:back:gauss_two_pieces}
  \begin{equation}
    g_h(4c) = \sum_{d_2\bmod 2^\alpha} \varepsilon_{d_2c'}^{2k} (-1)^{\frac{c'-1}{2}
    \frac{d_2c'-1}{2}}\varepsilon_{c'}^{-1} \kron{2^\alpha}{d_2}
    \varepsilon_{c'}\sum_{d_1\bmod c'} \kron{d_1}{c'}  e\big( \tfrac{h d_1}{c'} \big).
  \end{equation}
\end{lemma}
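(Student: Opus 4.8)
The plan is to unravel the Gauss sum $g_h(4c) = \sum_{d \bmod 4c} \varepsilon_d^{2k} \kron{4c}{d} e(hd/4c)$ by separating the $2$-power part of the modulus from the odd part. First I would write $4c = 2^\alpha c'$ with $c'$ odd, where $2^\alpha = 4 \cdot 2^{v_2(c)}$. By the Chinese Remainder Theorem, as $d$ runs over residues mod $4c$, we may write $d$ uniquely modulo $2^\alpha$ and modulo $c'$, but the two coordinates must be glued via a CRT change of variables: explicitly $d \equiv d_1 \overline{2^\alpha} \pmod{c'}$ and $d \equiv d_2 \overline{c'} \pmod{2^\alpha}$ in terms of independent variables $d_1 \bmod c'$ and $d_2 \bmod 2^\alpha$. (One should double-check the exact normalization used in the statement; the displayed form of the lemma records the result of one particular convenient choice of CRT representatives.)

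The key steps, in order, are: (i) apply CRT to split the modulus $4c = 2^\alpha c'$; (ii) split the additive character $e(hd/4c)$ accordingly as a product $e(h d_1 / c') \cdot e(h d_2 / 2^\alpha)$ after the change of variables; (iii) split the Jacobi symbol $\kron{4c}{d} = \kron{2^\alpha}{d}\kron{c'}{d}$ and, on the odd part, use quadratic reciprocity to flip $\kron{c'}{d_1}$ into $\kron{d_1}{c'}$, which is where the reciprocity sign $(-1)^{\frac{c'-1}{2}\frac{d_2 c' - 1}{2}}$ and the factor $\varepsilon_{c'}^{\pm 1}$ enter (these come precisely from the formula $\kron{c'}{d} = \kron{d}{c'} (-1)^{\frac{c'-1}{2}\frac{d-1}{2}}$ rewritten via $\varepsilon$ as is done for the theta cocycle); (iv) handle the $\varepsilon_d^{2k}$ factor by using multiplicativity-type relations for $\varepsilon$ under the CRT decomposition $d \leftrightarrow (d_1, d_2)$, producing the $\varepsilon_{d_2 c'}^{2k}$ that appears; (v) collect the terms depending only on $d_1$ into the inner sum $\sum_{d_1 \bmod c'} \kron{d_1}{c'} e(h d_1 / c')$ and everything else into the outer sum over $d_2 \bmod 2^\alpha$.

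The main obstacle will be step (iv) together with the careful bookkeeping of the reciprocity sign in step (iii): the factor $\varepsilon_d$ is not multiplicative in the naive sense, and tracking how $\varepsilon_d$ behaves when $d$ is reconstructed from $(d_1 \bmod c', d_2 \bmod 2^\alpha)$ via CRT — in particular pinning down that the correct combination is $\varepsilon_{d_2 c'}^{2k}$ rather than, say, $\varepsilon_{d_1 d_2}^{2k}$ or a product $\varepsilon_{d_1}^{2k}\varepsilon_{d_2}^{2k}$ — requires using the explicit congruence behavior of $\varepsilon$ modulo $4$ and matching it against the quadratic reciprocity sign. Once the signs are correctly matched, the remaining manipulations (separating characters and additive exponentials along coprime moduli) are routine CRT computations. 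I would verify the final identity by checking the simplest case, e.g. $c' = 1$ (so $4c = 2^\alpha$), where both sides should collapse to $g_h(2^\alpha)$, and then a small nontrivial case such as $c = 3$ to confirm the reciprocity sign is in the right place.
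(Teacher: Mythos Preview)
Your plan is essentially the paper's proof: split $4c = 2^\alpha c'$, use CRT on $d$, split the Jacobi symbol, apply quadratic reciprocity on the odd part, and collect. The one difference worth noting is the choice of CRT lift. The paper takes the explicit representative $d = d_1 \cdot 2^\alpha + d_2 \cdot c'$ (with $d_1 \bmod c'$, $d_2 \bmod 2^\alpha$) rather than your version with inverses. This choice makes your anticipated ``main obstacle'' in step (iv) disappear: since $d \equiv d_2 c' \pmod{4}$, one has $\varepsilon_d = \varepsilon_{d_2 c'}$ immediately, with no multiplicativity bookkeeping required. The additive character also splits cleanly as $e(hd/4c) = e(hd_1/c')\,e(hd_2/2^\alpha)$ under this lift. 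The $\varepsilon_{c'}\varepsilon_{c'}^{-1}$ is not produced by reciprocity; it is simply inserted by hand at the end to package the inner $d_1$-sum in the form $H_h(c') = \varepsilon_{c'}\sum_{d_1}\kron{d_1}{c'}e(hd_1/c')$ used in the subsequent lemmas.
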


\begin{proof}

Write $4c = 2^\alpha c'$ where $\gcd(c', 2) = 1$.
Note that we necessarily have $\alpha \geq 2$.
For any $d \bmod 4c$, we write $d = d_1 2^\alpha + d_2c'$ with $d_1$ varying mod $c'$ and
$d_2$ varying mod $2^\alpha$.
Using this, we can write
\begin{align}
    \sum_{d \bmod 4c} \varepsilon_d^{2k} e\big(\tfrac{hd}{4c}\big) \kron{4c}{d}
    &=
    \sum_{(d_1\bmod c')} \sum_{(d_2\bmod 2^\alpha)} \varepsilon_{d_2c'}^{2k}
    e\big( \tfrac{h (d_1 2^\alpha + d_2 c')}{2^\alpha c'} \big)
    \kron{2^\alpha c'}{d_1 2^\alpha + d_2c'}
    \\
    &=
    \sum_{d_2 \bmod 2^\alpha} \varepsilon_{d_2c'}^{2k}
    e\big(\tfrac{h d_2}{2^\alpha}\big)
    \kron{2^\alpha}{d_2c'} \sum_{d_1 \bmod c'}
  \kron{c'}{d_1 2^\alpha+ d_2c'} e\big( \tfrac{h d_1}{c'} \big).
\end{align}

Note that $d_2$ is necessarily odd.
By quadratic reciprocity, we can rewrite the inner sum as
\begin{align}
    \sum_{d_1 \bmod c'} \kron{c'}{d_1 2^\alpha+ d_2c'} e\big( \tfrac{h d_1}{c'} \big)
    &=
    \sum_{d_1 \bmod c'} \kron{d_1 2^\alpha + d_2 c'}{c'}
    (-1)^{\frac{c'-1}{2} \frac{d_2c'-1}{2}} e \big( \tfrac{h d_1}{c'} \big)
    \\
    &=
    \sum_{d_1 \bmod c'} \kron{d_1 2^\alpha}{c'} (-1)^{\frac{c'-1}{2}
    \frac{d_2c'-1}{2}} e\big( \tfrac{h d_1}{c'}\big).
\end{align}
Inserting this back into $g_h(4c)$ gives
\begin{align}
  g_h(4c)
  &=
  \sum_{d_2\bmod 2^\alpha} \varepsilon_{d_2c'}^{2k} (-1)^{\frac{c'-1}{2}
  \frac{d_2c'-1}{2}} e\big( \tfrac{h d_2}{2^\alpha} \big) \kron{2^\alpha}{d_2}
  \kron{2^\alpha}{c'}\sum_{d_1\bmod c'} \kron{2^\alpha}{c'}\kron{d_1}{c'}
  e\big( \tfrac{h d_1}{c'} \big) \\
  &=
  \sum_{d_2\bmod 2^\alpha} \varepsilon_{d_2c'}^{2k} (-1)^{\frac{c'-1}{2}
  \frac{d_2c'-1}{2}} e\big( \tfrac{h d_2}{2^\alpha} \big)
  \varepsilon_{c'}^{-1} \kron{2^\alpha}{d_2}
  \varepsilon_{c'}\sum_{d_1\bmod c'} \kron{d_1}{c'}  e\big( \tfrac{h d_1}{c'} \big).
\end{align}
Note that we have introduced $\varepsilon_c \varepsilon_c^{-1}$.
This ends the proof of the lemma.
\end{proof}

The $d_1$ sum is now completely decoupled from the $d_2$ sum, and they can be understood
separately.
We first understand the $d_1$ summation.

\begin{lemma}\label{lem:back:Hh_multiplicative}
  For odd $c$, define
  \begin{equation}
    H_h(c) := \varepsilon_c \sum_{d \bmod c} \kron{d}{c} e \big( \tfrac{h d}{c} \big),
  \end{equation}
  as occurs in the $d_1$ summation in $g_h(4c)$.
  Then $H_h(c)$ is multiplicative.
\end{lemma}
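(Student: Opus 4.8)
The plan is to establish multiplicativity of $H_h(c) = \varepsilon_c \sum_{d \bmod c} \kron{d}{c} e(hd/c)$ directly, by splitting the modulus and applying the Chinese Remainder Theorem, taking care to track the behavior of the three multiplicative ingredients: the Jacobi symbol $\kron{\cdot}{c}$, the sign factor $\varepsilon_c$, and the additive character. Suppose $c = c_1 c_2$ with $\gcd(c_1, c_2) = 1$ and both odd. The first step is to write $d = d_1 c_2 + d_2 c_1$ with $d_1$ varying mod $c_1$ and $d_2$ mod $c_2$, so that $d$ runs over a complete residue system mod $c$. Under this substitution, the Jacobi symbol splits as $\kron{d}{c} = \kron{d}{c_1}\kron{d}{c_2} = \kron{d_1 c_2}{c_1}\kron{d_2 c_1}{c_2} = \kron{c_2}{c_1}\kron{c_1}{c_2}\kron{d_1}{c_1}\kron{d_2}{c_2}$, and the additive character splits as $e(hd/c) = e(hd_1/c_1) e(hd_2/c_2)$. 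This immediately gives
\begin{equation}
  \sum_{d \bmod c} \kron{d}{c} e\big(\tfrac{hd}{c}\big) = \kron{c_2}{c_1}\kron{c_1}{c_2} \Big(\sum_{d_1 \bmod c_1} \kron{d_1}{c_1} e\big(\tfrac{hd_1}{c_1}\big)\Big)\Big(\sum_{d_2 \bmod c_2} \kron{d_2}{c_2} e\big(\tfrac{hd_2}{c_2}\big)\Big).
\end{equation}

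The second step is to absorb the cross-term $\kron{c_2}{c_1}\kron{c_1}{c_2}$ using quadratic reciprocity, which gives $\kron{c_1}{c_2}\kron{c_2}{c_1} = (-1)^{\frac{c_1-1}{2}\frac{c_2-1}{2}}$. The key arithmetic fact to invoke is the cocycle/multiplicativity relation for $\varepsilon$: one has $\varepsilon_{c_1 c_2} = \varepsilon_{c_1}\varepsilon_{c_2}(-1)^{\frac{c_1-1}{2}\frac{c_2-1}{2}}$, equivalently $\varepsilon_{c_1}\varepsilon_{c_2} = \varepsilon_{c_1 c_2}(-1)^{\frac{c_1-1}{2}\frac{c_2-1}{2}}$ since $\varepsilon^2 \in \{1,-1\}$ and the sign is governed precisely by $c \bmod 4$. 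Therefore $\varepsilon_c \cdot \kron{c_1}{c_2}\kron{c_2}{c_1} = \varepsilon_{c_1 c_2}(-1)^{\frac{c_1-1}{2}\frac{c_2-1}{2}} = \varepsilon_{c_1}\varepsilon_{c_2}$, and multiplying the displayed identity by $\varepsilon_c$ yields $H_h(c) = H_h(c_1) H_h(c_2)$ exactly.

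The only genuine obstacle is verifying the reciprocity bookkeeping for $\varepsilon_d$ cleanly — making sure the sign $(-1)^{\frac{c_1-1}{2}\frac{c_2-1}{2}}$ produced by Jacobi reciprocity matches the discrepancy between $\varepsilon_{c_1 c_2}$ and $\varepsilon_{c_1}\varepsilon_{c_2}$. This is a finite case check on residues mod $4$: if both $c_i \equiv 3 \bmod 4$ then $c_1 c_2 \equiv 1 \bmod 4$, so $\varepsilon_{c_1}\varepsilon_{c_2} = i \cdot i = -1$ while $\varepsilon_{c_1 c_2} = 1$, and indeed $(-1)^{\frac{c_1-1}{2}\frac{c_2-1}{2}} = (-1)^{1\cdot 1} = -1$; the other cases (at least one $c_i \equiv 1 \bmod 4$) give sign $+1$ on both sides. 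I would also note that the definition of $H_h$ does not require $\gcd(d,c)=1$ in the summation, but terms with $\gcd(d,c) > 1$ contribute zero to the Jacobi-symbol-weighted sum, so the CRT splitting over complete residue systems is unproblematic. With these checks in place the lemma follows with essentially no further computation.
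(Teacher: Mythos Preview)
Your proof is correct and follows essentially the same approach as the paper: both split $d$ via the Chinese Remainder Theorem, factor the Jacobi symbol to extract the cross term $\kron{c_1}{c_2}\kron{c_2}{c_1}$, and then verify by casework mod $4$ that this cross term combines with $\varepsilon_{c_1c_2}\varepsilon_{c_1}^{-1}\varepsilon_{c_2}^{-1}$ to give $1$. Your explicit identity $\varepsilon_{c_1c_2} = \varepsilon_{c_1}\varepsilon_{c_2}(-1)^{\frac{c_1-1}{2}\frac{c_2-1}{2}}$ is just a repackaging of that same casework together with quadratic reciprocity, so the two arguments are the same in substance.
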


\begin{proof}

Let $n_1$ and $n_2$ be two odd relative prime integers.
By the Chinese Remainder Theorem, and congruence class $d \bmod n_1 n_2$ can be uniquely
written as $d = b_2 n_1 + b_1 n_2$ where $b_2$ is defined modulo $n_2$ and $b_1$ is
defined modulo $n_1$.
Then
\begin{align}
  H_h(n_1 n_2)
  &=
  \varepsilon_{n_1 n_2} \sum_{d \bmod n_1n_2} \kron{d}{n_1n_2}
  e\big(\tfrac{d h}{n_1 n_2} \big)
  \\
  &= \varepsilon_{n_1 n_2} \sum_{b_1 \bmod n_1} \sum_{b_2 \bmod n_2}
  \kron{b_2 n_1 +b_1 n_2}{n_1 n_2}  e \big( \tfrac{h b_2 n_1 +b_1 n_2}{n_1 n_2} \big)
  \\
  &= \varepsilon_{n_1 n_2}\kron{n_2}{n_1} \kron{n_1}{n_2}
  \sum_{b_1 \bmod n_1} \kron{b_1}{n_1} e\big( \tfrac{h b_1}{n_1} \big)
  \sum_{b_2 \bmod n_2} \kron{b_2}{n_2} e\big( \tfrac{h b_2}{n_2} \big)
  \\
  &= \varepsilon_{n_1 n_2} \varepsilon_{n_1}^{-1} \varepsilon_{n_2}^{-1}
  \kron{n_2}{n_1} \kron{n_1}{n_2} H_h(n_1) H_{h}(n_2).
\end{align}
Casework and quadratic reciprocity shows that
$\varepsilon_{n_1n_2} \varepsilon_{n_1^{-1}} \varepsilon_{n_2}^{-1}
\kron{n_2}{n_1}\kron{n_1}{n_2}= 1$,
so that $H_h(n_1 n_2) = H_h(n_1) H_h(n_2)$.
\end{proof}

In order to understand a Dirichlet series formed from $H_h(c')$, it will be sufficient to
understand $H_h(p^k)$ for odd primes $p$.
When $p \nmid h$, these are particularly easy to understand.

\begin{lemma}\label{lem:back:Hh_goodp_eval}
  Suppose $p$ is an odd prime and $p \nmid h$.
  Then
  \begin{equation}
    H_h(p^k) = \begin{cases}
      \kron{-h}{p} \sqrt p & k = 1 \\
      0 & k \geq 2.
    \end{cases}
  \end{equation}
\end{lemma}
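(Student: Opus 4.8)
The plan is to evaluate the Gauss-sum-type object $H_h(p^k) = \varepsilon_{p^k}\sum_{d \bmod p^k} \kron{d}{p^k} e(hd/p^k)$ directly, using the standard theory of quadratic Gauss sums. Since $p \nmid h$, we may hope to reduce the sum to the classical quadratic Gauss sum $g(p) = \sum_{d \bmod p}\kron{d}{p} e(d/p)$, whose value $\sqrt p$ or $i\sqrt p$ (depending on $p \bmod 4$) is exactly what the normalizing factor $\varepsilon_{p^k}$ is designed to clean up, yielding the unadorned $\sqrt p$ in the statement.

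For the case $k = 1$: I would substitute $d \mapsto \bar h d$ (where $\bar h$ is an inverse of $h$ mod $p$), which is a bijection on residues mod $p$ coprime to $p$ and sends $e(hd/p) \mapsto e(d/p)$ while multiplying the Legendre symbol by $\kron{\bar h}{p} = \kron{h}{p}$. This gives $H_h(p) = \varepsilon_p \kron{h}{p} g(p)$. Then I invoke the classical evaluation $g(p) = \varepsilon_p^{-1}\sqrt p \cdot \kron{-1}{p}^{?}$ — more precisely, the clean statement is $\varepsilon_p g(p)$ equals $\sqrt p$ up to the sign $\kron{-1}{p}$; matching conventions carefully should produce $H_h(p) = \kron{-h}{p}\sqrt p$, with the $\kron{-1}{p}$ absorbed to turn $\kron{h}{p}$ into $\kron{-h}{p}$. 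I would verify the sign bookkeeping by checking $p \equiv 1$ and $p \equiv 3 \bmod 4$ separately against $\varepsilon_p \in \{1, i\}$.

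For the case $k \geq 2$: here the sum vanishes. The mechanism is that $\kron{d}{p^k} = \kron{d}{p}^k$, so for $k$ even the symbol is the trivial (principal) character mod $p$, and for $k$ odd it is $\kron{d}{p}$. In either case the summand depends on $d$ only through $d \bmod p$ for the character part, but through $d \bmod p^k$ for the exponential. Writing $d = d_0 + p d_1$ with $d_0 \bmod p$, $d_1 \bmod p^{k-1}$, the character factors out of the $d_1$-sum, and since $p \nmid h$ the inner exponential sum $\sum_{d_1 \bmod p^{k-1}} e(h d_1/p^{k-1}) = 0$ whenever $k - 1 \geq 1$. So the whole thing is zero. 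This is the cleanest part of the argument.

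\textbf{The main obstacle} I anticipate is not conceptual but notational: pinning down the exact sign in the $k=1$ case so that it comes out as $\kron{-h}{p}$ rather than $\kron{h}{p}$, which requires being scrupulous about the definition of $\varepsilon_p$ (as $1$ or $i$), the value of the quadratic Gauss sum in the author's normalization, and the interaction with quadratic reciprocity. A small sign error here would propagate into the $L$-function $L(2w-\tfrac12, \chi_{k,h})$ identified in Proposition~\ref{prop:back:half_integral_L}, so the casework on $p \bmod 4$ needs to be done explicitly rather than waved through.
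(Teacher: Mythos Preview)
Your proposal is correct and follows essentially the same approach as the paper. For $k=1$, the paper compresses your sign bookkeeping into the single identity $\varepsilon_p^2 = \kron{-1}{p}$, which immediately converts $\varepsilon_p \cdot \kron{h}{p}\,\varepsilon_p\sqrt p$ into $\kron{-h}{p}\sqrt p$ without casework on $p \bmod 4$; for $k\geq 2$, the paper splits into $k$ even (Ramanujan-sum argument) and $k$ odd (your decomposition $d = d_0 + p d_1$), whereas your uniform treatment via that decomposition handles both parities at once and is arguably cleaner.
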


\begin{proof}
  When $k = 1$, we have
  \begin{equation}
    H_h(p) = \varepsilon_p \sum_{d \bmod p} \kron{d}{p} e \big( \tfrac{dh}{p} \big) =
    \varepsilon_p^2 \kron{h}{p} \sqrt p = \kron{-h}{p} \sqrt p,
  \end{equation}
  as $H_h(p)$ is very nearly a standard Gauss Sum, as considered in the beginning
  of~\cite{Davenport1980}.

  For $k \geq 2$, there are two cases.
  If $k$ is even, then the sum is exactly a sum over the primitive $p^k$-roots of unity,
  and therefore is zero.
  If $k$ is odd, then writing $\eta = e^{2\pi i / p^k}$ as a primitive $p^k$-root of
  unity, we have
  \begin{align}
    H_h(p^k) &= \sum_{d\bmod p^k} \eta^d \kron d {p^k} = \sum_{d\bmod p^k} \eta^d \kron{d}{p} =
    \sum_{\substack{b \bmod p \\ c \bmod p^{k-1}}} \eta^{b+pc} \kron b p \\
    &= \sum_{b\bmod p} \eta^b
    \kron b p \sum_{c\bmod p^{k-1}} \eta^{pc} = 0,
  \end{align}
  as the inner sum is a sum over the primitive $p^{k-1}$ roots of unity.
\end{proof}

The case is substantially more complicated for primes $p$ dividing $h$.
For this thesis, we do not need to calculate these explicitly.
(Although we could, using these techniques).
It is sufficient to know that only finitely many contribute.

\begin{lemma}\label{lem:back:Hh_badp_eval}
  Suppose $p$ is an odd prime and $p \mid h$.
  Further, suppose $p^\ell \mid h$ but $p^{\ell + 1} \nmid h$.
  Then for $k \geq \ell + 2$, we have $H_h(p^k) = 0$.
\end{lemma}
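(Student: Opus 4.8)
The plan is to imitate the structure of the proof of Lemma~\ref{lem:back:Hh_goodp_eval}, splitting into the cases $k$ even and $k$ odd, but now carefully tracking the power of $p$ dividing $h$. Write $h = p^\ell h'$ with $\gcd(h', p) = 1$, and let $\eta = e^{2\pi i/p^k}$ be a primitive $p^k$th root of unity, so that $e(hd/p^k) = \eta^{dh} = \zeta^{d}$ where $\zeta = \eta^{h}$ is a primitive $p^{k-\ell}$th root of unity (this uses $k > \ell$). Since $H_h(p^k) = \varepsilon_{p^k}\sum_{d \bmod p^k} \kron{d}{p^k} \zeta^{d}$ and $\kron{d}{p^k} = \kron{d}{p}^k$, the sign $\varepsilon_{p^k}$ is harmless and I only need to show the character sum vanishes.

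First I would handle $k$ even: then $\kron{d}{p}^k$ is the trivial character on units, so the sum is $\sum_{\gcd(d,p)=1} \zeta^{d}$, a Ramanujan-type sum. Because $\zeta$ has order $p^{k-\ell}$ and $k - \ell \geq 2$, this is $\sum_{d \bmod p^k} \zeta^d - \sum_{d \equiv 0 (p)} \zeta^d$; the first sum is $0$ since $\zeta \neq 1$, and the second is $p \sum_{d' \bmod p^{k-1}} (\zeta^p)^{d'}$, which is $0$ because $\zeta^p$ still has order $p^{k-\ell-1} \geq p \geq 2$. For $k$ odd, $\kron{d}{p}^k = \kron{d}{p}$, and I would use the same device as in Lemma~\ref{lem:back:Hh_goodp_eval}: write $d = b + p c$ with $b \bmod p$, $c \bmod p^{k-1}$, so $\kron{d}{p} = \kron{b}{p}$ and
\begin{equation}
  \sum_{d \bmod p^k} \kron{d}{p} \zeta^{d}
  = \sum_{b \bmod p} \kron{b}{p} \zeta^{b} \sum_{c \bmod p^{k-1}} (\zeta^{p})^{c}.
\end{equation}
The inner sum over $c$ vanishes precisely when $\zeta^p \neq 1$, i.e.\ when $\zeta$ has order exceeding $p$, i.e.\ when $k - \ell \geq 2$ — which is exactly the hypothesis $k \geq \ell + 2$. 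Hence $H_h(p^k) = 0$ in both parities.

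The only subtle point — and the thing I would be most careful about — is the bookkeeping on the order of the root of unity $\zeta = \eta^h$: one must check that $e(hd/p^k)$ genuinely depends on $d$ only through $d \bmod p^{k-\ell}$ and is a faithful character of that group, so that the hypothesis $k \geq \ell + 2$ translates exactly into ``$\zeta^p \neq 1$.'' Everything else is the same orthogonality-of-characters and geometric-series manipulation already used above, so no genuinely new obstacle arises; I would just make sure the edge case distinctions ($k$ even vs.\ odd, and the degenerate possibilities $k - \ell = 0, 1$ which are excluded) are stated cleanly.
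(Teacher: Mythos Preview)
Your proposal is correct and follows essentially the same route as the paper, which merely sketches that ``when $k$ is even, $H_h(p^k)$ is a sum over primitive $p^{k-\ell}$ roots of unity; when $k$ is odd, $H_h(p^k)$ has an inner exponential sum over the $p^{k-\ell-1}$ roots of unity,'' deferring to the coprime argument. One minor slip: in your even case the second sum should be $\sum_{d' \bmod p^{k-1}} (\zeta^p)^{d'}$ without the extra factor of $p$ (the map $d = p d'$ is a bijection onto $\{0, p, \dots, (p^{k-1}-1)p\}$), though this does not affect the vanishing conclusion.
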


\begin{proof}

  This is substantially similar to the case when $p \nmid h$, and the same proof carries
  over.
  When $k$ is even, $H_h(p^k)$ is a sum over primitive $p^{k-\ell}$ roots of unity.
  When $k$ is odd, $H_h(p^k)$ has an inner exponential sum over the $p^{k - \ell - 1}$
  roots of unity.
\end{proof}

\begin{remark}
  Although we do not compute it here, it is possible to compute the exact contribution
  from each factor $p$ dividing $h$.
  One complete reference can be found at the author's website~\cite{mixedmathGauss}.
  Some subcases are included in~\cite{goldfeld1985eisenstein}.
\end{remark}

We now seek to understand the $d_2$ summation in Lemma~\ref{lem:back:gauss_two_pieces}.

\begin{lemma}\label{lem:back:d2_sum_simplifies}
  We have
  \begin{equation}\label{eq:back:d2_sum_I}
    \varepsilon^{2k}_{d_2 c'} (-1)^{\frac{c'-1}{2} \frac{d_2 c' - 1}{2}}
    \varepsilon_{c'}^{-1}
    =
    \varepsilon_{d_2}^{2k} \kron{(-1)^{k + \frac{1}{2}}}{c'}.
  \end{equation}
  In particular,
  \begin{equation}
    \sum_{d_2 \bmod 2^\alpha}
    \varepsilon^{2k}_{d_2 c'} (-1)^{\frac{c'-1}{2} \frac{d_2 c' - 1}{2}}
    e\big( \tfrac{h d_2}{2^\alpha} \big)
    \varepsilon_{c'}^{-1} \kron{2^\alpha}{d_2}
    =
    \sum_{d_2 \bmod 2^\alpha}
    \varepsilon_{d_2}^{2k} \kron{(-1)^{k + \frac{1}{2}}}{c'} \kron{2^\alpha}{d_2}
    e\big( \tfrac{h d_2}{2^\alpha} \big).
  \end{equation}
\end{lemma}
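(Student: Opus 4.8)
The plan is to reduce the identity to congruences modulo $4$ together with a four-case check, after first giving the half-integral power $\varepsilon_{d_2 c'}^{2k}$ an unambiguous meaning. Write $2k = 2m+1$ with $m = k - \tfrac12 \in \mathbb{Z}_{\geq 0}$ (recall $k$ is a half-integer, so $2k$ is an odd positive integer), and recall that for odd $n$ one has $\varepsilon_n \in \{1,i\}$ and $\varepsilon_n^2 = \kron{-1}{n} = (-1)^{(n-1)/2}$. Hence for odd $n$,
\[
  \varepsilon_n^{2k} = \varepsilon_n \cdot (\varepsilon_n^2)^m = \varepsilon_n \cdot (-1)^{m \frac{n-1}{2}},
\]
with all exponents now ordinary integers. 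Applying this with $n = d_2 c'$ and with $n = d_2$, and writing $\kron{(-1)^{k+1/2}}{c'} = \kron{-1}{c'}^{m+1} = (-1)^{(m+1)\frac{c'-1}{2}}$, the claimed identity \eqref{eq:back:d2_sum_I} becomes
\[
  \varepsilon_{d_2 c'}\,\varepsilon_{c'}^{-1}\,(-1)^{m\frac{d_2 c' - 1}{2} + \frac{c'-1}{2}\frac{d_2 c' - 1}{2}} = \varepsilon_{d_2}\,(-1)^{m\frac{d_2 - 1}{2} + (m+1)\frac{c'-1}{2}}.
\]

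Both sides are now a unit in $\{1,i\}$ times a sign whose exponent is affine in $m$, so it suffices to match the coefficient of $m$ in the exponents and then verify the residual ($m=0$) identity. For the coefficient of $m$ I would use the standard fact that $\frac{d_2 c' - 1}{2} \equiv \frac{d_2 - 1}{2} + \frac{c' - 1}{2} \pmod 2$ for odd $d_2, c'$, which holds because their difference equals $\tfrac{(d_2 - 1)(c' - 1)}{2}$ and $4 \mid (d_2-1)(c'-1)$. For the residual identity
\[
  \varepsilon_{d_2 c'}\,\varepsilon_{c'}^{-1}\,(-1)^{\frac{c'-1}{2}\frac{d_2 c'-1}{2}} = \varepsilon_{d_2}\,(-1)^{\frac{c'-1}{2}},
\]
I would simply run through the four cases $(d_2 \bmod 4,\, c' \bmod 4) \in \{1,3\}^2$; in each case $\varepsilon_{d_2 c'}$, $\varepsilon_{d_2}$, $\varepsilon_{c'}$ and the two signs are explicit, and equality holds (for instance when $d_2 \equiv c' \equiv 3 \pmod 4$ one has $d_2 c' \equiv 1$, so the left side is $1 \cdot (-i) \cdot 1 = -i$ and the right side is $i \cdot (-1) = -i$). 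Combining the two parts gives \eqref{eq:back:d2_sum_I}.

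The displayed ``in particular'' statement is then immediate: the factor $\kron{(-1)^{k+1/2}}{c'}$ appearing on the right of \eqref{eq:back:d2_sum_I} does not involve $d_2$, so multiplying \eqref{eq:back:d2_sum_I} through by $\kron{2^\alpha}{d_2}\,e\big(\tfrac{h d_2}{2^\alpha}\big)$ and summing over $d_2 \bmod 2^\alpha$ pulls that factor outside the sum and produces exactly the asserted equality (which is the form in which this lemma feeds into Lemma~\ref{lem:back:gauss_two_pieces} and Proposition~\ref{prop:back:half_integral_L}). I do not expect a genuine obstacle here; the only subtlety is bookkeeping — assigning a well-defined value to $\varepsilon_{d_2 c'}^{2k}$ via $2k = 2m+1$, and keeping careful track of which factors depend on $d_2$ versus on $c'$ when the $d_1$- and $d_2$-sums are separated.
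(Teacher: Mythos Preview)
Your proof is correct and follows essentially the same approach as the paper: reduce modulo $4$ and verify by cases, with the ``in particular'' following immediately by multiplying and summing. The paper reduces via $\varepsilon_d^4 = 1$ to the two cases $2k \equiv 1, 3 \bmod 4$ and then checks the four $(d_2, c') \bmod 4$ cases in each; your separation of the $m$-linear part (handled by the congruence $\tfrac{d_2 c' - 1}{2} \equiv \tfrac{d_2-1}{2} + \tfrac{c'-1}{2} \pmod 2$) from the $m=0$ residual is a minor organizational refinement that cuts the explicit casework in half, but the underlying idea is the same.
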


\begin{proof}

  First note that as $\varepsilon_d^4 = 1$, we can reduce the analysis into two cases: when
  $2k \equiv 1 \bmod 4$ and when $2k \equiv 3 \bmod 4$.
  After this reduction, the equality in~\eqref{eq:back:d2_sum_I} is quickly verified by
  considering the possible values of $d_2$ and $c'$ modulo $4$ (recalling that both $d_2$
  and $c'$ are odd).
  The rest of the lemma follows immediately.
\end{proof}

In this simplified $d_2$ summation, there appears $\varepsilon_{d_2}^{2k}$, which acts a bit
like a character in $d_2$ modulo $4$, and $\kron{2^\alpha}{d_2}$, which acts a bit like a
character modulo $8$, and an unrestrained exponential.
Therefore we should expect that when $2^\alpha > 8$, or rather when $\alpha \geq 4$, then
the entire $d_2$ sum vanishes unless $h$ is highly divisible by $2$.

\begin{lemma}\label{lem:back:d2_sum_is_finite}
  Suppose $2^\ell \mid h$ and $2^{\ell + 1} \nmid h$.
  If $\alpha \geq \ell + 4$, then the $d_2$ summation vanishes, i.e.\
  \begin{equation}
    \sum_{d_2 \bmod 2^\alpha}
    \varepsilon_{d_2}^{2k} \kron{2^\alpha}{d_2} e\big( \tfrac{h d_2}{2^\alpha} \big) = 0.
  \end{equation}
\end{lemma}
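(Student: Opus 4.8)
The plan is to exploit that the only part of the summand genuinely depending on $\alpha$ is the exponential $e(hd_2/2^\alpha)$, whereas the arithmetic weight $\varepsilon_{d_2}^{2k}\kron{2^\alpha}{d_2}$ has very small period. First I would note that $\kron{2^\alpha}{d_2} = \kron{2}{d_2}^\alpha$ vanishes for even $d_2$, so the sum is really over odd residues $d_2 \bmod 2^\alpha$ (and the value of $\varepsilon_{d_2}$ for even $d_2$ is irrelevant). For odd $d_2$, the factor $\varepsilon_{d_2}$ depends only on $d_2 \bmod 4$ and $\kron{2}{d_2}$ depends only on $d_2 \bmod 8$; hence $w(d_2) := \varepsilon_{d_2}^{2k}\kron{2^\alpha}{d_2}$ is a function of $d_2 \bmod 8$ alone. (If one prefers, $w$ can be written as a $\mathbb{C}$-linear combination of the Dirichlet characters modulo $8$, but this is not needed.)

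Next I would factor $h = 2^\ell h'$ with $h'$ odd, and split the summation variable as $d_2 = d_0 + 8m$, where $d_0$ runs over the four odd residues modulo $8$ and $m$ runs over a complete residue system modulo $2^{\alpha - 3}$; this is legitimate since $\alpha \geq \ell + 4 \geq 4$. Because $w(d_2) = w(d_0)$ and
\[
  e\Bigl(\frac{h d_2}{2^\alpha}\Bigr)
  = e\Bigl(\frac{h' d_0}{2^{\alpha - \ell}}\Bigr)\,
    e\Bigl(\frac{h' m}{2^{\alpha - \ell - 3}}\Bigr),
\]
the sum factors as $\bigl(\sum_{d_0} w(d_0)\, e(h' d_0 / 2^{\alpha - \ell})\bigr)\bigl(\sum_{m \bmod 2^{\alpha - 3}} e(h' m / 2^{\alpha - \ell - 3})\bigr)$, the first sum being over the four odd $d_0 \bmod 8$.

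Finally, I would observe that the inner sum over $m$ is a complete additive-character sum with frequency $h'/2^{\alpha - \ell - 3}$; since $\alpha - \ell - 3 \geq 1$ and $h'$ is odd this frequency is not an integer, while its denominator $2^{\alpha - \ell - 3}$ divides the modulus $2^{\alpha - 3}$, so by orthogonality of additive characters the $m$-sum vanishes. Hence the whole sum vanishes, as claimed. There is no real obstacle here: the entire content is the $8$-periodicity of $w$ on odd residues together with the inequality $\alpha - \ell - 3 \ge 1$, which is exactly the hypothesis $\alpha \ge \ell + 4$. The only thing to be careful about is the bookkeeping --- verifying that $\varepsilon_{d_2}^{2k}$ and $\kron{2^\alpha}{d_2}$ are honestly $8$-periodic (so they may be pulled out of the $m$-sum), and that $m$ ranges modulo $2^{\alpha - 3}$, not $2^\alpha$.
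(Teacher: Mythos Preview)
Your proof is correct and follows essentially the same approach as the paper: both arguments write $d_2$ as its residue modulo $8$ plus $8$ times a free variable ranging over $\mathbb{Z}/2^{\alpha-3}\mathbb{Z}$, use the $8$-periodicity of $\varepsilon_{d_2}^{2k}\kron{2^\alpha}{d_2}$ to pull the weight out, and then observe that the remaining inner sum is a complete additive-character sum that vanishes precisely because $2^{\alpha-3}\nmid h$ (equivalently, $\alpha-\ell-3\ge 1$). The only cosmetic differences are that you first restrict to odd $d_2$ and factor $h=2^\ell h'$ explicitly, whereas the paper sums over all $d_2\bmod 2^\alpha$ (the even terms being zero anyway) and phrases the vanishing condition directly as $2^{\alpha-3}\nmid h$.
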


\begin{proof}
  Without loss of generality, choose least non-negative representations for each $d_2 \bmod
  2^\alpha$.
  Write $d_2 = 8d' + d''$ where $0 \leq d' < 2^{\alpha - 3}$ and $0 \leq d'' < 8$.
  Then $\varepsilon_{8d' + d''} = \varepsilon_{d''}$ and
  $\kron{2^\alpha}{8d' + d''} = \kron{2^\alpha}{d''}$, so the $d'$ summation can be
  considered separately.
  This summation is
  \begin{equation}
    \sum_{d' = 0}^{2^{\alpha-3} - 1} e \bigg( \frac{h d'}{2^{\alpha - 3}} \bigg),
  \end{equation}
  which is $0$ unless $2^{\alpha - 3} \mid h$.
\end{proof}

As $c = 2^\alpha c'$, we see that the $d_2$ summation constrains the contribution from the
$2$-factor of $c$.
We are now finally ready to prove Proposition~\ref{prop:back:half_integral_L}.

\begin{proof}[Proof of Prop~\ref{prop:back:half_integral_L}]

  We try to understand
  \begin{align}
    &\sum_{c > 0} \frac{1}{(4c)^{2w}} \sum_{d \bmod 4c} \varepsilon_d^{2k}
    e \big( \tfrac{hd}{4c} \big) \kron{4c}{d}
    \\
    &= \sum_{\alpha \geq 2} \sum_{\substack{c \geq 1 \\ \gcd(2,c) = 1}}
    \frac{1}{(2^\alpha c)^{2w}} \kron{(-1)^{k + \frac{1}{2}}}{c} H_h(c)
    \sum_{d_2 \bmod 2^\alpha} \varepsilon_{d_2}^{2k}
    e \big( \tfrac{d_2 h}{2^\alpha} \big) \kron{2^\alpha}{d_2}
    \\
    &= \Bigg( \sum_{\alpha \geq 2} \sum_{d_2 \bmod 2^\alpha}
      \frac{1}{2^{2\alpha w}} \varepsilon_{d_2}^{2k} e \big( \tfrac{d_2 h}{2^\alpha} \big)
      \kron{2^\alpha}{d_2}
    \Bigg)
    \Bigg( \sum_{\substack{c \geq 1 \\ \gcd(c,2) = 1}}
      \frac{1}{c^{2w}}
      \Big(\frac{(-1)^{k + \frac{1}{2}}}{c}\Big) H_h(c)
    \Bigg). \label{eq:back:Lprop_proof_I}
  \end{align}
  We have used Lemma~\ref{lem:back:gauss_two_pieces} and
  Lemma~\ref{lem:back:d2_sum_simplifies} to split and simplify this expression.
  Let $\chi_k(c) := \kron{(-1)^{k + \frac{1}{2}}}{c}$ for ease of notation.
  As the summands over $c$ are multiplicative (by Lemma~\ref{lem:back:Hh_multiplicative}),
  the $c$ sum can be written (for $\Re w \gg 1$) as
  \begin{equation}
    \sum_{\substack{c \geq 1 \\ \gcd(c,2) = 1}}
    \frac{1}{c^{2w}} \chi_k(c) H_h(c)
    =
    \prod_{\substack{p \\ p \neq 2}} \Big( 1 + \frac{\chi_k(p) H_h(p)}{p^{2w}} +
    \frac{\chi_k(p^2) H_h(p^2)}{p^{4w}} + \cdots  \Big).
  \end{equation}

  For primes not dividing $h$, this expression simplifies significantly as $H_h(p^k) = 0$
  for $k \geq 2$ (as shown in Lemma~\ref{lem:back:Hh_goodp_eval}).
  The product over these primes is then
  \begin{equation}
    \prod_{\substack{p \\ p \neq 2 \\ p \nmid h}} \Big( 1 + \frac{\big(\frac{h (-1)^{k -
    \frac{1}{2}}}{p}\big)} {p^{2w - \frac{1}{2}}} \Big).
  \end{equation}
  On primes avoiding $h$ and $2$, it's quickly checked that this perfectly matches the
  Euler product for $L(2w-\frac{1}{2}, \chi_{k,h}) \zeta(4w-1)^{-1}$, where
  $\chi_{k,h}(\cdot) = \big( \frac{h (-1)^{k - \frac{1}{2}}}{\cdot} \big)$.
  Therefore we can write the product over those primes avoiding $h$ and $2$ as
  \begin{equation}
    \prod_{\substack{p \\ p \neq 2 \\ p \nmid h}} \Big( 1 + \frac{\big(\frac{h (-1)^{k -
    \frac{1}{2}}}{p}\big)} {p^{2w - \frac{1}{2}}} \Big)
    =
    \frac{L^{(2)} (2w - \frac{1}{2}, \chi_{k,h})}{\zeta^{(2h)}(4w-1)}.
  \end{equation}
  (Note that we are using the convention that $L^{(Q)}(s)$ denotes an $L$-function $L(s)$,
  but with the Euler factors for primes $p$ dividing $Q$ removed).

  We now consider the primes $p$ which do divide $h$ in the Euler product
  in~\eqref{eq:back:Lprop_proof_I}.
  By Lemma~\ref{lem:back:Hh_badp_eval}, we see that $H_h(p^k) = 0$ when $p^{k-1} \nmid h$.
  Therefore the product over primes dividing $h$ is a product of finitely many terms of
  finite length, and is thus just a Dirichlet polynomial.

  Similarly, by Lemma~\ref{lem:back:d2_sum_is_finite}, the sum over $d_2$ and $\alpha$
  in~\eqref{eq:back:Lprop_proof_I} is a finite sum whose length depends on the $2$-factor
  of $h$, and is also a Dirichlet polynomial.

  We group the product over primes dividing $h$ with the $d_2$ and $\alpha$ summation
  in~\eqref{eq:back:Lprop_proof_I}, which are both finite Dirichlet polynomials, into a
  single Dirichlet polynomial
  \begin{equation}
    \begin{split}
      \widetilde{D}_\infty^k (h,w) :=
      \sum_{\alpha \geq 2} \sum_{d_2 \bmod 2^\alpha}
        \frac{1}{2^{2\alpha w}} \varepsilon_{d_2}^{2k} e \big( \tfrac{d_2 h}{2^\alpha} \big)
        \kron{2^\alpha}{d_2}
      \times
      \prod_{\substack{p \\ p \mid h \\ p \neq 2}}
        \sum_{j \geq 0} \frac{\chi_k(p^j) H_h(p^j)}{p^{2jw}}.
    \end{split}
  \end{equation}
  Note carefully that although this is written as an infinite polynomial, it is a finite
  Dirichlet polynomial.

  Collecting these pieces together we have now shown that
  \begin{equation}
    \sum_{c > 0} \frac{1}{(4c)^{2w}} \sum_{d \bmod 4c} \varepsilon_d^{2k} e \big( \tfrac{hd}{4c} \big) \kron{4c}{d}
    =
    \frac{L^{(2)} (2w - \frac{1}{2}, \chi_{k,h})}{\zeta^{(2h)}(4w-1)}
    \widetilde{D}_\infty^k(h,w),
  \end{equation}
  as we set out to show.
\end{proof}

\begin{remark}
  If $h$ is squarefree, then it is possible to show that $\widetilde{D}_\infty^k(h,w)$ has
  the necessary Euler factors to ``fill in'' the $h$ factors of $\zeta^{(2h)}(4w - q)$
  (although not the $2$ factor), and the expression for $\widetilde{D}_\infty^k(h,w)$
  simplifies significantly.
  Thus the case when $h$ is squarefree is significantly simpler.
\end{remark}

\section{Cutoff Integrals and Their Properties}\label{sec:cutoff_integrals}

We recall the Mellin transform and inverse Mellin transform, and use these to construct
appropriate integral transforms to analyze properties of the coefficients of a Dirichlet
series.
In general, if
\index{Mellin transform}
\begin{equation}
  F(s) = \int_0^\infty f(x) x^s \frac{dx}{x},
\end{equation}
then $F(s)$ is the \emph{Mellin transorm} of $f(x)$.
Mellin transforms are deeply related to Laplace transforms and Fourier transforms, and
when $f$ and $F$ are sufficiently nice, there is an analogous inversion theorem giving
that
\begin{equation}
  f(x) = \frac{1}{2\pi i} \int_{(\sigma)} F(s) x^{-s} ds.
\end{equation}

\subsubsection*{Ces\`{a}ro Cutoff Transform}
\index{Integral transform!Ces\'{a}ro Weights}
\index{Integral transform!Riesz Means}

In this thesis, we reintroduce and use Ces\`{a}ro weights.
Note that these are sometimes referred to as ``Riesz Means.''
Given a positive integer $k$ and a Dirichlet series
\begin{equation}
  D(s) = \sum_{n \geq 1} \frac{a(n)}{n^s},
\end{equation}
we have the fundamental relationship
\begin{equation}\label{eq:cesaro_transform}
  \begin{split}
    \frac{1}{k!} \sum_{n \leq X} a(n) \Big(1 - \frac{n}{X}\Big)^k
    &= \frac{1}{2\pi i} \int_{(\sigma)} D(s) \frac{X^s}{s(s+1)\cdots(s+k)} ds
    \\
    &= \frac{1}{2\pi i} \int_{(\sigma)} D(s) \frac{X^s\Gamma(s)}{\Gamma(s+k+1)} ds,
  \end{split}
\end{equation}
where $\sigma$ is large enough that $D(s)$ and the integral absolutely converge.
The individual weights $(1 - \frac{n}{X})^k$ on each $a(n)$ are the $k$-Ces\`{a}ro
weights, and give access to smoothed asymptotics.

The relationship~\eqref{eq:cesaro_transform} follows from the following integral
equality.
\begin{lemma}
  For $\sigma > 0$, we have
  \begin{equation}
    \frac{1}{2\pi i} \int_{(\sigma)} \frac{Y^s}{s(s+1)\cdots(s+k)} ds = \begin{cases}
      \frac{1}{k!} \big(1 - \frac{1}{Y}\big)^k & \text{if } Y \geq 1, \\
      0 & \text{if }Y < 1.
    \end{cases}
  \end{equation}
\end{lemma}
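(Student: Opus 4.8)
The plan is a standard Perron/Mellin-inversion argument: move the contour of integration and apply the residue theorem, splitting into the cases $Y<1$ and $Y\ge 1$. First I would record two facts about the integrand $\Phi(s) := Y^s/\bigl(s(s+1)\cdots(s+k)\bigr)$. It is meromorphic on $\mathbb{C}$ with simple poles exactly at $s=0,-1,\dots,-k$ and no others, and on any vertical line $\Re s=c$ avoiding these points one has $|\Phi(s)|\ll_c Y^{c}\,(1+|\Im s|)^{-(k+1)}$, since each factor $|s+\ell|$ grows like $1+|\Im s|$. Because $k\ge 1$, the exponent $k+1\ge 2$ makes $\int_{(\sigma)}\Phi$ absolutely convergent and, crucially, forces the two horizontal sides of any tall rectangle $[c_1,c_2]\times[-T,T]$ to contribute $O(T^{-k})$, which vanishes as $T\to\infty$; this is what licenses every contour shift below.

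For the case $Y<1$: every pole of $\Phi$ has $\Re s\le 0$, so Cauchy's theorem applied to the rectangle with vertical sides $\Re s=\sigma$ and $\Re s=N$ (and horizontal sides pushed to $\pm i\infty$) gives $\tfrac{1}{2\pi i}\int_{(\sigma)}\Phi=\tfrac{1}{2\pi i}\int_{(N)}\Phi$ for every $N>0$. On $\Re s=N$ one has $|\Phi(s)|\le Y^{N}\,|(N+it)\cdots(N+k+it)|^{-1}$, and since $\int_{-\infty}^{\infty}|(N+it)\cdots(N+k+it)|^{-1}\,dt$ is finite while $Y^{N}\to 0$, the integral over $\Re s=N$ tends to $0$ as $N\to\infty$. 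Hence $\tfrac{1}{2\pi i}\int_{(\sigma)}\Phi=0$.

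For the case $Y\ge 1$: I would shift the contour leftward past the poles at $0,-1,\dots,-k$, so that for any $M>k$ the residue theorem gives $\tfrac{1}{2\pi i}\int_{(\sigma)}\Phi=\sum_{j=0}^{k}\Res_{s=-j}\Phi+\tfrac{1}{2\pi i}\int_{(-M)}\Phi$. The residue sum does not depend on $M$ (no further poles are crossed), while on $\Re s=-M$ the bound $|\Phi(s)|\le Y^{-M}|(-M+it)\cdots(-M+k+it)|^{-1}$ together with $Y^{-M}\le 1$ gives $\bigl|\tfrac{1}{2\pi i}\int_{(-M)}\Phi\bigr|\ll M^{-k}\to 0$, so that remaining integral is identically $0$. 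Therefore $\tfrac{1}{2\pi i}\int_{(\sigma)}\Phi=\sum_{j=0}^{k}\Res_{s=-j}\Phi$, and it remains to compute the residues: $\Res_{s=-j}\Phi=\frac{Y^{-j}}{\prod_{0\le i\le k,\ i\ne j}(i-j)}=\frac{(-1)^{j}Y^{-j}}{j!\,(k-j)!}$, so the sum collapses by the binomial theorem to $\tfrac{1}{k!}\sum_{j=0}^{k}\binom{k}{j}(-Y^{-1})^{j}=\tfrac{1}{k!}\bigl(1-\tfrac1Y\bigr)^{k}$. The boundary value $Y=1$ is consistent with the first case, both sides being $0$.

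The computations here — the decay estimate for $\Phi$, the convergence of $\int|(c+it)\cdots|^{-1}\,dt$, and the residue evaluation — are all routine. The only step that needs genuine care is the justification of the contour shifts, namely verifying that the horizontal segments of each rectangle and the far vertical segment (as $\Re s=N\to\infty$ or $\Re s=-M\to-\infty$) really do vanish; this is precisely where the hypothesis $k\ge 1$ enters, through $|\Phi(s)|\ll (1+|\Im s|)^{-(k+1)}$.
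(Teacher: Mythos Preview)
Your proof is correct and follows the same approach as the paper: shift the contour right for $Y<1$ (no poles passed, integral vanishes) and left for $Y\ge 1$ (collect residues at $s=0,-1,\dots,-k$, evaluate via the binomial theorem). The paper's version is terser and omits the decay estimates you supply for the horizontal segments and the far vertical line, but the underlying argument is identical.
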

\begin{proof}
  In the case when $Y \geq 1$, shifting the contour infinitely far to the left shows that
  the integral can be evaluated as
  \begin{equation}
    \sum_{j = 0}^k \Res_{s = -j} \bigg( \frac{Y^s}{s(s+1)\cdots(s+k)} \bigg) = \sum_{j =
    0}^k \frac{(-1)^j Y^{-j}}{j! (k-j)!} = \frac{1}{k!} \Big(1 - \frac{1}{Y}\Big)^k.
  \end{equation}
  Note that the last equality is an application of the binomial theorem.

  In the case when $Y < 1$, shifting the contour infinitely far to the right shows that
  the integral is $0$.
\end{proof}

To recover~\eqref{eq:cesaro_transform}, one expands $D(s)$ within the integral and applies
the lemma to each individual term with $Y = (X/n)$.

\subsubsection*{Exponentially Smoothed Integral}
\index{Integral transform!exponential smoothing}

The integral definition of the Gamma function
\begin{equation}
  \Gamma(s) = \int_0^\infty t^s e^{-t} \frac{dt}{t}
\end{equation}
is a Mellin integral, and gives the inverse Mellin integral
\begin{equation}
  e^{-x} = \frac{1}{2 \pi i} \int_{(\sigma)} x^{-s} \Gamma(s) ds.
\end{equation}
Applied to the Dirichlet series $D(s)$, we have
\begin{equation}
  \sum_{n \geq 1} a(n) e^{-n/X} = \frac{1}{2\pi i} \int_{(\sigma)} D(s) X^s \Gamma(s) ds.
\end{equation}

\subsubsection*{Concentrating Integral}
\index{Integral transform!concentrating}

We now produce an integral transform that has the effect of concentrating the mass of the
integral around the parameter $X$.
We claim that
\begin{equation}\label{eq:back:conc_I}
  \frac{1}{2\pi i} \int_{(2)} \exp\left(\frac{\pi s^2}{Y^2}\right) \frac{X^s}{Y}ds =
  \frac{1}{2\pi} \exp\left(-\frac{Y^2 \log^2 X}{4\pi}\right).
\end{equation}

\begin{proof}
  Write $X^s = e^{s\log X}$ and complete the square in the exponents.
  Since the integrand is entire and the integral is absolutely convergent, performing the
  change of variables $s \mapsto s-Y^2 \log X/2\pi$ and shifting the line of integration
  back to the imaginary axis yields
  \begin{equation*}
    \frac{1}{2\pi i} \exp\left( - \frac{Y^2 \log^2 X}{4\pi}\right) \int_{(0)} e^{\pi
    s^2/Y^2} \frac{ds}{Y}.
  \end{equation*}
  The change of variables $s \mapsto isY$ transforms the integral into the standard
  Gaussian, completing the proof.
\end{proof}

Applied to a Dirichlet series $D(s)$, we have
\begin{equation}
  \frac{1}{2\pi} \sum_{n \geq 1} a(n) \exp \bigg( - \frac{Y^2 \log^2 (X/n)}{4\pi} \bigg) =
  \frac{1}{2\pi i} \int_{(\sigma)} \exp \big( \frac{\pi s^2}{Y^s} \big) \frac{X^s}{Y} ds.
\end{equation}
Note in particular that when $\lvert n - X \rvert$ is large, there is significant
exponential decay.
Therefore this integral concentrates the mass of the expression very near $X$ (and in
particular in an interval of width $X/Y$ around $X$).

\subsubsection*{Cutoff Integrals from Smooth, Compactly Supported Functions}
\index{Integral transform!compactly supported transform}
\index{phi@$\Phi_Y(s) and \phi_Y(n)$}

It will also be useful to document a more general family of cutoff transforms.
For $X,Y > 0$, let $\phi_Y(X)$ denote a smooth non-negative function with maximum value $1$,
satisfying
\begin{enumerate}
  \item $\phi_Y(X) = 1$ for $X \leq 1$,
  \item $\phi_Y(X) = 0$ for $X \geq 1 + \frac{1}{Y}$.
\end{enumerate}
Let $\Phi_Y(s)$ denote the Mellin transform of $\phi_Y(x)$, given by
\begin{equation}
  \Phi_Y(s) = \int_0^\infty t^s \phi_Y(t) \frac{dt}{t},
\end{equation}
defined initially for $\Re s > 0$.
Repeated applications of integration by parts and differentiation under the integral shows
that $\Phi_Y(s)$ satisfies the following four properties:
\begin{enumerate}
  \item $\Phi_Y(s) = \frac{1}{s} + O_s(\frac{1}{Y})$,
  \item $\Phi'_Y(s) = -\frac{1}{s^2} + O_s(\frac{1}{Y})$
  \item $\Phi_Y(s) = -\frac{1}{s} \int_1^{1 + \frac{1}{Y}} \phi'_Y(t) t^s dt$,
  \item and for all positive integers $m$, for $s$ constrained within a vertical strip and
    $\lvert s-1 \rvert > \epsilon$, we have
    \begin{equation}
      \Phi_Y(s) \ll_\epsilon \frac{1}{Y} \Big( \frac{Y}{1 + \lvert s \rvert} \Big)^m.
    \end{equation}
\end{enumerate}
Further, the last property can be extended to real $m > 1$ through the
Phragm\'{e}n-Lindel\"{o}f principle.
The Mellin transform pair $\Phi_Y(s), \phi_Y(x)$ gives a general set of integral cutoff
relations,
\begin{equation}
  \sum_{n \leq X} a(n) + \sum_{X < n \leq X + X/Y} a(n) \phi_Y\Big(\frac{n}{X}\Big) =
  \frac{1}{2\pi i} \int_{(\sigma)} D(s) \Phi_Y(s) X^s ds.
\end{equation}


\clearpage{\pagestyle{empty}\cleardoublepage}

\chapter{On Dirichlet Series for Sums of Coefficients of Cusp Forms}\label{c:sums}

\section{Introduction}

Continuing with same notation as before, let $f$ be a holomorphic cusp form of positive
weight $k$ on a congruence subgroup $\Gamma \subseteq \SL(2, \mathbb{Z})$, where $k \in
\mathbb{Z} \cup ( \mathbb{Z} + \tfrac{1}{2})$.
Denote the Fourier expansion of $f$ at $\infty$ by
\begin{equation}
  f(z) = \sum_{n \geq 1} a(n) e(nz).
\end{equation}
The individual coefficients $a(n)$ have long been of interest since the coefficients
contain interesting arithmetic data.
For example, the major insight leading to the resolution of Fermat's Last Theorem involved
showing that for an elliptic curve $E$ there exists a corresponding modular form $f_E$
whose coefficients (at prime indices) satisfy
\begin{equation}
  a(p) = p + 1 - \# E(\mathbb{F}_p),
\end{equation}
or rather that the $a(p)$ counted the number of points on the elliptic curve over finite
fields.

The first cusp form to be studied in depth was the Delta Function (as described in
Chapter~\ref{c:introduction}), whose coefficients are the Ramanujan $\tau$ function,
\index{Ramanujan tau}
\begin{equation}
  \Delta(z) = \sum_{n \geq 1} \tau(n) e(nz).
\end{equation}
Ramanujan conjectured that the coefficients of $\Delta$ should satisfy the bound
\begin{equation}
  \lvert \tau(n) \rvert \ll d(n) n^{\frac{11}{2}},
\end{equation}
where $d(n)$ is the number of positive divisors of $n$.
This conjecture initiated an exploration that included a much wider set of objects than
Ramanujan could have dreamt of.

Ramanujan's Conjecture has been extended to include all modular and automorphic forms.
For a cusp form $f$ of weight $k$ in $\GL(2)$, the conjecture states that
\begin{equation}\label{eq:ramanujan_conjecture_an}
  a(n) \ll n^{\frac{k-1}{2} + \epsilon}.
\end{equation}
This is now known as the Ramanujan-Petersson conjecture, and it is now known to be true
for full integral weight $k$ holomorphic cusp forms on $\GL(2)$ as a consequence of
Deligne's proof of the Weil Conjecture~\cite{Deligne}.

It is an interesting coincidence that Hardy and Littlewood were investigating averaged
estimates for the Gauss Circle and Dirichlet Divisor problems when Ramanujan was arriving
in England, thinking about the Delta function.
It is easy to look at the first several $\tau$ coefficients and believe that the sign of
$\tau(n)$ changes approximately uniformly random in $n$.
Under this assumption, and assuming Ramanujan's Conjecture that $\tau(n) \ll
n^{\frac{11}{2} + \epsilon}$, it is very natural to conjecture that the summatory function
of $\tau(n)$ satisfies the square-root cancellation phenomenon,
\begin{equation}
  \sum_{n \leq X} \tau(n) \ll X^{\frac{11}{2} + \frac{1}{2} + \epsilon}.
\end{equation}
As described in Chapter~\ref{c:introduction}, this is analogous to the error terms $E(R)$
in the Gauss Circle or Dirichlet Divisor Problems.
We similarly expect the even better bound,
\begin{equation}
  \sum_{n \leq X} \tau(n) \ll X^{\frac{11}{2} + \frac{1}{4} + \epsilon}.
\end{equation}
Although there is a clear qualitative connection with the Circle and Divisor problems, it
seems unlikely that this common thread was recognized by Hardy, Littlewood, or Ramanujan
at the time.

For our general cusp form $f$ of weight $k$ in $\GL(2)$, we expect an analogous conjecture
to hold, which we refer to as the ``Classical Conjecture.''
\index{Classical Conjecture}

\begin{conjecture}[Classical Conjecture]
  Let $f(z) = \sum_{n \geq 1} a(n) e(nz)$ be a holomorphic cusp form of weight $k$ on
  $\GL(2)$, where $k \in \mathbb{Z}\cup(\mathbb{Z}+\frac{1}{2})$ and $k > 1$.
  Then\index{S@$S_f(n)$}
  \begin{equation}
    S_f(n) := \sum_{n \leq X} a(n) \ll X^{\frac{k-1}{2} + \frac{1}{4} + \epsilon}.
  \end{equation}
\end{conjecture}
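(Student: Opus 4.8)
The plan is to attack the conjecture through the cusp-form analogue of Hardy's identity \eqref{eq:intro:HardysIdentity}, reducing the pointwise bound to genuine cancellation in a nonlinearly twisted sum of Hecke eigenvalues. First I would normalize: write $\lambda(n) = a(n) n^{-(k-1)/2}$, so that for full integral weight Deligne's bound \eqref{eq:ramanujan_conjecture_an} gives $\lambda(n) \ll n^\epsilon$, while for half-integral weight only weaker pointwise bounds are available and this is itself part of the difficulty. By Abel summation the conjecture is equivalent to
\begin{equation}
  T_f(X) := \sum_{n \leq X} \lambda(n) \ll X^{\frac14 + \epsilon},
\end{equation}
which is exactly the cusp-form counterpart of the Gauss circle error term; indeed for the non-cuspidal form $\theta^2$ this specializes to the classical circle problem, so the conjecture is at least as hard as that open problem.

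Second, I would make the analogue of Hardy's identity explicit and truncated. Starting from Perron's formula
\begin{equation}
  S_f(X) = \frac{1}{2\pi i} \int_{(c)} L(s,f) \frac{X^s}{s}\, ds, \qquad c > \tfrac{k+1}{2},
\end{equation}
I would shift the contour to the left (the integrand is holomorphic apart from the harmless pole of $1/s$ at $s=0$, which contributes only a constant since $f$ is cuspidal) and insert the functional equation $\Lambda(s,f) = i^k \Lambda(k-s, f)$ of the completed $L$-function. Expanding the dual Dirichlet series and integrating term by term against the resulting Bessel kernel $J_{k-1}(4\pi\sqrt{nX})$, whose large-argument asymptotics supply the oscillation, yields the truncated identity
\begin{equation}
  S_f(X) = \frac{X^{\frac{k-1}{2} + \frac14}}{\sqrt{2}\,\pi} \sum_{n \leq N} \frac{\lambda(n)}{n^{3/4}} \cos\!\big(4\pi\sqrt{nX} - \phi_k\big) + O\!\big(X^{\frac{k-1}{2} + \frac12 + \epsilon} N^{-1/2}\big)
\end{equation}
for an explicit phase $\phi_k$; choosing $N = X$ makes the error $O(X^{(k-1)/2 + \epsilon})$, comfortably below the target. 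Everything now rests on the oscillatory sum
\begin{equation}
  \Sigma(X) := \sum_{n \leq X} \frac{\lambda(n)}{n^{3/4}} e(2\sqrt{nX}),
\end{equation}
and it suffices to prove $\Sigma(X) \ll X^\epsilon$. Note that the single term $n=1$ already has size $X^{(k-1)/2+1/4}$, so the conjectured exponent is sharp and the entire content is that the remaining terms conspire to cancel.

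Third, I would reduce $\Sigma(X)$ to square-root cancellation in the twisted sum. Writing $B(t) = \sum_{n \leq t} \lambda(n) e(2\sqrt{nX})$ and applying Abel summation against the smooth weight $t^{-3/4}$ gives
\begin{equation}
  \Sigma(X) = B(X) X^{-3/4} + \tfrac34 \int_1^X B(t)\, t^{-7/4}\, dt,
\end{equation}
so the uniform bound $B(t) \ll t^{1/2 + \epsilon}$ would force $\Sigma(X) \ll X^{-1/4+\epsilon} + \int_1^X t^{-5/4+\epsilon}\,dt \ll X^\epsilon$, and hence the conjecture. Thus the entire problem collapses to proving square-root cancellation in the sum of Hecke eigenvalues against the nonlinear phase $e(2\sqrt{nX})$.

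This last step is the main obstacle, and it is exactly where all the difficulty concentrates. The trivial input $\lambda(n) \ll n^\epsilon$ yields only $B(t) \ll t^{1+\epsilon}$, hence $\Sigma(X) \ll X^{1/4+\epsilon}$ and $T_f(X) \ll X^{1/2+\epsilon}$ --- the Hecke-strength bound, which is the analogue of Gauss's $\sqrt{R}$ and of Sierpi\'nski's starting point, not the conjecture. Extracting cancellation from the nonlinear phase is of the same depth as the Gauss circle and Dirichlet divisor conjectures themselves: the available tools --- van der Corput estimates and exponent pairs, iterated applications of Voronoi summation, the Duke--Friedlander--Iwaniec $\delta$-method, and spectral large-sieve inequalities for the shifted convolutions $\sum_n \lambda(n)\lambda(n+h)$ --- deliver power savings below the exponent $\frac12$ (for $\theta^2$ these reproduce the Hardy--Littlewood--Huxley line of improvements), but none reach the full square-root barrier. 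The second-moment theorem proved later in this chapter, Theorem~\ref{thm:second_moment_Sf}, shows that $\tfrac14$ is the correct exponent on average and cannot be lowered, so the conjecture is precisely an average-to-pointwise upgrade; the genuine obstruction is controlling the large values (the tail) of $\Sigma(X)$, and no present method closes this gap. This is why the statement stands as a conjecture rather than a theorem.
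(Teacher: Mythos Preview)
The statement is labeled a \emph{Conjecture} in the paper, and the paper does not prove it; it is presented as an open problem (the cusp-form analogue of the Gauss Circle problem), and the chapter instead establishes average-order results such as Theorem~\ref{thm:second_moment_Sf} that are consistent with it. So there is no ``paper's own proof'' to compare against.

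Your write-up correctly recognizes this. Your reduction via the Voronoi/Hardy-type identity to square-root cancellation in the nonlinearly twisted sum $\sum_{n\le t}\lambda(n)e(2\sqrt{nX})$ is the standard heuristic picture, and your conclusion---that this is exactly as deep as the classical circle and divisor conjectures and is presently out of reach---is accurate and matches the paper's own framing in Chapter~\ref{c:introduction}. The Abel-summation equivalence between the bound on $S_f$ and the bound on $T_f$ is clean, and the observation that the $n=1$ term alone already has size $X^{(k-1)/2+1/4}$ nicely explains why the exponent is sharp. One small caveat: for half-integral weight the truncated Voronoi formula and the functional equation are more delicate (the relevant $L$-function need not be primitive, and Deligne's bound is unavailable), so your uniform treatment glosses over genuine extra difficulty there; but since the full-integral case is already open, this does not affect your overall conclusion.
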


In a seminal pair of works, Chandrasekharan and
Narasimhan~\cite{chandrasekharan1962functional, chandrasekharan1964mean} showed that the
Classical Conjecture is true \emph{on average} by showing that
\begin{equation*}
  \sum_{n \leq X} \lvert S_f(n) \rvert^2 = C X^{k-1 + \frac{3}{2}} + B(X)
\end{equation*}
where $B(X)$ is an error term satisfying
\begin{equation}
  B(X) = \begin{cases}
    O(X^k \log^2 X) \\
    \Omega(X^{k - \frac{1}{4}} \frac{(\log \log \log X)^2}{\log X}),
  \end{cases}
\end{equation}
and where $C$ is an explicitly known constant.

This should be thought of as a Classical Conjecture \emph{on average} due to the following
classical argument:
\begin{align}
  \left( \sum_{n \leq X} \lvert S_f(n) \rvert \right)^2 &= \left( \sum_{n \leq X} \lvert
  S_f(n) \rvert \cdot 1 \right)^2
  \\
  &\leq \sum_{n \leq X} \lvert S_f(n) \rvert^2 \sum_{m \leq X} 1 = X \sum_{n \leq X}
  \lvert S_f(n) \rvert^2
  \\
  &\ll X^{k-1 + \frac{5}{2}}.
\end{align}
The Cauchy-Schwarz-Bunyakovsky inequality is used to pass from the first line to the
second, and the bound of Chandrasekharan and Narasimhan is used to pass from the second to
the third.
Taking the square root of each side and dividing by $X$ gives
\begin{equation}
  \frac{1}{X} \sum_{n \leq X} \lvert S_f(n) \rvert \ll X^{\frac{k-1}{2} + \frac{1}{4}},
\end{equation}
which is precisely the statement that the Classical Conjecture holds \emph{on average}.
Their result is described more completely in \S\ref{ssec:CN}.

Building on this result, Hafner and Ivi\'c were able to show that for holomorphic cusp
forms of full integral weight on $\SL(2, \mathbb{Z})$, we know
\begin{equation}
  S_f(n) \ll X^{\frac{k-1}{2} + \frac{1}{3}}.
\end{equation}
The argument of Hafner and Ivi\'c only applies for holomorphic forms of full-integral
eight and of level one, but it is possible to provide some extension to their result using
their methodology.

In the rest of this chapter, we will examine a new method for examining the behavior of $S_f(n)$.
We will be able to study a slightly more general object.
Let $g = \sum_{n \geq 1} b(n) e(nz)$ be another modular form of weight $k$ and of the same
level as $f$.
The fundamental idea is to study the Dirichlet series
\index{Ds@$D(s, S_f \times S_f)$}
\begin{align}
  D(s, S_f) &= \sum_{n \geq 1} \frac{S_f(n)}{n^{s + \frac{k-1}{2}}} \\
  D(s, S_f \times S_g) &= \sum_{n \geq 1} \frac{S_f(n) S_g(n)}{n^{s + k-1}} \\
  D(s, S_f \times \overline{S_g}) &= \sum_{n \geq 1}
  \frac{S_f(n) \overline{S_g(n)}}{n^{s + k-1}}.
\end{align}
In the sequel, we show that these three Dirichlet series have meromorphic continuation to
$\mathbb{C}$.
In \S\ref{sec:secondmoment}, we show how to analyze these Dirichlet series
to prove results concerning average sizes of the partial sums $S_f(n)$.

\begin{remark}
  Note that the notation used in this thesis is different than the notation used in the
  series of papers~\cite{hkldw, hkldwShort, hkldwSigns}.
  In this thesis, we adopt the convention that has risen amidst the representation
  theoretic point of view on automorphic forms.
  Therefore $L(s, f \times f)$ in this thesis is the same as $L(s, f\times \overline{f})$
  in the papers.
  This difference is ultimately very minor, and does not change any aspect of the
  analysis.
\end{remark}

\section{Useful Tools and Notation Reference}

For ease of reference, we give a notational reference and a brief description of some of
the tools necessary for the analysis.

\subsection{The Rankin--Selberg $L$-function}\label{ssec:rankinselberg_lfunction}
\index{Rankin--Selberg $L$-function} 
\index{L@$L(s, f\times g)$}

The Rankin--Selberg convolution $L$-function is described in detail 
in~\cite{Goldfeld2006automorphic, Bump98}, but we summarize its construction and
properties.
Note that there is a choice of convention concerning notation for conjugation.
The more common convention is changing due to influence from more general lines of
inquiry.

Let $f(z) = \sum a(n) e(nz)$ and $g(z) = \sum b(n) e(nz)$ be modular forms of weight $k$
on a congruence subgroup $\Gamma \subseteq \SL(2, \mathbb{Z})$, where at least one is
cuspidal.
Let $\Gamma \backslash \mathcal{H}$ denote the upper half plane modulo the group action
due to $\Gamma$, and let $\langle f, g \rangle$ denote the Petersson inner product
\index{Petersson inner product}
\begin{equation}
  \langle f, g \rangle = \iint_{\Gamma \backslash \mathcal{H}} f(z) \overline{g(z)}
  \frac{dx dy}{y^2}.
\end{equation}
The Rankin--Selberg $L$-function is given by the Dirichlet series 
\begin{equation}
  L(s, f\times \overline{g}) := \zeta(2s) \sum_{n \geq 1} \frac{a(n) \overline{b(n)}}{n^{s
  + k - 1}},
\end{equation}
which is absolutely convergent for $\Re s > 1$.
This $L$-function has a meromorphic continuation to all $s \in \mathbb{C}$ via the
identity
\begin{equation}\label{eq:Lsfgbar_equals_eisenstein}
  L(s, f\times \overline{g}) := \frac{(4\pi)^{s + k - 1} \zeta(2s)}{\Gamma(s + k - 1)}
  \langle \Im(\cdot)^k f \overline{g}, E(\cdot, \overline{s})\rangle,
\end{equation}
where $E(z,s)$ is the real-analytic Eisenstein series\index{E@$E(z,s)$}
\begin{equation}\label{eq:eisenstein_def}
  E(z,s) = \sum_{\gamma \in \Gamma_\infty \backslash \Gamma} \Im(\gamma z)^s.
\end{equation}

If, in~\eqref{eq:Lsfgbar_equals_eisenstein}, we replace $f \overline{g}$ with $f T_{-1}g$,
where $T_{-1}$ is the Hecke operator giving the action
\begin{equation}
  T_{-1} F(x + iy) = F(-x + iy),
\end{equation}
then one gets a meromorphic continuation of
\begin{equation}
  L(s, f\times g) = \zeta(2s) \sum_{n \geq 1} \frac{a(n) b(n)}{n^{s + k - 1}}.
\end{equation}
More details on the $T_{-1}$ Hecke operator can be found in the discussion leading up to
Theorem~3.12.6 of~\cite{Goldfeld2006automorphic}.
As the meromorphic properties of both are determined by the zeta function, Gamma function,
and Eisenstein series, we see that complex analytic arguments are very similar on either
$L(s, f\times \overline{g})$ or $L(s, f\times g)$.
We will only carry out the argument for $L(s, f\times \overline{g})$, and
describe any changes necessary to perform the argument on the other.

These Rankin--Selberg $L$-functions are holomorphic except for, 
at most, a simple pole at $s = 1$ with residue proportional to $\langle f, g \rangle$.
When $\Gamma = \SL(2, \mathbb{Z})$, there is the functional equation
\begin{equation}
  (2\pi)^{-2s} \Gamma(s) \Gamma(s + k - 1) L(s, f\times \overline g) =: \Lambda(s, f\times
  \overline{g}) = \Lambda(1 - s, f\times \overline{g}),
\end{equation}
coming from the functional equation of the completed Eisenstein series
\begin{equation}
  \pi^{-s} \Gamma(s) \zeta(2s) E(z,s) =: E^*(z,s) = E^*(z, 1-s).
\end{equation}
There are analogous transformations for higher levels, but their formulation is a bit more
complicated due to the existence of other cusps.

\subsection{Selberg spectral expansion}\label{ssec:selberg-spectral}
\index{spectral expansion}
\index{Maass forms}
\index{mu@$\mu_j$ | see {Maass forms} }
\index{Selberg Eigenvalue Conjecture}

Let $L^2(\Gamma \backslash \mathcal{H})$ denote the space of square integrable functions
on $\Gamma \backslash \mathcal{H}$ with respect to the Petersson norm.
There is a complete orthonormal system for the residual and cuspidal spaces of $\Gamma
\backslash \mathcal{H}$, which we denote by $\{\mu_j(z): j \geq 0\}$, consisting of the
constant function $\mu_0(z)$ and infinitely many Maass cusp forms $\mu_j(z)$ for $j \geq
1$ with associated eigenvalues $\tfrac{1}{4} + t_j^2$ with respect to the hyperbolic
Laplacian.
Without loss of generality, we assume that the $\mu_j$ are also simultaneous
eigenfunctions of the standard Hecke operators, as well as the $T_{-1}$ operator.
Then for any $f \in L^2(\Gamma \backslash \mathcal{H})$, we have the \emph{spectral
decomposition} of $f$ given by
\begin{equation}
  f(z) = \sum_j \langle f, \mu_j \rangle \mu_j(z) + \sum_{\mathfrak{a}} \frac{1}{4\pi}
  \int_\mathbb{R} \langle f, E_\mathfrak{a}(\cdot, \tfrac{1}{2} + it)\rangle
  E_\mathfrak{a}(a, \tfrac{1}{2} + it) \; dt,
\end{equation}
where $\mathfrak{a}$ ranges of the cusps of $\Gamma \backslash \mathcal{H}$.  Throughout
this thesis, we will refer to the first sum as the \emph{discrete spectrum} and the sums
of integrals as the \emph{continuous spectrum}.
\index{discrete spectrum}
\index{continuous spectrum}
The spectral decomposition as presented here is a consequence of Selberg's Spectral
Theorem, as presented in Theorem~15.5 of~\cite{IwaniecKowalski04}.

To each Maass form $\mu_j$ is associated a type $\tfrac{1}{2} + it_j$, and these $it_j$
are expected to satisfy Selberg's Eigenvalue Conjecture, which says that all $t_j$ are
real.
In complete generality, it is known that $t_j$ is either purely real or purely imaginary.
Selberg's Eigenvalue Conjecture has been proved for many congruence subgroups, including
$\SL(2, \mathbb{Z})$, but it is not known in general.
\index{Selberg's Eigenvalue Conjecture}
We let $\theta = \sup_j \{ \lvert \Im(t_j) \rvert \}$ denote the best known progress
towards Selberg's Eigenvalue Conjecture for $\Gamma$.
\index{theta@$\theta$ in Selberg's Eigenvalue Conjecture}
The current best result for $\theta$ in all congruence subgroups that $\theta \leq
\tfrac{7}{64}$, due to Kim and Sarnak~\cite{KimSarnak03}.

\subsection{Decoupling integral transform}\label{ssec:mellinbarnes_decouple}
\index{Mellin Barnes transform}

We will use an integral analogue of the binomial theorem, originally considered by
Barnes~\cite{Barnes}, also presented in 6.422(3) of~\cite{GradshteynRyzhik07}.
\begin{lemma}[Barnes, 1908]
  If $0 > \gamma > - \Re s$ and $\lvert \arg t \rvert < \pi$, then
  \begin{equation}\label{eq:mellinbarnes_base}
    \frac{1}{2\pi i} \int_{(\gamma)} \Gamma(-z) \Gamma(s + z) t^z \; dz = \Gamma(s) (1 +
    t)^{-s}.
  \end{equation}
\end{lemma}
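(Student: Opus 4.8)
The plan is to prove the identity first for small positive real $t$ by a residue computation that produces a binomial series, and then to extend it to the whole region $|\arg t| < \pi$ by analytic continuation.

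\emph{Case $0 < t < 1$.} With $-\Re s < \gamma < 0$ the integrand $\Gamma(-z)\Gamma(s+z)t^z$ is meromorphic in $z$ with poles only at $z = 0,1,2,\dots$ to the right of the contour (the poles of $\Gamma(-z)$) and at $z = -s,-s-1,\dots$ to the left (the poles of $\Gamma(s+z)$), the line $\Re z = \gamma$ separating the two sets. I would shift the contour rightward past $\Re z = N + \tfrac12$ and let $N \to \infty$. Using the reflection formula $\Gamma(-z) = -\pi/\bigl(\sin(\pi z)\,\Gamma(1+z)\bigr)$ together with Stirling's asymptotics in the form $\Gamma(s+z)/\Gamma(1+z) \ll_s |z|^{\Re s - 1}$, the integrand is $\ll_s |z|^{\Re s - 1}\,t^{\Re z}/|\sin\pi z|$; since $|\sin\pi z| = \cosh(\pi\Im z) \geq 1$ on the line $\Re z = N + \tfrac12$ and $|\sin\pi z|$ grows like $e^{\pi|\Im z|}$ off the real axis, the vertical segment at $\Re z = N+\tfrac12$ contributes $O(t^{N+1/2}N^{\Re s}) \to 0$ and the horizontal connectors vanish. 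Collecting residues — the residue of $\Gamma(-z)$ at $z = n$ is $(-1)^{n+1}/n!$, and $\Gamma(s+n) = \Gamma(s)(s)_n$ — and recalling that closing to the right is clockwise, I obtain
\begin{align*}
  \frac{1}{2\pi i}\int_{(\gamma)} \Gamma(-z)\Gamma(s+z)\,t^z\,dz
  &= \sum_{n \geq 0}\frac{(-1)^n}{n!}\Gamma(s+n)\,t^n \\
  &= \Gamma(s)\sum_{n \geq 0}\frac{(s)_n}{n!}(-t)^n = \Gamma(s)(1+t)^{-s},
\end{align*}
the last step being the binomial series $\sum_{n \geq 0}(s)_n x^n/n! = (1-x)^{-s}$, valid for $|x| < 1$, with $x = -t$.

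\emph{Analytic continuation in $t$.} On the fixed line $\Re z = \gamma$ one has $|t^z| = |t|^\gamma e^{-\Im z\cdot\arg t}$, while Stirling gives $|\Gamma(-z)\Gamma(s+z)| \ll_s (1 + |\Im z|)^{C}e^{-\pi|\Im z|}$; hence for $|\arg t| < \pi$ the integrand decays exponentially in $\Im z$, so the left-hand side is an absolutely convergent integral that depends holomorphically on $t$ throughout the cut plane $\{|\arg t| < \pi\}$ (differentiate under the integral, or apply Morera). The right-hand side $\Gamma(s)(1+t)^{-s}$, with the principal branch, is holomorphic wherever $1 + t \notin (-\infty,0]$; and $|\arg t| < \pi$ forces $t \notin (-\infty,0]$, a fortiori $t \notin (-\infty,-1]$, i.e. $1+t \notin (-\infty,0]$. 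Since the two holomorphic functions agree on the interval $(0,1)$, which has accumulation points in the connected domain $\{|\arg t| < \pi\}$, the identity theorem gives equality throughout, which is the lemma.

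\emph{Main obstacle, and an alternative.} The only step needing genuine care is the contour-closing estimate above: controlling the vertical segment at $\Re z = N + \tfrac12$ and the horizontal connectors, which requires the reflection formula, Stirling's ratio asymptotics, the lower bound for $|\sin\pi z|$ between consecutive half-integers, and — in the continuation step — the hypothesis $|\arg t| < \pi$ so that $e^{-\Im z\cdot\arg t}$ is dominated by the $e^{-\pi|\Im z|}$ decay coming from the two Gamma factors. An approach that avoids the contour shift entirely is to start from the Beta integral $\int_0^\infty x^{z-1}(1+x)^{-s}\,dx = \Gamma(z)\Gamma(s-z)/\Gamma(s)$ for $0 < \Re z < \Re s$: this identifies $\Gamma(z)\Gamma(s-z)/\Gamma(s)$ as the Mellin transform of $(1+x)^{-s}$, so the Mellin inversion formula recalled earlier in the chapter, after the substitution $z \mapsto -z$, yields the stated identity directly for real $t = x > 0$, and the continuation step above then completes the proof for $|\arg t| < \pi$.
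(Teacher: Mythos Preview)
Your proof is correct. Your primary argument (contour shift, residue summation yielding the binomial series, then analytic continuation in $t$) is a genuinely different route from the paper's, which simply observes that $B(z,s-z) = \Gamma(z)\Gamma(s-z)/\Gamma(s)$ is the Mellin transform of $(1+x)^{-s}$ and then invokes Mellin inversion --- exactly the alternative you sketch at the end. Your approach is more hands-on: it makes the convergence and the role of the hypothesis $|\arg t| < \pi$ completely explicit, and it requires no black-box inversion theorem. The paper's approach is shorter and more conceptual, but leaves the reader to supply the justification that Mellin inversion applies and that the formula extends from real $t>0$ to the cut plane; you have supplied precisely those details.
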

This is a corollary to an integral representation of the beta function,
\begin{equation}
  B(z, s) = \int_0^\infty \frac{x^z}{(1 + x)^{z + s}} \; \frac{dx}{x},
\end{equation}
which implies that
\begin{equation}\label{eq:mellinbarnes_beta_is_mellin_transform}
  B(z, s-z) = \int_0^\infty \frac{x^z}{(1+x)^{s}} \; \frac{dx}{x}.
\end{equation}
The right hand side is a Mellin transform,
so~\eqref{eq:mellinbarnes_beta_is_mellin_transform} indicates that $B(z, s-z)$ is the
Mellin transform of $(1+x)^{-s}$ (with auxiliary variable $z$).
Applying the Mellin Inversion Theorem (as shown in~\cite{titchmarshfourier}) and the
representation of the Beta function in terms of Gamma functions, $B(s,t) =
\Gamma(s)\Gamma(t) / \Gamma(s+t)$, we recover a proof of the Lemma.

We will apply this integral transform to decouple $m,n$ in $(m+n)^{-s} = m^{-s} (1 +
\tfrac{n}{m})^{-s}$.
It is easy to check that an application of the lemma (followed by a change of variables $z
\mapsto -z$) gives
\begin{equation}
  \frac{1}{(n+m)^s} = \frac{1}{2\pi i} \int_{(\gamma)}
  \frac{\Gamma(z)\Gamma(s-z)}{\Gamma(s)} \frac{1}{n^{s-z} m^z} \; dz.
\end{equation}

\subsection{Chandrasekharan and Narasimhan}\label{ssec:CN}
\index{Chandrasekharan and Narasimhan}

We will refer to a result of Chandrasekharan and Narasimhan through this thesis.
We combine~\cite[Theorem~4.1]{chandrasekharan1962functional}
and~\cite[Theorem~1]{chandrasekharan1964mean} to state the following
theorem of theirs.

\begin{theorem}[Chandrasekharan and Narasimhan, 1962 and 1964]
  Let $f$ and $g$ be objects with meromorphic Dirichlet series
  \begin{equation*}
    L(s,f) = \sum_{n \geq 1} \frac{a(n)}{n^s}, \qquad L(s,g)=\sum_{n\geq 1}
    \frac{b(n)}{n^s}.
  \end{equation*}
  Suppose $G(s) = Q^s\prod_{i=1}^\ell \Gamma(\alpha_i s + \beta_i)$ is a product of Gamma
  factors with $Q>0$ and $\alpha_i > 0$.
  Define $A = \sum_{i=1}^\ell \alpha_i$.
  Let $w$ and $w'$ be numbers such that $\sum_{n \leq X} \lvert b(n) \rvert^2 \ll X^{2w -
  1} \log^{w'} X$.
  Let
  \begin{equation*}
    Q(X) = \frac{1}{2\pi i} \int_{\mathcal{C}} \frac{L(s,f)}{s} X^s \ ds,
  \end{equation*}
  where $\mathcal{C}$ is a smooth closed contour enclosing all the singularities of the
  integrand.
  Let $q$ be the maximum of the real parts of the singularities of $L(s,f)$ and let $r$ be
  the maximum order of a pole of $L(s,f)$ with real part $q$.
  Suppose the functional equation
  \begin{equation}\label{eq:basic_feq}
    G(s)L(s,f) = \epsilon(f) G(\delta-s)L(\delta-s,g)
  \end{equation}
  is satisfied for some $\lvert \epsilon(f) \rvert = 1$ and $\delta >0$.
  Then we have that
  \begin{equation}
    \begin{split}\label{eq:CN_L1}
      S_f(X) =& \sum_{n\leq X} a(n) =
      Q(X) + O(X^{\frac{\delta}{2}-\frac{1}{4A}+2A(w-\frac{\delta}{2}-\frac{1}{4A})\eta +
      \epsilon}) \\
      & \quad + O(X^{q-\frac{1}{2A}-\eta}\log(X)^{r-1})+O\bigg( \sum_{X \leq n \leq X'}
    |a(n)| \bigg)
    \end{split}
  \end{equation}
  for any $\eta \geq 0$, and where $X' =X+O(X^{1-\frac{1}{2A}-\eta})$.
  If all $a(n) \geq 0$, the final $O$-error term above does not contribute.

  Suppose further that $A \geq 1$ and that $2w - \delta - \frac{1}{A} \leq 0$.
  Then
  \begin{equation*}
    \sum_{n \leq X} \lvert S_f(n) - Q(n) \rvert^2 = cX^{\delta - \frac{1}{2A} + 1} +
    O(X^{\delta} \log^{w' + 2} X)
  \end{equation*}
  for a constant $c$ that can be made explicit.
\end{theorem}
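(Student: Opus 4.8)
The plan is to derive both assertions from a single Voronoi-type summation formula obtained by feeding the functional equation into a Riesz (Ces\`{a}ro) mean, a method going back to Landau. For an auxiliary real parameter $\rho\geq0$ I would introduce the Riesz mean
\[
  A_\rho(x):=\frac{1}{\Gamma(\rho+1)}\sum_{n\leq x}a(n)(x-n)^\rho
  =\frac{1}{2\pi i}\int_{(c)}L(s,f)\,\frac{\Gamma(s)}{\Gamma(s+\rho+1)}\,x^{s+\rho}\,ds
\]
with $c$ to the right of every singularity. Shifting the contour far to the left collects the residues of $L(s,f)$, which assemble into the main term $Q_\rho(x)$, the Riesz-mean analogue of $Q(x)$, of leading size $x^{q+\rho}\log(x)^{r-1}$. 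On the shifted line I substitute $L(s,f)=\epsilon(f)\,G(\delta-s)G(s)^{-1}L(\delta-s,g)$, expand $L(\delta-s,g)=\sum_{n\geq1}b(n)n^{s-\delta}$, and exchange sum and integral (valid once $\rho$ is large relative to $A$), producing the Voronoi identity
\[
  A_\rho(x)-Q_\rho(x)=\epsilon(f)\,x^\rho\sum_{n\geq1}\frac{b(n)}{n^\delta}\,\Phi_\rho(nx),
\]
where $\Phi_\rho(y)=\frac{1}{2\pi i}\int G(\delta-s)G(s)^{-1}\frac{\Gamma(s)}{\Gamma(s+\rho+1)}y^s\,ds$. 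By Stirling and a saddle-point analysis, $\Phi_\rho$ has an oscillatory asymptotic expansion with amplitude a power of $y$ (determined by $\rho,\delta,A$) and frequency proportional to $y^{1/(2A)}$; for $\rho=0$ the amplitude is $y^{\frac{\delta}{2}-\frac{1}{4A}}$. The series converges for $\rho$ large, and for small $\rho$ it must be truncated.

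For the sharp bound on $S_f(X)=A_0(X)$ I would take $\rho$ small but large enough for the relevant convergence, split $\sum_n b(n)n^{-\delta}\Phi_\rho(nx)$ at a truncation point and bound the tail by Cauchy--Schwarz against $\sum_{n\leq X}|b(n)|^2\ll X^{2w-1}\log^{w'}X$, and then un-smooth: $A_0$ is recovered from $A_\rho$ by an iterated finite-difference operator of step $h\asymp X^{1-\frac{1}{2A}-\eta}$, which is precisely where the term $O\bigl(\sum_{X\leq n\leq X'}|a(n)|\bigr)$, with $X'=X+O(h)$, enters --- and this term drops out when $a(n)\geq0$, since the differencing then sandwiches $A_0$ between one-sided smoothed sums. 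Optimising $\eta$ to balance the surviving oscillatory sum, of size $X^{\frac{\delta}{2}-\frac{1}{4A}+2A(w-\frac{\delta}{2}-\frac{1}{4A})\eta+\epsilon}$, against the residual $O(X^{q-\frac{1}{2A}-\eta}\log(X)^{r-1})$ yields the three displayed error terms.

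For the mean square, with $\rho=0$, I would square $A_0(x)-Q_0(x)=\epsilon(f)\sum_n b(n)n^{-\delta}\Phi_0(nx)$ (after a mild truncation) and integrate over $x\in[1,X]$. The diagonal $\sum_n|b(n)|^2 n^{-2\delta}\int_1^X|\Phi_0(nx)|^2\,dx$, upon averaging $\cos^2$ to $\tfrac12$, contributes $c\,X^{\delta-\frac{1}{2A}+1}$ with $c$ proportional to the sum $\sum_n|b(n)|^2 n^{-\delta-\frac{1}{2A}}$; the hypotheses $A\geq1$ and $2w-\delta-\tfrac1A\leq0$ are exactly what make this sum converge and make the diagonal dominate. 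The off-diagonal terms $\sum_{n\neq m}b(n)\overline{b(m)}(nm)^{-\delta}\int_1^X\Phi_0(nx)\overline{\Phi_0(mx)}\,dx$ carry the phase $\asymp(nx)^{1/(2A)}-(mx)^{1/(2A)}$; a first-derivative (van der Corput) estimate in $x$, a dyadic decomposition, and the second-moment bound on $b(n)$ bound their total contribution --- together with the truncation error --- by $O(X^\delta\log^{w'+2}X)$.

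The main obstacle in both halves is the off-diagonal analysis. In the first, one must track how the truncation length, the step $h$, and the oscillation of $\Phi_\rho$ interact so that optimising in $\eta$ reproduces exactly the exponent $\tfrac{\delta}{2}-\tfrac{1}{4A}+2A(w-\tfrac{\delta}{2}-\tfrac{1}{4A})\eta$; in the second, the delicate point is showing that the frequencies $(nx)^{1/(2A)}-(mx)^{1/(2A)}$ really do collapse the off-diagonal to a power of $\log X$ rather than competing with the main term of order $X^{\delta-\frac{1}{2A}+1}$. Under the stated hypotheses on $A$, $w$, and $\delta$, this is the technical core of Chandrasekharan and Narasimhan's two papers, and it is what I would expect to occupy the bulk of a full write-up.
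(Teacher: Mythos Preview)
The paper does not prove this theorem at all: it is stated in \S\ref{ssec:CN} as a quotation of results from the literature, explicitly introduced as ``We combine~\cite[Theorem~4.1]{chandrasekharan1962functional} and~\cite[Theorem~1]{chandrasekharan1964mean} to state the following theorem of theirs,'' and followed only by the one-sentence remark that a functional equation with understood Gamma factors yields bounds on first and second moments. There is nothing to compare your proposal against.

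That said, your sketch is a faithful outline of the Chandrasekharan--Narasimhan method itself: the Riesz-mean Voronoi identity obtained by contour-shifting through the functional equation, the Bessel-type asymptotics for the kernel $\Phi_\rho$, the finite-difference un-smoothing that introduces the short-sum error $\sum_{X\le n\le X'}|a(n)|$, and the diagonal/off-diagonal splitting for the mean square are exactly the ingredients of their two papers. If you were asked to supply a proof where the thesis gives none, this is the right architecture; the only caution is that several of the steps you label as routine (the precise exponent bookkeeping in the un-smoothing, and the first-derivative bound collapsing the off-diagonal to $O(X^\delta\log^{w'+2}X)$) are where the actual work in \cite{chandrasekharan1962functional,chandrasekharan1964mean} lies, so a full write-up would need to reproduce those arguments rather than gesture at them.
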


Thus from little more than a functional equation with understood Gamma factors, one can
produce nontrivial bounds on first and second moments.

\section{Meromorphic Continuation}
\index{D@$D(s, S_f \times S_f)$}

We will now produce the meromorphic continuations of $D(s, S_f)$ and $D(s, S_f \times
\overline{S_g})$.
The meromorphic continuation of $D(s, S_f \times S_g)$ follows from applying the exact
same methodology to $T_{-1} g$ in place of $g$, so we only write down the corresponding
results.
In this section, we explicitly show the results in the case when $\Gamma = \SL(2,
\mathbb{Z})$.
For higher level congruence subgroups, the same methodology will work.
We remark on this in \S\ref{sec:higherlevel}.

Throughout this section, let $f(z) = \sum a(n) e(nz)$ and $g(z) = \sum b(n) e(nz)$ be
weight $k$ cusp forms on $\SL(2, \mathbb{Z})$.
Define $S_f(n) := \sum_{m \leq n} a(m)$ to be the partial sum of the first $n$ Fourier
coefficients of $f$.
Define $S_g(n)$ similarly.

We first describe the meromorphic continuation of $D(s, S_f)$.
The main ideas of this continuation are very similar to those in the continuation of $D(s,
S_f \times \overline{S_g})$, but the details are much simpler.
We then produce the meromorphic continuation of $D(s, S_f \times \overline{S_g})$ in
\S\ref{ssec:mero_DsSfSg}.
We shall see that the main obstacle is understanding the shifted convolution sum
\begin{equation}
  \sum_{n,h \geq 1} \frac{a(n)\overline{b(n-h)}}{n^{s+k-1} h^w}.
\end{equation}
We then use these meromorphic continuations to understand $D(s, S_f \times
\overline{S_g})$ in the next section.

\subsection{Meromorphic continuation of $D(s, S_f)$}
\index{D@$D(s, S_f)$}

The meromorphic continuation of $D(s, S_f)$ is very simple.
The pattern of the proof is similar to the pattern necessary for $D(s, S_f \times
\overline{S_g})$, so it is useful to be very clear.
The proof proceeds in two steps:
\begin{enumerate}
  \item Decompose the Dirichlet series into sums of Dirichlet series that are easier to
    understand, and
  \item Understand the reduced Dirichlet series by relating them to $L$-functions.
\end{enumerate}

\begin{proposition}
  With $f$ and $S_f(n)$ as defined above, the Dirichlet series associated to $S_f(n)$
  decomposes into
  \begin{equation}\label{eq:DsSf_decomposition}
    D(s, S_f) := \sum_{n \geq 1} \frac{S_f(n)}{n^{s + \frac{k-1}{2}}} = L(s, f) +
    \frac{1}{2\pi i} \int_{(2)} L(s-z, f) \zeta(z) \frac{\Gamma(z) \Gamma(s +
    \frac{k-1}{2} - z)}{\Gamma(s + \frac{k-1}{2})} \; dz,
  \end{equation}
  valid for $\Re s > 3$.
  Here, $L(s, f)$ denotes the standard $L$-function associated to $f$, given by
  \begin{equation}
    L(s, f) := \sum_{n \geq 1} \frac{a(n)}{n^{s + \frac{k-1}{2}}},
  \end{equation}
  normalized to have functional equation of the form $s \mapsto 1-s$.
\end{proposition}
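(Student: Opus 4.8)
The plan is to unfold the definition of $S_f(n)$, peel off a ``diagonal'' contribution that is exactly $L(s,f)$, and dispatch the remaining shifted sum with the Barnes-type decoupling transform of \S\ref{ssec:mellinbarnes_decouple}. First, for $\Re s$ large the series converges absolutely (using $a(n)\ll n^{\frac{k-1}{2}+\epsilon}$, which is \eqref{eq:ramanujan_conjecture_an}), so I may interchange the order of summation and write
\[
  D(s,S_f) = \sum_{n\geq 1}\frac{1}{n^{s+\frac{k-1}{2}}}\sum_{m\leq n} a(m)
  = \sum_{m\geq 1} a(m)\sum_{n\geq m}\frac{1}{n^{s+\frac{k-1}{2}}}.
\]
Splitting the inner sum into the term $n=m$ and the terms $n=m+h$ with $h\geq 1$, the $n=m$ contribution is precisely $\sum_{m\geq 1} a(m)\, m^{-s-\frac{k-1}{2}} = L(s,f)$, so that
\[
  D(s,S_f) = L(s,f) + \sum_{m\geq 1}\sum_{h\geq 1}\frac{a(m)}{(m+h)^{s+\frac{k-1}{2}}}.
\]

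Next I would apply the decoupling identity of \S\ref{ssec:mellinbarnes_decouple}, with the parameter $s$ there replaced by $s+\frac{k-1}{2}$ and with $(n,m)$ there replaced by $(m,h)$, to obtain
\[
  \frac{1}{(m+h)^{s+\frac{k-1}{2}}}
  = \frac{1}{2\pi i}\int_{(2)}\frac{\Gamma(z)\Gamma(s+\frac{k-1}{2}-z)}{\Gamma(s+\frac{k-1}{2})}\,\frac{1}{m^{s+\frac{k-1}{2}-z}\,h^{z}}\,dz,
\]
which is legitimate on the line $\Re z = 2$ since $0 < 2 < \Re\!\big(s+\frac{k-1}{2}\big)$. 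Substituting this into the double sum and interchanging the sum with the integral, the $h$-sum collapses to $\zeta(z)$ and the $m$-sum collapses to $\sum_m a(m)\,m^{-(s-z)-\frac{k-1}{2}} = L(s-z,f)$ in the stated normalization, giving exactly the claimed identity \eqref{eq:DsSf_decomposition}.

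The only genuine technical point — and hence the step I expect to be the main obstacle — is justifying the interchange of $\sum_{m,h}$ with $\int_{(2)}$. This rests on combining the exponential decay of $\Gamma(z)\Gamma(s+\frac{k-1}{2}-z)/\Gamma(s+\frac{k-1}{2})$ along the vertical line $\Re z = 2$ (Stirling) with the absolute convergence of $\zeta(z)$ there and of $\sum_m a(m)\,m^{-(s-z)-\frac{k-1}{2}}$, the latter requiring $\Re(s-z) > 1$ at $\Re z = 2$, i.e. $\Re s > 3$; this is the source of the stated region of validity. Everything else is bookkeeping. Once \eqref{eq:DsSf_decomposition} is established for $\Re s > 3$, the right-hand side is the engine for the meromorphic continuation of $D(s,S_f)$: one moves the $z$-contour to the left past the pole of $\zeta(z)$ at $z=1$ and the poles of $L(s-z,f)$, collecting residues — but that is the content of the analysis that follows, not of the present statement.
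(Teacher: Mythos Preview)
Your proof is correct and follows essentially the same route as the paper: rewrite $D(s,S_f)$ so that the diagonal $n=m$ yields $L(s,f)$ and the off-diagonal becomes $\sum_{m,h\geq 1} a(m)(m+h)^{-s-\frac{k-1}{2}}$, then decouple via the Mellin--Barnes integral of \S\ref{ssec:mellinbarnes_decouple} to recover $L(s-z,f)\zeta(z)$. Your justification of the interchange and the origin of the constraint $\Re s>3$ is in fact more explicit than the paper's, which simply says ``for $\gamma>1$ and $\Re s$ sufficiently large.''
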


\begin{proof}
  We directly manipulate the Dirichlet series.
  \begin{equation}
    D(s, S_f) = \sum_{n \geq 1} \frac{S_f(n)}{n^{s + \frac{k-1}{2}}} = \sum_{\substack{n
    \geq 1 \\ m \geq 0}} \frac{a(n-m)}{n^{s + \frac{k-1}{2}}}.
  \end{equation}
  In this last equality, we adopt the convention that $a(n) = 0$ for $n \leq 0$ to
  simplify notation.
  Separate the $m = 0$ case and reindex the remaining sum with $n \mapsto n+m$ to get
  \begin{equation}
    \sum_{n \geq 1} \frac{a(n)}{n^{s + \frac{k-1}{2}}} + \sum_{m,n \geq 1}
    \frac{a(n)}{(n+m)^{s + \frac{k-1}{2}}}.
  \end{equation}

  The first sum is exactly $L(s, f)$.
  In the second sum, decouple $(n+m)^{-s}$ through the use of the Mellin-Barnes transform
  detailed in \S\ref{ssec:mellinbarnes_decouple}.
  For $ \gamma > 1$ and $\Re s$ sufficiently large, the $m$ sum can be collected into
  $\zeta(z)$ and the $n$ sum can be collected into $L(s + \frac{k-1}{2})$.
  Simplification completes the proof.
\end{proof}

The meromorphic continuation of the $L$-function $L(s,f)$ is well-understood.
Note that in
\begin{equation}
  \frac{1}{2\pi i} \int_{(2)} L(s - z, f) \zeta(z) \frac{\Gamma(z) \Gamma(s +
  \frac{k-1}{2} - z)}{\Gamma(s + \frac{k-1}{2})} \; dz,
\end{equation}
the integrand is meromorphic in both $s$ and $z$, and has exponential decay in vertical
strips in $\Im z$ for any individual $s$.
Therefore one can use the meromorphic continuation of $L(s,f)$ to understand that this
integral is meromorphic for all $s \in \mathbb{C}$.

Note that it is also possible to shift the line of $z$ integration arbitrarily far in the
negative direction, passing poles at $z = 1, 0, -1, \ldots$ and picking up their residues.
Shifting the line of $z$ integration to $\epsilon$ for a small $\epsilon > 0$ passes
exactly one pole, coming from $\zeta(z)$ at $z = 1$, with residue
\begin{equation}
  \Res_{z = 1} = L(s - 1, f) \frac{\Gamma(s + \frac{k-1}{2} - 1)}{\Gamma(s +
  \frac{k-1}{2})} = \frac{L(s-1, f)}{s + \frac{k-1}{2} - 1}.
\end{equation}
Therefore, we have the equality
\begin{equation}
  D(s, S_f) = L(s, f) + \frac{L(s-1, f)}{s + \frac{k-1}{2} - 1} + \frac{1}{2\pi i}
  \int_{(\epsilon)} L(s - z, f) \zeta(z) \frac{\Gamma(z)\Gamma(s + \frac{k-1}{2} -
  z)}{\Gamma(s + \frac{k-1}{2})} \; dz.
\end{equation}
The first term is analytic, the third term is analytic in $s$ for $\Re s > \epsilon -
\frac{k-1}{2}$, and the middle term appears to have a simple pole at $s = 1 -
\frac{k-1}{2}$.
However, from the functional equation equation of $L(s,f)$,
\begin{equation}
  \Lambda(s,f) := (2\pi)^{-(s + \tfrac{k-1}{2})} \Gamma(s + \tfrac{k-1}{2}) L(s,f)
  =
  \varepsilon \Lambda(1-s, f),
\end{equation}
we see that the residue $L(-\tfrac{k-1}{2})$ is a trivial zero of the $L$-function,
and $s = 1 - \frac{k-1}{2}$ is not a pole after all.

Further shifting the line of $z$ integration to $-M + \epsilon$ for small $\epsilon > 0$
passes $(M)$ further poles, coming from $\Gamma(z)$ at $z = -j$ for $0 \leq j < M$.
The $j$th pole, located at $z = -j, 0 \leq j < M$ has residue
\begin{equation}
  \Res_{z = -j} = L(s + j, f) \zeta(-j) \frac{\Gamma(s + \frac{k-1}{2} + j)}{\Gamma(s +
  \frac{k-1}{2})} \frac{(-1)^j}{j!}.
\end{equation}
In each residue, the $L$-function is analytic, and all apparent poles of the Gamma
function in the numerator are cancelled by poles of the Gamma function in the denominator.
Therefore each residue is analytic in $s$.

For any integer $M \geq 0$, we therefore have that
\begin{equation}
  D(s, S_f) = L(s, f) + \sum_{j = -1}^{M-1} \Res_j + \frac{1}{2\pi i} \int_{(-M +
  \epsilon)} L(s - z, f)\zeta(z) \frac{\Gamma(z) \Gamma(s + \frac{k-1}{2} - z)}{\Gamma(s +
\frac{k-1}{2})} dz.
\end{equation}
The $L$-function and residues are analytic in $s$.
The integral term is analytic in $s$ for $\Re s > -\frac{k-1}{2} - M + \epsilon$.
Since $M$ is arbitrary, we see that $D(s, S_f)$ is actually analytic for all $s \in
\mathbb{C}$.
We record this observation as a corollary to the decomposition of $D(s, S_f)$.

\begin{corollary}
  The Dirichlet series $D(s, S_f)$ has analytic continuation to the whole complex plane,
  given by~\eqref{eq:DsSf_decomposition}.
\end{corollary}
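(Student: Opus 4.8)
The plan is to obtain the continuation directly from the decomposition \eqref{eq:DsSf_decomposition} by pushing the line of $z$-integration arbitrarily far to the left. Recall that for $\Re s > 3$ we have
\[
  D(s, S_f) = L(s,f) + \frac{1}{2\pi i}\int_{(2)} L(s-z,f)\,\zeta(z)\,\frac{\Gamma(z)\,\Gamma(s + \frac{k-1}{2} - z)}{\Gamma(s + \frac{k-1}{2})}\,dz .
\]
The term $L(s,f)$ is entire, since $f$ is a holomorphic cusp form on $\SL(2,\mathbb{Z})$. For the integral, the first observation is that, for fixed $s$, the integrand is meromorphic in $z$ with poles only at $z = 1$ (from $\zeta(z)$) and at $z = 0, -1, -2, \dots$ (from $\Gamma(z)$): the factor $L(s-z,f)$ contributes nothing, as $L(\cdot, f)$ is entire, and the poles of $\Gamma(s + \frac{k-1}{2} - z)$, located at $z = s + \frac{k-1}{2} + \ell$ with $\ell \geq 0$, lie far to the right when $\Re s$ is large.

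Next I would shift the contour from $\Re z = 2$ to $\Re z = -M + \epsilon$ for an arbitrary integer $M \geq 0$ and a small $\epsilon > 0$, collecting the residues at $z = 1$ and at $z = -j$ for $0 \leq j \leq M-1$. The residue at $z = 1$ is $L(s-1,f)/(s + \frac{k-1}{2} - 1)$; although this appears to have a pole at $s = 1 - \frac{k-1}{2}$, the functional equation for $L(s,f)$ forces the trivial zero $L(-\frac{k-1}{2}, f) = 0$, which cancels it, so the residue is entire. The residue at $z = -j$ is a constant multiple of $L(s+j,f)\,\Gamma(s + \frac{k-1}{2} + j)/\Gamma(s + \frac{k-1}{2})$, and the Gamma quotient equals the polynomial $\prod_{0 \leq \ell < j}\big(s + \frac{k-1}{2} + \ell\big)$, so this residue is entire as well. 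Finally, the shifted integral is analytic in $s$ on the half-plane $\Re s > -\frac{k-1}{2} - M + \epsilon$, since there the poles $z = s + \frac{k-1}{2} + \ell$ of $\Gamma(s + \frac{k-1}{2} - z)$ stay to the right of the new contour and the integrand depends holomorphically on $s$. Because $M$ is arbitrary, this exhibits $D(s, S_f)$ as analytic on all of $\mathbb{C}$, with the continuation furnished by \eqref{eq:DsSf_decomposition}.

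The only step that is not pure bookkeeping is the justification of the contour shift: I would need to verify that the integrand decays rapidly enough along vertical lines that the horizontal connecting segments vanish as $\Im z \to \pm\infty$. This is where the decay of the Gamma quotient $\Gamma(z)\Gamma(s + \frac{k-1}{2} - z)/\Gamma(s + \frac{k-1}{2})$, which is exponential in $\Im z$ by Stirling's formula, is decisive; it comfortably overcomes the polynomial-order (convexity) growth of $L(s-z,f)$ and $\zeta(z)$ in vertical strips. Granting this standard estimate, the residue computations above complete the argument, so I do not expect any genuine obstacle beyond the Stirling-plus-convexity input.
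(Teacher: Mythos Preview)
Your proposal is correct and follows essentially the same route as the paper: shift the $z$-contour left to $-M+\epsilon$, observe that the residues at $z=1$ and $z=-j$ are entire in $s$ (the former via the trivial zero $L(-\tfrac{k-1}{2},f)=0$, the latter because the Gamma quotient is a polynomial), and note the shifted integral is analytic for $\Re s > -\tfrac{k-1}{2}-M+\epsilon$. Your explicit mention of the Stirling-plus-convexity justification for the contour shift is a nice addition that the paper leaves implicit.
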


\begin{remark}
  I note that if $f$ is not cuspidal, then the decomposition and most of the analysis of
  $D(s, S_f)$ carries over verbatim, with one key difference: the value
  $L(-\tfrac{k-1}{2})$ is no longer a trivial zero.
  Therefore it is possible to show in general that $D(s, S_f)$ is meromorphic in the plane
  with at most several simple poles with residues given by special values of $L(s, f)$.

  This indicates a very strong parallel between the properties of $L(s, f)$ and
  $D(s, S_f)$.
  But it should be noted that $D(s, S_f)$ does not have a simple functional equation or an
  Euler product.
\end{remark}

\subsection{Meromorphic continuation of $D(s, S_f \times \overline{S_g})$}
\label{ssec:mero_DsSfSg}

The meromorphic continuation of $D(s, S_f \times \overline{S_g})$ is a bit involved, but
the approach is very similar to the approach for $D(s, S_f)$.
We proceed in three steps:
\begin{enumerate}
  \item Decompose $D(s, S_f \times \overline{S_g})$ into sums of Dirichlet series that are
    easier to understand,
  \item Related the reduced Dirichlet series to $L$-functions and convolution sums, and
  \item Combine the analytic properties of the $L$-functions and convolution sums.
\end{enumerate}
The first two steps are fundamentally the same as in$D(s, S_f)$, except that convolution
sums are necessary in the analysis.
As the meromorphic properties of convolution sums are significantly more delicate,
ascertaining the final analytic properties will take much more work.
The final step is deferred to \S\ref{sec:analyticbehavior}.

\index{Ds@$D(s, S_f \times S_f)$}
\index{Ws@$W(s; f, f)$}
\begin{proposition}\label{prop:SfSg_decomposition}
  With $f, g, S_f(n)$, and $S_g(n)$ as defined above, the Dirichlet series associated to
  $S_f(n)\overline{S_g(n)}$ decomposes into
  \begin{equation}
    \begin{split}
      D(s, &S_f \times \overline{S_g}) := \sum_{n \geq 1} \frac{S_f(n)
      \overline{S_g(n)}}{n^{s + k - 1}} \\
      &= W(s; f,\overline{g}) + \frac{1}{2\pi i} \int_{(\gamma)} W(s - z; f,\overline{g})
      \zeta(z) \frac{\Gamma(z) \Gamma(s - z + k - 1)}{\Gamma(s + k - 1)} \; dz,
    \end{split}
  \end{equation}
  for $1 < \gamma < \Re(s-1)$.
  Here, $W(s; f, \overline{g})$ denotes
  \begin{equation}
    W(s; f,\overline{g}) := \frac{L(s, f\times \overline{g})}{\zeta(2s)} + Z(s, 0, f\times
    \overline{g}),
  \end{equation}
  $L(s, f\times \overline{g})$ denotes the Rankin--Selberg $L$-function 
  as in~\ref{ssec:rankinselberg_lfunction},
  and $Z(s, w, f\times \overline{g})$ denotes the symmetrized
  shifted convolution sum\index{Zs@$Z(s, w, f\times f)$}
  \begin{equation}
    Z(s,w,f\times \overline{g}) := \sum_{n,h \geq 1} \frac{a(n)\overline{b(n-h)} +
    a(n-h)\overline{b(n)}}{n^{s + k - 1} h^w}.
  \end{equation}
\end{proposition}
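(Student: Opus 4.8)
The plan is to follow the same two-step pattern as in the proof for $D(s, S_f)$ --- first decompose $D(s, S_f \times \overline{S_g})$ into pieces expressible through known Dirichlet series, then decouple a two-variable sum with the Barnes integral of \S\ref{ssec:mellinbarnes_decouple} --- the only genuinely new ingredient being a partial summation to deal with the product $S_f(n)\overline{S_g(n)}$. As in the $D(s, S_f)$ argument, adopt the convention $a(n) = b(n) = 0$ for $n \le 0$; since $f$ and $g$ are cuspidal this gives $S_f(0) = S_g(0) = 0$. The first step is to record the telescoping identity
\begin{equation*}
  S_f(n)\overline{S_g(n)} = \sum_{m \le n} c(m), \qquad
  c(m) := S_f(m)\overline{b(m)} + a(m)\overline{S_g(m)} - a(m)\overline{b(m)},
\end{equation*}
which is immediate from $c(m) = S_f(m)\overline{S_g(m)} - S_f(m-1)\overline{S_g(m-1)}$ together with $S_f(0)\overline{S_g(0)} = 0$. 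Expanding $S_f(m) = \sum_{j \le m} a(j)$ and $\overline{S_g(m)} = \sum_{j \le m}\overline{b(j)}$ rewrites this as
\begin{equation*}
  c(m) = a(m)\overline{b(m)} + \sum_{h \ge 1}\bigl(a(m)\overline{b(m-h)} + a(m-h)\overline{b(m)}\bigr),
\end{equation*}
so that, summing against $m^{-(s+k-1)}$ and reading the diagonal term as $L(s, f\times\overline g)/\zeta(2s)$ and the off-diagonal terms as $Z(s, 0, f\times\overline g)$, we obtain $\sum_{m \ge 1} c(m)\, m^{-(s+k-1)} = W(s; f, \overline g)$. (Equivalently, the same identity is visible by writing $S_f(n)\overline{S_g(n)} = \sum_{n_1, n_2 \le n} a(n_1)\overline{b(n_2)}$ and splitting the double sum according to whether $n_1 = n_2$, $n_1 < n_2$, or $n_1 > n_2$.)

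With this in hand the second step is routine. Working in a right half-plane where the series below converge absolutely, interchange the order of summation:
\begin{equation*}
  D(s, S_f \times \overline{S_g}) = \sum_{n \ge 1}\frac{1}{n^{s+k-1}}\sum_{m \le n} c(m)
  = \sum_{m \ge 1} c(m)\sum_{n \ge m}\frac{1}{n^{s+k-1}}
  = W(s; f, \overline g) + \sum_{m, \ell \ge 1}\frac{c(m)}{(m+\ell)^{s+k-1}},
\end{equation*}
the last equality splitting off the term $n = m$ and reindexing $n = m + \ell$. Now apply the decoupling identity of \S\ref{ssec:mellinbarnes_decouple} to $(m+\ell)^{-(s+k-1)}$ with $1 < \gamma < \Re(s-1)$; interchanging the $\ell$- and $m$-summations with the contour integral (justified by absolute convergence and the exponential decay of the Gamma factors in $\Im z$), the $\ell$-sum collapses to $\zeta(z)$ and the $m$-sum to $W(s-z; f, \overline g)$. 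Since $s + k - 1 - z = s - z + k - 1$, this is exactly the asserted formula, valid first for $\Re s$ large and then, once the meromorphic continuations are in place, on the stated range.

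I do not expect a real obstacle inside this statement: the only points requiring care are the bookkeeping in the telescoping identity --- in particular that the two off-diagonal cases assemble precisely into the symmetrized sum $Z(s, 0, f\times\overline g)$ rather than an asymmetric object --- and the justification of the interchanges of summation and of the Barnes integral in a suitable half-plane. The genuinely delicate matter, namely the analytic behaviour of the shifted convolution sum $Z(s, w, f\times\overline g)$ hidden inside $W$, plays no role here and is deferred to \S\ref{sec:analyticbehavior}. Finally, the corresponding decomposition of $D(s, S_f \times S_g)$ follows by running the identical argument with $T_{-1}g$ in place of $g$, i.e.\ by omitting all conjugations, exactly as remarked in the text.
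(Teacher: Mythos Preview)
Your proof is correct and follows essentially the same approach as the paper: both arguments identify the ``increment'' of $S_f(n)\overline{S_g(n)}$ with the summand of $W(s;f,\overline g)$ (you via the telescoping identity, the paper via directly splitting the double sum $\sum_{n_1,n_2\le n}$ into the diagonal and the two off-diagonal pieces --- which you also note parenthetically), then separate the $n=m$ term and apply the Mellin--Barnes decoupling to the remaining $\sum_{m,\ell\ge 1} c(m)(m+\ell)^{-(s+k-1)}$. The only cosmetic difference is that your telescoping organization makes the identification $\sum_m c(m)m^{-(s+k-1)} = W(s;f,\overline g)$ slightly cleaner, whereas the paper tracks the three cases through an extra reindexing step before arriving at the same point.
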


\begin{proof}
  Expand and recollect the partial sums $S_f$ and $S_g$.
  \begin{equation}
    D(s, S_f \times \overline{S_g}) = \sum_{n \geq 1} \frac{S_f(n) \overline{S_g(n)}}{n^{s
    + k - 1}} = \sum_{n \geq 1} \frac{1}{n^{s + k - 1}} \sum_{m = 1}^n a(m) \sum_{h = 1}^n
    \overline{b(h)}.
  \end{equation}
  Separate the sums over $m$ and $h$ into the cases where $m=h, m>h$, and $m<h$.
  We again adopt the convention that $a(n) = 0$ for $n \leq 0$ to simplify notation.
  Reorder the sums, summing down from $n$ instead of up to $n$, giving
  \begin{equation}
    \sum_{n \geq 1} \frac{1}{n^{s + k - 1}} \Bigg( \sum_{h = m > 0} + \sum_{h > m \geq 0}
    + \sum_{m > h \geq 0}\Bigg) a(n-m) \overline{b(n-h)}.
  \end{equation}
  In the first sum, take $h = m$.
  In the second sum, when $h > m$, we let $h = m + \ell$ and then sum over $m$ and $\ell$.
  Similarly in the third sum, when $m > h$, we let $m = h + \ell$.
  Together, this yields
  \begin{equation}
    \begin{split}
      \sum_{n \geq 1} &\frac{1}{n^{s + k - 1}} \bigg( \sum_{m \geq 0} a(n-m)
      \overline{b(n-m)} \\
      &+ \sum_{\substack{\ell \geq 1 \\ m \geq 0}} a(n-m) \overline{b(n-m-\ell)} +
    \sum_{\substack{\ell \geq 1 \\ m \geq 0}} a(n-m-\ell)\overline{b(n-m)}\bigg).
    \end{split}
  \end{equation}

  In each sum, the cases when $m = 0$ are distinguished.
  Altogether, these contribute
  \begin{equation}
    W(s; f,\overline{g}) = \frac{L(s, f\times \overline{g})}{\zeta(2s)} + Z(s, 0, f\times
    \overline{g}).
  \end{equation}
  Within $W(s; f, \overline{g})$, one should think of $L(s, f\times
  \overline{g})\zeta(2s)^{-1}$ as the diagonal part of the double summation, while
  $Z(s, 0, f\times \overline{g})$ contains the off-diagonal, written as the sum of the
  above-diagonal and below-diagonal parts of the double summation.

  Reindexing by changing $n \mapsto n + m$, the remaining sums with $m \geq 1$ can be rewritten as
  \begin{equation}
    \sum_{m,n \geq 1} \frac{1}{(n+m)^{s + k - 1}}\bigg( a(n) \overline{b(n)} + \sum_{\ell
    \geq 1} a(n) \overline{b(n - \ell)} + \sum_{\ell \geq 1} a(n - \ell)
  \overline{b(n)}\bigg).
  \end{equation}
  Decouple $(n+m)^{-(s + k - 1)}$ through the use of the Mellin-Barnes transform detailed
  in~\S\ref{ssec:mellinbarnes_decouple}.
  Restricting to $\gamma > 1$ and $\Re s$ sufficiently large, we can collect the $m$ sum
  into $\zeta(z)$ and the $n$ sum can be colleced into $W(s; f,g)$.
  Simplification completes the proof.
\end{proof}

As in the case of $D(s, S_f)$, the individual pieces $L(s, f\times \overline{g})
\zeta(2s)^{-1}$ and $Z(s, w, f\times \overline{g})$ are known to have meromorphic
continuation to the complex plane.
The Rankin--Selberg $L$-function is classical, 
and its meromorphic continuation is explained in~\S\ref{ssec:rankinselberg_lfunction}.
The meromorphic properties of $Z(s,w, f\times \overline{g})$ are extensively studied
in~\cite{HoffsteinHulse13}.
One should expect to be able to perform an analysis similar to the analysis for $D(s,
S_f)$ to study $D(s, S_f \times \overline{S_g})$, perhaps by shifting the line of $z$
integration and analyzing residue terms.

However, the shifted sums $Z(s, 0, f\times \overline{g})$ show miraculous cancellation
with the diagonal $L(s, f\times\overline{g}) \zeta(2s)^{-1}$ that does not occur in $Z(s,
w, f\times \overline{g})$ for general $w$.
We group $W(s; f,\overline{g})$ into a single object and study its analytic properties
in the next section.

\section{Analytic Behavior of $D(s, S_f \times \overline{S_g})$}\label{sec:analyticbehavior}
\index{Zs@$Z(s, w, f\times f)$}

In this section, we will understand the meromorphic continuation of $D(s, S_f \times
\overline{S_g})$ by studying the analytic properties of $W(s; f, \overline{g})$.
Although we work in level $1$, the methodology in this section generalizes to arbitrary
level and half-integral weight forms.
Therefore, we will use $\theta = \sup_j \{ \lvert \Im(t_j) \rvert \}$ to denote progress
towards Selberg's Eigenvalue Conjecture (as described in~\S\ref{ssec:selberg-spectral})
even though it is known that $\theta = 0$ in the level $1$ case.

We first produce a spectral expansion for the off-diagonal component, the symmetrized
shifted double Dirichlet series
\begin{equation}
  Z(s,w, f\times \overline{g}) := \sum_{m \geq 1} \sum_{\ell \geq 1} \frac{a(m)
  \overline{b(m - \ell)} + a(m - \ell)\overline{b(m)}}{m^{s + k - 1} \ell^w}.
\end{equation}
We then use this to understand the analytic behavior of $W(s; f, \overline{g})$ and, from
this, the analytic behavior of $D(s, S_f \times \overline{S_g})$.

\subsubsection{Spectral expansion}

For each integer $h \geq 1$, define the weight zero Poincar\'{e} series on $\Gamma$,
\index{Poincar\'{e} series}
\begin{equation}
  P_h(z,s):=\sum_{\gamma \in\Gamma_\infty \backslash \Gamma} \Im(\gamma z)^s e\left(h
  \gamma z\right),
\end{equation}
defined initially for $\Re(s)$ sufficiently large, but with meromorphic continuation to
all $s\in\mathbb{C}$.

Recall $T_{-1}$ from~\S\ref{ssec:rankinselberg_lfunction}.
Let $\mathcal{V}_{f,\overline{g}}(z) :=y^k (f \overline{g}+T_{-1}(f\overline{g}))$.
Note that $\mathcal{V}_{f, \overline{g}}(z) \in L^2(\Gamma \backslash \mathcal{H})$, so
the Petersson inner product $\langle \mathcal{V}_{f, \overline{g}}, P_h(\cdot,
\overline{s}) \rangle$ converges.
By expanding this inner product, we get
\begin{equation}
  \langle \mathcal{V}_{f,\overline{g}} ,P_h(\cdot, \overline{s}) \rangle
  =\frac{\Gamma\left({s}+k -1\right)}{\left(4\pi \right)^{{s}+ k -1}}
  D_{f,\overline{g}}({s};h),
\end{equation}
where we define $D_{f, \overline{g}}$ to have analogous notation as
in~\cite{HoffsteinHulse13},
\index{D@$D_{f,f}(s;h)$}
\begin{equation}
  D_{f,g}(s;h) := \sum_{n \geq
  1}\frac{a(n)\overline{b(n-h)}+a(n-h)\overline{b(n)}}{n^{s+k-1}},
\end{equation}
which converges absolutely for $\Re s$ sufficiently positive.
Dividing by $h^w$ and summing over $h \geq 1$ recovers $Z(s, w, f\times \overline{g} )$,
\begin{equation}\label{eq:Zsw_poincare}
  Z(s, w, f\times \overline{g} ) := \sum_{n, h \geq 1} \frac{D_{f, \overline{g}
  }(s;h)}{h^w} = \frac{(4\pi)^{s+k-1}}{\Gamma(s+k-1)} \sum_{h \geq 1} \frac{\langle
  \mathcal{V}_{f,\overline{g} }, P_h \rangle}{h^w},
\end{equation}
for $\Re s$ and $\Re w$ sufficiently positive.

We will obtain a meromorphic continuation of $Z(s, w, f\times \overline{g} )$ by using the
spectral expansion of the Poincar\'e series and substituting it
into~\eqref{eq:Zsw_poincare}.
Let $\{\mu_j\}$ be an orthonormal basis of Maass eigenforms with associated types
$\frac{1}{2} + it_j$ for $L^2(\Gamma \backslash \mathcal{H})$ as in
\S\ref{ssec:selberg-spectral}, each with Fourier expansion
\begin{equation}
  \mu_j(z)=\sum_{n \neq 0} \rho_j(n)y^{\frac{1}{2}}K_{it_j}(2\pi \vert n \vert y)
  e^{2\pi i n x}.
\end{equation}

The spectral expansion of the Poincar\'{e} series is given by
\begin{align}\label{eq:Pspectral}
  \begin{split}
    P_h(z,s)&=\sum_j \langle P_h(\cdot,s),\mu_j \rangle \mu_j(z) \\
            &\quad + \frac{1}{4\pi}\int_{-\infty}^\infty\langle
    P_h(\cdot,s),E(\cdot,\tfrac{1}{2}+it)\rangle E(z,\tfrac{1}{2}+it)\,dt.
  \end{split}
\end{align}
We shall refer to the above sum and integral as the discrete and continuous spectrum,
respectively, similar to the convention in~\S\ref{ssec:selberg-spectral}.

The inner product of $\mu_j$ against the Poincar\'{e} series gives
\begin{equation}\label{eq:mujP}
  \langle P_h(\cdot,s),\mu_j \rangle = \frac{\overline{\rho_j(h)}\sqrt{\pi}}{(4\pi
  h)^{s-\frac{1}{2}}}
  \frac{\Gamma(s-\frac{1}{2}+it_j)\Gamma(s-\frac{1}{2}-it_j)}{\Gamma(s)}.
\end{equation}
\begin{remark}
  In the computation of this inner product and the inner product of the Eisenstein series
  against the Poincar\'e series, we use formula~\cite[$\S$6.621(3)]{GradshteynRyzhik07} to
  evaluate the final integrals.
\end{remark}

%

Let $E(z,w)$ be the Eisenstein series on $\SL(2, \mathbb{Z})$, given by
\begin{equation}
  E(z,w) = \sum_{\gamma \in \Gamma_\infty \backslash \SL(2, \mathbb{Z})} (\Im \gamma z)^s.
\end{equation}
Then $E(z,w)$ has Fourier expansion (as in~\cite[Chapter 3]{Goldfeld2006automorphic})
\begin{align}\label{eq:sums:E_Fourier}
  E(z,w)& =y^w + \phi(w)y^{1-w}  \\
  &\quad +  \frac{2\pi^w \sqrt{y} }{\Gamma(w)\zeta(2w)}\sum_{m \neq 0} \vert m
  \vert^{w-\frac{1}{2}}\sigma_{1-2w}(\vert m \vert) K_{w-\frac{1}{2}}(2\pi \vert m \vert
  y)e^{2\pi i m x}, \notag
\end{align}
where
\begin{equation}
  \phi(w) = \sqrt{\pi} \frac{\Gamma(w - \tfrac{1}{2})\zeta(2w - 1)}{\Gamma(w)\zeta(2w)}.
\end{equation}
The Petersson inner product of the Poincar\'{e} series ($h \geq 1$) against the Eisenstein
series $E(z,w)$ is given by
\begin{align}\label{eq:PhE}
  \left\langle P_h(\cdot,s),E(\cdot,w)\right\rangle =\frac{2\pi^{\overline{w}+\frac{1}{2}}
  h^{\overline{w}-\frac{1}{2}}\sigma_{1-2\overline{w}}(h)}{\zeta(2\overline{w})(4\pi
  h)^{s-\frac{1}{2}}}\frac{\Gamma(s+\overline{w}-1)\Gamma(s-\overline{w})}{\Gamma(\overline{w})\Gamma(s)},
\end{align}
provided that $\Re s >\frac{1}{2}+\vert \Re w-\frac{1}{2}\vert$.
For $t$ real, $w=\frac{1}{2}+it$, and $\Re s>\frac{1}{2}$, we can
specialize~\eqref{eq:PhE} to
\begin{equation}\label{eq:PhEspecialized}
  \langle P_h(\cdot,s),E(\cdot,\tfrac{1}{2}+it)\rangle=\frac{2\sqrt{\pi}
  \sigma_{2it}(h)}{\Gamma(s)(4\pi
  h)^{s-\frac{1}{2}}}\frac{\Gamma(s-\frac{1}{2}+it)\Gamma(s-\frac{1}{2}-it)}{h^{it}\zeta^*(1-2it)},
\end{equation}
in which $\zeta^*(2s):=\pi^{-s}\Gamma(s)\zeta(2s)$ denotes the completed zeta function.

Now that we have computed the inner products of the Eisenstein series and Maass forms with
the Poincar\'e series, we are ready to analyze the spectral expansion.
After substituting~\eqref{eq:mujP} into the discrete part of~\eqref{eq:Pspectral}, the
discrete spectrum takes the form
\begin{equation}
  \frac{\sqrt{\pi}}{(4\pi h)^{s-\frac{1}{2}}\Gamma(s)}\sum_j \overline{\rho_j(h)}
  \Gamma(s-\tfrac{1}{2}+it_j)\Gamma(s-\tfrac{1}{2}-it_j) \mu_j(z)
\end{equation}
and is analytic in $s$ in the right half-plane $\Re s> \frac{1}{2}+\theta$.
After inserting~\eqref{eq:PhEspecialized}, the continuous spectrum takes the form
\begin{equation}
  \frac{\sqrt{\pi}}{2\pi(4\pi h)^{s-\frac{1}{2}}}\int_{-\infty}^\infty
  \frac{\sigma_{2it}(h)}{h^{it}}\frac{\Gamma(s-\frac{1}{2}+it)
  \Gamma(s-\frac{1}{2}-it)}{\zeta^*(1-2it)\Gamma(s)}E(z,\tfrac{1}{2}+it)\,dt,
\end{equation}
which is analytic in $s$ for $\Re s > \frac{1}{2}$ and has apparent poles when $\Re
s=\frac{1}{2}$.

Substiting this spectral expansion into~\eqref{eq:Zsw_poincare} and summing over $h \geq
1$ recovers an expression for all of $Z(s, w, f\times \overline{g})$.
Recognizing the Dirichlet series (as described in~\cite{Goldfeld2006automorphic}, for
instance)
\begin{align}
  \sum_{h \geq 1} \frac{\rho_j(h)}{h^{s + w - \frac{1}{2}}} &= L(s + w - \tfrac{1}{2},
  \mu_j) \\
  \sum \frac{\sigma_{1-2w}(h)}{h^{s + \frac{1}{2} - w}} &= L(s, E(\cdot, w)) = \zeta(s + w
  - \tfrac{1}{2})\zeta(s - w + \tfrac{1}{2}),
\end{align}
we are able to execute the $h$ sum completely.
Assembling it all together, we have proved the following proposition.

\begin{proposition}\label{prop:spectralexpansionfull}
  For $f,g$ weight $k$ forms on $\SL_2(\mathbb{Z})$, the shifted convolution sum $Z(s, w,
  f\times \overline{g} )$ can be expressed as
\begin{align}
  Z(s&, w, f\times \overline{g} ) := \sum_{m=1}^\infty \frac{a(m)
  \overline{b(m-h)}+a(m-h)\overline{b(m)} }{m^{s+k -1}h^w} \nonumber \\
  &= \frac{(4\pi )^k}{2} \sum_j\rho_j(1) G(s, i t_j) L(s + w -\tfrac{1}{2},\mu_j)\langle
\mathcal{V}_{f,\overline{g} },\mu_j \rangle \label{line:1spectralexp} \\
  &\quad+\frac{(4\pi)^{k}}{4\pi i}\int_{(0)} G(s, z) \mathcal{Z}(s,w,z) \langle
  \mathcal{V}_{f,\overline{g} },E(\cdot,\tfrac{1}{2}-\overline{z})\rangle \,dz,
  \label{line:2spectralexp}
\end{align}
when $\Re (s+w)>\frac{3}{2}$, where $G(s, z)$ and $\mathcal{Z} (s,w,z)$ are the collected
$\Gamma$ and $\zeta$ factors of the discrete and continuous spectra,
\begin{align*}
  G(s, z) &= \frac{\Gamma(s - \tfrac{1}{2} + z)\Gamma(s - \tfrac{1}{2} -
  z)}{\Gamma(s)\Gamma(s+k-1)} \\
  \mathcal{Z}(s,w,z) &= \frac{\zeta(s + w -\frac{1}{2} + z)\zeta(s + w -\frac{1}{2} -
  z)}{\zeta^*(1+2z)}.
\end{align*}
\end{proposition}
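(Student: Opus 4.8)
The plan is to substitute the Selberg spectral expansion of the Poincar\'e series into the Poincar\'e-series formula~\eqref{eq:Zsw_poincare} for $Z(s,w,f\times\overline{g})$ and then carry out the resulting sum over $h$. I would work first in a region where $\Re s$ and $\Re w$ are both large, so that every sum and integral below converges absolutely. Pairing the square-integrable function $\mathcal{V}_{f,\overline{g}}$ against the spectral expansion~\eqref{eq:Pspectral} of $P_h$ and using Parseval on $L^2(\Gamma\backslash\mathcal{H})$ gives
\begin{equation*}
  \langle\mathcal{V}_{f,\overline{g}},P_h\rangle = \sum_j \overline{\langle P_h,\mu_j\rangle}\,\langle\mathcal{V}_{f,\overline{g}},\mu_j\rangle + \frac{1}{4\pi}\int_{-\infty}^{\infty} \overline{\langle P_h,E(\cdot,\tfrac12+it)\rangle}\,\langle\mathcal{V}_{f,\overline{g}},E(\cdot,\tfrac12+it)\rangle\,dt .
\end{equation*}
Inserting the explicit evaluations~\eqref{eq:mujP} and~\eqref{eq:PhEspecialized} of the Poincar\'e inner products --- with the conjugations and the choice of argument $\overline{s}$ in $P_h$ arranged so that $\rho_j(h)$ rather than $\overline{\rho_j(h)}$ appears --- reduces the $h$-dependence of each spectral term to a single power $h^{-(s-1/2)}$ times an arithmetic factor ($\rho_j(h)$ in the discrete part, $\sigma_{2it}(h)h^{-it}$ in the continuous part).

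Next I would divide by $h^w$, multiply through by $(4\pi)^{s+k-1}/\Gamma(s+k-1)$ as in~\eqref{eq:Zsw_poincare}, and interchange the sum over $h$ with the spectral sum and integral. The discrete $h$-sum then becomes $\sum_{h\geq1}\rho_j(h)h^{-(s+w-1/2)}=\rho_j(1)L(s+w-\tfrac12,\mu_j)$ and the continuous $h$-sum becomes $\sum_{h\geq1}\sigma_{2it}(h)h^{-(s+w-1/2+it)}=\zeta(s+w-\tfrac12+it)\zeta(s+w-\tfrac12-it)$, the two Dirichlet series recorded just after~\eqref{eq:sums:E_Fourier}. Collecting the surviving ratios of Gamma factors into $G(s,z)$, the zeta factors (including the completed $\zeta^{*}$ coming from~\eqref{eq:PhEspecialized}) into $\mathcal{Z}(s,w,z)$, writing $z=it$ so that $\tfrac{1}{4\pi}\int_{-\infty}^\infty(\cdots)\,dt=\tfrac{1}{4\pi i}\int_{(0)}(\cdots)\,dz$ with $\tfrac12+it=\tfrac12-\overline{z}$ on the contour, and bookkeeping the powers of $4\pi$ and $\sqrt{\pi}$ (which combine to $(4\pi)^k/2$ in the discrete part) should produce exactly the stated identity. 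Each $h$-sum converges for $\Re(s+w)>\tfrac32$, which is the source of that constraint; the identity then extends to that region by analytic continuation from the range of joint absolute convergence in $s$ and $w$.

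The hard part will be justifying the two interchanges --- pulling $\langle\mathcal{V}_{f,\overline{g}},\cdot\rangle$ through the spectral expansion, and pulling $\sum_h$ through the spectral sum and integral --- while keeping track of the complex conjugations so that the arguments of $L(s+w-\tfrac12,\mu_j)$, of $\zeta^{*}(1+2z)$ in the denominator of $\mathcal{Z}$, and of $E(\cdot,\tfrac12-\overline{z})$ all emerge with the correct signs. Both interchanges should follow from absolute convergence once one invokes Stirling's formula (which gives exponential decay of $G(s,z)$ in $\Im z$ and in the spectral parameters $t_j$), the Weyl law for the $t_j$, and standard polynomial bounds for $\rho_j(h)$ and for the spectral coefficients $\langle\mathcal{V}_{f,\overline{g}},\mu_j\rangle$ and $\langle\mathcal{V}_{f,\overline{g}},E(\cdot,\tfrac12+it)\rangle$; the remaining steps are routine Dirichlet-series and Gamma-function manipulations.
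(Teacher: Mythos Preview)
Your proposal is correct and follows essentially the same route as the paper: substitute the spectral expansion~\eqref{eq:Pspectral} of $P_h$ into~\eqref{eq:Zsw_poincare}, use the explicit inner products~\eqref{eq:mujP} and~\eqref{eq:PhEspecialized}, execute the $h$-sum to produce $L(s+w-\tfrac12,\mu_j)$ and the product of zetas, and collect the resulting Gamma and zeta factors into $G$ and $\mathcal{Z}$. The justification of the interchanges that you flag as the ``hard part'' is exactly what the paper handles in Remark~\ref{rem:extraremark} immediately following the proposition, via Stirling and Watson's triple product bound.
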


\begin{remark}\label{rem:extraremark}
Let's verify that this spectral expansion converges.
Recall Stirling's approximation: for $x,y \in \mathbb{R}$,
\index{Stirling's approximation}
\begin{equation}
  \gamma(x+iy) \sim (1+|y|)^{x-\frac{1}{2}}e^{-\frac{\pi}{2}|y|}
\end{equation}
as $y \to \pm\infty$ with $x$ bounded.
For vertical strips in $s$ and $z$,
\begin{equation}
  G(s,z) \sim P(s,z) e^{-\frac{\pi}{2}(2\max(|s|,|z|)-2|s|)},
\end{equation}
where $P(s,z)$ has at most polynomial growth in $s$ and $z$.
When $k$ is a full-integer, Watson's triple product formula (given in Theorem 3
of~\cite{watson2008rankin}) shows that
\begin{equation}
  \rho_j(1)\langle f\overline{g} \Im(\cdot)^k,\overline{\mu_j}\rangle, \quad \text{and}
  \quad  \rho_j(1)\langle T_{-1}(f\overline{g}) \Im(\cdot)^k,\overline{\mu_j}\rangle
\end{equation}
has at most polynomial growth in $|t_j|$.
When $k$ is a half-integer, K\i{}ral's bound (given in Proposition~13
of~\cite{mehmet2015}) also proves polynomial growth, albeit of a higher degree.
Through direct computation with the associated Rankin--Selberg $L$-function, 
the same can be said about
\begin{equation}
  \langle \mathcal{V}_{f,\overline{g} }, E(\cdot,\tfrac{1}{2}+z)\rangle / \zeta^*(1+2z).
\end{equation}
Both~\eqref{line:1spectralexp} and~\eqref{line:2spectralexp} converge uniformly on
vertical strips in $t_j$ and have at most polynomial growth in $s$.
\end{remark}

We will now specialize to $w = 0$ and analyze the meromorphic properties of $Z(s, 0,
f\times \overline{g})$.
This very naturally breaks into two parts: the contribution from the discrete spectrum (in
line~\eqref{line:1spectralexp}) and the contribution from the continuous spectrum (in
line~\eqref{line:2spectralexp}).

\subsubsection{Meromorphic continuation of $Z(s, 0, f\times \overline{g})$: discrete spectrum}

Examination of line~\eqref{line:1spectralexp}, the contribution from the discrete
spectrum, reveals that the poles come only from $G(s, it_j)$.
There are apparent poles when $s = \tfrac{1}{2} \pm it_j - n$ for $n \in \mathbb{Z}_{\geq
0}$.
Interestingly, the first set of apparent poles at $s = \frac{1}{2} \pm it_j$ do not
actually occur.

\begin{lemma}\label{lem:Litj_equals_zero}
  For even Maass forms $\mu_j$, we have $L(-2n \pm it_j, \mu_j) = 0$ for $n \in
  \mathbb{Z}_{\geq 0}$.
\end{lemma}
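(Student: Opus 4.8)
My plan is to deduce the vanishing from the functional equation of the standard $L$-function of $\mu_j$. I would invoke the classical fact that, for $\mu_j$ an even Hecke--Maass cusp form with spectral parameter $t_j$, the completed $L$-function
\[
  \Lambda(s,\mu_j) \;=\; \pi^{-s}\,\Gamma\!\Big(\frac{s+it_j}{2}\Big)\,\Gamma\!\Big(\frac{s-it_j}{2}\Big)\,L(s,\mu_j)
\]
extends to an \emph{entire} function of $s$ (and satisfies a functional equation $s\mapsto 1-s$); see, e.g., \cite{Goldfeld2006automorphic, Bump98, IwaniecKowalski04}. The feature that matters is the exact shape of the gamma factor: for an \emph{even} form it is $\Gamma(\tfrac{s+it_j}{2})\Gamma(\tfrac{s-it_j}{2})$ with no shift (odd forms instead carry $\Gamma(\tfrac{s+1\pm it_j}{2})$), and this is what forces the trivial zeros to sit at even negative integers translated by $\pm it_j$. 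If one prefers to avoid quoting the functional equation, I would note that $\Lambda(s,\mu_j)$ equals, up to a nonzero constant, the Mellin transform $\int_0^\infty \mu_j(iy)\, y^{s-\frac12}\,\frac{dy}{y}$, which converges for every $s$ because a cusp form decays rapidly both as $y\to\infty$ and, by modularity, as $y\to 0$; hence $\Lambda(s,\mu_j)$ is entire.

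Granting this, the rest is immediate. The factor $\Gamma(\tfrac{s-it_j}{2})$ has a simple pole at each point $s_0 = it_j - 2n$, $n\in\mathbb{Z}_{\ge 0}$. At such a point $\pi^{-s_0}$ is finite and nonzero, and the companion factor $\Gamma(\tfrac{s_0+it_j}{2}) = \Gamma(it_j - n)$ is finite and nonzero as well: indeed $it_j$ is never an integer unless $t_j=0$, since $it_j\in\mathbb{Z}$ would make the Laplace eigenvalue $\tfrac14 + t_j^2 = \tfrac14 - (it_j)^2$ either negative or equal to $\tfrac14$, and the eigenvalue $\tfrac14$ does not occur for congruence subgroups. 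Since $\Lambda(s,\mu_j)$ is holomorphic at $s_0$, I would write
\[
  L(s,\mu_j) \;=\; \frac{\pi^{s}\,\Lambda(s,\mu_j)}{\Gamma(\tfrac{s+it_j}{2})\,\Gamma(\tfrac{s-it_j}{2})}
\]
and observe that near $s_0$ the right-hand side is holomorphic and vanishes simply, because $1/\Gamma(\tfrac{s-it_j}{2})$ has a simple zero at $s_0$ while the other factors are holomorphic and nonzero there; hence $L(it_j - 2n,\mu_j)=0$. Running the same argument with the poles of $\Gamma(\tfrac{s+it_j}{2})$ at $s = -it_j - 2n$ gives $L(-it_j - 2n,\mu_j)=0$, and together these are precisely $L(-2n\pm it_j,\mu_j)=0$ for all $n\ge 0$. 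This is exactly the cancellation that removes the first family of apparent poles of $G(s,it_j)$ in line~\eqref{line:1spectralexp}.

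I do not expect a genuine obstacle here; the steps requiring care are bookkeeping. I would note that the normalization of $L(s,\mu_j)$ matches the one in the excerpt (analytic normalization, functional equation $s\mapsto 1-s$), so the nonzero scalar $\rho_j(1)$ relating $\sum_h\rho_j(h)h^{-s}$ to $L(s,\mu_j)$ is irrelevant to the location of zeros; I would make sure to use the even-form gamma factor rather than the odd-form one; and I would dispose of the degenerate case $t_j=0$ (which does not arise for congruence subgroups) by observing that then the factor is $\Gamma(s/2)^2$, with a \emph{double} pole at $s=-2n$, so $L(-2n,\mu_j)$ merely vanishes to order two and the conclusion still stands.
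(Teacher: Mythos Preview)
Your proof is correct and follows essentially the same approach as the paper: both arguments invoke the entirety of the completed $L$-function $\Lambda_j(s)=\pi^{-s}\Gamma(\tfrac{s+it_j}{2})\Gamma(\tfrac{s-it_j}{2})L(s,\mu_j)$ for an even Maass form and deduce that $L(-2n\pm it_j,\mu_j)$ must vanish to cancel the poles of the gamma factors. Your version is somewhat more careful in explicitly verifying that the companion gamma factor is finite and nonzero at the relevant points (so the cancellation really must come from $L$), and in disposing of the degenerate case $t_j=0$, but the underlying idea is identical.
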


\begin{proof}
  The completed $L$-function associated to a Maass form $\mu_j$ is given by
  \begin{equation} \label{eq:feq}
    \Lambda_j(s) = \pi^{-s} \Gamma\left( \tfrac{s + \epsilon + it_j}{2}
    \right)\Gamma\left( \tfrac{s + \epsilon - it_j}{2} \right) L(s, \mu_j) = (-1)^\epsilon
    \Lambda_j(1-s),
  \end{equation}
  as in~\cite[Sec 3.13]{Goldfeld2006automorphic}, where $\epsilon = 0$ if the Maass form
  $\mu_j$ is even and $1$ if it is odd.
  In the case of an even Maass form, the functional equation is of shape
  \begin{equation}
    \Lambda_j(s) = \pi^{-s} \Gamma\left( \tfrac{s + it_j}{2} \right)\Gamma\left( \tfrac{s
    - it_j}{2} \right) L(s, \mu_j) = \Lambda_j(1-s).
  \end{equation}
  The completed $L$-function is entire.
  Thus $L(-2n \pm it_j, \mu_j)$ must be trivial zeroes to cancel the apparent poles at $s
  =-2n \pm it_j$ from the Gamma functions.
\end{proof}

It turns out that there are no poles appearing from odd Maass forms due to the symmetry of
the above-diagonal and below-diagonal terms in $\mathcal{V}_{f, \overline{g}}$.

\begin{lemma}\label{lem:oddorthogonaltoeven}
  Suppose $f$ and $g$ are weight $k$ cusp forms, as above. For odd Maass forms $\mu_j$, we
  have $\langle \mathcal{V}_{f,\overline{g}}, \mu_j \rangle = 0$.
\end{lemma}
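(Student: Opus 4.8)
The plan is to exploit the $T_{-1}$-symmetry that was deliberately built into $\mathcal{V}_{f,\overline{g}} = y^k\bigl(f\overline{g} + T_{-1}(f\overline{g})\bigr)$. The point is that $\mathcal{V}_{f,\overline{g}}$ lies in the $(+1)$-eigenspace of $T_{-1}$ essentially by construction, whereas an odd Maass form lies in the $(-1)$-eigenspace; since $T_{-1}$ is self-adjoint for the Petersson inner product, eigenfunctions with different eigenvalues are orthogonal, and the result follows.

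Concretely, I would first record two structural facts. First, the operator $T_{-1}\colon F(x+iy)\mapsto F(-x+iy)$ is an involution on $L^2(\Gamma\backslash\mathcal{H})$: it commutes with multiplication by any function of $y$ alone (in particular with $y^k$), it satisfies $T_{-1}(F\overline{G}) = (T_{-1}F)\,\overline{(T_{-1}G)}$, and, because the change of variables $x\mapsto -x$ preserves the measure $dx\,dy/y^2$ while $\Gamma$ (here $\SL(2,\mathbb{Z})$, and likewise the congruence subgroups used later) is stable under conjugation by $\operatorname{diag}(-1,1)$, it preserves $\Gamma\backslash\mathcal{H}$; hence $\langle T_{-1}F, G\rangle = \langle F, T_{-1}G\rangle$ for all $F,G\in L^2(\Gamma\backslash\mathcal{H})$. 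Second, by definition an odd Maass form $\mu_j$ satisfies $T_{-1}\mu_j = -\mu_j$, and recall that in \S\ref{ssec:selberg-spectral} the basis $\{\mu_j\}$ was chosen to consist of $T_{-1}$-eigenfunctions, so the even/odd dichotomy is well defined.

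With these in hand the computation is immediate: writing $\mathcal{V}_{f,\overline{g}} = (1+T_{-1})\bigl(y^k f\overline{g}\bigr)$ and using that $1+T_{-1}$ is self-adjoint,
\[
\langle \mathcal{V}_{f,\overline{g}}, \mu_j\rangle
= \bigl\langle (1+T_{-1})\bigl(y^k f\overline{g}\bigr), \mu_j\bigr\rangle
= \bigl\langle y^k f\overline{g}, (1+T_{-1})\mu_j\bigr\rangle
= \bigl\langle y^k f\overline{g}, \mu_j - \mu_j\bigr\rangle = 0 .
\]
There is no real obstacle here; the only point deserving a sentence of care is the verification that $T_{-1}$ is a well-defined self-adjoint operator on $L^2(\Gamma\backslash\mathcal{H})$, which reduces to the stability of $\Gamma$ under $\operatorname{diag}(-1,1)$-conjugation — automatic in level one and true for the congruence subgroups appearing in the sequel — together with the fact that $y^k f\overline{g}$ is genuinely in $L^2$ (both $f$ and $g$ being cusp forms, so that $\mathcal{V}_{f,\overline{g}}$ decays in the cusp), which was already noted just before the statement.
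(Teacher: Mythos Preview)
Your proof is correct and is essentially the same as the paper's: both use that $T_{-1}$ is self-adjoint, that $\mathcal{V}_{f,\overline{g}}$ is $T_{-1}$-invariant, and that $T_{-1}\mu_j=-\mu_j$. The only cosmetic difference is that you move $(1+T_{-1})$ across the inner product, whereas the paper moves $T_{-1}$ itself to conclude $\langle\mathcal{V}_{f,\overline{g}},\mu_j\rangle=-\langle\mathcal{V}_{f,\overline{g}},\mu_j\rangle$.
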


\begin{proof}
  Recall that $\mathcal{V}_{f,g} :=y^k(f\overline{g}+T_{-1}(f\overline{g}))$, so clearly
  $T_{-1}\mathcal{V}_{f,\overline{g} }=\mathcal{V}_{f,\overline{g} }$.
  Recall also that $T_{-1} \mu_j = -\mu_j$ for odd Maass forms $\mu_j$ (in fact, this is
  the defining characteristic of an odd Maass form).
  Since $T_{-1}$ is a self-adjoint operator with respect to the Petersson inner product we
  have that
  \begin{equation}
    \langle \mathcal{V}_{f,\overline{g} },\mu_j \rangle
    = \langle T_{-1}\mathcal{V}_{f,\overline{g} },\mu_j \rangle
    = \langle \mathcal{V}_{f,\overline{g} },T_{-1}\mu_j \rangle
    = - \langle \mathcal{V}_{f,\overline{g} },\mu_j \rangle.
  \end{equation}
  Thus $\langle \mathcal{V}_{f,\overline{g} },\mu_j \rangle  =0$.
\end{proof}

In the special case when $f = g$, it is possible to show that odd Maass forms $\mu_j$ do
not contribute poles in \emph{either} the above-diagonal or below-diagonal terms in
$\mathcal{V}_{f, \overline{f}}$.
We record this observation as a corollary to the above lines of thought, even though it is
not necessary for the applications in this thesis.

\begin{corollary*}
  Suppose $f$ is a full-integral weight cuspidal Hecke eigenform, not necessarily with
  real coefficients.
  Then for odd Maass forms $\mu_j$, we have $\langle \lvert f \rvert^2 \Im(\cdot)^k, \mu_j
  \rangle = 0$.
  Similarly, we have $\langle f^2 \Im(\cdot)^k, \mu_j \rangle = 0$.
\end{corollary*}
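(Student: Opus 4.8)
The plan is to re-run the $T_{-1}$-self-adjointness argument of Lemma~\ref{lem:oddorthogonaltoeven}, after first sorting out which of the two quadratic objects is genuinely fixed by $T_{-1}$ and which one needs an extra input. Write $f^\rho(z):=\sum_n\overline{a(n)}e(nz)$ for the cusp form obtained by conjugating the Fourier coefficients of $f$. Using $T_{-1}F(x+iy)=F(-x+iy)$ one computes directly that $f(-\bar z)=\overline{f^\rho(z)}$, hence $T_{-1}\bigl(y^k f\overline f\bigr)=y^k f^\rho\overline{f^\rho}=y^k|f^\rho|^2$, whereas $T_{-1}\bigl(y^k f\cdot T_{-1}f\bigr)=y^k(T_{-1}f)\cdot f=y^k f\cdot T_{-1}f$. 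That is, the object $y^k f\cdot T_{-1}f$ --- which is what I read ``$f^2\Im(\cdot)^k$'' to mean, namely the analogue for $L(s,f\times g)$ of the Rankin--Selberg object $\mathcal V_{f,\overline g}$ attached to $L(s,f\times\overline g)$ --- is \emph{already} $T_{-1}$-fixed. So the ``$f^2$'' statement drops out at once: since $T_{-1}$ is self-adjoint and $T_{-1}\mu_j=-\mu_j$ for odd $\mu_j$, we get $\langle y^k f\cdot T_{-1}f,\mu_j\rangle=\langle y^k f\cdot T_{-1}f,T_{-1}\mu_j\rangle=-\langle y^k f\cdot T_{-1}f,\mu_j\rangle$, forcing the inner product to vanish. (This part uses neither that $f$ is an eigenform nor anything about its coefficients.)

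The real content is the $|f|^2$ statement. Here $T_{-1}$ only relates $y^k|f|^2$ to $y^k|f^\rho|^2$, so the bare symmetry argument delivers just $\langle y^k|f|^2,\mu_j\rangle+\langle y^k|f^\rho|^2,\mu_j\rangle=0$, which is Lemma~\ref{lem:oddorthogonaltoeven} again and does not split. In the level-one setting of this chapter the gap is cosmetic: a cuspidal Hecke eigenform on $\SL(2,\mathbb Z)$ automatically has real Fourier coefficients (the Hecke operators are self-adjoint), so $f^\rho=f$, the function $y^k|f|^2$ is itself $T_{-1}$-fixed, and the one-line argument above applies verbatim. To get the statement as written --- allowing genuinely complex coefficients, i.e.\ higher level with nebentypus --- I would instead identify $\langle y^k|f|^2,\mu_j\rangle$ with a triple product period and invoke Watson's triple product formula, already used in Remark~\ref{rem:extraremark}: $\bigl|\langle y^k|f|^2,\mu_j\rangle\bigr|^2$ is a nonzero constant times $L(\tfrac12,\mathrm{Ad}\,f\times\mu_j)\,L(\tfrac12,\mu_j)$ over nonvanishing adjoint $L$-values, via the factorization $\pi_f\times\overline{\pi_f}=\mathrm{Ad}\,\pi_f\boxplus\mathbf 1$ (this is where the eigenform hypothesis enters). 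For an odd Maass cusp form the completed $L$-function satisfies $\Lambda_j(s)=-\Lambda_j(1-s)$ --- the $\epsilon=1$ case of the functional equation recalled in the proof of Lemma~\ref{lem:Litj_equals_zero} --- so $L(\tfrac12,\mu_j)=0$ and the period vanishes.

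The main obstacle is exactly this splitting in the $|f|^2$ case: the symmetrization built into $\mathcal V_{f,\overline f}$ is precisely strong enough to see that the sum of the above- and below-diagonal contributions is orthogonal to odd forms, and extracting the orthogonality of the individual diagonal term requires either the (level-one-automatic) reality of the coefficients or the central-value vanishing $L(\tfrac12,\mu_j)=0$ together with Watson's formula. Since the corollary is used nowhere else in the thesis, a reasonable alternative is to present only the clean level-one argument --- the $f\cdot T_{-1}f$ observation plus reality of the coefficients --- and record the fully general case as a remark pointing to Watson's formula.
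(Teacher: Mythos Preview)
Your argument for the $|f|^2$ case is exactly the paper's: both invoke Watson's triple product formula, factor off $L(\tfrac12,\mu_j)$ from $L(\tfrac12,f\times\overline f\times\mu_j)$, and use the sign of the functional equation for odd Maass forms to kill it. Your level-one shortcut via reality of Hecke eigenvalues is a pleasant extra observation the paper does not make.

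For the second statement you and the paper diverge, and the divergence is really about what ``$f^2$'' denotes. The paper treats $f^2$ literally as $f\cdot f$ and runs Watson a second time: the triple product $L(\tfrac12,f\times f\times\mu_j)$ factors as $L(\tfrac12,\mathrm{Sym}^2 f\times\mu_j)\,L(\tfrac12,\mu_j)$, and the second factor vanishes for the same parity reason. Your reading --- $y^k f\cdot T_{-1}f$, the natural non-conjugate Rankin--Selberg object --- admits the clean one-line $T_{-1}$-symmetry proof you give, but this establishes a slightly different assertion than the one written. If the corollary is meant literally (as the appearance of $\mathrm{Sym}^2 f$ in the paper's sketch suggests), Watson is needed for both halves and the eigenform hypothesis is genuinely used twice; under your interpretation, the second half becomes elementary and needs no eigenform assumption at all. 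Either way your analysis of the $|f|^2$ half is on the mark.
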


\begin{proof}
  We sketch the proof. From Watson's well-known triple product
  formula~\cite{watson2008rankin}, we have
  \begin{equation}
    \langle \lvert f \rvert^2 \Im(\cdot)^k, \mu_j \rangle^2 \sim \frac{L(\frac{1}{2},
    f\times \overline{f} \times \mu_j)}{L(1, f, \text{Ad}) L(1, \overline{f}, \text{Ad})
    L(1, \mu_j, \text{Ad})}
  \end{equation}
  up to multiplication by a nonzero constant coming from the missing Gamma factors.
  The $L$-functions in the denominator are all nonzero, and the numerator factors as
  \begin{equation}
    L(\tfrac{1}{2}, f \times \overline{f} \times \mu_j) = L(\tfrac{1}{2}, \text{Ad}^2 f
    \times \mu_j) L(\tfrac{1}{2}, \mu_j).
  \end{equation}
  Since $\mu_j$ is odd, $L(\tfrac{1}{2}, \mu_j) = 0$ by the functional equation for odd
  Maass forms, given in~\eqref{eq:feq}.

  Applying Watson's triple product to $\langle f^2 \Im(\cdot)^k, \mu_j \rangle$ yields the
  numerator
  \begin{equation}
    L(\tfrac{1}{2}, \text{Sym}^2 f \times \mu_j) L(\tfrac{1}{2}, \mu_j),
  \end{equation}
  which is zero for the same reason.
\end{proof}

Lemma~\ref{lem:oddorthogonaltoeven} guarantees that the only Maass forms appearing in
line~\eqref{line:1spectralexp} are even. The first set of apparent poles from even Maass
forms appear at $s = \frac{1}{2} \pm it_j$ and occur as simple poles of the Gamma
functions in the numerator of $G(s, t_j)$. They come multiplied by the value of $L(it_j,
\mu_j)$, which by Lemma~\ref{lem:Litj_equals_zero} is zero.

In summary, $D(s, S_f \times S_g)$ has no poles at $s = \frac{1}{2} \pm it_j$.
The next set of apparent poles are at $s = -\frac{1}{2} \pm it_j$, appearing at the next
set of simple poles of the Gamma functions in the numerator.
Unlike the previous poles, these do not coincide with trivial zeroes of the $L$-function.
We have poles of the discrete spectrum at $s = -\frac{1}{2} \pm it_j$.

\subsubsection{Meromorphic continuation of $Z(s, 0, f\times \overline{g})$: continuous spectrum}

Examination of the line~\eqref{line:2spectralexp}, the contribution from the continuous
spectrum, is substantially more involved than the discrete spectrum.
It is here where the most remarkable cancellation occurs.
For ease of reference, we repeat this line, the part we call the continuous spectrum:
\begin{equation}\label{eq:line2-repeat}
  \frac{(4\pi)^{k}}{4\pi i}\int_{(0)} G(s, z) \mathcal{Z}(s,w,z) \langle
  \mathcal{V}_{f,\overline{g} },E(\cdot,\tfrac{1}{2}-\overline{z})\rangle \,dz
  \tag{\ref{line:2spectralexp}}
\end{equation}\noeqref{eq:line2-repeat} 
where $G(s, z)$ and $\mathcal{Z} (s,w,z)$ are the collected $\Gamma$ and $\zeta$ factors
\begin{align}
  G(s, z) = \frac{\Gamma(s - \tfrac{1}{2} + z)\Gamma(s - \tfrac{1}{2} -
  z)}{\Gamma(s)\Gamma(s+k-1)},
  \; \mathcal{Z}(s,w,z) = \frac{\zeta(s + w -\frac{1}{2} + z)\zeta(s + w -\frac{1}{2} -
  z)}{\zeta^*(1+2z)}.
\end{align}
The rightmost pole seems to occur from the pair of zeta functions in the numerator,
occurring when $s + w - \frac{1}{2} \pm z = 1$. We must disentangle $s$ and $w$ from $z$
in order to understand these poles.

Line~\eqref{line:2spectralexp} is analytic for $\Re (s+w) > \frac{3}{2}$, $\Re s >
\tfrac{1}{2}$.
As we will shortly set $w = 0$, we treat the boundary $\Re (s+w) > \frac{3}{2}$.
For $s$ with $\Re s \in (\frac{3}{2} - \Re w, \frac{3}{2} - \Re w + \epsilon)$ for some
very small $\epsilon$, we want to shift the contour of integration, avoiding poles coming
from the $\zeta^*(1 -2z)$ appearing in the denominator of the Fourier expansion of
$E(\cdot,\frac{1}{2}+\overline{z})$ (described in~\eqref{eq:sums:E_Fourier}).

We shift the $z$-contour to the right while staying within the zero-free region of
$\zeta(1 - 2z)$.
By an abuse of notation, we denote this shift here by $\Re z = \epsilon$ and let
$\epsilon$ in this context actually refer to the real value of the $z$-contour at the
relevant imaginary value.
This argument can be made completely rigorous, cf.~\cite[p. 481-483]{HoffsteinHulse13}.

We perform this shift in order to guarantee that the two poles in $z$ coming from $\zeta(s
+ w - \tfrac{1}{2} \pm z)$, occurring at $\pm z = \tfrac{3}{2} - s - w$, have different
real parts; simultaneously, we pass the pole with more positive real part, occurring at
$z = s + w - \frac{3}{2}$ from $\zeta(s + w - \tfrac{1}{2} - z)$.
By the residue theorem,
\begin{align}\label{eq:FEfirstpart}
  &\frac{(4\pi)^k}{4\pi i} \int_{(0)}G(s,z) \mathcal{Z}(s,w,z) \langle
  \mathcal{V}_{f,\overline{g} }, E\rangle  \ dz \\
  &= \frac{(4\pi)^k}{4\pi i}\int_{(\epsilon)} G \mathcal{Z} \langle
  \mathcal{V}_{f,\overline{g} }, E \rangle \  dz - \frac{(4\pi)^k}{2}\Res_{z = s + w -
  \frac{3}{2}} G \mathcal{Z} \langle \mathcal{V}_{f,\overline{g} }, E\rangle \nonumber,
\end{align}
where the above residue is found to be
\begin{equation}\label{eq:FEfirstresidue}
  -\frac{\zeta(2s + 2w - 2)\Gamma(2s + w -
  2)\Gamma(1-w)}{\zeta^*(2s+2w-2)\Gamma(s)\Gamma(s + k - 1)}\langle
\mathcal{V}_{f,\overline{g} }, E(\cdot, 2-\overline{s}-\overline{w})\rangle.
\end{equation}
The residue is analytic in $s$ for $\Re s \in (1 - \Re w, \tfrac{3}{2} - \Re w +
\epsilon)$, and has an easily understood meromorphic continuation to the whole plane.
Notice also that the shifted contour integral has no poles in $s$ for $\Re s \in
(\tfrac{3}{2} - \Re w - \epsilon, \tfrac{3}{2} - \Re w + \epsilon)$, so we have found an
analytic (not meromorphic!) continuation in $s$ of Line~\eqref{line:2spectralexp} past the
first apparent pole along $\Re s = \frac{3}{2} - \Re w$.

For $s$ with $\Re s \in (\frac{3}{2} - \Re w - \epsilon, \frac{3}{2} - \Re w)$, we shift
the contour of integration back to $\Re z = 0$.
Since this passes a pole, we pick up a residue.
But notice that this is the residue at the \emph{other} pole, $\tfrac{3}{2} - s - w$,
coming from $\zeta(s + w - \tfrac{1}{2} + z)$,
\begin{align}\label{eq:FEsecondpart}
  &\frac{(4\pi)^k}{4\pi i} \int_{(\epsilon)}G(s, w, z) \mathcal{Z}(s,w,z) \langle
  \mathcal{V}_{f,\overline{g} }, E\rangle dz  \\
  =&\frac{(4\pi)^k}{4\pi i}\int_{(0)} G \mathcal{Z} \langle \mathcal{V}_{f,\overline{g} },
  E \rangle dz + \frac{(4\pi)^k}{2}\Res_{z = \frac{3}{2} - s - w} G \mathcal{Z}  \langle
  \mathcal{V}_{f,\overline{g} }, E\rangle \nonumber.
\end{align}
By using the functional equations of the Eisenstein series and zeta functions, one can
check that
\begin{equation}
  \Res_{z = \frac{3}{2} - s - w} G \mathcal{Z} \langle \mathcal{V}_{f,\overline{g} },
  E\rangle = - \Res_{z = s + w -  \frac{3}{2}} G \mathcal{Z}
  \langle\mathcal{V}_{f,\overline{g} }, E\rangle,
\end{equation}
so the two residues combine together, and have well-understood meromorphic continuations
to the whole plane.
The shifted integral has clear meromorphic continuation until the next apparent poles at
$\Re s = \tfrac{1}{2}$ coming from the Gamma functions $\Gamma(s - \tfrac{1}{2} \pm z)$ in
the integrand.
Thus~\eqref{line:2spectralexp}, originally defined for $\Re s > \frac{3}{2} - \Re w$, has
meromorphic continuation for $\frac{1}{2} < \Re s < \frac{3}{2} - \Re w$ given by
\begin{equation}
  \frac{(4\pi)^k}{4\pi i}\int_{(0)}G \mathcal{Z} \langle \mathcal{V}_{f,\overline{g} },
  E\rangle dz + (4\pi)^k \Res_{z = \frac{3}{2} - s - w} G \mathcal{Z} \langle
\mathcal{V}_{f,\overline{g} }, E\rangle.
\end{equation}

A very similar argument works to extend the meromorphic continuation in $s$ of the contour
integral past the next apparent poles at $\Re s  = \frac{1}{2}$ from the Gamma functions,
leading to a meromorphic continuation in the region $-\frac{1}{2}< \Re s < \frac{1}{2}$
given by
\begin{align}\label{eq:fullcontinuation}
  &\frac{(4\pi)^k}{4\pi i}\int_{(0)}G(s, w, z)\mathcal{Z}(s,w,z)\langle
\mathcal{V}_{f,\overline{g} }, E(\cdot, \tfrac{1}{2} - \overline{z})\rangle dz \\
  &\quad+ (4\pi)^k \Res_{z = \frac{3}{2} - s - w}  G(s, w, z) \mathcal{Z}(s,w,z) \langle
  \mathcal{V}_{f,\overline{g} }, E(\cdot, \tfrac{1}{2} - \overline{z})\rangle
  \label{line:firstresidual} \\
  &\quad+ (4\pi)^k \Res_{z = \frac{1}{2} - s} G(s, w, z) \mathcal{Z}(s,w,z) \langle
\mathcal{V}_{f,\overline{g} }, E(\cdot, \tfrac{1}{2} -\overline{z})\rangle.
\label{line:secondresidual}
\end{align}

We iterate this argument, as in Section~4 of~\cite[p. 481-483]{HoffsteinHulse13}.
Somewhat more specifically, when $\Re(s)$ approaches a negative half-integer,
$\frac{1}{2}-n$, we can shift the line of integration for $z$ right past the pole due to
$G(s,z)$ at $z=s-\frac{1}{2}+n$, move $s$ left past the line $\Re(s)=\frac{1}{2}-n$ and
then shift the line of integration for $z$ left, back to zero and over the pole at
$z=\frac{1}{2}-s-n$.
This gives meromorphic continuation of~\eqref{line:2spectralexp} arbitrarily far to the
left, accumulating an additional pair of residual terms each time $\Re s$ passes a
half-integer, and of a similar form to the first residual term coming from $G(s,z)$,
appearing in~\eqref{line:secondresidual}.

\index{residual terms}
We now specialize to $w = 0$.
It is advantageous to codify some terminology for these additional terms appearing in the
meromorphic continuation of the integral, i.e.\ terms like~\eqref{line:firstresidual}
and~\eqref{line:secondresidual}.
We call these terms \emph{residual} terms, as they come from residues in the $z$ variable;
these are distinct from residues in $s$, as these residual terms are functions in $s$.
We also introduce a notation for these residual terms.
Substituting $w = 0$ into~\eqref{line:firstresidual}, we get the residual term
\begin{equation}\label{eq:residual_0}
  \rho_{\frac{3}{2}}(s) = \frac{(4\pi)^k \zeta(2s - 2)\Gamma(2s - 2)}{\Gamma(s) \Gamma(s +
  k - 1) \zeta^*(2s - 2)} \langle \mathcal{V}_{f, \overline{g}}, \overline{E(\cdot, 2 -
s)} \rangle.
\end{equation}
This residual term is distinguished as the only residual term appearing as a residue of
the zeta functions.

The remaining residual terms all come from residues of Gamma functions, and all have a
similar form.
For each $m \geq 1$, there is a residual term $\rho_{\frac{3}{2} - m}(s)$ appearing for
$\Re s < \tfrac{3}{2} - m$, appearing from an apparent pole of Gamma functions in $G(s,z)$
at $z = \tfrac{3}{2} - m - s$, given by
\begin{equation}\label{eq:residual_m}
  \begin{split}
    \rho_{\frac{3}{2} - m}(s) = &\frac{(-1)^{m-1}(4\pi)^k \zeta(1 - m) \zeta(2s + m - 2)
    \Gamma(2s + m - 2)}{\Gamma(m) \Gamma(s) \Gamma(s + k - 1) \zeta^*(4 - 2s - 2m)} \times
    \\
    &\quad\langle \mathcal{V}_{f, \overline{g}}, \overline{E(\cdot, s + m - 1)}\rangle.
  \end{split}
\end{equation}

We summarize these computations with the following proposition.
\begin{proposition}
  The continuous spectrum component of $Z(s, 0, f\times \overline{g})$, as given
  by~\eqref{line:2spectralexp} in Proposition~\ref{prop:spectralexpansionfull}, has
  meromorphic continuation to the complex plane.
  Further, the meromorphic continuation can be written explicitly as
  \begin{equation}
    \frac{(4\pi)^k}{4\pi i} \int_{(0)} G(s,z) \mathcal{Z}(s,0,z) \langle \mathcal{V}_{f,
    \overline{g}}, \overline{E(\cdot, \tfrac{1}{2} - z)}\rangle dz + \sum_{\mathclap{0
    \leq m < \frac{3}{2} - \Re s}} \rho_{\frac{3}{2} - m}(s),
  \end{equation}
  where each residual term $\rho_{\frac{3}{2} - m}(s)$ is given by~\eqref{eq:residual_0}
  (in the case that $m = 0$) or~\eqref{eq:residual_m} (when $m \geq 1$), and
  $\rho_{\frac{3}{2} - m}$ appears only when $\Re s < \tfrac{3}{2} - m$.
\end{proposition}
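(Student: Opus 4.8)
The plan is to start from the expression~\eqref{line:2spectralexp} furnished by Proposition~\ref{prop:spectralexpansionfull}, which represents the continuous spectrum component as a single contour integral over $\Re z = 0$ and is holomorphic in $s$ for $\Re(s+w) > \tfrac{3}{2}$, $\Re s > \tfrac{1}{2}$, with at most polynomial growth in vertical strips (by Stirling applied to the Gamma factors in $G(s,z)$ against the polynomial bounds of Remark~\ref{rem:extraremark}). I would obtain the continuation by pushing the region of analyticity in $s$ leftward, one exceptional line at a time, each time shifting the $z$-contour across a pole and then returning it to $\Re z = 0$ so that the ``main'' integral keeps its shape while a residual term is deposited. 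The apparent poles come in two families: first those from the pair $\zeta(s+w-\tfrac{1}{2}\pm z)$ in $\mathcal{Z}(s,w,z)$, which for $\Re z = 0$ lie on $\Re s = \tfrac{3}{2} - \Re w$; and then those from the pair $\Gamma(s-\tfrac{1}{2}\pm z)$ in $G(s,z)$, lying on $\Re s = \tfrac{1}{2} - n$ for $n \geq 0$.

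For the first family, I would fix $w$, take $\Re s$ slightly larger than $\tfrac{3}{2} - \Re w$, and shift the $z$-contour rightward to $\Re z = \epsilon$, where $\epsilon$ is chosen (depending on $\Im z$) to stay inside the zero-free region of $\zeta(1-2z)$, thereby avoiding spurious poles from $1/\zeta^*(1+2z)$ and from the Fourier expansion~\eqref{eq:sums:E_Fourier} of $E(\cdot,\tfrac{1}{2}-z)$; this is exactly the bookkeeping carried out on pp.~481--483 of~\cite{HoffsteinHulse13}. This shift crosses only the pole of $\zeta(s+w-\tfrac{1}{2}-z)$ at $z = s+w-\tfrac{3}{2}$, yielding the explicit residue~\eqref{eq:FEfirstresidue} and a shifted integral $\int_{(\epsilon)}$ that is holomorphic across $\Re s = \tfrac{3}{2} - \Re w$, cf.~\eqref{eq:FEfirstpart}. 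Moving $s$ slightly to the left of that line and returning the $z$-contour to $\Re z = 0$ then crosses the pole of $\zeta(s+w-\tfrac{1}{2}+z)$ at $z = \tfrac{3}{2} - s - w$, depositing a second residue as in~\eqref{eq:FEsecondpart}. The key computation is the identity $\Res_{z = \frac{3}{2}-s-w}\, G\mathcal{Z}\langle\mathcal{V}_{f,\overline{g}},E\rangle = -\Res_{z = s+w-\frac{3}{2}}\, G\mathcal{Z}\langle\mathcal{V}_{f,\overline{g}},E\rangle$, which I would verify from the functional equations $E^*(\cdot,w) = E^*(\cdot,1-w)$ and $\zeta^*(2s) = \zeta^*(1-2s)$; because the two residues enter~\eqref{eq:FEfirstpart} and~\eqref{eq:FEsecondpart} with opposite signs, they reinforce rather than cancel, so~\eqref{line:2spectralexp} continues to $\tfrac{1}{2} < \Re s < \tfrac{3}{2} - \Re w$ as $\frac{(4\pi)^k}{4\pi i}\int_{(0)} G\mathcal{Z}\langle\mathcal{V}_{f,\overline{g}},E\rangle\,dz + (4\pi)^k \Res_{z=\frac{3}{2}-s-w} G\mathcal{Z}\langle\mathcal{V}_{f,\overline{g}},E\rangle$. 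Setting $w = 0$ and simplifying the residue produces the term $\rho_{\frac{3}{2}}(s)$ of~\eqref{eq:residual_0}.

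The iteration then runs verbatim over the Gamma family. With $w = 0$ fixed, for $\Re s$ slightly above $\tfrac{1}{2} - n$ I would shift the $z$-contour rightward past the pole of $\Gamma(s-\tfrac{1}{2}-z)$ at $z = s-\tfrac{1}{2}+n$, continue $s$ across the line, and return the $z$-contour to $\Re z = 0$ across the pole of $\Gamma(s-\tfrac{1}{2}+z)$ at $z = \tfrac{1}{2}-s-n$; the residue of the Gamma factor supplies the combinatorial weight $(-1)^{m-1}/\Gamma(m)$ and the value $\zeta(1-m)$ with $m = n+1$, and the Eisenstein inner product specializes to $\langle\mathcal{V}_{f,\overline{g}},\overline{E(\cdot,s+m-1)}\rangle$, reassembling into precisely the residual term $\rho_{\frac{3}{2}-m}(s)$ of~\eqref{eq:residual_m}, which appears only for $\Re s < \tfrac{3}{2} - m$. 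Since $m$ may be taken arbitrarily large, this gives meromorphic continuation of~\eqref{line:2spectralexp} to all of $\mathbb{C}$ with the stated formula; holomorphy of the $\int_{(0)}$ term between the exceptional lines follows from exponential decay of $G(s,z)$ in $\Im z$ against the polynomial growth of $\langle\mathcal{V}_{f,\overline{g}},E(\cdot,\tfrac{1}{2}-z)\rangle/\zeta^*(1+2z)$ recorded in Remark~\ref{rem:extraremark}, and each $\rho_{\frac{3}{2}-m}(s)$ is visibly meromorphic, being built from $\zeta$, $\Gamma$, and a Rankin--Selberg-type Eisenstein inner product. I expect the main obstacle to be the delicate bookkeeping of these contour shifts near $\Re z = 0$ --- ensuring that no spurious poles of $1/\zeta^*(1+2z)$ or of~\eqref{eq:sums:E_Fourier} are swept up, that the pole at $z = s+w-\tfrac{3}{2}$ is crossed exactly once, and that the functional-equation cancellation holds --- together with the (minor but essential) check that the iteration can be carried out uniformly enough to reach all of $\mathbb{C}$; all of this is routine once organized as in~\cite{HoffsteinHulse13}.
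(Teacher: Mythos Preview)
Your proposal is correct and follows essentially the same approach as the paper: the same contour-shifting maneuver across the zeta poles (with the zero-free-region bookkeeping from~\cite{HoffsteinHulse13}), the same functional-equation identity to combine the two residues into $\rho_{\frac{3}{2}}(s)$, and the same iteration over the Gamma poles at $\Re s = \tfrac{1}{2} - n$ to deposit the remaining $\rho_{\frac{3}{2}-m}(s)$. The paper's exposition differs only in that it carries the general $w$ a bit longer before specializing to $w=0$, but the argument is otherwise step-for-step identical to what you outline.
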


\subsection{Polar Analysis of $Z(s, 0, f\times \overline{g})$}

Comparing the meromorphic continuations of the discrete spectrum component and continuous
spectrum component of $Z(s, 0, f\times \overline{g})$ reveals that the rightmost pole of
$Z(s, 0, f\times \overline{g})$ occurs in the first residual term,
$\rho_{\frac{3}{2}}(s)$, appearing in~\eqref{eq:residual_0}.
The pole occurs at $s = 1$, caused by the Eisenstein series.
By comparison, the discrete spectrum component is analytic for $\Re s > -\tfrac{1}{2} \pm
it_j$, and the rest of the continuous spectrum is analytic for $\Re s > \tfrac{1}{2}$.

The residue at this pole is given by
\begin{equation}\label{eq:Zresidue}
  \Res_{s = 1} \rho_{\frac{3}{2}}(s) = \Res_{s = 1} \frac{(4\pi)^k \zeta(2s - 2)\Gamma(2s
  - 2)}{\zeta^*(2s-2)\Gamma(s)\Gamma(s + k - 1)}\langle \mathcal{V}_{f,\overline{g} },
E(\cdot, 2-\overline{s})\rangle.
\end{equation}
Expand $\zeta^*(2s-2) = \zeta(2s-2) \Gamma(s-1) \pi^{1-s}$ in the denominator, cancel the
two zeta functions, and use the Gamma duplication identity
\begin{equation}
  \frac{\Gamma(2z)}{\Gamma(z)} = \frac{\Gamma(z + \tfrac{1}{2}) 2^{2z - 1}}{\sqrt \pi}
\end{equation}
with $z = s-1$ to rewrite the residue as
\begin{align}
  \Res_{s = 1} \rho_{\frac{3}{2}}(s) &= \Res_{s = 1} \frac{\Gamma(s - \tfrac{1}{2})
  2^{2s-3}}{\sqrt \pi} \frac{\pi^{s-1}}{\Gamma(s)} \frac{(4\pi)^k}{\Gamma(s + k - 1)}
  \langle \mathcal{V}_{f, \overline{g}}, E(\cdot, 2 - \overline{s}) \rangle \\
  &= -\frac{(4\pi)^k}{\Gamma(k)} \Res_{s = 1} \langle f \overline{g} \Im(\cdot)^k,
E(\cdot, \overline{s})\rangle.
\end{align}
Through the relationship between the Eisenstein series and the
Rankin--Selberg $L$-function (cf.~\S\ref{ssec:rankinselberg_lfunction}), 
we can rewrite this as
\begin{equation}\label{eq:first_residual_pole_1}
  \Res_{s = 1} \rho_{\frac{3}{2}}(s) = -\Res_{s = 1} \frac{L(s, f\times
  \overline{g})}{\zeta(2)}.
\end{equation}

The next pole of $Z(s, 0, f\times \overline{g})$ also comes from the first residual term
$\rho_{\frac{3}{2}}(s)$, occurring at $s = \tfrac{1}{2}$ from the Gamma function in the
numerator.
Similar to the computation of the residue at $s = 1$, we expand $\zeta^*$, cancel the two
zeta functions, and apply the Gamma duplication identity to recognize the residue as
\begin{align}
  \Res_{s = \frac{1}{2}} \rho_{\frac{3}{2}}(s) &= \Res_{s = \frac{1}{2}} \frac{(4\pi)^{s +
  k - 1}}{2\sqrt \pi} \frac{\Gamma(s - \tfrac{1}{2})}{\Gamma(s) \Gamma(s + k - 1)} \langle
  \mathcal{V}_{f, \overline{g}}, E(\cdot, 2 - \overline{s}) \rangle \\
  &= \frac{1}{2 \pi }\frac{(4\pi)^{k - \frac{1}{2}}}{\Gamma(k - \frac{1}{2})}\langle
\mathcal{V}_{f,\overline{g} }, E(\cdot, \tfrac{3}{2}) \rangle.
\end{align}
We rewrite this as a special value of the Rankin--Selberg $L$-function, 
\begin{equation}\label{eq:first_residual_pole_half}
  \Res_{s = \frac{1}{2}} \rho_{\frac{3}{2}}(s) = \frac{1}{2 \pi}\frac{(k -
  \frac{1}{2})}{4\pi} \frac{(4\pi)^{k + \frac{1}{2}}}{\Gamma(k + \frac{1}{2})}\langle
\mathcal{V}_{f,\overline{g} }, E(\cdot, \tfrac{3}{2}) \rangle = \frac{(k -
\frac{1}{2})}{4\pi^2} \frac{L(\frac{3}{2}, f\times \overline{g} )}{\zeta(3)}.
\end{equation}

Returning to the rest of the meromorphic continuation of the continuous spectrum, let us
examine the second residual term $\rho_{\frac{3}{2} - 1}(s) = \rho_{\frac{1}{2}}(s)$,
which appears as part of the meromorphic continuation only for $\Re s < \tfrac{1}{2}$.
We simplify the expression in~\eqref{eq:residual_m}, with $m = 1$,
\begin{equation}
  \rho_{\frac{1}{2}}(s) = \frac{(4\pi)^k \zeta(0)}{\Gamma(s + k - 1)} \frac{\zeta(2s
  -1)}{\zeta^*(2 - 2s)} \frac{\Gamma(2s - 1)}{\Gamma(s)} \langle
\mathcal{V}_{f,\overline{g}}, E(\cdot, \overline{s})\rangle.
\end{equation}
By using the functional equation for $\zeta^*(2 - 2s)$, the Gamma duplication formula, and
recognizing the Eisenstein series inner product as a sum of
two Rankin--Selberg $L$-functions, this simplifies to 
\begin{equation}\label{eq:secondresidualsimple}
  \rho_{\frac{1}{2}}(s) = -\frac{L(s, f\times \overline{g})}{\zeta(2s)}.
\end{equation}
We now recognize that $\rho_{\frac{1}{2}}(s)$ has poles at zeroes of $\zeta(2s)$.

\begin{remark}
  At first glance, it would appear that $\rho_{\frac{1}{2}}(s)$ also has a pole at $s=1$,
  coming from the pole of the Rankin--Selberg convolution, 
  and that this term therefore contributes a pole to $Z(s, 0, f\times \overline{g})$ at
  $s=1$.
  However, the term $\rho_{\frac{1}{2}}(s)$ does not appear as part of the meromorphic
  continuation of $Z(s, 0, f\times \overline{g})$ except when $\Re s < \frac{1}{2}$, so
  this term does not contribute a pole at $s = 1$.
\end{remark}

More generally, for each $m \geq 1$, the residual term $\rho_{\frac{3}{2} - m}(s)$, which
appears for $\Re s < \frac{3}{2} - m$, also contributes poles.
As in~\eqref{eq:secondresidualsimple}, the Eisenstein series in $\rho_{\frac{3}{2} -
m}(s)$ introduces poles at $s = \frac{\gamma}{2} - m + 1$ for each nontrivial zero
$\gamma$ of $\zeta(s)$, in addition to potential poles at negative integers and
half-integers from the Gamma function in the numerator.

Poles appearing in the discrete spectrum component do not exhibit the same properties of
cancellation, aside from those noted in Lemmas~\ref{lem:Litj_equals_zero}
and~\ref{lem:oddorthogonaltoeven}.
Analysis of the Gamma functions in the discrete component,
in~\eqref{line:1spectralexp}, shows that there are potential simple poles at $s =
\frac{1}{2} \pm it_j - n$ for $n \in \mathbb{Z}_{\geq 0}$.
The Lemmas~\ref{lem:Litj_equals_zero} and~\ref{lem:oddorthogonaltoeven} show that those
poles occurring at $s = \frac{1}{2} \pm it_j - n$ with $n$ even are cancelled by trivial
zeroes.
Together, these indicate that there are potential poles at $s = \frac{1}{2} \pm it_j - n$
for each odd, positive integer $n$.

\subsection{Analytic Behavior of $W(s; f, \overline{g})$}
\index{Ws@$W(s; f, f)$}

Recall that
\begin{equation}
  W(s; f, \overline{g}) = \frac{L(s, f\times \overline{g})}{\zeta(2s)} + Z(s, 0, f\times \overline{g}).
\end{equation}
As noted in~\eqref{eq:first_residual_pole_1}, the leading pole of $L(s, f\times
\overline{g}) \zeta(2s)^{-1}$ perfectly cancels with the leading pole of $Z(s, 0, f\times
\overline{g})$.
Therefore $W(s, f, \overline{g})$ is analytic for $\Re s > \tfrac{1}{2}$ and has a pole at
$s = \tfrac{1}{2}$, identified in~\eqref{eq:first_residual_pole_half}.

Further, the second residual term, $\rho_{\frac{1}{2}}(s)$, was shown to be exactly $-L(s,
f\times \overline{g}) \zeta(2s)^{-1}$ in~\eqref{eq:secondresidualsimple}, and appears for
$\Re s < \frac{1}{2}$.
Therefore the Rankin--Selberg $L$-function 
$L(s, f\times \overline{g})\zeta(2s)^{-1}$ perfectly cancels with $\rho_{\frac{1}{2}}(s)$
for $\Re s < \frac{1}{2}$.
For $\Re s < \frac{1}{2}$, the analytic behavior of $W(s; f, \overline{g})$ is determined
entirely by the analytic behavior of $Z(s, 0, f\times \overline{g})$ (and omitting
$\rho_{\frac{1}{2}}(s)$).

Therefore $W(s; f, \overline{g})$ has meromorphic continuation to $\mathbb{C}$.
For $\Re s > - \tfrac{1}{2}$, the only possible poles of $W(s; f, \overline{g})$ are at $s
= \tfrac{1}{2}$, coming from~\eqref{eq:first_residual_pole_half}, and those at $s = -
\tfrac{1}{2} \pm it_j$ coming from exceptional eigenvalues of the discrete spectrum.
(There are no exceptional eigenvalues on $\SL(2, \mathbb{Z})$).
Collecting the analytic data, we have proved the following.

\begin{theorem}\label{thm:Wsfgmero}
  Let $f,g$ be two holomorphic cusp forms on $\SL(2, \mathbb{Z})$.
  Maintaining the same notation as above, the function $W(s; f,\overline{g})$ has a
  meromorphic continuation to $\mathbb{C}$ given by the Rankin--Selberg 
  $L$-function~\eqref{eq:Lsfgbar_equals_eisenstein}
  and spectral decomposition in Proposition~\ref{prop:spectralexpansionfull},
  with potential poles at $s$ with $\Re s \leq \tfrac{1}{2}$ and $s \in
  \mathbb{Z}\cup(\mathbb{Z} + \tfrac{1}{2})\cup\mathfrak{S}\cup\mathfrak{Z}$, where
  $\mathfrak{Z}$ denotes the set of shifted zeta-zeroes $\{-1 + \frac{\gamma}{2} - n: n
  \in \mathbb{Z}_{\geq 0}\}$, and $\mathfrak{S}$ denotes the set of shifted discrete types
  $\{-\tfrac{1}{2} \pm it_j - n: n \in \mathbb{Z}_{\geq 0}, n \; \text{odd}\; \}$.

  The leading pole is at $s = \frac{1}{2}$ and
    \begin{equation}
      \Res_{s = \frac{1}{2}} W(s; f, \overline{g}) = \frac{(k - \frac{1}{2})}{4\pi^2}
      \frac{L(\tfrac{3}{2}, f\times \overline{g})}{\zeta(3)}.
    \end{equation}
\end{theorem}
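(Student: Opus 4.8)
The plan is to assemble the pieces already assembled in the preceding subsections: the decomposition $W(s;f,\overline{g}) = L(s,f\times\overline{g})\zeta(2s)^{-1} + Z(s,0,f\times\overline{g})$ from Proposition~\ref{prop:SfSg_decomposition}, the spectral expansion of $Z(s,w,f\times\overline{g})$ from Proposition~\ref{prop:spectralexpansionfull} specialized to $w=0$, and the contour-shifting continuation of the continuous-spectrum integral together with the residual terms $\rho_{3/2-m}(s)$ recorded in~\eqref{eq:residual_0} and~\eqref{eq:residual_m}. First I would note that the discrete-spectrum contribution in~\eqref{line:1spectralexp} converges and is analytic for $\Re s > \tfrac{1}{2} + \theta$ by the Stirling estimate of Remark~\ref{rem:extraremark}, and that $\theta = 0$ on $\SL(2,\mathbb{Z})$, so it is analytic for $\Re s > \tfrac{1}{2}$; its only apparent poles further left are the simple poles of $\Gamma(s-\tfrac{1}{2}\pm it_j)$, of which the even-index ones are cancelled by trivial zeroes of $L(\cdot,\mu_j)$ via Lemma~\ref{lem:Litj_equals_zero} and the odd-Maass-form terms vanish identically by Lemma~\ref{lem:oddorthogonaltoeven}, leaving precisely the shifted-type set $\mathfrak{S}$.

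Next I would work out the two half-planes separated by $\Re s = \tfrac{1}{2}$. For $\Re s > \tfrac{1}{2}$, the function $Z(s,0,f\times\overline{g})$ equals the continuous-spectrum integral on $\Re z = 0$ plus the discrete spectrum plus only the first residual term $\rho_{3/2}(s)$, whose sole pole there is the simple pole at $s=1$ coming from the Eisenstein series; by~\eqref{eq:first_residual_pole_1} its residue is exactly $-\Res_{s=1} L(s,f\times\overline{g})\zeta(2)^{-1}$, which cancels the pole of the Rankin–Selberg term in $W$. Hence $W(s;f,\overline{g})$ is analytic for $\Re s > \tfrac{1}{2}$. For $\Re s < \tfrac{1}{2}$ the second residual term $\rho_{1/2}(s)$ enters the continuation of $Z$, and by~\eqref{eq:secondresidualsimple} it equals $-L(s,f\times\overline{g})\zeta(2s)^{-1}$, which cancels the Rankin–Selberg term in $W$ on that half-plane; there $W(s;f,\overline{g})$ therefore agrees with the continuous-spectrum integral plus the discrete spectrum plus the residual terms $\rho_{3/2-m}(s)$ with $m\neq 1$. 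Collecting pole locations then gives the stated set: $s=\tfrac{1}{2}$ from $\rho_{3/2}(s)$; the shifted discrete types $\mathfrak{S}$ from the Gamma factors of the discrete spectrum; the shifted zeta zeroes $\mathfrak{Z}=\{-1+\tfrac{\gamma}{2}-n\}$ from the Eisenstein inner product $\langle\mathcal{V}_{f,\overline{g}},\overline{E(\cdot,s+m-1)}\rangle$ inside $\rho_{3/2-m}(s)$; and integers and half-integers from the Gamma quotient $\Gamma(2s+m-2)/(\Gamma(s)\Gamma(s+k-1))$ in the residual terms. All of these have $\Re s \leq \tfrac{1}{2}$.

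Finally, for the leading residue, I would observe that at $s=\tfrac{1}{2}$ the Rankin–Selberg term $L(s,f\times\overline{g})\zeta(2s)^{-1}$ is regular (indeed vanishes, as $\zeta(1)$ has a pole), the discrete spectrum is analytic, and the continuous-spectrum integral on $\Re z = 0$ is analytic in $s$ near $\tfrac{1}{2}$; hence the pole of $W$ at $s=\tfrac{1}{2}$ comes solely from $\Res_{s=1/2}\rho_{3/2}(s)$, which is computed in~\eqref{eq:first_residual_pole_half} by expanding $\zeta^*$, cancelling the two zeta factors, and applying the Gamma duplication formula, to be $\frac{(k-\tfrac{1}{2})}{4\pi^2}\frac{L(\tfrac{3}{2},f\times\overline{g})}{\zeta(3)}$, as claimed. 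The main obstacle — really the heart of the theorem — is verifying the two ``miraculous'' cancellations: the pole of the Rankin–Selberg $L$-function at $s=1$ against $\rho_{3/2}(s)$, and, for $\Re s<\tfrac{1}{2}$, the Rankin–Selberg term against $\rho_{1/2}(s)$; each requires carefully matching the functional equations of the Eisenstein series and of $\zeta$ inside the relevant residues, which is exactly where the bookkeeping of the preceding subsections is needed.
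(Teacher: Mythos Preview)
Your proposal is correct and follows essentially the same approach as the paper: assemble the decomposition of $W$, invoke the spectral expansion and the contour-shifted continuation of the continuous spectrum with its residual terms, verify the two cancellations at $s=1$ (via~\eqref{eq:first_residual_pole_1}) and for $\Re s<\tfrac12$ (via~\eqref{eq:secondresidualsimple}), and read off the leading residue from~\eqref{eq:first_residual_pole_half}. One small caution: your statement that ``the continuous-spectrum integral on $\Re z=0$ is analytic in $s$ near $\tfrac12$'' is slightly glib, since the integrand has apparent singularities there from $\Gamma(s-\tfrac12\pm z)$; the paper handles this by the same contour-shifting maneuver, which shows no actual pole arises from the integral at $s=\tfrac12$ (the resulting residual $\rho_{1/2}(s)$ appears only for $\Re s<\tfrac12$), so your conclusion stands.
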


\subsection{Complete Meromorphic Continuation of $D(s, S_f\times \overline{S_{g}})$}

With Theorem~\ref{thm:Wsfgmero} and the decomposition from
Proposition~\ref{prop:SfSg_decomposition}, we can quickly give the meromorphic
continuation of the Dirichlet series $D(s, S_f \times \overline{S_g})$.
In particular, by Proposition~\ref{prop:SfSg_decomposition}, we know that
\begin{equation}
  D(s, S_f \times \overline{S_g}) = W(s; f, \overline{g}) + \frac{1}{2\pi i}
  \int_{(\gamma)} W(s-z; f, \overline{g}) \zeta(z)\frac{\Gamma(z) \Gamma(s - z + k -
  1)}{\Gamma(s + k - 1)} \; dz,
\end{equation}
where initially $\Re s$ is large and $\gamma \in (1, \Re(s) - 1)$.
Notice that $W(s; f, \overline{g})$ can also be written as a single Dirichlet series as
\begin{align}
  W(s; f, \overline{g}) &= \frac{L(s, f\times \overline{g})}{\zeta(2s)} + Z(s, 0, f\times
  \overline{g}) \\
  &= \sum_{n,h \geq 1} \frac{a(n) \overline{b(n)} + a(n)\overline{b(n-h)} +
  a(n-h)\overline{b(n)}}{n^{s + k - 1}} \\
  &= \sum_{\substack{n \geq 1 \\ h \geq 0}} \frac{a(n)\overline{b(n-h)} +
  a(n-h)\overline{b(n)} - a(n)\overline{b(n)}}{n^{s + k - 1}} \\
  &= \sum_{n \geq 1} \frac{a(n) \overline{S_g(n)} + S_f(n) \overline{b(n)}}{n^{s + k - 1}}
  =: \sum_{n \geq 1} \frac{w(n)}{n^{s + k - 1}}.
\end{align}
We denote the $n$th coefficient of this Dirichlet series of $w(n)$.
As $a(n) \ll n^{\frac{k-1}{2} + \epsilon}$ and $S_f(n) \ll n^{\frac{k-1}{2} +
\frac{1}{3}}$, we know that $w(n) \ll n^{k-1 + \frac{1}{3} + \epsilon}$.
Thus $W(s; f, \overline{g})$ converges absolutely for $\Re s > \frac{4}{3}$.

Consider $D(s, S_f \times \overline{S_g})$ for $\Re s > 4$ and $\gamma = 2$ initially, so
that both $W(s; f, \overline{g})$ and $W(s-z; f, \overline{g})$ are absolutely convergent.
Shifting the line of $z$ integration to $-M - \frac{1}{2}$ for some positive integer $M$
passes several poles occurring when $z = 1$ (from $\zeta(z)$) or $z = -j$ with $j \in
\mathbb{Z}_{\geq 0}$ (from $\Gamma(z)$).
Notice that in this region, $W(s-z; f, \overline{g})$ converges absolutely, $\zeta(z)$ has
at most polynomial growth in vertical strips, and the Gamma functions have exponential
decay for any fixed $s$.
By Cauchy's Theorem, we have
\begin{align}
  &D(s, S_f \times \overline{S_g}) =\\
  &= W(s; f, \overline{g}) + \sum_{-M \leq j \leq 1} \Res_{z = j} W(s-z; f, \overline{g})
  \zeta(z)\frac{\Gamma(z) \Gamma(s - z + k - 1)}{\Gamma(s + k - 1)} \\
  &\quad + \frac{1}{2\pi i} \int_{(-M - \frac{1}{2})} W(s-z; f, \overline{g})
  \zeta(z)\frac{\Gamma(z) \Gamma(s - z + k - 1)}{\Gamma(s + k - 1)} \; dz \\
  &= W(s; f, \overline{g}) + \frac{W(s-1; f, \overline{g})}{s+k-2} + \sum_{j = 0}^{M}
  \frac{(-1)^j}{j!} W(s + j; f, \overline{g}) \zeta(-j) \frac{\Gamma(s + j + k -
  1)}{\Gamma(s + k - 1)} \\
  &\quad + \frac{1}{2\pi i} \int_{(-M - \frac{1}{2})} W(s-z; f, \overline{g})
  \zeta(z)\frac{\Gamma(z) \Gamma(s - z + k - 1)}{\Gamma(s + k - 1)} \; dz.
\end{align}
Each of the residues gives an expression containing $W(s; f, \overline{g})$ with clear
meromorphic continuation to the plane.
The remaining shifted integral contains $W(s-z; f, \overline{g})$ in its integrand, with
$\Re z = -M-\tfrac{1}{2}$.
Therefore $\Re s-z = \Re s + M + \tfrac{1}{2}$, and so $W(s-z; f, \overline{g})$ is
absolutely convergent for $\Re s > -M + \tfrac{5}{2}$.
As $\zeta(z)$ has only polynomial growth and $\Gamma(z)\Gamma(s-z+k-1)$ has exponential
decay in vertical strips, we see that the integral represents an analytic function of $s$
for $\Re s > -M + \tfrac{5}{2}$.
Therefore the entire right hand side has meromorphic continuation to the region $\Re s >
-M + \tfrac{5}{2}$.
As $M$ is arbitrary, we have proved the following, which we record as a corollary to
Theorem~\ref{thm:Wsfgmero}.

\begin{corollary}\label{cor:DsSfSg_has_meromorphic}
  The Dirichlet series $D(s, S_f \times \overline{S_g})$ has meromorphic continuation to
  the entire complex plane.
\end{corollary}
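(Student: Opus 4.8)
The plan is to leverage the two structural results already in hand: the decomposition of $D(s, S_f \times \overline{S_g})$ from Proposition~\ref{prop:SfSg_decomposition} and the meromorphic continuation of $W(s; f, \overline{g})$ from Theorem~\ref{thm:Wsfgmero}. Recall that for $\Re s$ large and $1 < \gamma < \Re(s) - 1$,
\[
  D(s, S_f \times \overline{S_g}) = W(s; f, \overline{g}) + \frac{1}{2\pi i} \int_{(\gamma)} W(s - z; f, \overline{g}) \zeta(z) \frac{\Gamma(z) \Gamma(s - z + k - 1)}{\Gamma(s + k - 1)} \, dz.
\]
The first summand is already meromorphic on $\mathbb{C}$ by Theorem~\ref{thm:Wsfgmero}, so the whole task reduces to meromorphically continuing the integral in $s$.

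First I would fix a large positive integer $M$ and shift the line of $z$-integration from $\Re z = \gamma$ to $\Re z = -M - \tfrac{1}{2}$. In this region the integrand has a simple pole at $z = 1$ from $\zeta(z)$ and simple poles at $z = -j$, $j \in \mathbb{Z}_{\geq 0}$, from $\Gamma(z)$, while $\Gamma(s - z + k - 1)$ stays holomorphic for $\Re s$ large. Collecting residues produces
\[
  \frac{W(s - 1; f, \overline{g})}{s + k - 2} + \sum_{j=0}^{M} \frac{(-1)^j}{j!} W(s + j; f, \overline{g}) \zeta(-j) \frac{\Gamma(s + j + k - 1)}{\Gamma(s + k - 1)},
\]
each term a product of a shift of $W$ (meromorphic on $\mathbb{C}$ by Theorem~\ref{thm:Wsfgmero}), a value of $\zeta$, and a ratio of Gamma functions, hence meromorphic on $\mathbb{C}$. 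The leftover shifted integral now has $\Re(s - z) = \Re s + M + \tfrac{1}{2}$ in its integrand; since $W(s; f, \overline{g}) = \sum_n w(n) n^{-(s+k-1)}$ with $w(n) \ll n^{k - 1 + 1/3 + \epsilon}$ and hence converges absolutely for $\Re(\,\cdot\,) > \tfrac{4}{3}$, this integral represents a holomorphic function of $s$ for $\Re s > -M + \tfrac{5}{2}$. Since $M$ is arbitrary, letting $M \to \infty$ gives meromorphic continuation of $D(s, S_f \times \overline{S_g})$ to all of $\mathbb{C}$.

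The step needing the most care — and the main obstacle — is justifying the contour shift and the absolute convergence of the shifted integral, which requires controlling the growth of $W(s - z; f, \overline{g})$ in vertical strips in the $z$-aspect. Here I would invoke the spectral expansion of $Z(s, 0, f\times \overline{g})$ from Proposition~\ref{prop:spectralexpansionfull} together with the polynomial-growth estimates recorded in Remark~\ref{rem:extraremark}: Stirling's formula controls the Gamma ratios $G(s, z)$, Watson's triple product formula (or K\i{}ral's bound in the half-integral weight case) controls $\rho_j(1)\langle \mathcal{V}_{f,\overline{g}}, \mu_j\rangle$, and direct computation with the Rankin--Selberg $L$-function controls the continuous-spectrum inner products. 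Combined, these show $W$ has at most polynomial growth in vertical strips away from its poles, which — paired with the exponential decay of $\Gamma(z)\Gamma(s - z + k - 1)$ and the polynomial growth of $\zeta(z)$ — legitimizes every contour shift and makes the residue sums and shifted integrals absolutely convergent. Since none of these ingredients depend on the level, the same argument extends verbatim to higher level and half-integral weight forms.
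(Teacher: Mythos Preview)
Your proposal is correct and mirrors the paper's argument essentially verbatim: decompose via Proposition~\ref{prop:SfSg_decomposition}, shift the $z$-contour to $\Re z = -M - \tfrac{1}{2}$ picking up the residues at $z=1$ and $z=0,-1,\ldots,-M$, and use absolute convergence of the Dirichlet series $W(s-z;f,\overline{g})$ for $\Re(s-z) > \tfrac{4}{3}$ to see the remaining integral is analytic for $\Re s > -M + \tfrac{5}{2}$. One small remark: your final paragraph invoking the spectral polynomial-growth estimates is more than is needed here---the paper justifies the contour shift simply by keeping $\Re s$ large enough that $W(s-z;\cdot)$ stays in its region of absolute convergence throughout the shift, so the exponential decay of $\Gamma(z)\Gamma(s-z+k-1)$ against the polynomial growth of $\zeta(z)$ suffices; the finer growth bounds you cite are only used later (Lemma~\ref{lem:DsSfSg_poly_growth}) for the second-moment applications.
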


\begin{remark}
  Very similar work gives the meromorphic continuation for $D(s, S_f \times
  \overline{S_g})$, mainly replacing $\overline{g}$ with $T_{-1}g$ in the above
  formulation.
  This distinction only matters at higher levels when $f$ and $g$ have nontrivial
  nebentypus, and the spectral expansion is modified accordingly.
\end{remark}

Analysis of the exact nature of the poles of $D(s, S_f \times \overline{S_g})$ can be
performed directly on this presentation of the meromorphic continuation.
In many cases, the leading behavior of integral transforms on $D(s, S_f \times
\overline{S_g})$ will come from $W(s-1; f, \overline{g})/(s+k-2)$, as this term contains
the largest negative shift in $s$.
For later reference, it will be useful to view $D(s, S_f \times \overline{S_g})$ in this
form for clear arithmetic application.
We codify this in the following lemma.
\begin{lemma}\label{lem:D_is_lots_of_W}
  \begin{equation}
    \begin{split}
    D(s, S_f \times \overline{S_g}) &= W(s; f, \overline{g})
      + \frac{W(s-1; f, \overline{g})}{s+k-2} \\
    &\quad + \sum_{j = 0}^M \frac{(-1)^j}{j!} W(s+j; f, \overline{g}) \zeta(-j)
      \frac{\Gamma(s + j + k - 1)}{\Gamma(s + k - 1)} \\
    &\quad + \frac{1}{2\pi i} \int_{(-M - \frac{1}{2})} W(s-z; f, \overline{g})
      \zeta(z)\frac{\Gamma(z) \Gamma(s - z + k - 1)}{\Gamma(s + k - 1)} \; dz.
    \end{split}
  \end{equation}
\end{lemma}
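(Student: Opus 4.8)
The plan is to recognize that the claimed formula is precisely the identity produced by shifting the line of integration in the contour representation of $D(s, S_f \times \overline{S_g})$ from Proposition~\ref{prop:SfSg_decomposition}, and to carry out that shift carefully. First I would invoke Proposition~\ref{prop:SfSg_decomposition}: for $\Re s$ sufficiently large and $1 < \gamma < \Re(s) - 1$,
\[
  D(s, S_f \times \overline{S_g}) = W(s; f, \overline{g}) + \frac{1}{2\pi i} \int_{(\gamma)} W(s-z; f, \overline{g}) \, \zeta(z) \, \frac{\Gamma(z) \, \Gamma(s - z + k - 1)}{\Gamma(s + k - 1)} \, dz .
\]
Fix an integer $M \geq 0$, take $\Re s > 4$, and begin with $\gamma = 2$. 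Using $a(n) \ll n^{\frac{k-1}{2} + \epsilon}$ together with the bound $S_f(n) \ll n^{\frac{k-1}{2} + \frac{1}{3}}$ of Hafner and Ivi\'c, the Dirichlet series defining $W(s; f, \overline{g})$ converges absolutely for $\Re s > \frac{4}{3}$, so throughout the vertical strip $-M - \tfrac{1}{2} \le \Re z \le 2$ the factor $W(s-z; f, \overline{g})$ is analytic --- indeed absolutely convergent, since there $\Re(s - z) \ge \Re s - 2 > 2$ --- and the only poles of the integrand in this strip come from $\zeta(z)$ and $\Gamma(z)$.

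Next I would shift the contour from $\Re z = 2$ to $\Re z = -M - \tfrac{1}{2}$ and apply Cauchy's theorem. The poles crossed are the simple pole of $\zeta(z)$ at $z = 1$ and the simple poles of $\Gamma(z)$ at $z = -j$ for $0 \le j \le M$. Since $\Res_{z=1}\zeta(z) = 1$ and $\Gamma(s+k-1) = (s+k-2)\Gamma(s+k-2)$, the residue at $z = 1$ equals $W(s-1; f, \overline{g})/(s+k-2)$; since $\Res_{z=-j}\Gamma(z) = (-1)^j/j!$, the residue at $z = -j$ equals $\frac{(-1)^j}{j!} \, W(s+j; f, \overline{g}) \, \zeta(-j) \, \frac{\Gamma(s+j+k-1)}{\Gamma(s+k-1)}$. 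Collecting the boundary term $W(s; f, \overline{g})$, these residues, and the remaining integral over $\Re z = -M - \tfrac{1}{2}$ yields exactly the asserted identity.

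The one delicate point --- and the main, if routine, obstacle --- is justifying the contour shift: one must verify that the horizontal connecting segments at height $\pm T$ contribute nothing as $T \to \infty$ and that the shifted integral converges. Here I would invoke Stirling's approximation, which shows that for $z$ confined to a bounded vertical strip and $s$ fixed the product $\Gamma(z)\,\Gamma(s-z+k-1)$ decays like $e^{-\pi|\Im z|}$ (each Gamma factor contributing an $e^{-(\pi/2)|\Im z|}$), while $\zeta(z)$ has at most polynomial growth in vertical strips and $W(s-z; f, \overline{g})$ stays bounded, since it remains in its region of absolute convergence throughout the swept strip. The exponential decay of the Gamma factors therefore dominates, making the horizontal segments negligible and the integral over $\Re z = -M - \tfrac{1}{2}$ absolutely convergent; moreover, because that integrand contains $W(s-z; f, \overline{g})$ with $\Re(s-z)$ large, the integral defines an analytic function of $s$ on a half-plane whose left boundary recedes as $M$ increases. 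Since the finitely many remaining terms on the right are meromorphic on all of $\mathbb{C}$ by Theorem~\ref{thm:Wsfgmero} and the shifted integral is analytic on that half-plane, the right-hand side is meromorphic there and agrees with the Dirichlet series $D(s, S_f \times \overline{S_g})$ for $\Re s > 4$; this establishes the identity (and, letting $M \to \infty$, Corollary~\ref{cor:DsSfSg_has_meromorphic}).
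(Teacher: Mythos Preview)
Your proposal is correct and follows essentially the same approach as the paper: starting from the integral representation of Proposition~\ref{prop:SfSg_decomposition} with $\Re s > 4$ and $\gamma = 2$, shifting the $z$-contour to $\Re z = -M - \tfrac{1}{2}$, picking up the residue at $z=1$ from $\zeta(z)$ and at $z=-j$ from $\Gamma(z)$, and justifying the shift via the exponential decay of the Gamma factors against the polynomial growth of $\zeta(z)$ and the absolute convergence of $W(s-z;f,\overline{g})$. The paper in fact derives this identity in the course of proving Corollary~\ref{cor:DsSfSg_has_meromorphic} and then records it as the present lemma.
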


\section{Second-Moment Analysis}\label{sec:secondmoment}

It is now necessary to estimate the growth of $D(s, S_f \times \overline{S_g})$ and to use
the analytic properties described above to study the sizes of sums of coefficients of cusp
forms.
It will be necessary to understand the size of growth of $D(s, S_f \times
\overline{S_g})$, but it is relatively straightforward to see that $D(s, S_f \times
\overline{S_g})$ has polynomial growth in vertical strips.

\begin{lemma}\label{lem:DsSfSg_poly_growth}
  For $\sigma < \Re s < \sigma'$ and $s$ uniformly away from poles, there exists some $A$
  such that
  \begin{equation}
    D(s, S_f \times \overline{S_g}) \ll \lvert \Im s \rvert^A.
  \end{equation}
  Therefore $D(s, S_f \times \overline{S_g})$ is of polynomial growth in vertical strips.
\end{lemma}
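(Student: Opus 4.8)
The plan is to reduce the statement to the analogous growth bound for $W(s;f,\overline{g})$, and then to read that bound off the spectral decomposition and residual terms constructed in the previous sections.

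First I would apply Lemma~\ref{lem:D_is_lots_of_W}, which expresses $D(s, S_f\times\overline{S_g})$ as the sum of $W(s;f,\overline{g})$, the term $W(s-1;f,\overline{g})/(s+k-2)$, a finite sum $\sum_{j=0}^{M}\tfrac{(-1)^j}{j!}\zeta(-j)\tfrac{\Gamma(s+j+k-1)}{\Gamma(s+k-1)}W(s+j;f,\overline{g})$, and a contour integral over $\Re z=-M-\tfrac{1}{2}$ with integrand $W(s-z;f,\overline{g})\,\zeta(z)\,\Gamma(z)\Gamma(s-z+k-1)/\Gamma(s+k-1)$. In a fixed strip $\sigma<\Re s<\sigma'$, each shifted argument $s+j$ with $0\le j\le M$ lies in the bounded strip $\sigma<\Re(s+j)<\sigma'+M$, and $\Gamma(s+j+k-1)/\Gamma(s+k-1)$ is a polynomial in $s$, so the finite part has polynomial growth once we know that $W$ does on every bounded strip away from its poles. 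For the integral I would take $M$ large enough (depending only on $\sigma$) that $\Re(s-z)$ is forced into the half-plane of absolute convergence of $W$, where $W$ is uniformly bounded; since $\zeta(z)$ has polynomial growth in $\Im z$ and $\Gamma(z)\Gamma(s-z+k-1)/\Gamma(s+k-1)$ decays exponentially in $|\Im z|$ for $\Im s$ fixed by Stirling, the integral converges and is $\ll|\Im s|^{A}$. This reduces the lemma to showing that $W(s;f,\overline{g})$ has at most polynomial growth on every bounded vertical strip, uniformly away from its poles.

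For that I would split the meromorphic continuation of $W(s;f,\overline{g})$ into three parts. The first is the piece $L(s,f\times\overline{g})/\zeta(2s)$, present only for $\Re s>\tfrac{1}{2}$: there $\Re(2s)>1$, so $1/\zeta(2s)$ is bounded, while $L(s,f\times\overline{g})$ has polynomial growth in vertical strips away from its pole at $s=1$ by its functional equation and the Phragm\'{e}n--Lindel\"{o}f principle. The second part consists of the discrete spectral sum and the continuous spectral integral $\tfrac{1}{2\pi i}\int_{(0)}G(s,z)\mathcal{Z}(s,0,z)\langle\mathcal{V}_{f,\overline{g}},\overline{E(\cdot,\tfrac{1}{2}-z)}\rangle\,dz$ of Proposition~\ref{prop:spectralexpansionfull}; here I would invoke Remark~\ref{rem:extraremark}, which establishes exactly the required uniform convergence on vertical strips and polynomial growth in $s$ --- Stirling controls $G(s,z)$ and supplies the exponential decay in $\Im z$ that dominates the polynomial growth of the $\zeta$-factors in $\mathcal{Z}$, Watson's triple-product formula (or K{\i}ral's bound in the half-integral case) gives polynomial growth of $\rho_j(1)\langle\mathcal{V}_{f,\overline{g}},\mu_j\rangle$ in $|t_j|$, and the Rankin--Selberg computation gives the analogous bound for $\langle\mathcal{V}_{f,\overline{g}},E(\cdot,\tfrac{1}{2}+z)\rangle/\zeta^*(1+2z)$; the one thing to note is that the apparent factor $e^{\pi|\Im s|}$ from the tail $|t_j|>|\Im s|$ is compensated exactly by the exponentially small tail $\sum_{|t_j|>|\Im s|}e^{-\pi|t_j|}$. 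The third part is the finitely many residual terms $\rho_{\tfrac{3}{2}-m}(s)$ that enter the continuation in the strip, each of which appears only for $\Re s<\tfrac{3}{2}-m$. I would simplify each $\rho_{\tfrac{3}{2}-m}(s)$ as in the computations of $\rho_{\tfrac{3}{2}}(s)$ and $\rho_{\tfrac{1}{2}}(s)$, using the functional equation of $\zeta^*$, the $\Gamma$-duplication formula, and the Rankin--Selberg identity for the Eisenstein inner product; after simplification the $\Gamma$-factors form a ratio whose Stirling exponents cancel, multiplied by numerator $\zeta$-values, a Rankin--Selberg $L$-value (polynomially bounded away from finitely many poles), and reciprocals of $\zeta$. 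Two observations close this case: every reciprocal-$\zeta$ factor coming from a $\zeta^*$ in a denominator has argument $4-2s-2m$ with $\Re(4-2s-2m)>1$ precisely because $\Re s<\tfrac{3}{2}-m$, hence is bounded; and the remaining reciprocal-$\zeta$ factor, whose argument lies in the critical strip, is controlled by the standard fact that $1/\zeta(w)\ll|\Im w|^{A'}$ uniformly for $w$ in a vertical strip at a fixed distance from the zeros of $\zeta$ --- and those zeros, suitably shifted, are among the poles of $D$ listed in Theorem~\ref{thm:Wsfgmero} and Corollary~\ref{cor:DsSfSg_has_meromorphic}, so the hypothesis ``uniformly away from poles'' keeps us a fixed distance from them.

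I expect the main obstacle to be the bookkeeping in the residual-term case: keeping track of which $\rho_{\tfrac{3}{2}-m}(s)$ occur in a given strip, carrying the simplifications far enough that every surviving $1/\zeta$ factor either has its argument in the region of absolute convergence or is pinned a fixed distance from the zeros of $\zeta$, and matching the genuine poles encountered (the Rankin--Selberg pole at $s=1$, integers and half-integers, shifted discrete types, shifted zeta-zeros) against the pole set of Theorem~\ref{thm:Wsfgmero} so that the ``uniformly away from poles'' hypothesis does its work with room to spare. Everything else is routine Stirling-and-convexity estimation.
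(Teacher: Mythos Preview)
Your proposal is correct and follows essentially the same route as the paper: reduce via Lemma~\ref{lem:D_is_lots_of_W} to $W(s;f,\overline{g})$ and its Mellin--Barnes transform, then handle $L(s,f\times\overline{g})/\zeta(2s)$ by Phragm\'en--Lindel\"of, the spectral pieces by Remark~\ref{rem:extraremark}, and the residual terms $\rho_{\frac{3}{2}-m}(s)$ by checking that the exponential Stirling factors in the Gamma ratios cancel.

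Two minor differences worth noting. For the Mellin--Barnes integral, the paper proves a slightly more general transfer principle: if $F$ has polynomial growth on vertical strips, then so does $\int_{(\sigma)}F(s-z)\zeta(z)\Gamma(z)\Gamma(s-z)/\Gamma(s)\,dz$, arguing directly that the exponential damping in $|\Im z|$ cuts the integral to $|\Im z|\le|\Im s|$ and then bounding crudely. Your approach---pushing $M$ large enough to land $W(s-z)$ in its region of absolute convergence so it is simply bounded---is equally valid and arguably cleaner here, though it uses more about $W$ specifically. For the residual terms, your treatment is in fact more careful than the paper's: you track the $1/\zeta$ factors explicitly, observe that the $\zeta^*(4-2s-2m)^{-1}$ denominator sits in absolute convergence precisely because $\rho_{\frac{3}{2}-m}$ only appears for $\Re s<\tfrac{3}{2}-m$, and invoke the ``uniformly away from poles'' hypothesis to control any remaining $1/\zeta$ in the critical strip. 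The paper compresses all of this into a single sentence asserting that Stirling shows the exponential contributions cancel; your version makes the argument more robust.
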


\begin{proof}
  From Lemma~\ref{lem:D_is_lots_of_W} it is only necessary to study the growth properties
  of $W(s; f, \overline{g})$ and the Mellin-Barnes integral transform of $W(s; f,
  \overline{g})$.

  We first handle $W(s; f, \overline{g})$.
  The diagonal component of $W(s; f, \overline{g})$ is just the Rankin--Selberg 
  $L$-function $L(s, f\times \overline{g}) \zeta(2s)^{-1}$, which has polynomial growth in
  vertical strips as a consequence of the Phragm\'{e}n-Lindel\"{o}f convexity principle
  and the functional equation.

  As noted in Remark~\ref{rem:extraremark}, the discrete spectrum and integral term in the
  continuous spectrum each have polynomial growth in vertical strips.
  It remains to consider the possible contribution from the residual terms
  $\rho_{\frac{3}{2}}(s)$ and $\rho_{\frac{3}{2} - m}(s)$.
  These each consist of a product of zeta functions, Gamma functions,
  and Rankin--Selberg 
  $L$-functions, and a quick analysis through Stirling's approximation shows that the
  exponential contributions from the Gamma functions all perfectly cancel.
  Therefore these are also of polynomial growth.

  We now handle the Mellin-Barnes transform of $W(s; f, \overline{g})$.
  We actually prove a slightly more general result.

  Let $F(s)$ be a function of polynomial growth in $\lvert \Im s \rvert$ in vertical
  strips containing $\sigma$.
  Then the function
  \begin{equation}
    \frac{1}{2\pi i} \int_{(\sigma)}F(s-z)\zeta(z) \frac{\Gamma(z) \Gamma(s -
    z)}{\Gamma(s)} dz
  \end{equation}
  has at most polynomial growth in $\lvert \Im s \rvert$.
  Indeed, through Stirling's Approximation, the integrand is bounded by
  \begin{equation}
    \lvert \Im s \rvert^A \lvert \Im (s-z) \rvert^B \lvert \Im z \rvert^C
    \exp\bigg(-\frac{\pi}{2}\Big(\lvert \Im z \rvert + \lvert \Im(s-z) \rvert - \lvert \Im
    s \rvert \Big)\bigg).
  \end{equation}
  Therefore, for $\lvert \Im z \rvert > \lvert \Im s \rvert$, the integrand has
  exponential decay and converges rapidly.
  Thus the integral is essentially of an integrand of polynomial growth along an interval
  of length $2 \lvert \Im s \rvert$, leading to an overall polynomial bound in $\lvert \Im
  s \rvert$.
\end{proof}

This is already sufficient for many applications.
Consider the integral transform
\begin{equation}\label{eq:smooth_cutoff_ref}
  \frac{1}{2\pi i} \int_{(\sigma)} D(s, S_f \times \overline{S_g}) X^s \Gamma(s) ds =
  \sum_{n \geq 1} \frac{S_f(n) \overline{S_g(n)}}{n^{k-1}} e^{-n/X},
\end{equation}
as described in \S\ref{sec:cutoff_integrals}.
Initially take $\sigma$ large enough to be in the domain of absolute convergence of $D(s,
S_f \times \overline{S_g})$, say $\sigma \geq 4$.

Through Lemma~\ref{lem:D_is_lots_of_W}, we rewrite~\eqref{eq:smooth_cutoff_ref} as
\begin{equation}\label{eq:smooth_cutoff_in_W}
  \begin{split}
    &\frac{1}{2\pi i} \int_{(4)} \bigg(\frac{1}{2} W(s; f, \overline{g}) + \frac{W(s-1; f,
\overline{g})}{s + k - 2} \bigg) X^s \Gamma(s) ds \\
    &\quad+ \frac{1}{(2\pi i)^2} \int_{(4)} \int_{(-1 + \epsilon)} W(s-z; f, \overline{g})
\zeta(z) \frac{\Gamma(z) \Gamma(s - z + k - 1)}{\Gamma(s + k - 1)}dz X^s \Gamma(s) ds.
  \end{split}
\end{equation}
From the proof and statement of Theorem~\ref{thm:Wsfgmero}, we see that $W(s; f,
\overline{g})$ is analytic in $\Re s > -\frac{1}{2}$ except for poles at $s = \frac{1}{2}$
and at $s = -\frac{1}{2} \pm it_j$.
Therefore when we shift lines of $s$-integration in~\eqref{eq:smooth_cutoff_in_W} to
$\frac{1}{2} + 2\epsilon$ passes a pole at $s = \frac{3}{2}$ from $W(s-1; f,
\overline{g})$, and otherwise no poles.

\begin{remark}
  For general level, we shift lines of $s$-integration to $\frac{1}{2} + \theta +
  2\epsilon$, where $\theta < \frac{7}{64}$ is the best-known progress towards Selberg's
  Eigenvalue Conjecture, as noted above.
\end{remark}

By Lemma~\ref{lem:DsSfSg_poly_growth}, this shift is justified and the resulting integral
converges absolutely.
Therefore
\begin{equation}
  \sum_{n \geq 1} \frac{S_f(n)\overline{S_g(n)}}{n^{k-1}} e^{-n/X} = C X^{\frac{3}{2}} +
  O_{f,g,\epsilon}(X^{\frac{-1}{2}+\theta+\epsilon})
\end{equation}
We can evaluate the residue $C$ using Theorem~\ref{thm:Wsfgmero}.
Note that the same analysis holds on $D(s, S_f \times S_g)$ as well.
In total, we have proved the following theorem.

\begin{theorem}\label{thm:second_moment_Sf}
  Suppose $f$ and $g$ are weight $k$ holomorphic cusp forms on $\SL(2, \mathbb{Z})$.
  For any $\epsilon > 0$,
  \begin{align}
    \sum_{n \geq 1} \frac{S_f(n) \overline{S_g(n)}}{n^{k-1}} e^{-n/X} &= C X^{\frac{3}{2}}
    + O_{f,g,\epsilon} (X^{\frac{1}{2} + \epsilon}) \\
    \sum_{n \geq 1} \frac{S_f(n) S_g(n)}{n^{k-1}} e^{-n/X} &= C' X^{\frac{3}{2}} +
    O_{f,g,\epsilon} (X^{\frac{1}{2} + \epsilon})
  \end{align}
  where
  \begin{equation}
    C = \frac{\Gamma(\frac{3}{2})}{4\pi^2} \frac{L(\frac{3}{2}, f \times
    \overline{g})}{\zeta(3)},
    \qquad C' = \frac{\Gamma(\frac{3}{2})}{4\pi^2} \frac{L(\frac{3}{2}, f \times
    g)}{\zeta(3)}.
  \end{equation}
\end{theorem}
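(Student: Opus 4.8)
The plan is to realise the smoothed second moment as a Mellin contour integral of the Dirichlet series $D(s, S_f\times\overline{S_g})$ and then to shift the contour past its rightmost pole. By the exponentially smoothed transform of \S\ref{sec:cutoff_integrals}, for $\sigma$ in the half-plane of absolute convergence of $D(s, S_f\times\overline{S_g})$ (say $\sigma = 4$),
\begin{equation*}
  \sum_{n\geq 1}\frac{S_f(n)\,\overline{S_g(n)}}{n^{k-1}}\,e^{-n/X}
  = \frac{1}{2\pi i}\int_{(\sigma)} D(s, S_f\times\overline{S_g})\,X^{s}\,\Gamma(s)\,ds.
\end{equation*}
First I would substitute the representation of Lemma~\ref{lem:D_is_lots_of_W}, which expresses $D(s, S_f\times\overline{S_g})$ as $W(s;f,\overline{g})$, plus the shifted term $W(s-1;f,\overline{g})/(s+k-2)$, plus a finite sum of further shifts $W(s+j;f,\overline{g})$ weighted by $\zeta(-j)$ and ratios of Gamma functions, plus a residual Mellin--Barnes integral of $W(s-z;f,\overline{g})$ taken along $\Re z = -M-\tfrac12$ for an arbitrarily large integer $M$.

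The core of the argument is a single shift of the $s$-contour. Theorem~\ref{thm:Wsfgmero} says $W(s;f,\overline{g})$ is holomorphic for $\Re s > \tfrac12$ with a simple pole at $s = \tfrac12$ of residue $\tfrac{(k-1/2)}{4\pi^{2}}\,L(\tfrac32,f\times\overline{g})/\zeta(3)$. Consequently, in the strip $\tfrac12 < \Re s \leq \sigma$ the only pole of the integrand is at $s = \tfrac32$, contributed by $W(s-1;f,\overline{g})/(s+k-2)$; all poles coming from the terms $W(s+j;f,\overline{g})$ with $j\geq 0$, from the residual Mellin--Barnes integral (holomorphic in $s$ for $\Re s > -M$), and from $\Gamma(s)$ lie on or to the left of $\Re s = \tfrac12$. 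Using Lemma~\ref{lem:DsSfSg_poly_growth} (polynomial growth of $D$, and separately of $W$ and of its Mellin--Barnes transform, in vertical strips) together with the exponential decay $\Gamma(\sigma+it)\ll e^{-\pi|t|/2}$, I would move the line of integration from $\Re s = \sigma$ down to $\Re s = \tfrac12+2\epsilon$, the horizontal connecting segments vanishing in the limit, crossing only the pole at $s = \tfrac32$. Its residue is
\begin{equation*}
  \Gamma(\tfrac32)\,X^{3/2}\cdot\frac{1}{\tfrac32+k-2}\,\Res_{s=1/2}W(s;f,\overline{g})
  = \frac{\Gamma(3/2)}{4\pi^{2}}\,\frac{L(\tfrac32,f\times\overline{g})}{\zeta(3)}\,X^{3/2}
  = C\,X^{3/2},
\end{equation*}
the identity $\tfrac32+k-2 = k-\tfrac12$ cancelling the $(k-\tfrac12)$ in the residue of $W$. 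The integral that remains along $\Re s = \tfrac12+2\epsilon$ is absolutely convergent with integrand $O_{\epsilon}\!\big(X^{1/2+2\epsilon}\,|\Im s|^{A}\,e^{-c|\Im s|}\big)$, so it is $O(X^{1/2+\epsilon})$; this yields the first asymptotic, with $C$ read off from Theorem~\ref{thm:Wsfgmero}. The second asymptotic, for $\sum_n S_f(n)S_g(n)\,n^{-(k-1)}e^{-n/X}$, follows verbatim after replacing $\overline{g}$ by $T_{-1}g$ throughout, so that $L(s,f\times\overline{g})$ is replaced by $L(s,f\times g)$ and $C$ by $C'$; the analogues of Lemma~\ref{lem:D_is_lots_of_W}, Theorem~\ref{thm:Wsfgmero}, and Lemma~\ref{lem:DsSfSg_poly_growth} hold with $T_{-1}g$ in place of $\overline{g}$ without change.

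Since the deep input — the spectral expansion of the shifted convolution sum, the cancellation of its diagonal against the Rankin--Selberg $L$-function, and the exact location and residue of the leading pole of $W$ — is already packaged into Theorem~\ref{thm:Wsfgmero} and Lemma~\ref{lem:DsSfSg_poly_growth}, the only genuinely delicate point remaining is the pole bookkeeping for the contour shift. In particular I must verify that, inside the residual Mellin--Barnes term, the $z$-pole of $W(s-z;f,\overline{g})$ at $z = s-\tfrac12$ stays to the left of the $z$-line $\Re z = -M-\tfrac12$ while $\Re s$ runs from $\sigma$ down to $\tfrac12+2\epsilon$ (it meets that line only at $\Re s = -M$), and that the auxiliary $z$-contour underlying Lemma~\ref{lem:D_is_lots_of_W} remains admissible throughout. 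For congruence subgroups with exceptional eigenvalues one would move the $s$-contour only to $\Re s = \tfrac12+\theta+2\epsilon$, staying to the right of the poles at $s = -\tfrac12\pm it_j$ of the discrete spectrum, and the error term would read $O(X^{1/2+\theta+\epsilon})$; over $\SL(2,\mathbb{Z})$ one has $\theta = 0$, recovering the stated clean bound.
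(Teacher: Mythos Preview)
Your proposal is correct and follows essentially the same route as the paper: realise the smoothed sum as $\frac{1}{2\pi i}\int_{(4)} D(s,S_f\times\overline{S_g})\,X^s\,\Gamma(s)\,ds$, decompose $D$ via Lemma~\ref{lem:D_is_lots_of_W}, invoke Theorem~\ref{thm:Wsfgmero} and Lemma~\ref{lem:DsSfSg_poly_growth} to shift the $s$-contour to $\Re s = \tfrac12+2\epsilon$, and extract the residue of $W(s-1;f,\overline{g})/(s+k-2)$ at $s=\tfrac32$. Your extra bookkeeping on the location of the $z$-pole of $W(s-z;f,\overline{g})$ during the shift, and your remark on the $\theta$-adjustment for congruence subgroups, are exactly the details the paper either states briefly or relegates to a remark.
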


As an immediate corollary, we have the following smoothed analogue of the Classical
Conjecture.
\begin{corollary}
  \begin{equation}
    \sum_{n \geq 1} \frac{\lvert S_f(n) \rvert^2}{n^{k-1}} e^{-n/X} = C X^{\frac{3}{2}} +
    O_{f,\epsilon} (X^{\frac{1}{2} + \epsilon}),
  \end{equation}
  where $C$ is the special value of $L(\frac{3}{2}, f \times \overline{f})
  \Gamma(\frac{3}{2}) (\zeta(3) 4\pi^2)^{-1}$ as above.
\end{corollary}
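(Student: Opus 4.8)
The plan is to read this off directly from Theorem~\ref{thm:second_moment_Sf} by taking $g = f$. With that choice one has $S_f(n)\overline{S_g(n)} = S_f(n)\overline{S_f(n)} = \lvert S_f(n) \rvert^2$, so the first identity of Theorem~\ref{thm:second_moment_Sf} becomes precisely
\begin{equation}
  \sum_{n \geq 1} \frac{\lvert S_f(n) \rvert^2}{n^{k-1}} e^{-n/X} = C X^{\frac{3}{2}} + O_{f,\epsilon}(X^{\frac{1}{2} + \epsilon}),
\end{equation}
with $C = \Gamma(\tfrac{3}{2})(4\pi^2)^{-1} L(\tfrac{3}{2}, f \times \overline{f}) \zeta(3)^{-1}$, which is exactly the asserted constant. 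In principle this is the whole proof; the remaining remarks only trace where the statement comes from.

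For a self-contained account, I would recall the exponentially smoothed Mellin identity of \S\ref{sec:cutoff_integrals},
\begin{equation}
  \sum_{n \geq 1} \frac{\lvert S_f(n) \rvert^2}{n^{k-1}} e^{-n/X} = \frac{1}{2\pi i} \int_{(\sigma)} D(s, S_f \times \overline{S_f}) X^s \Gamma(s) \, ds,
\end{equation}
taken initially with $\sigma$ in the half-plane of absolute convergence (say $\sigma = 4$). Using Lemma~\ref{lem:D_is_lots_of_W} to express $D(s, S_f \times \overline{S_f})$ through $W(s; f, \overline{f})$, the shifted term $W(s-1; f, \overline{f})/(s+k-2)$, finitely many further shifts, and a Mellin--Barnes remainder, and recalling from Theorem~\ref{thm:Wsfgmero} that $W(\cdot; f, \overline{f})$ is holomorphic for $\Re s > \tfrac{1}{2}$ on $\SL(2,\mathbb{Z})$ (no exceptional eigenvalues, so $\theta = 0$), one shifts the line of $s$-integration to $\Re s = \tfrac{1}{2} + \epsilon$. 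The only pole crossed is the simple pole at $s = \tfrac{3}{2}$ carried by $W(s-1; f, \overline{f})$, whose contribution is
\begin{equation}
  \Res_{s = \frac{3}{2}} \frac{W(s-1; f, \overline{f})}{s+k-2} X^s \Gamma(s) = \frac{\Gamma(\tfrac{3}{2}) X^{3/2}}{k - \tfrac{1}{2}} \Res_{s = \frac{1}{2}} W(s; f, \overline{f}) = C X^{\frac{3}{2}},
\end{equation}
where the last equality uses the residue formula $\Res_{s = 1/2} W(s; f, \overline{f}) = (k - \tfrac{1}{2})(4\pi^2)^{-1} L(\tfrac{3}{2}, f \times \overline{f})\zeta(3)^{-1}$ from Theorem~\ref{thm:Wsfgmero}. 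The polynomial-growth bound of Lemma~\ref{lem:DsSfSg_poly_growth}, combined with the exponential decay of $\Gamma(s)$ in vertical strips, justifies the shift and bounds the remaining integral by $O_{f,\epsilon}(X^{\frac{1}{2}+\epsilon})$.

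I do not expect a real obstacle here: all of the analytic content --- the meromorphic continuation of $W(s; f, \overline{f})$, the location and residue of its leading pole, and the vertical-strip growth estimate --- has already been established in Theorem~\ref{thm:Wsfgmero}, Lemma~\ref{lem:D_is_lots_of_W}, and Lemma~\ref{lem:DsSfSg_poly_growth}. The only points worth a moment's attention are bookkeeping ones: that the diagonal object attached to $\lvert S_f(n) \rvert^2$ is $L(s, f \times \overline{f})$ rather than $L(s, f \times f)$ (which is why the constant involves $f \times \overline{f}$), that on $\SL(2, \mathbb{Z})$ one has $\theta = 0$ so the error exponent is the clean $\tfrac{1}{2} + \epsilon$, and that $\lvert S_f(n) \rvert^2 \geq 0$, so the identity is a genuine smoothed second-moment estimate compatible with the Classical Conjecture.
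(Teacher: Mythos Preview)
Your proposal is correct and matches the paper's approach exactly: the paper presents this as an immediate corollary of Theorem~\ref{thm:second_moment_Sf} obtained by specializing $g = f$, which is precisely your first paragraph. Your additional self-contained account faithfully retraces the argument already carried out in the proof of Theorem~\ref{thm:second_moment_Sf}, so there is nothing to add or correct.
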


\section{A General Cancellation Principle}\label{sec:higherlevel}

While the techniques and methodology employed so far should work for general weight and
level, it is not immediately obvious that that the miraculous cancellation that occurs in
the level $1$ case should always occur.
In particular, it is not clear that the continuous spectrum of $Z(s, 0,f \times
\overline{g})$ will always perfectly cancel both the leading pole and potentially
infinitely many poles from the zeta zeroes of $L(s, f\times g)\zeta(2s)^{-1}$.

In the case when $f=g$ are the same cusp form, we can compare our methodology with the
results of Chandrasekharan and Narasimhan to show that the leading polar cancellation does
always occur.
A more detailed analysis using the same methodology as the rest of this chapter would
likely be able to show general cancellation.

\begin{remark}
  In Section~6 of the soon-to-be-published paper~\cite{hkldw}, my collaborators and I
  explicitly show that this cancellation continues to hold for cusp forms $f$ and $g$ on
  $\Gamma_0(N)$ when $N$ is square-free.
  This is much stronger than what is showed in the rest of this section concerning general
  cancellation between the diagonal and off-diagonal sums corresponding to $f \times
  \overline{f}$.
\end{remark}

Suppose $f(z) = \sum a(n)e(nz)$ is a cusp form on $\Gamma_0(N)$ and of weight $k \in
\mathbb{Z}\cup(\mathbb{Z} + \frac{1}{2})$ with $k > 2$.
Theorem~1 of~\cite{chandrasekharan1964mean} gives that
\begin{equation}\label{eq:CN_compare}
  \frac{1}{X} \sum_{n \leq X} \frac{\lvert S_f(n) \rvert^2}{n^{k-1}}  = C X^{\frac{1}{2}}
  + O(\log^2 X).
\end{equation}

Performing the decomposition from Proposition~\ref{prop:SfSg_decomposition} leads us to
again study $Z(s, 0, f\times \overline{f})$ and $W(s; f,\overline{f})$.
The Rankin--Selberg convolution 
$L(s, f\times f)/\zeta(2s)$ has a pole at $s = 1$.
This pole must cancel with poles from $Z(s, 0, f\times \overline{f})$, as otherwise the
methodology of this chapter contradicts~\eqref{eq:CN_compare}.
Stated differently, we must have that the leading contribution of the diagonal term
cancels perfectly with a leading contribution from the off-diagonal,
\begin{equation*}
  \Res_{s = 1}\sum_{n \geq 1}\frac{\lvert a(n) \rvert^2}{n^{s + k - 1}} = - \Res_{s =
  1}\sum_{n,h \geq 1} \frac{a(n) \overline{a(n-h)} + \overline{a(n)}a(n-h)}{n^{s + k -
  1}}.
\end{equation*}
We investigate this cancellation further by sketching the arguments of
\S\ref{sec:analyticbehavior} and \S\ref{sec:secondmoment} in greater generality.

The spectral decomposition corresponding to Proposition~\ref{prop:spectralexpansionfull}
is more complicated since we must now use the Selberg Poincar\'{e} series on $\Gamma_0(N)$
\begin{equation}
  P_h(z,s) := \sum_{\gamma \in \Gamma_\infty \backslash \Gamma_0(N)} \Im(\gamma z)^s
  e(h\gamma \cdot z).
\end{equation}
The spectral decomposition of $P_h$ will involve Eisenstein series associated to each cusp
$\mathfrak{a}$ of $\Gamma_0(N)$.
These Eisenstein series have expansions
\begin{equation}
  E_\mathfrak{a} (z,w) = \delta_\mathfrak{a} y^w + \varphi_\mathfrak{a}(0,w) y^{1-w} +
  \sum_{m \neq 0} \varphi_\mathfrak{a} (m,w) W_w(\lvert m\rvert z),
\end{equation}
where $\delta_\mathfrak{a} = 1$ if $\mathfrak{a} = \infty$ and is $0$ otherwise,
\begin{align}
  \varphi(0, w) &= \sqrt \pi \frac{\Gamma(w - \frac{1}{2})}{\Gamma(w)} \sum_c
  c^{-2w}S_\mathfrak{a}(0,0;c) \\
  \varphi(m, w) &= \frac{\pi^w}{\Gamma(w)} \lvert m \rvert^{w-1} \sum_c c^{-2w}
  S_\mathfrak{a}(0, m; c)
\end{align}
are generalized Whittaker-Fourier coefficients,
\begin{equation}
  W_w(z) = 2\sqrt y K_{w - \frac{1}{2}}(2\pi y) e(x)
\end{equation}
is a Whittaker function, $K_\nu(z)$ is a $K$-Bessel function, and
\begin{equation}
  S_\mathfrak{a}(m,n; c) = \sum_{\left(\begin{smallmatrix} a&\cdot \\ c&d
  \end{smallmatrix}\right) \in \Gamma_\infty \backslash \sigma_\alpha^{-1} \Gamma_0(N) /
  \Gamma_\infty} e\left( m \frac{d}{c} + n \frac{a}{c}\right)
\end{equation}
is a Kloosterman sum associated to double cosets of $\Gamma_0(N)$ with
\begin{equation}
  \Gamma_\infty = \left \langle \begin{pmatrix} 1&n \\ &1 \end{pmatrix} : n \in \mathbb{Z}
\right\rangle \subset \SL_2(\mathbb{Z}).
\end{equation}
This expansion is given in Theorem 3.4 of~\cite{iwaniec2002spectral}.

Letting $\mu_j$ be an orthonormal basis of the residual and cuspidal spaces, we may expand
$P_h(z,s)$ by the Spectral Theorem (as presented in Theorem~15.5
of~\cite{IwaniecKowalski04}) to get
\begin{align}
  P_h(z,s) &= \sum_j \langle P_h(\cdot, s), \mu_j \rangle \mu_j(z) \\
           &\quad + \sum_\mathfrak{a} \frac{1}{4\pi} \int_\mathbb{R} \langle P_h(\cdot,
           s), E_\mathfrak{a}(\cdot, \tfrac{1}{2} + it)\rangle E_\mathfrak{a}(z,
           \tfrac{1}{2} + it) \ dt. \label{line:PoincareLevelNContinuous}
\end{align}
This is more complicated than the $\SL_2(\mathbb{Z})$ spectral expansion
in~\eqref{eq:Pspectral} for two major reasons: we are summing over cusps and the
Kloosterman sums within the Eisenstein series are trickier to handle.
Continuing as before, we try to understand the shifted convolution sum
\begin{equation}
  Z(s,w,f\times f) = \frac{(4\pi)^{s + k - 1}}{\Gamma(s + k - 1)}\sum_{h \geq
  1}\frac{\langle \lvert f \rvert^2 \Im(\cdot)^k, P_h\rangle}{h^w}
\end{equation}
by substituting the spectral expansion for $P_h(z,s)$ and producing a meromorphic
continuation.

The analysis of the discrete spectrum is almost exactly the same: it is analytic for $\Re
s > -\frac{1}{2} + \theta$.
The only new facet is understanding the continuous spectrum component corresponding
to~\eqref{line:PoincareLevelNContinuous}.
We expect that the continuous spectrum of $Z(s, 0, f\times \overline{f})$ has leading
poles that perfectly cancel the leading pole of $L(s, f\times \overline{f})
\zeta(2s)^{-1}$.

Using analogous methods to those in Section~\ref{sec:analyticbehavior}, we compute the
continuous spectrum of $Z(s, 0, f\times \overline{f})$ to get
\begin{align}
  &\sum_{h \geq 1}\frac{(4\pi)^{s + k - 1}}{\Gamma(s + k - 1)}\sum_{\mathfrak{a}}
  \frac{1}{4\pi i} \int_{(\frac{1}{2})} \langle P_h(\cdot, s), E_\mathfrak{a}(\cdot,
  t)\rangle \langle \lvert f \rvert^2 \Im(\cdot)^k, \overline{E_\mathfrak{a}(\cdot,
  t)}\rangle \, dt \nonumber \\
  \begin{split}
  & = \frac{(4\pi)^k}{\Gamma(s + k - 1)\Gamma(s)}\sum_{\mathfrak{a}} \int_{-\infty}^\infty
    \left( \sum_{h,c \geq 1} \frac{S_\mathfrak{a} (0, h; c)}{h^{s + it} c^{1 -
    2it}}\frac{\pi^{\frac{1}{2} - it}}{\Gamma(\frac{1}{2} - it)}\right) \times \\
  &\quad \times \Gamma(s - \tfrac{1}{2} + it)\Gamma(s - \tfrac{1}{2} - it) \langle \lvert
    f \rvert^2 \Im(\cdot)^k, \overline{E_\mathfrak{a}(\cdot, \tfrac{1}{2} + it)} \rangle \,
    dt.
  \end{split}
\end{align}
We've placed parentheses around the arithmetic part, including the Kloosterman sums and
factors for completing a zeta function that appears within the Kloosterman sums.

The arithmetic part of the Eisenstein series are classically-studied $L$-functions, and
each satisfies analogous analytic properties to the function denoted $\mathcal{Z}(s,0,z)$
in \S\ref{sec:analyticbehavior}.
We summarize the results of this section with the following theorem.

\begin{theorem}\label{thm:general_weight_level_comparison}
  Let $f$ be a weight $k > 2$ cusp form on $\Gamma_0(N)$. Then
  \begin{align}\label{eq:general_level_general_comparison}
    \Res_{s = 1} \sum_{n \geq 1} \frac{\lvert a(n) \rvert^2}{n^{s + k - 1}} = - \Res_{s =
    1} \sum_{n,h \geq 1} \frac{a(n)\overline{a(n-h)}}{n^{s + k - 1}}- \Res_{s = 1}
    \sum_{n,h \geq 1} \frac{\overline{a(n)} a(n-h)}{n^{s + k - 1}},
  \end{align}
  or equivalently
  \begin{align}\label{eq:general_level_kloosterman}
    -\frac{1}{2}\Res_{s = 1} &\frac{L(s, f\times f)}{\zeta(2s)} =\\
    &= \Res_{s = 1}\sum_{\mathfrak{a}} \frac{1}{4\pi} \int_\mathbb{R} \langle P_h(\cdot,
s), E_\mathfrak{a}(\cdot, \tfrac{1}{2} + it)\rangle \langle \lvert f \rvert^2
\Im(\cdot)^k, \overline{E_\mathfrak{a}(\cdot, \tfrac{1}{2} + it)}\rangle \, dt \\
    \begin{split}
      &= \Res_{s = 1} \frac{(4\pi)^k}{\Gamma(s + k - 1)\Gamma(s)}\sum_{\mathfrak{a}}
      \int_{-\infty}^\infty  \left( \sum_{h,c \geq 1} \frac{S_\mathfrak{a} (0, h; c)}{h^{s
      + it} c^{1 - 2it}}\frac{\pi^{\frac{1}{2} - it}}{\Gamma(\frac{1}{2} - it)}\right) \\
      &\quad \times \Gamma(s - \tfrac{1}{2} + it)\Gamma(s - \tfrac{1}{2} - it) \langle
      \lvert f \rvert^2 \Im(\cdot)^k, \overline{E_\mathfrak{a}(\cdot, \tfrac{1}{2} + it)}
      \rangle \, dt.
    \end{split}
  \end{align}
\end{theorem}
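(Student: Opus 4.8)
The plan is to deduce the identity from a comparison between the decomposition of $D(s, S_f\times\overline{S_f})$ built in this chapter and the mean-square asymptotic of Chandrasekharan and Narasimhan recorded in~\eqref{eq:CN_compare}. The crucial point is that $D(s, S_f\times\overline{S_f}) = \sum_{n\geq 1}\lvert S_f(n)\rvert^2 n^{-(s+k-1)}$ has \emph{nonnegative} coefficients, and~\eqref{eq:CN_compare} pins its abscissa of absolute convergence to $\tfrac{3}{2}$: by partial summation, $\sum_{n\leq X}\lvert S_f(n)\rvert^2 n^{-(k-1)} = CX^{3/2}+O(X\log^2 X)$ with $C>0$ forces convergence for $\Re s>\tfrac32$, so $D(s, S_f\times\overline{S_f})$ is holomorphic at $s=2$ (and Landau's theorem identifies $s=\tfrac32$ as its rightmost singularity). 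The decomposition of Lemma~\ref{lem:D_is_lots_of_W}, which holds verbatim over $\Gamma_0(N)$ with the modified $W$ and $Z$ of this section (cf.\ the remark after Corollary~\ref{cor:DsSfSg_has_meromorphic}), expresses $D(s, S_f\times\overline{S_f})$ for $\Re s>\tfrac32$ as $W(s;f,\overline{f})$ plus $W(s-1;f,\overline{f})/(s+k-2)$ plus finitely many translates $W(s+j;f,\overline{f})$ with $j\geq 0$ plus a contour integral holomorphic near $s=2$. Of these summands only $W(s-1;f,\overline{f})/(s+k-2)$ can contribute a pole at $s=2$, and there is no cancellation since the other terms have poles only at $\Re s\leq 1$; since $D$ has no pole at $s=2$, its residue $\tfrac{1}{k-1}\Res_{s=1}W(s;f,\overline{f})$ must vanish, so $W(s;f,\overline{f})$ is holomorphic at $s=1$.

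First I would then invoke $W(s;f,\overline{f}) = L(s,f\times\overline{f})/\zeta(2s) + Z(s,0,f\times\overline{f})$ together with the fact that $L(s,f\times\overline{f})$ has a genuine simple pole at $s=1$ with residue proportional to $\langle f,f\rangle\neq 0$ (see~\S\ref{ssec:rankinselberg_lfunction}) while $\zeta(2)\notin\{0,\infty\}$. Hence $L(s,f\times\overline{f})/\zeta(2s)$ has a nonzero simple pole at $s=1$, and holomorphy of $W$ at $s=1$ forces $Z(s,0,f\times\overline{f})$ to carry a matching simple pole with $\Res_{s=1}Z(s,0,f\times\overline{f}) = -\Res_{s=1}L(s,f\times\overline{f})/\zeta(2s)$. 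Unwinding the Dirichlet series, $L(s,f\times\overline{f})/\zeta(2s) = \sum_n \lvert a(n)\rvert^2 n^{-(s+k-1)}$ and $Z(s,0,f\times\overline{f}) = \sum_{n,h\geq 1}(a(n)\overline{a(n-h)}+a(n-h)\overline{a(n)})n^{-(s+k-1)}$, which turns this into~\eqref{eq:general_level_general_comparison}.

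For the equivalent form~\eqref{eq:general_level_kloosterman}, I would localize the pole of $Z(s,0,f\times\overline{f})$ at $s=1$ to the continuous spectrum, exactly as in the level $1$ argument of \S\ref{sec:analyticbehavior}: by the same reasoning (Lemmas~\ref{lem:Litj_equals_zero} and~\ref{lem:oddorthogonaltoeven}) the discrete-spectrum component is holomorphic for $\Re s > -\tfrac12+\theta$ with $\theta\leq\tfrac{7}{64}<1$, and within the continuous spectrum the main contour integral and all residual terms other than the leading one are holomorphic near $\Re s=1$, so $\Res_{s=1}Z(s,0,f\times\overline{f})$ equals the residue at $s=1$ of the continuous-spectrum contribution. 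Substituting the explicit expression for that contribution already displayed in this section — the sum over cusps $\mathfrak a$ of $\Gamma_0(N)$ of the $t$-integral of $\langle P_h(\cdot,s),E_\mathfrak{a}(\cdot,\tfrac12+it)\rangle\langle\lvert f\rvert^2\Im(\cdot)^k,\overline{E_\mathfrak{a}(\cdot,\tfrac12+it)}\rangle$, followed by the Fourier expansion of $E_\mathfrak{a}$ with its Kloosterman sums $S_\mathfrak{a}(0,h;c)$ — and combining with the residue identity above produces the displayed chain of equalities; the $-\tfrac12$ and the appearance of $L(s,f\times f)$ rather than $L(s,f\times\overline{f})$ are bookkeeping tied to the symmetrization in $\mathcal{V}_{f,\overline{f}} = y^k(f\overline{f}+T_{-1}(f\overline{f}))$.

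The main obstacle is the higher-level analytic bookkeeping needed for the second formulation. The comparison argument (convergence via~\eqref{eq:CN_compare}, Lemma~\ref{lem:D_is_lots_of_W}, nonnegativity) is soft and robust, and yields~\eqref{eq:general_level_general_comparison} with little effort. But~\eqref{eq:general_level_kloosterman} requires redoing the continuous-spectrum computation of \S\ref{sec:analyticbehavior} over $\Gamma_0(N)$: one must establish convergence and meromorphic continuation of the Dirichlet series of Kloosterman sums $\sum_{h,c\geq 1}S_\mathfrak{a}(0,h;c)\,h^{-s-it}c^{-1+2it}$ occurring in the Eisenstein Fourier coefficients, verify that these play the role of the function $\mathcal{Z}(s,0,z)$ from the level $1$ case, and confirm that the leading residual term attached to each cusp is the unique source of the pole at $s=1$. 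That step is the delicate one; the remaining manipulations of the $\Gamma$- and $\zeta$-factors are routine.
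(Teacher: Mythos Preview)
Your proposal is correct and follows essentially the same approach as the paper: both arguments deduce the cancellation by comparing the decomposition of $D(s, S_f\times\overline{S_f})$ (via Lemma~\ref{lem:D_is_lots_of_W}) against the Chandrasekharan--Narasimhan mean-square asymptotic~\eqref{eq:CN_compare}, concluding that a pole of $W(s;f,\overline{f})$ at $s=1$ would force a pole of $D$ at $s=2$ in contradiction with~\eqref{eq:CN_compare}, and then localizing the required pole of $Z(s,0,f\times\overline{f})$ to the continuous spectrum. Your write-up is in fact more explicit than the paper's sketch (e.g.\ the partial-summation step pinning the abscissa of convergence, and the invocation of Landau's theorem), but the underlying logic is identical.
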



\clearpage{\pagestyle{empty}\cleardoublepage}

\chapter{Applications of Dirichlet Series of Sums of Coefficients}\label{c:sums_apps}

In this chapter, we highlight some applications of the Dirichlet series $D(s, S_f \times
\overline{S_g})$ and $D(s, S_f \times S_g)$.
We begin with completed applications, summarizing the results and methodology.
Towards the end of the chapter, we highlight ongoing and future work.

\vspace{-1in}
\subsection*{\center{Reacknowledgements to my Collaborators}}

\begin{quote}
  Each of the applications in this chapter were carried out (or are being carried out)
  with Alex Walker, Chan Ieong Kuan, and Tom Hulse, each of whom are my academic brothers,
  collaborators, and friends.
\end{quote}

\section{Applications}

The Dirichlet series $D(s, S_f \times \overline{S_g})$ and $D(s, S_f \times S_g)$ present
a new avenue for investigating the behavior of $S_f$, $S_f(n)S_g(n)$, and related objects.
As $S_f$ is analogous to the error term in the Gauss Circle Problem (cf.
Chapter~\ref{c:introduction}), it is perhaps most natural to ask about the sizes of
$S_f(n)$.

\subsection*{Long Sums}

One result of this form was presented in \S\ref{c:sums}, in
Theorem~\ref{thm:second_moment_Sf} giving smoothed averages for $S_f(n)\overline{S_g(n)}$
and, if $f = g$, smoothed averages for $\lvert S_f(n) \rvert^2$.
This theorem was proved completely for $f$ and $g$ on level $1$.

In~\cite{hkldw}, my collaborators and I analyze further smoothed asymptotics for
$S_f(n)\overline{S_g(n)}$ for forms $f$ and $g$ on $\Gamma_0(N)$ for $N$ squarefree,
proving analogous results to Theorem~\ref{thm:second_moment_Sf} for forms even of
half-integral weight.
When $f = g$, this is a generalized smoothed analogue to the result of
Cram\'er~\cite{cramer1922} giving that
\begin{equation}
  \frac{1}{X} \int_0^X \Big \lvert \sum_{n \leq t} r_2(n) - \pi t \Big \rvert^2 dt = c
  X^{1/2} + O(X^{\frac{1}{4} + \epsilon}),
\end{equation}
giving evidence towards a generalized Circle problem.

It is very interesting (and very new) that if $f \neq g$, then $S_f S_g$ still appears to
satisfy a generalized Circle problem.
Recalling Chandrasekharan and Narasimhan's result that $S_f(X) =
\Omega_{\pm}(X^{\frac{k-1}{2} + \frac{1}{4}})$, it's apparent that both $S_f(n)$ and
$S_g(n)$ oscillate in size between $\pm n^{\frac{k-1}{2} + \frac{1}{4}}$.
From first principles alone, it seems possible that $S_f(n)$ might be small or negative
with $S_g(n)$ is large and positive, leading to large cancellation in average sums $\sum
S_f(n) S_g(n)$.
But Theorem~\ref{thm:second_moment_Sf} indicates that there is not large cancellation of
this sort.
Indeed, Theorem~\ref{thm:second_moment_Sf} roughly indicates that the partial sums of
Fourier coefficients of $f(z)$ correlate about as well with the partial sums of Fourier
coefficients of $g(z)$ as with itself, up to the constant $L(\frac{3}{2}, f \times g)$ of
proportionality.

By mimicking the techniques of Chapter~\ref{c:sums}, it is possible to apply different
integral transforms to $D(s, S_f \times \overline{S_g})$ or $D(s, S_f \times S_g)$, either
to get different long-average estimates, or estimates of a different variety.

\subsection*{Short-Interval Averages}

Now restrict attention to $f$ a full-integer weight cusp form on $\SL(2, \mathbb{Z})$, and
suppose that $f = g$.
In~\cite{hkldwShort}, my collaborators and I analyze short-interval estimates of the type
\begin{equation}\label{eq:short-interval-estimate}
  \frac{1}{X^{2/3}(\log X)^{1/6}} \sum_{\lvert n-X \rvert < X^{2/3} (\log X)^{1/6}} \lvert
  S_f(n) \rvert^2 \ll X^{k-1 + \frac{1}{2}},
\end{equation}
which says essentially that the Classical Conjecture holds \emph{on average} over short
intervals of width $X^{\frac{2}{3}} (\log X)^{\frac{1}{6}}$ around $X$.
This is qualitatively a  much stronger result than the long-interval estimate, and is a
vast improvement over the previous best result of this type due to
Jarnik~\cite{jutila1987lectures},
\begin{equation}
  \frac{1}{X^{\frac{3}{4} + \epsilon}} \sum_{\lvert n - X \rvert < X^{\frac{3}{4} +
  \epsilon}} \lvert S_f(n) \rvert^2 \ll X^{k - 1 + \frac{1}{2}}.
\end{equation}

It is interesting to note that it is possible to get bounds for individual sums $S_f(n)$
from short-interval estimates.

\begin{proposition}
  Suppose that
  \begin{equation}
    \frac{1}{X^w} \sum_{\lvert n - X \rvert < X^w} \lvert S_f(n) \rvert^2 \ll X^{k - 1 +
    \frac{1}{2}}
  \end{equation}
  for $w \geq \frac{1}{4}$.
  Then also
  \begin{equation}
    S_f(X) \ll X^{\frac{k-1}{2} + \frac{1}{4} + (\frac{w}{3} - \frac{1}{12})}.
  \end{equation}
\end{proposition}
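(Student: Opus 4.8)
The plan is to deduce the pointwise bound on $S_f(X)$ from the short-interval second moment by a standard \emph{amplification by near-constancy} argument: near $X$ the partial sum $S_f(n)$ cannot change faster than the individual Fourier coefficients, so a large value of $S_f(X)$ would force $|S_f(n)|$ to be comparably large on an entire short interval, and a short interval on which $|S_f(n)|^2$ is uniformly large is incompatible with the hypothesis once that interval is narrow enough to sit inside the range $|n-X|<X^w$.

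First I would record the elementary input. By Deligne's theorem the Ramanujan--Petersson bound $a(n)\ll_\epsilon n^{\frac{k-1}{2}+\epsilon}$ holds unconditionally for holomorphic cusp forms, so summing trivially over an interval gives
\begin{equation}
  \bigl| S_f(n) - S_f(X) \bigr| \;\le\; \sum_{|m-X|\le L} |a(m)| \;\ll_\epsilon\; L\,X^{\frac{k-1}{2}+\epsilon}
  \qquad\text{whenever } |n-X|\le L.
\end{equation}
Write $M := |S_f(X)|$, and assume $S_f(X)\ne 0$. I would choose
\begin{equation}
  L \;=\; c\,M\,X^{-\frac{k-1}{2}-\epsilon}
\end{equation}
for a small absolute constant $c$, so that the displayed inequality yields $|S_f(n)-S_f(X)|\le\tfrac12 M$, hence $|S_f(n)|\ge\tfrac12 M$, for every $n$ with $|n-X|\le L$. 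If this choice of $L$ is $<1$ then $M\ll_\epsilon X^{\frac{k-1}{2}+\epsilon}$, which already beats the claimed bound (whose exponent exceeds $\tfrac{k-1}{2}$ by $\tfrac14-\tfrac1{12}+\tfrac w3=\tfrac16+\tfrac w3>0$), so we may assume $L\ge 1$.

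Next I would feed this into the hypothesis. Provided $L\le X^w$ --- which I verify at the end --- the interval $\{|n-X|\le L\}$ lies inside $\{|n-X|<X^w\}$, so
\begin{equation}
  X^{w}\,X^{\,k-1+\frac12} \;\gg\; \sum_{|n-X|<X^{w}} |S_f(n)|^2 \;\ge\; \sum_{|n-X|\le L} |S_f(n)|^2 \;\gg\; L\,M^2 .
\end{equation}
Substituting $L\asymp M X^{-\frac{k-1}{2}-\epsilon}$ and solving for $M$ gives
\begin{equation}
  M^3 \;\ll_\epsilon\; X^{\,\frac{3k}{2}-1+w+\epsilon},
  \qquad\text{i.e.}\qquad
  M \;\ll_\epsilon\; X^{\,\frac{k}{2}-\frac13+\frac{w}{3}+\epsilon} \;=\; X^{\,\frac{k-1}{2}+\frac14-\frac1{12}+\frac{w}{3}+\epsilon},
\end{equation}
which is the asserted estimate. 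To justify $L\le X^w$: with $M$ of the size just obtained, $L\asymp M X^{-\frac{k-1}{2}-\epsilon}\ll X^{\frac16+\frac{w}{3}}$, and $\tfrac16+\tfrac{w}{3}\le w$ is exactly the hypothesis $w\ge\tfrac14$. If instead the natural choice of $L$ already exceeds $X^w$ (equivalently $M$ is larger than a constant times $X^{\frac{k-1}{2}+w+\epsilon}$), I would take $L=X^w$ directly: on that interval the same inequality still gives $|S_f(n)|\gg M$, so $X^w M^2\ll X^{w+k-1/2}$, hence $M\ll X^{\frac{k}{2}-\frac14}$, which is stronger than the target precisely when $w\ge\tfrac14$.

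There is no genuine obstacle here; modulo Deligne's bound and the hypothesis the argument is entirely elementary. The only points needing care are the exponent bookkeeping --- in particular the observation that the break-even inequality $\tfrac16+\tfrac{w}{3}\le w$ required to fit the amplified interval inside the hypothesized range is exactly $w\ge\tfrac14$ --- together with the two harmless case distinctions ($M$ already small, so $L<1$; or $M$ so large that the natural interval overflows $X^w$). One can also dispense with the $\epsilon$'s by using the divisor bound $|a(n)|\ll d(n)\,n^{\frac{k-1}{2}}$ together with $\sum_{n\le X}d(n)\ll X\log X$ in place of Deligne's bound, at the cost of only logarithmic factors; I would mention this but not pursue it.
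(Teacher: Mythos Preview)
Your proof is correct and follows essentially the same approach as the paper: use Deligne's bound to show that a large value of $|S_f(X)|$ persists over a short interval of length $\asymp |S_f(X)|\,X^{-\frac{k-1}{2}}$, insert this into the short-interval second moment hypothesis, and solve the resulting inequality. Your write-up is in fact more careful than the paper's sketch --- you explicitly handle the edge cases $L<1$ and $L>X^w$ and verify that the constraint $\tfrac16+\tfrac w3\le w$ reduces precisely to $w\ge\tfrac14$ --- whereas the paper parameterizes via $S_f(X)\gg X^{\frac{k-1}{2}+\frac14+\alpha}$ and argues by contradiction to reach $\alpha<\tfrac13(w-\tfrac14)$.
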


\begin{proof}
  We only sketch the proof.
  For each individual Fourier coefficient $a(n)$, we have Deligne's bound $a(n) \ll
  n^{\frac{k-1}{2}}$.
  Suppose there is an $X$ such that $S_f(X) \gg X^{\frac{k-1}{2} + \frac{1}{4} + \alpha}$.
  Then $S_f(X + \ell) \gg X^{\frac{k-1}{2} + \frac{1}{4} + \alpha}$ for $\ell <
  X^{\frac{1}{4} + \alpha - \epsilon}$ for any $\epsilon > 0$, as it takes approximately
  $X^{\frac{1}{4} + \alpha}$ coefficients $a(n)$ to combine together to cancel $S_f(X)$.
  But then
  \begin{equation}
    \frac{1}{X^w} \sum_{\lvert n - X \rvert < X^w} \lvert S_f(X) \rvert^2 \gg
    \frac{1}{X^w} X^{k-1 + \frac{1}{2} + 2\alpha} X^{\min(w, \frac{1}{4} + \alpha)}.
  \end{equation}
  The $X^{\min(w, \frac{1}{4} + \alpha)}$ term comes from the width of the interval where
  each $S_f(X + \ell)$ is approximately $X^{\frac{k-1}{2} + \frac{1}{4} + \alpha}$.
  Comparing exponents of $X$ with
  \begin{equation}
    \frac{1}{X^w} \sum_{\lvert n - X \rvert < X^w} \lvert S_f(n) \rvert^2 \ll X^{k - 1 +
    \frac{1}{2}}
  \end{equation}
  shows that
  \begin{equation}
    2\alpha + \min(w, \tfrac{1}{4} + \alpha) \leq w,
  \end{equation}
  from which either $\alpha = 0$ or $\alpha < \tfrac{1}{3}( w - \tfrac{1}{4})$.
\end{proof}

\begin{corollary}
  A short-interval estimate of the type~\eqref{eq:short-interval-estimate}, with interval
  $\lvert n-X \rvert \leq X^{\frac{1}{4} + \epsilon}$, would prove the Classical
  Conjecture.
\end{corollary}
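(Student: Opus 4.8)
The plan is to deduce the corollary as a direct and essentially formal consequence of the Proposition immediately preceding it, maintaining the standing assumptions of this subsection ($f = g$ a full-integral weight cusp form on $\SL(2,\mathbb{Z})$, so that Deligne's bound $a(n) \ll n^{\frac{k-1}{2}}$ used in the Proposition is available). First I would observe that a short-interval estimate ``of the type~\eqref{eq:short-interval-estimate} with interval $\lvert n - X \rvert \leq X^{\frac{1}{4}+\epsilon}$'' is precisely the hypothesis of the Proposition with the choice $w = \tfrac{1}{4} + \epsilon$, namely $\frac{1}{X^{w}} \sum_{\lvert n - X\rvert < X^{w}} \lvert S_f(n)\rvert^2 \ll X^{k-1+\frac12}$. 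Since $\epsilon > 0$ we have $w = \tfrac14 + \epsilon > \tfrac14$, so the hypothesis $w \geq \tfrac14$ of the Proposition is satisfied and it applies, giving $S_f(X) \ll X^{\frac{k-1}{2} + \frac14 + (\frac{w}{3} - \frac{1}{12})}$.

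Next I would substitute $w = \tfrac14 + \epsilon$ into the gained exponent: $\frac{w}{3} - \frac{1}{12} = \frac{1}{3}\bigl(\tfrac14 + \epsilon\bigr) - \frac{1}{12} = \tfrac{1}{12} + \tfrac{\epsilon}{3} - \tfrac{1}{12} = \tfrac{\epsilon}{3}$, so the bound reads $S_f(X) \ll X^{\frac{k-1}{2} + \frac14 + \frac{\epsilon}{3}}$. Finally, since $\epsilon > 0$ was arbitrary and $\tfrac{\epsilon}{3}$ ranges over all positive reals as $\epsilon$ does, I would relabel to conclude $S_f(X) \ll_{f,\epsilon} X^{\frac{k-1}{2} + \frac14 + \epsilon}$ for every $\epsilon > 0$, which is exactly the Classical Conjecture.

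I do not expect a genuine obstacle here; the content is entirely in the Proposition, and the corollary is a bookkeeping statement about the specific value $w = \tfrac14 + \epsilon$. The only points requiring a word of care are minor: the Proposition is phrased with the strict bound $\lvert n - X\rvert < X^{w}$ whereas the corollary uses $\leq$, but enlarging the summation range from $< X^{\frac14+\epsilon}$ to $\leq X^{\frac14+\epsilon}$ only adds boundary terms and is harmless (or one simply passes to a marginally larger exponent); and the implied constant in the Proposition may depend on $w$, hence on $\epsilon$, which is permitted since the Classical Conjecture allows dependence on $\epsilon$. Thus the argument is a one-line application of the Proposition followed by the observation that $\epsilon$ is arbitrary.
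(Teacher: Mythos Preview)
Your proposal is correct and is exactly the intended derivation: the paper states the corollary immediately after the Proposition without separate proof, so it is meant to follow by plugging $w = \tfrac{1}{4} + \epsilon$ into the Proposition's conclusion, precisely as you do.
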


The short-interval estimate~\eqref{eq:short-interval-estimate} only produces the
individual estimate $S_f(X) \ll X^{\frac{k-1}{2} + \frac{1}{4} + \frac{5}{36}}$, which is
$1/18$ worse than the current best-known individual bound $S_f(X) \ll X^{\frac{k-1}{2} +
\frac{1}{3}}$.
On the other hand, it is by far the strongest short-interval average estimate.
(Note that the individual bound $S_f(X) \ll X^{\frac{k-1}{2} + \frac{1}{3}}$ doesn't give
a Classical Conjecture on average type result for any interval length).

\begin{remark}
  It is possible to use the short-interval estimate~\eqref{eq:short-interval-estimate},
  along with an argument used in Chapter~\ref{c:hyperboloid} which makes use of multiple
  integral transforms in combination, to recover the Hafner-Ivi\'c tyle bound $S_f(n) \ll
  n^{\frac{k-1}{2} + \frac{1}{3}}$.
\end{remark}

To prove the estimate~\eqref{eq:short-interval-estimate}, one builds upon the meromorphic
information of $D(s, S_f \times \overline{S_f})$ and applies an integral transform
\begin{equation}
  \frac{1}{2\pi i} \int_{(4)} D(s, S_f \times \overline{S_f}) \exp{\Big(\frac{\pi
  s^2}{y^2} \Big)} \frac{X^s}{y} ds
\end{equation}
to understand (essentially) a sum over the interval $\lvert n - X \rvert < X/y$.
For more details and analysis, refer to~\cite{hkldwShort}.

\subsection*{Sign-Changes of Sums of Coefficients of Cusp Forms}

As noted in Chapter~\ref{c:motivations}, the original guiding question that led to the
investigation of $D(s, S_f \times \overline{S_g})$ was concerning the sign changes within
the sequence $\{S_f(n)\}_{n \in \mathbb{N}}$.
In~\cite{hkldwSigns}, my collaborators and I succeeded in answering our original guiding
question.\footnote{The story behind the scope of this paper is a bit interesting, as it
  was written at the same time as~\cite{hkldwShort}.
  Tom Hulse had learned of a set of criteria guaranteeing sign changes from a paper of Ram
  Murty, and he first thought of how to generalize the criteria to sequences of real and
  complex coefficients.
  I had thought it was possible to further generalize towards smoothed sums, but we failed
  in this regard.
  This morphed into our generalization, stating how to translate from analytic properties
  of Dirichlet series directly into sign-change results of the coefficients.
We each focused on the parts that interested us most: I was interested in $S_f(n)$ and
$S_f^\nu(n)$ (defined below), Hulse was interested in real and complex coefficients, and
Ieong Kuan was interested in $\GL(3)$.
In the end, each aspect strengthened the overall paper and led to a nice unified
description of somewhat disjoint sign-change results.}

In this paper, we proved a veritable cornucopia of results concerning the sign changes of
coefficients and sums of coefficients of cusp forms on $\GL(2)$ and $\GL(3)$.
Here, I emphasize what I focused on, and what follows most naturally from the
considerations of Chapter~\ref{c:sums}.

Let $f$ be a weight $0$ Maass form or a holomorphic cusp form of full or half-integer
weight $k$ on a congruence subgroup $\Gamma \subseteq \SL(2, \mathbb{Z})$, possibly with
nontrivial nebentypus.\index{sign change results}
Write $f$ as either
\begin{equation}
  f(z) = \sum_{n \neq 0} A_f(n) \sqrt{y} K_{it_j} (2\pi \lvert y \rvert n) e^{2\pi i n x}
\end{equation}
if $f$ is a Maass form with eigenvalue $\lambda_j = \frac{1}{4} + t_j^2$, or
\begin{equation}
  f(z) = \sum_{n \geq 1} A_f(n)n^{\frac{k-1}{2}} e^{2 \pi i n z}
\end{equation}
if $f$ is a holomorphic cusp form.
We write the coefficients of $f$ as $a_f(n) = A_f(n)n^{\kappa(f)}$ with
\begin{equation}
  \kappa(f) = \begin{cases}
    \frac{k-1}{2} & \text{if } $f$ \; \text{is a holomorphic cusp form}, \\
    0 & \text{if } $f$ \; \text{is a Maass form},
  \end{cases}
\end{equation}
Then $n^{\kappa(f)}$ conjecturally normalizes the coefficients $A_f(n)$ correctly
depending on whether $f$ is a holomorphic cusp form or a Maass form.
It is expected that $A_f(n) \ll n^\epsilon$, but in general it is only known that
\begin{equation}
  A_f(n) \ll n^{\alpha(f) + \epsilon}
\end{equation}
where
\begin{equation}
  \alpha(f) = \begin{cases}
    0 & \text{if f is full-integral weight holomorphic}, \\
    \frac{3}{16} & \text{if f is half-integral weight holomorphic}, \\
    \frac{7}{64} & \text{if f is a Maass form}.
  \end{cases}
\end{equation}
Define the partial sums of normalized coefficients $S_f^\nu(n)$ as
\index{S@$S_f^\nu(n)$}
\begin{equation}
  S_f^\nu(n) = \sum_{m \leq n} \frac{a_f(m)}{m^\nu}
\end{equation}
Then by studying the Dirichlet series $D(s, S_f^\nu, \overline{S_g^\nu})$ and $D(s,
S_f^\nu)$ in~\cite{hkldwSigns}, we proved the following theorem.

\begin{theorem}
  Let $f$ be a weight $0$ Maass form or holomorphic cusp form as described above.
  Suppose that $0 \leq \nu < \kappa(f) + \frac{1}{6} - \frac{2 \alpha(f)}{3}$.
  If there is a coefficient $a_f(n)$ such that $\Re a_f(n) \neq 0$ (resp. $\Im a_f(n) \neq
  0$), then the sequence $\{\Re S_f^\nu(n)\}_{n \in \mathbb{N}}$ (resp. $\{\Im
S^\nu_f(n)\}_{n \in \mathbb{N}}$) has at least one sign change for some $n \in [X, X +
X^{r(\nu)}]$ for $X \gg 1$, where
  \begin{equation}
    r(\nu) = \begin{cases}
      \frac{2}{3} + \frac{2\alpha(f)}{3} + \epsilon & \text{if } \nu < \kappa(f) +
      \frac{\alpha(f)}{3} - \frac{1}{6}, \\
      \frac{2}{3} + \frac{2\alpha(f)}{3} + \Delta + \epsilon & \text{if } \nu = \kappa(f)
      + \frac{\alpha(f)}{3} - \frac{1}{6} + \Delta, 0 \leq \Delta < \frac{1}{3} - \frac{2
      \alpha(f)}{3}.
    \end{cases}
  \end{equation}
\end{theorem}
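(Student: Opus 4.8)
The plan is to run a ``second moment versus first moment'' comparison, in the spirit of the sign-change criteria of Ram Murty but phrased directly in terms of the analytic data of the Dirichlet series studied in Chapter~\ref{c:sums}. By symmetry it suffices to treat $\Re S_f^\nu$. Suppose, for contradiction, that for some large $X$ the sequence $\{\Re S_f^\nu(n)\}$ has no sign change on $[X, X + X^{r(\nu)}]$; without loss of generality $\Re S_f^\nu(n) \geq 0$ there.

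First I would establish, by the method of \S\ref{sec:analyticbehavior} carried out in the generality of an arbitrary congruence subgroup $\Gamma$, half-integral weight, and weight-$0$ Maass forms, and with the normalizing shift $\nu$ inserted, the meromorphic continuations of $D(s, S_f^\nu)$, $D(s, S_f^\nu \times \overline{S_f^\nu})$ and $D(s, S_f^\nu \times S_f^\nu)$ to $\mathbb{C}$, together with polynomial growth in vertical strips. The two facts needed are: (i) in the cuspidal case $D(s, S_f^\nu)$ has no pole to the right of the critical region --- the would-be poles from $\zeta$ and from the Gamma factors are cancelled by trivial zeroes of $L(s, f)$ exactly as in the $\nu = 0$ case --- so the summatory function of $S_f^\nu$ carries no main term; and (ii) $D(s, S_f^\nu \times \overline{S_f^\nu})$ and $D(s, S_f^\nu \times S_f^\nu)$ have leading poles producing, via Theorem~\ref{thm:Wsfgmero}-type residue computations, a main term of size $\asymp X^{2(\kappa(f) - \nu) + \frac32}$ for the corresponding smoothed second moment. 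The residual-term bookkeeping, and (for Maass $f$ and for half-integral weight) the appeal to the bound $\alpha(f)$ in place of the triple-product input used in Remark~\ref{rem:extraremark}, are where the real work lies; this is the main obstacle, since the precise shape of the error term here is exactly what pins down both the permissible range of $\nu$ and the exponent $r(\nu)$.

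Granting this, the argument localizes in three steps. Applying the concentrating integral transform of \S\ref{sec:cutoff_integrals} to $D(s, S_f^\nu \times \overline{S_f^\nu}) + D(s, S_f^\nu \times S_f^\nu)$ and using $\Re\!\left(z^2 + |z|^2\right) = 2(\Re z)^2$, I obtain a short-interval second moment $\tfrac{1}{X^{r(\nu)}} \sum_{|n - X| < X^{r(\nu)}} (\Re S_f^\nu(n))^2 = C X^{2(\kappa(f) - \nu) + \frac12} + o\!\left(X^{2(\kappa(f) - \nu) + \frac12}\right)$, valid precisely when $r(\nu)$ exceeds the stated threshold and $\nu$ lies in the stated range. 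Hence some $n_0 \in [X + \tfrac14 X^{r(\nu)}, X + \tfrac34 X^{r(\nu)}]$ satisfies $\Re S_f^\nu(n_0) \gg X^{\kappa(f) - \nu + \frac14}$, positive by assumption. Since consecutive values of $S_f^\nu$ differ by $a_f(n)/n^\nu \ll n^{\kappa(f) - \nu + \alpha(f) + \epsilon}$, we get $\Re S_f^\nu(n) \gg X^{\kappa(f) - \nu + \frac14}$ on an interval $J$ of length $\asymp X^{\frac14 - \alpha(f) - \epsilon}$ about $n_0$; because $\tfrac23 + \tfrac23\alpha(f) > \tfrac14 - \alpha(f)$, the interval $J$ lies inside $[X, X + X^{r(\nu)}]$, where $\Re S_f^\nu \geq 0$.

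Finally I would derive the contradiction from a first-moment estimate. Apply the concentrating transform to $D(s, S_f^\nu)$ centred at $n_0$ with $Y = X^{\frac34 + \alpha(f) + \epsilon}$. Its Gaussian weight is nonnegative, is $\gg 1$ throughout $J$, and is super-polynomially small at distance $\gg X^{\frac14 - \alpha(f)}\sqrt{\log X}$ from $n_0$, hence negligible outside $[X, X + X^{r(\nu)}]$; since $\Re S_f^\nu \geq 0$ on that whole interval, the weighted sum is therefore $\gg X^{\kappa(f) - \nu + \frac12 - \alpha(f) - \epsilon} > 0$. On the other hand, $D(s, S_f^\nu)$ is holomorphic with polynomial growth in vertical strips, so moving the contour arbitrarily far left and using the rapid decay of $\exp(\pi s^2 / Y^2)$ shows the same weighted sum is $O_A(X^{-A})$ for every $A$. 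For $X$ large this is impossible, so a sign change must occur in $[X, X + X^{r(\nu)}]$; the identical argument applies to $\Im S_f^\nu$.
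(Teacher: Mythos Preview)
The thesis itself does not prove this theorem: it is stated in Chapter~\ref{c:sums_apps} as a summary of results from~\cite{hkldwSigns}, with only a high-level description of the method (a generalization of Ram Murty's sign-change criteria, fed by the analytic data of $D(s, S_f^\nu)$ and $D(s, S_f^\nu \times \overline{S_f^\nu})$). Your outline matches that described strategy closely: a localized second moment obtained via the concentrating transform of \S\ref{sec:cutoff_integrals}, followed by a first-moment contradiction drawn from the analyticity of $D(s, S_f^\nu)$, is exactly the Ram Murty-type comparison the thesis alludes to, and your identity $\Re(z^2+|z|^2)=2(\Re z)^2$ is the natural device for extracting real parts in the complex-coefficient setting that the footnote there singles out.

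One point to tighten. Your final step asserts that the concentrated first-moment sum is $O_A(X^{-A})$ for every $A$, by shifting the contour arbitrarily far left. This is not automatic: on $\Re s=-M$ the integrand picks up a factor $e^{\pi M^2/Y^2}$ and the polynomial growth of $D(s, S_f^\nu)$, so the bound one actually obtains is $\ll_M X^{-M} Y^{c(M)}$ where $c(M)$ is the growth exponent of $D(s, S_f^\nu)$ on that line. With $Y=X^{3/4+\alpha(f)+\epsilon}$ you need $(3/4+\alpha(f)+\epsilon)\,c(M)<M$ for large $M$. This does hold---convexity for $L(s,f)$ together with the decomposition of $D(s,S_f^\nu)$ gives $c(M)$ linear in $M$ with slope $1$, and $\alpha(f)<\tfrac14$ in all three cases---but you should say so. Alternatively, and more in the spirit of~\cite{hkldwSigns}, note that the contradiction only requires the weighted first moment to be $o\bigl(X^{\kappa(f)-\nu+\frac12-\alpha(f)}\bigr)$, for which a single contour shift past $\Re s=0$ already suffices; the $O_A(X^{-A})$ claim is stronger than you need.
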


In other words, we showed high regularity of the sign changes of sums of normalized
coefficients, depending on the amount of normalization.
As should be expected, higher amounts of normalization lead to fewer guaranteed sign
changes.

However, we show that it is possible to take $\nu$ slightly larger than $\kappa(f)$, so
that the individual coefficients $a_f(n)/n^\nu$ are each decaying in size.
For example, for full-integer weight holomorphic cusp forms, we can take
\begin{equation}
  \nu = \frac{k-1}{2} + \frac{1}{6} - \epsilon
\end{equation}
and guarantee at least one sign change in $\{S_f^\nu(n)\}_{n \in \mathbb{N}}$ for some $n$
in $[X, 2X]$ for sufficiently large $X$.
Yet for this normalization, we have
\begin{equation}
  S_f^\nu(n) = \sum_{m \leq n} \frac{a_f(m)}{m^{\frac{k-1}{2} + \frac{1}{6} - \epsilon}},
\end{equation}
so that the coefficients are \emph{decaying} and look approximately like $n^{-1/6}$.
It is a remarkable fact that the coefficients are arranged in such a way that there are
still infinitely many sign regularly-spaced sign changes even though they are
\emph{over-normalized}.

This suggests a certain regularity of the sign changes of individual coefficients
$a_f(n)$, but it is challenging to describe the exact nature of this regularity.

\section{Directions for Further Investigation: Non-Cusp Forms}

In the investigations carried out thus far, we have taken $f$ to be a cusp form.
But one can attempt to perform the same argument on sums of coefficients of noncuspidal
automorphic forms.

One particular example would be to consider sums of the form
\begin{equation}
  S_{\theta^k}(n) = \sum_{m \leq n} r_k(m),
\end{equation}
where $r_k(m)$ is the number of ways of representing $m$ as a sum of $k$ squares.
This is equivalent to the Gauss $k$-dimensional sphere problem, which asks how many
integer lattice points are contained in $B_k(\sqrt n)$, the $k$-dimensional sphere of
radius $\sqrt{n}$ centered at the origin?
A (very good) first approximation is that there are approximately
$\text{Vol} B_k(\sqrt n)$ points within the sphere, so the question is really to
understand the size of the discrepancy
\begin{equation}
  P_k(n) := S_{\theta^k}(n) - \text{Vol} B_k(\sqrt n).
\end{equation}

My collaborators and I have been focusing our attention on this problem.
In the recently submitted paper~\cite{HulseGaussSphere}, we proved that
$D(s, S_{\theta^k} \times S_{\theta^k})$ and $D(s, P_k \times P_k)$ have meromorphic
continuation to the complex plane for $k \geq 3$.
Using these continuations, we were able to prove a smooth estimate of a similar flavor as
in Theorem~\ref{thm:second_moment_Sf}.

\begin{theorem}

  For $k \geq 3$ and any $\epsilon > 0$,
  \begin{equation}
    \begin{split}
      \sum_{n \geq 1} P_k(n)^2 e^{-n/X} &= \delta_{[k = 3]}C' X^2 \log X + C X^{k-1} +
      \delta_{[k = 4]} C'' X^{\frac{5}{2}} + O(X^{k - 2 + \epsilon}),
    \end{split}
  \end{equation}
  where $\delta_{[k = n]}$ is $1$ if $k = n$ and is $0$ otherwise.
  Similarly,
  \begin{equation}
    \int_0^\infty P_k(t)^2 e^{-t/X} dt = \delta_{[k = 3]}D' X^2 \log X + D X^{k-1} +
    \delta_{[k = 4]} D'' X^{\frac{5}{2}} + O(X^{k - 2 + \epsilon}).
  \end{equation}

\end{theorem}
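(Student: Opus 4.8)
The plan is to reduce everything to the meromorphic continuation of the Dirichlet series $D(s, P_k\times P_k) := \sum_{n\ge 1} P_k(n)^2\, n^{-s}$ --- which is the input recorded above and established in \cite{HulseGaussSphere} --- and then to extract the asymptotics by the exponential-smoothing contour shift already used for $D(s, S_f\times\overline{S_g})$ in the proof of Theorem~\ref{thm:second_moment_Sf}. To locate the various main terms I would first record the modular decomposition $\theta^k = \mathcal E_k + \mathcal C_k$ of $\theta^k$ into its Eisenstein and cuspidal components in the space of weight-$k/2$ forms on $\Gamma_0(4)$ (integral weight for $k$ even, half-integral for $k$ odd, via the Eisenstein series of Chapter~\ref{c:background}). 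Matching the leading term of the summed singular series with $\Vol B_k(\sqrt n)$ gives $P_k(n) = E_k(n) + S_{\mathcal C_k}(n)$, where $E_k$ is the Eisenstein error (partial sums of the singular series with their volume main term removed) and $S_{\mathcal C_k}$ is the partial-sum operator applied to the cusp form; here $\mathcal C_3$ and $\mathcal C_4$ vanish, so $P_3$ and $P_4$ are purely Eisenstein, while for larger $k$ there is in general a genuine cusp form.

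Expanding $P_k(n)^2 = E_k(n)^2 + 2E_k(n)S_{\mathcal C_k}(n) + S_{\mathcal C_k}(n)^2$ splits $D(s, P_k\times P_k)$ into three Dirichlet series: the purely cuspidal one is $D(s, S_{\mathcal C_k}\times\overline{S_{\mathcal C_k}})$, handled by the $\Gamma_0(4)$ analogue of Proposition~\ref{prop:SfSg_decomposition}, Theorem~\ref{thm:Wsfgmero}, Corollary~\ref{cor:DsSfSg_has_meromorphic} and Lemma~\ref{lem:DsSfSg_poly_growth} (meromorphic continuation with polynomial growth in vertical strips); the purely Eisenstein one is a double Dirichlet series over divisor-type (and, for odd $k$, class-number-type) coefficients that the explicit Eisenstein coefficient formulas of Chapter~\ref{c:background} reduce to products of zeta and Dirichlet $L$-functions; and the cross term is a mixed shifted convolution treated by the same machinery. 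Combining these, one reads off the polar data the theorem needs: in the half-plane $\Re s > k - 2$ the function $D(s, P_k\times P_k)$ is holomorphic except for a pole at $s = k - 1$, of order $2$ when $k = 3$ and simple otherwise, together with one extra simple pole at $s = \tfrac{5}{2}$ when $k = 4$; and it is of at most polynomial growth in $\lvert\Im s\rvert$ on vertical strips bounded away from its poles. (For larger $k$ one checks from the three pieces that nothing else enters the strip $k-2 < \Re s < k-1$; the secondary poles for $k=3,4$ come entirely from the Eisenstein piece.)

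With this in hand the endgame is routine. The exponential-smoothing transform of \S\ref{sec:cutoff_integrals} gives, for $\sigma > k-1$,
\[
  \sum_{n\ge 1} P_k(n)^2 e^{-n/X} = \frac{1}{2\pi i}\int_{(\sigma)} D(s, P_k\times P_k)\,\Gamma(s)\,X^s\,ds .
\]
Shifting the contour to $\Re s = k - 2 + \epsilon$, the pole at $s = k-1$ contributes $C X^{k-1}$ (with $C = \Gamma(k-1)\Res_{s=k-1}D(s,P_k\times P_k)$ when it is simple), the double pole at $s = 2$ for $k = 3$ contributes $C' X^2\log X + C X^2$, and the pole at $s = \tfrac{5}{2}$ for $k = 4$ contributes $C'' X^{5/2}$; the shifted integral is $O(X^{k-2+\epsilon})$ by polynomial growth of $D$ and exponential decay of $\Gamma(s)$ along vertical lines, which also justifies the shift. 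Evaluating the residues --- $C$ from the leading residue of the Eisenstein-error series (and, when $\mathcal C_k\neq 0$, the residue of $W(s;\mathcal C_k,\overline{\mathcal C_k})$ via Theorem~\ref{thm:Wsfgmero}), and $C',C''$ from the Eisenstein pieces --- gives the explicit constants. For the continuous statement one repeats the argument with $D(s, P_k\times P_k)$ replaced by the regularized Mellin transform $\int_0^\infty P_k(t)^2\, t^{-s-1}\,dt$; writing $S_{\theta^k}(t)^2 = \sum_{m,n\ge1} r_k(m)r_k(n)\,\mathbf{1}[\max(m,n)\le t]$ and expanding the square against $V(t) := \Vol B_k(\sqrt t)$ shows that this Mellin transform equals $D(s, P_k\times P_k)$ up to elementary factors (a simple pole or zero at $s=0$ and Gamma factors from the volume cross-terms) that introduce no new singularity in $\Re s > k-2$, so the identical contour shift yields the expansion with constants $D, D', D''$.

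The main obstacle has already been front-loaded into the meromorphic continuation of $D(s, P_k\times P_k)$, which we are assuming. Within the present argument the delicate point is the bookkeeping of the Eisenstein-error contributions $E_k(n)^2$ and $E_k(n)S_{\mathcal C_k}(n)$: these are the true source of the $k=3$ and $k=4$ secondary main terms, they involve double Dirichlet series over divisor and (for odd $k$) class-number coefficients whose polar structure must be pinned down precisely, and one must verify that no unexpected pole slips into the critical strip $k-2 < \Re s < k-1$ so that the error is exactly $O(X^{k-2+\epsilon})$. One must also confirm that passing between the discrete sum and the continuous integral introduces no additional main term.
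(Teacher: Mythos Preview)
The thesis does not actually prove this theorem; it is stated as a result of the external paper \cite{HulseGaussSphere}, and the surrounding text only indicates the method in broad strokes: one establishes the meromorphic continuation of $D(s, S_{\theta^k}\times S_{\theta^k})$ and $D(s, P_k\times P_k)$ and then deduces the smooth estimate ``of a similar flavor as in Theorem~\ref{thm:second_moment_Sf}.'' Your proposal is consistent with that description, and the contour-shift endgame you give is exactly the mechanism used for Theorem~\ref{thm:second_moment_Sf}.

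That said, a few cautions about your sketch. First, your claim that $\mathcal{C}_3$ and $\mathcal{C}_4$ vanish is correct (no weight-$3/2$ or weight-$2$ cusp forms on $\Gamma_0(4)$), but you should be aware that this does \emph{not} by itself make $P_3$ and $P_4$ ``easy'': the Eisenstein error $E_k(n)$ is the difference between a partial sum of Eisenstein Fourier coefficients and the volume term, and the Dirichlet series of $E_k(n)^2$ is genuinely a shifted-convolution object, not just a product of $L$-functions. Your parenthetical that it ``reduces to products of zeta and Dirichlet $L$-functions'' understates this; the diagonal does, but the off-diagonal requires the same spectral machinery as in Chapter~\ref{c:sums}, now with non-cuspidal inputs, and this is where the real work of \cite{HulseGaussSphere} lies. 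Second, the precise polar structure you assert (double pole at $s=2$ for $k=3$, extra simple pole at $s=5/2$ for $k=4$, nothing else in $\Re s > k-2$) is exactly what is needed, but verifying it---especially ruling out further poles from the spectral side in the non-cuspidal setting---is the substance of the cited paper and cannot simply be read off from Chapter~\ref{c:background}. Third, for the continuous integral, the relationship between $\int_0^\infty P_k(t)^2 t^{-s-1}\,dt$ and $D(s, P_k\times P_k)$ involves more than ``elementary factors'': $P_k(t)$ is piecewise constant with jumps at integers, and the cross-terms with the smooth volume function require some care to show that no new pole appears in the relevant strip.

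In short, your outline matches the stated methodology, but the phrase ``the main obstacle has already been front-loaded'' is doing essentially all of the work, and several of your intermediate claims about the Eisenstein pieces are themselves nontrivial components of that front-loaded obstacle rather than consequences of the material in this thesis.
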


These two statements can be thought of as discrete and continuous Laplace transforms of
the mean square error in the Gauss $k$-dimensional Sphere problem.
We are also able to prove results concerning sharp sums and integrals.

\begin{theorem}

  For each $k \geq 3$, there exists $\lambda > 0$ such that
  \begin{equation}
    \sum_{n \leq X} P_k(n)^2 = \delta_{[k = 3]} C' X^{k-1} \log X + C X^{k-1} +
    O_{\lambda}(X^{k - 1 - \lambda}).
  \end{equation}
  Similarly, we also have
  \begin{equation}
    \int_0^X (P_k(x))^2 dx = \delta_{[k = 3]} D' X^{k-1} \log X + D X^{k-1} +
    O_{\lambda}(X^{k-1 - \lambda}).
  \end{equation}

\end{theorem}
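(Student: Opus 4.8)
The plan is to work with the Dirichlet series $\mathcal{D}(w) := \sum_{n \geq 1} P_k(n)^2 n^{-w}$, which coincides with $D(s, P_k \times P_k)$ after a translation of the complex variable and therefore, by the meromorphic continuation quoted above, extends meromorphically to all of $\mathbb{C}$. Its abscissa of convergence is $w = k-1$, and the rightmost pole lies at $w = k-1$: simple when $k \geq 4$, double when $k = 3$ (this double pole is the source of the $\log X$ factor, since $X^w$ picks up a $\log X$ under differentiation there). From the error term $O(X^{k-2+\epsilon})$ and the location of the secondary term in the exponentially-weighted estimate proved above, one reads off that there is a pole-free strip $k-1-\delta_k < \Re w \leq k-1$ for some $\delta_k > 0$ (one may take $\delta_k$ below $\tfrac12$ when $k = 4$, below $1$ otherwise), with $w = k-1$ the only pole on its vertical line. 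The sharp asymptotic is then extracted from this polar data by the sharp-cutoff-from-smooth-cutoffs argument described in \S\ref{sec:cutoff_integrals}.

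First I would isolate the main term. Let $\phi_Y^{+}$ and $\phi_Y^{-}$ be the compactly supported smooth weights of \S\ref{sec:cutoff_integrals}, with $\phi_Y^{+} = 1$ on $[0,1]$ and vanishing past $1 + \tfrac1Y$, and $\phi_Y^{-} = 1$ on $[0, 1 - \tfrac1Y]$ and vanishing past $1$, with Mellin transforms $\Phi_Y^{\pm}$, so that $\sum_n P_k(n)^2 \phi_Y^{\pm}(n/X) = \frac{1}{2\pi i}\int_{(\sigma)} \mathcal{D}(w)\Phi_Y^{\pm}(w) X^w\, dw$ for $\sigma$ in the range of absolute convergence. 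I would shift the contour to $\Re w = k-1-\delta$ with $0 < \delta < \delta_k$. The residue at $w = k-1$ produces, using $\Phi_Y^{\pm}(w) = \tfrac1w + O(\tfrac1Y)$ there, exactly $\delta_{[k=3]}C' X^{k-1}\log X + C X^{k-1}$ up to an error $O(X^{k-1}/Y)$ from the $O(1/Y)$ part of $\Phi_Y^{\pm}$. The shifted integral is controlled by the decay $\Phi_Y^{\pm}(w) \ll Y^{-1}(Y/(1+|w|))^m$ together with a bound $\mathcal{D}(w) \ll (1+|\Im w|)^{B}$ in vertical strips away from poles; splitting according to $|\Im w| \lessgtr Y$ bounds this contribution by $\ll X^{k-1-\delta}Y^{B}$.

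Next comes the transition range. By positivity, the differences $\sum_n P_k(n)^2\phi_Y^{+}(n/X) - \sum_{n\leq X}P_k(n)^2$ and $\sum_{n\leq X}P_k(n)^2 - \sum_n P_k(n)^2\phi_Y^{-}(n/X)$ are nonnegative and bounded by $\sum_{|n-X|<X/Y}P_k(n)^2$. Using positivity once more, this short sum is $\ll \sum_n P_k(n)^2\exp(-Y^2\log^2(X/n)/4\pi)$, since the concentrating weight of \S\ref{sec:cutoff_integrals} is $\gg 1$ on $|n-X|<X/Y$; and by the concentrating-integral identity the same contour shift (the Gaussian factor $\exp(\pi s^2/Y^2)$ supplies the vertical decay, and the ranges $n\notin[X/2,2X]$ are discarded using the super-polynomial decay of the weight and a crude bound $P_k(n)\ll_k n^{k}$) gives $\sum_n P_k(n)^2\exp(-Y^2\log^2(X/n)/4\pi) \ll X^{k-1}/Y + X^{k-1-\delta}Y^{B}$ (with a harmless extra $\log X$ when $k=3$). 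Assembling everything, $\sum_{n\leq X}P_k(n)^2 = \delta_{[k=3]}C'X^{k-1}\log X + CX^{k-1} + O(X^{k-1}/Y + X^{k-1-\delta}Y^{B})$, and taking $Y = X^{\delta/(B+1)}$ gives the theorem with, say, $\lambda = \delta/(2(B+1)) > 0$. The continuous statement is obtained in parallel from the Mellin transform $\int_1^\infty P_k(x)^2 x^{-w-1}\,dx$, which shares the location and order of the poles of $\mathcal{D}(w)$ once the elementary main term $\Vol B_k(\sqrt x)$ is accounted for (the residues differ); alternatively it follows from the continuous Laplace transform already established, by the identical cutoff argument.

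The main obstacle is the vertical-strip bound $\mathcal{D}(w) \ll (1+|\Im w|)^{B}$: the entire sharp-cutoff argument is only as strong as this estimate. Establishing it requires going term by term through the meromorphic continuation of $D(s, P_k\times P_k)$ — diagonal $L$-value factors, the discrete and continuous spectral pieces, and the residual terms arising from shifting Eisenstein-series contours — and checking via Stirling's approximation that all exponential contributions of the Gamma factors cancel, precisely the phenomenon underlying Lemma~\ref{lem:DsSfSg_poly_growth}, but now with Eisenstein series at several cusps and, for odd $k$, half-integral weight. A secondary but genuine point is to certify $\delta_k > 0$, i.e.\ that $w = k-1$ is the only pole on its vertical line and the next pole to its left is at a definite distance apart from the known secondary pole when $k = 4$; this is forced by the error term in the smoothed estimate but should be confirmed for each contribution to the continuation. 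Once these are in place, the balancing of $Y$, $\delta$, and $m$ is routine.
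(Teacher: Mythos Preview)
The thesis does not actually contain a proof of this theorem. The statement appears in Chapter~\ref{c:sums_apps} under ``Directions for Further Investigation,'' where it is quoted as a result from the separate submitted paper~\cite{HulseGaussSphere}; the thesis explicitly says ``In~\cite{HulseGaussSphere}, we do not prove what $\lambda$ is'' and gives no argument here. So there is no in-paper proof to compare against.

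That said, your proposal is a faithful instantiation of the thesis's own template. The sharp-cutoff-from-smooth-cutoffs machinery you invoke is exactly what the thesis sets up in \S\ref{sec:cutoff_integrals} and then carries out in detail for the hyperboloid problem in \S\ref{sec:hyp:proof_main_theorems}: a $\Phi_Y$-smoothed sum is evaluated by contour shifting, the transition range $|n - X| < X/Y$ is controlled via the concentrating integral, and $Y$ is balanced against the shifted-integral error. Your identification of the two genuine obstacles --- the polynomial vertical-strip bound on $D(s, P_k \times P_k)$ (the non-cuspidal analogue of Lemma~\ref{lem:DsSfSg_poly_growth}, now with Eisenstein main terms subtracted and possibly half-integral weight) and the verification of a pole-free strip to the left of $w = k-1$ --- is exactly right, and the thesis itself flags these as the substantive content deferred to~\cite{HulseGaussSphere}. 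One small point: your short-interval bound via the concentrating weight yields a main term of size $X^{k-1}/Y$ from the residue, not just an error; but since you only need an upper bound and this term is already present in your final error budget, the argument goes through as written.
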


In the dimension $3$ case, this is the first known polynomial savings on the error term,
and represents the first major improvement over a result from Jarnik in
1940~\cite{Jarnik40}, which achieved only $\sqrt{\log X}$ savings.

In~\cite{HulseGaussSphere}, we do not prove what $\lambda$ is.
In forthcoming work, we will consider the size of $\lambda$.
We are also working on extending the techniques and results to the classical Gauss circle
problem, when $k = 2$.

\begin{remark}
  There are limitations to this technique.
  We can only consider forms $f$ for which we understand the shifted convolution sum
  coming from $f \times f$ sufficiently well.
  So we are not capable of understanding sums of coefficients of Maass forms at this time,
  since shifted convolution sums of Maass forms with Maass forms remain mysterious.
\end{remark}


\clearpage{\pagestyle{empty}\cleardoublepage}

\chapter{On Lattice Points on Hyperboloids}\label{c:hyperboloid}

\section{Introduction}\label{sec:hyperboloid_introduction}
\index{hyperboloid, one-sheeted}
\index{H@$\mathcal{H}_{d,h}$}
\index{N@$N_{d,h}(R)$}

A one-sheeted $d$-dimensional hyperboloid $\mathcal{H}_{d,h}$ is a surface satisfying the
equation
\begin{equation}
  X_1^2 + \cdots + X_{d-1}^2 = X_d^2 + h
\end{equation}
for some fixed positive integer $h$.
In this chapter, we investigate the number of integer points lying on the hyperboloid
$\mathcal{H}_{d,h}$.
In particular, we investigate the asymptotics for the number $N_{d,h}(R)$ of integer
points $\bm{m} = (m_1, \ldots, m_d) \in \mathbb{Z}^d$ lying on $\mathcal{H}_{d,h}$ and
within the ball $\| \bm{m} \|^2 \leq R$ for large $R$.
Stated differently, if $B(\sqrt{R})$ is the ball of radius $\sqrt R$ in $\mathbb{R}^d$,
centered at the origin, then
\begin{equation}
  N_{d,h}(R) = \# \big(\mathbb{Z}^d \cap \mathcal{H}_{d,h} \cap B(\sqrt{R})\big).
\end{equation}

Heuristically, one should expect to be capable of determining the leading term asymptotic
using the circle method on hyperboloids $\mathcal{H}_{d,h}$ of sufficiently high
dimension.
More recently, Oh and Shah~\cite{ohshah2014} used ergodic methods to study the
three-dimensional hyperboloid $\mathcal{H}_{3,h}$ when $h$ is a positive square.
They proved the following theorem.
\begin{theorem}{Oh and Shah}
  Suppose that $h$ is a square.
  On $\mathcal{H}_{3,h}$, as $X \to \infty$,
  \begin{equation}
    N_{d,h}(X) = c X^{\frac{1}{2}}\log X + O \big( X^{\frac{1}{2}} (\log X)^{\frac{3}{4}}
    \big)
  \end{equation}
  for some constant $c > 0$.
\end{theorem}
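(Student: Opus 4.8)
The plan is to replace the geometric count by an arithmetic sum and then study that sum through the Dirichlet series already introduced in the thesis. First I would fix the last coordinate: a lattice point $\bm m = (m_1,m_2,m_3)$ on $\mathcal{H}_{3,h}$ with $m_3 = n$ has $m_1^2 + m_2^2 = n^2 + h$, so there are exactly $r_2(n^2+h)$ choices of $(m_1,m_2)$, and then $\|\bm m\|^2 = (n^2+h)+n^2 = 2n^2+h$. Hence
\begin{equation}
  N_{3,h}(R) = \sum_{2n^2+h \le R} r_2(n^2+h) = \sum_{|n| \le \sqrt{(R-h)/2}} r_2(n^2+h),
\end{equation}
so the theorem is equivalent to an asymptotic for a divisor-type sum along the thin sequence $n^2+h$. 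Equivalently, with $D_h(s) := \sum_{n \in \mathbb{Z}} r_2(n^2+h)(2n^2+h)^{-s}$ — the $d=3$ case of the Dirichlet series of the excerpt — one has (suitably interpreted) $N_{3,h}(R) = \frac{1}{2\pi i}\int_{(\sigma)} D_h(s)\, R^s s^{-1}\,ds$ for $\sigma$ large, so the asymptotics are controlled by the rightmost pole of $D_h$ together with its growth on vertical lines.

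The core of the argument is the meromorphic continuation of $D_h(s)$ and the location of that pole, which I would obtain in the style of Chapter~\ref{c:background}: since $r_2(n^2+h)$ is, up to a constant, the $(n^2+h)$-th coefficient of $\theta^2$, the series $D_h(s)$ is essentially the $h$-th Fourier coefficient of $\theta^2\overline\theta$ unfolded against the weight $3/2$ Eisenstein series $E_\infty^{3/2}(z,w)$ on $\Gamma_0(4)$. Substituting the spectral/Eisenstein expansion of the relevant automorphic object and using the half-integral weight Fourier coefficients $\rho_\infty^{3/2}(h,w)$ of \S\ref{ssec:half_integral_eisenstein} — which carry the factor $L(2w-\tfrac12,\chi_{3/2,h})/\zeta(4w-1)$ — then gives a meromorphic continuation of $D_h(s)$ to all of $\mathbb{C}$, with poles arising from the pole of $E_\infty^{3/2}(z,w)$ at $w=\tfrac34$, from the residual spectrum, and from the discrete spectrum, and with at most polynomial growth in vertical strips once one feeds in the best known bound toward Selberg's eigenvalue conjecture and convexity for the $L$-functions in the coefficients. (A more self-contained route, avoiding the spectral picture, uses $r_2(m)=4\sum_{d\mid m}\chi_{-4}(d)$, interchanges the $d$ and $n$ sums, replaces each progression sum $\sum_{n^2\equiv -h\ (d)}(2n^2+h)^{-s}$ by a Hurwitz-type integral, and recognises the outer series as a product of Dirichlet $L$-functions; this is elementary but yields weaker growth control.)

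The decisive point is the dependence on $h$. The character $\chi_{3/2,h}$ attached to $E_\infty^{3/2}$ is the quadratic character of $\mathbb{Q}(\sqrt{-h})$ (here $\mu_{3/2}=-1$), so $\chi_{3/2,h}=\chi_{-4}$ precisely when $-h$ is $-1$ times a square, i.e.\ precisely when $h$ is a perfect square. When that happens, $L(2w-\tfrac12,\chi_{3/2,h})$ degenerates to $\zeta(2w-\tfrac12)$ up to finitely many Euler factors, and its pole at $w=\tfrac34$ collides with the pole of the Eisenstein series there; tracing this through the unfolding, $D_h(s)$ acquires a \emph{double} pole at $s=\tfrac12$ (morally a $\zeta(2s)^2$ in place of $\zeta(2s)L(2s,\chi)$), whereas for non-square $h$ the pole at $s=\tfrac12$ is simple. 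Extracting the two leading Laurent coefficients at $s=\tfrac12$ and inserting them into the Mellin formula yields
\begin{equation}
  N_{3,h}(R) = c R^{1/2}\log R + c' R^{1/2} + (\text{error}),
\end{equation}
with the logarithmic term present exactly when $h$ is a square and $c,c'$ explicitly computable. For the error I would shift the contour just left of $\Re s=\tfrac12$; by the polynomial-growth bounds the shifted integral is $O(R^{1/2-\delta})$ after a standard smoothing and truncation, which already beats the claimed $O(R^{1/2}(\log R)^{3/4})$ — indeed the Oh--Shah bound is only a power of $\log$ below the main term, so even a soft Tauberian argument suffices, while the quantitative vertical-line estimates give the genuine power saving recorded later in the chapter. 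The main obstacle is exactly this input for the continuation: controlling the shifted convolution $\sum_n r_2(n^2+h)$ — equivalently the analytic behaviour of $\theta^2\overline\theta$ against $E_\infty^{3/2}$ — well enough to push $D_h$ past $\Re s=\tfrac12$ with polynomial growth, and verifying carefully that the pole there is double precisely on the square locus for $h$; the remaining work is bookkeeping with $\Gamma$-factors and residues.
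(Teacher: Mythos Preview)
Your high-level plan coincides with the paper's: rewrite $N_{3,h}(R)=\sum_{2n^2+h\le R}r_2(n^2+h)$, realise the associated Dirichlet series automorphically on $\Gamma_0(4)$, identify a double pole on the line $\Re s=\tfrac12$ precisely when $h$ is a square, and shift contours. But two of your identifications are wrong in ways that would derail the computation.

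First, the weight. The function $\theta^2\overline\theta\, y^{3/4}$ transforms with weight $k=\tfrac12$ under $\Gamma_0(4)$ (this is the $d=3$, i.e.\ $k=\tfrac12$, case of $\widetilde V=\theta^{2k+1}\overline\theta\,y^{(k+1)/2}$), not weight $\tfrac32$. With the correct weight the character in the half-integral Eisenstein coefficient is $\chi_{1/2,h}(\cdot)=\big(\tfrac{h}{\cdot}\big)$, which is principal exactly when $h$ is a square. Your route through $\mu_{3/2}=-1$ and $\mathbb{Q}(\sqrt{-h})$ would make the character principal only when $-h$ is a square, i.e.\ never for positive $h$, so you would not find the double pole at all.

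Second, an Eisenstein series cannot extract the $h$-th Fourier coefficient --- there is no factor $e(hz)$ in $E_\infty^k$. The paper instead pairs against the weight-$\tfrac12$ Poincar\'e series $P_h^{1/2}(z,s)$, whose exponential $e(h\gamma z)$ is what produces the shift by $h$ and hence the sum $\sum_m r_2(m^2+h)(2m^2+h)^{-(s-1/4)}$. The Eisenstein series enter in a different role: since $\widetilde V\notin L^2$, one first subtracts the constant Laurent terms at $w=\tfrac34$ of $E_\infty^{1/2}$ and $E_0^{1/2}$ to obtain a square-integrable $V$, and only then spectrally expands $P_h^{1/2}$. It is this subtracted Eisenstein correction $\mathfrak E_h^{1/2}(s)$ --- not the continuous spectrum of the expansion --- that carries the double pole at $s=\tfrac34$ (your $s=\tfrac12$) when $h$ is a square, and this is the source of the $R^{1/2}\log R$ main term.
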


In this chapter, we sharpen and extend this theorem to any dimension $d \geq 3$ and any
integral $h \geq 1$.
We also prove a smoothed analogue, including smaller-order growth terms.
The primary result is the following theorem.

\begin{theorem}\label{theorem:hyperboloid_intro_sharp}
  Let $d \geq 3$ and $h \geq 1$ be integers.
  Let $N_{d,h}(R)$ denote the number of integer points $\bm{m}$ on the hyperboloid
  $\mathcal{H}_{d,h}$ with $\| \bm{m} \|^2 \leq R$.
  Then for any $\epsilon > 0$,
  \begin{align}
    N_{d,h}(R) &= \delta_{[d = 3]} \delta_{[h = a^2]} C'_3 R^{\frac{1}{2}} \log R + C_d
    R^{\frac{d}{2} - 1} + O(R^{\frac{d}{2} - 1 - \lambda(d) + \epsilon}).
  \end{align}
  Here the Kronecker $\delta$ expressions indicate that the first term only occurs if $d =
  3$ and if $h$ is a square, and $\lambda(d) > 0$ is a constant depending only on the
  dimension $d$.
\end{theorem}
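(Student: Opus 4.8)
The plan is to relate $N_{d,h}(R)$ to a smoothed quantity controllable by Dirichlet series, following the philosophy used for $S_f$ in Chapter~\ref{c:sums}. First I would establish the arithmetic reduction: a lattice point $\bm m$ on $\mathcal H_{d,h}$ with $m_d = n$ contributes $r_{d-1}(n^2+h)$ many points with $m_1^2+\cdots+m_{d-1}^2 = n^2+h$, and the constraint $\|\bm m\|^2 \le R$ becomes $2n^2 + h \le R$. Hence
\begin{equation}
  N_{d,h}(R) = \sum_{2n^2 + h \le R} r_{d-1}(n^2 + h) = \sum_{n \in \mathbb Z,\ |n| \le \sqrt{(R-h)/2}} r_{d-1}(n^2+h),
\end{equation}
so the problem is exactly a sharp cutoff of the coefficients of the Dirichlet series $\sum_{n} r_{d-1}(n^2+h)(2n^2+h)^{-s}$, whose meromorphic continuation is provided by Theorem~\ref{thm:hyperboloid:mero_summary}.

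Next I would run the smoothed-to-sharp machinery. Applying the exponentially smoothed Mellin transform and the concentrating integral from \S\ref{sec:cutoff_integrals} to this Dirichlet series produces a smoothed count with a main term coming from the rightmost pole, plus secondary terms from further poles; this is Theorem~\ref{thm:hyperboloid:smooth_full} and Theorem~\ref{thm:hyp:concentrating_theorem_full}. The location and order of the leading pole dictate the shape of the answer: I expect a pole producing $R^{d/2-1}$ in general, and in the critical case $d=3$ an extra pole (or a double pole) forced by the $\log$-behaviour, producing the $R^{1/2}\log R$ term precisely when the relevant $L$-function $L(s,\chi_{k,h})$ has a pole, i.e.\ when $\chi_{k,h}$ is trivial, which happens exactly when $h$ is a perfect square (cf.\ the factorization~\eqref{eq:Eisenstein_halfweight_Dirichletseries_factors}). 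This explains the two Kronecker $\delta$'s. To pass from the smoothed statement to the sharp statement $N_{d,h}(R) = \delta_{[d=3]}\delta_{[h=a^2]} C_3' R^{1/2}\log R + C_d R^{d/2-1} + O(R^{d/2-1-\lambda(d)+\epsilon})$, I would use the two smooth integral transforms in tandem as described in Chapter~\ref{c:background}: the concentrating transform localizes mass to a short window $R/Y$ around the cutoff, and the compactly supported cutoff transform $\Phi_Y$ removes the smoothing at the cost of the mass in that window. Optimizing $Y$ against the error term from the contour shift in the Dirichlet series (bounded via the polynomial growth in vertical strips, together with subconvexity or the trivial bound on the auxiliary $L$-functions appearing in the Eisenstein coefficients) yields an explicit $\lambda(d) > 0$; for $d = 3$ this recovers a power saving improving Oh--Shah's $(\log R)^{3/4}$, and with $\lambda(3) = 1/44$ gives the stated bound.

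The main obstacle is controlling the error term in the contour shift, which amounts to bounding the growth of $\sum_n r_{d-1}(n^2+h)(2n^2+h)^{-s}$ in vertical strips to the left of the leading pole. By the spectral decomposition underlying Theorem~\ref{thm:hyperboloid:mero_summary}, this reduces to bounding a spectral sum over the discrete and continuous spectrum of $\Gamma_0(4)$ weighted by half-integral weight Whittaker transforms and by coefficients $\rho_{\mathfrak a}^k(h,w)$ of the Eisenstein series from \S\ref{ssec:half_integral_eisenstein}; here one needs growth bounds in both $s$ and the spectral parameter $t_j$ (or the continuous parameter $t$), uniform enough to integrate. Stirling's approximation handles the archimedean factors, so the crux is an average or pointwise bound on the half-integral weight $L$-values $L(\tfrac12 + it, \chi_{k,h} \times \mu_j)$ and on $\langle \theta^{d-1}\overline\theta\, \Im(\cdot)^{d/2}, \mu_j\rangle$; the former is where any subconvexity input (or its absence) directly affects the size of $\lambda(d)$, and the numerology $\lambda(3) = 1/44$ comes from balancing these bounds against the smoothing parameter. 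Once that bound is in hand, the rest is the routine residue bookkeeping of identifying $C_d$ and $C_3'$ as explicit special values of Eisenstein-series/$L$-function data and checking that no intermediate pole contributes between $R^{d/2-1}$ and $R^{d/2-1-\lambda(d)}$.
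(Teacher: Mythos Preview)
Your proposal is correct and follows essentially the same approach as the paper: reduce to the Dirichlet series $\sum_m r_{d-1}(m^2+h)(2m^2+h)^{-s}$, extract main terms from its poles (including the double pole at $s=\tfrac{3}{4}$ when $d=3$ and $h$ is a square), and pass from the compactly-supported smoothing $\Phi_Y$ to the sharp cutoff by bounding the short-interval remainder via the concentrating transform, optimizing $Y$ to get $\lambda(k)=1/(6+19/k)$. The only minor discrepancy is that the dominant error in the paper comes not from Rankin--Selberg $L$-values $L(\tfrac12+it,\chi_{k,h}\times\mu_j)$ but from a direct average bound on $\rho_j(h)\langle \mu_j,\theta^{2k+1}\overline{\theta}\,y^{(k+1)/2}\rangle$ over the discrete spectrum (Lemma~\ref{lem:discrete_newspectrum_innerproduct_bound}), obtained by unfolding against an Eisenstein series and Phragm\'en--Lindel\"of, together with a Kuznetsov-type bound on $\rho_j(h)$; no subconvexity is used.
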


This Theorem is presented in greater detail as Theorem~\ref{thm:hyp:sharp_theorem_full},
including the description of $\lambda(d)$.
When $d = 3$, the power savings $\lambda(d)$ is exactly $\frac{1}{44}$.
As $d$ gets larger, $\lambda(d)$ grows and limits towards $\frac{1}{6}$.
Note that there is an error term with polynomial savings, which is a significant
improvement over previous results.
As a corollary, one recovers the Theorem of Oh and Shah.

In addition to the sharp estimate of Theorem~\ref{theorem:hyperboloid_intro_sharp}, we
consider smoothed approximations to $N_{d,h}(R)$.
In~\eqref{eq:hyp:points_equals_sum}, we show that $N_{d,h}(R) = \sum_{2m^2+h \leq R}
r_{d-1}(m^2 + h)$.
Then sums of the form
\begin{equation}
  \sum_{m \in \mathbb{Z}} r_{d-1}(m^2 + h) e^{-\frac{2m^2 + h}{R}}
\end{equation}
count the number of points $\bm{m}$ on $\mathcal{H}_{d,h}$ with exponential decay in $\|
\bm{m} \|$ once $\| \bm{m} \|^2 \geq R$.
This smoothed sum should be thought of as giving a smooth approximation to $N_{d,h}(R)$.
Through the methodology of this chapter, we prove the following smooth estimate.

\begin{theorem}\label{theorem:hyperboloid_intro_smooth}
  Let $d \geq 3$ and $h \geq 1$ be integers.
  Then for each $h$ and $d$, there exist constants $C'$ and $C_m$ such that for any
  $\epsilon > 0$,
  \begin{align}
    &\sum_{m \in \mathbb{Z}} r_{d-1}(m^2 + h) e^{-(2m^2 + h)/X} \\
    &\qquad = \delta_{[d=3]} \delta_{[h = a^2]} C' X^{\frac{1}{2}} \log X + \sum_{0 \leq m
    < \lceil \frac{d}{2} - 1 \rceil} C_{m} X^{\frac{d}{2} - 1 -\frac{m}{2}}
    + O(X^{\frac{d}{4} - \frac{1}{2} + \epsilon}).
  \end{align}
  Here, $\delta_{[\text{condition}]}$ is a Kronecker $\delta$ and evaluates to $1$ if the
  condition is true and $0$ otherwise.
\end{theorem}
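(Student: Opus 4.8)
The plan is to extract the estimate as a residue computation applied to the exponentially smoothed integral transform of the Dirichlet series
\begin{equation}
  D_{d,h}(s) := \sum_{n \in \mathbb{Z}} \frac{r_{d-1}(n^2 + h)}{(2n^2 + h)^s},
\end{equation}
whose meromorphic continuation to $\mathbb{C}$ is provided by Theorem~\ref{thm:hyperboloid:mero_summary}. Since $r_{d-1}(m) \ll m^{\frac{d-3}{2} + \epsilon}$ for $d \geq 3$, the series $D_{d,h}(s)$ converges absolutely for $\Re s > \tfrac{d}{2} - 1$, so for any such $\sigma$ the transform recorded in Section~\ref{sec:cutoff_integrals} gives
\begin{equation}
  \sum_{m \in \mathbb{Z}} r_{d-1}(m^2 + h)\, e^{-(2m^2 + h)/X}
  = \frac{1}{2\pi i} \int_{(\sigma)} D_{d,h}(s)\, \Gamma(s)\, X^s\, ds
\end{equation}
(the $m = 0$ term $r_{d-1}(h)e^{-h/X}$ is $O(1)$ and harmless). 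First I would take $\sigma$ slightly larger than $\tfrac{d}{2}-1$.

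Next I would shift the line of integration to $\Re s = \tfrac{d}{4} - \tfrac12 + \epsilon$, collecting the residues of $D_{d,h}(s)\Gamma(s)X^s$ in the intervening strip. The meromorphic continuation of $D_{d,h}(s)$ is built, exactly in the spirit of Section~\ref{sec:analyticbehavior}, from the spectral expansion of a suitable Poincar\'e series on $\Gamma_0(4)$ paired against (an appropriate $y$-power times) $\theta^{d-1}\overline{\theta}$, with the half-integral weight Eisenstein series of Section~\ref{sec:Eisen_summary} playing the role of $E(z,w)$. The poles encountered in this range come from the continuous spectrum: the rightmost one, at $s = \tfrac{d}{2}-1$, arises from the pole of the Eisenstein series and yields the leading term $C_0\,\Gamma(\tfrac d2-1)X^{\frac d2-1}$, while the subsequent residual terms (the analogues of the $\rho_{\frac32 - m}(s)$ of Chapter~\ref{c:sums}) contribute simple poles at $s = \tfrac d2 - 1 - \tfrac m2$ for $1 \le m < \lceil \tfrac d2 - 1\rceil$, giving the secondary terms $C_m X^{\frac d2 - 1 - \frac m2}$. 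In the special case $d = 3$ and $h = a^2$, the Dirichlet $L$-factor $L(2w - \tfrac12, \chi_{k,h})$ appearing in the half-integral weight Eisenstein coefficient (see~\eqref{eq:Eisenstein_halfweight_Dirichletseries_factors} and Proposition~\ref{prop:back:half_integral_L}) degenerates to a shifted Riemann zeta and collides with the Eisenstein pole, producing a double pole at $s = \tfrac12$; the residue of $D_{3,h}(s)\Gamma(s)X^s$ there has the form $X^{1/2}(C'\log X + C'')$, which is the origin of the $C' X^{1/2}\log X$ term (and, for $d = 3$, $h$ not a square, the pole at $s = \tfrac12$ is simple and only the $C_0 X^{1/2}$ survives). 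For $d \geq 4$ the pole at $\tfrac d2 - 1$ is simple and no logarithm appears.

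Finally, on the shifted line $\Re s = \tfrac d4 - \tfrac12 + \epsilon$ I would bound the remaining integral. As in Lemma~\ref{lem:DsSfSg_poly_growth}, $D_{d,h}(s)$ is of polynomial growth in vertical strips: the discrete spectrum is controlled by Watson's triple product formula and K{\i}ral's bound (\cite{watson2008rankin, mehmet2015}) together with the exceptional-eigenvalue bound $\theta \le \tfrac{7}{64}$ of Kim and Sarnak~\cite{KimSarnak03} (needed since $\Gamma_0(4)$ may a priori carry exceptional spectrum), the shifted contour integral of the continuous spectrum is handled by convexity bounds for $\zeta$ and for $L(s,\chi_{k,h})$, and the residual terms are polynomially bounded once Stirling's formula shows the exponential factors of the $\Gamma$'s cancel. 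Paired against the $e^{-\pi|\Im s|/2}$ decay of $\Gamma(s)$ this makes the shifted integral $\ll X^{\frac d4 - \frac12 + \epsilon}$, which combines with the residues above to give the claimed asymptotic, with the constants $C', C_m$ read off from the explicit residues.

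The hard part will be justifying that the contour can in fact be moved all the way to $\Re s = \tfrac d4 - \tfrac12 + \epsilon$: one must verify that no poles other than those enumerated intervene (in particular that the discrete-spectrum poles and the zeta-zero-type residual poles lie to the left of this line, or cancel, in the spirit of Lemmas~\ref{lem:Litj_equals_zero} and~\ref{lem:oddorthogonaltoeven}), and one needs growth bounds for $D_{d,h}(s)$ on that line uniform enough to place the error strictly below the last secondary main term. Both of these rest on the precise analytic behavior of the half-integral weight Eisenstein series coefficients of Section~\ref{sec:Eisen_summary}; keeping the estimates uniform in $h$, so as to track the dependence of $C'$, $C_m$, and the implied constant on $h$, is the most delicate bookkeeping.
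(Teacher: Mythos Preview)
Your overall strategy---apply the exponentially-smoothed Mellin transform to the Dirichlet series, shift the contour to $\Re s = \tfrac{d}{4}-\tfrac12+\epsilon$, and collect residues---is exactly the paper's. But the mechanism you sketch for where the poles sit is off in a way that would derail the execution. You propose to pair the Poincar\'e series directly against $y^{(k+1)/2}\theta^{d-1}\overline{\theta}$ and read the main-term poles from the continuous spectrum, by analogy with the residual terms $\rho_{\frac32-m}$ of Chapter~\ref{c:sums}. That function is \emph{not} in $L^2(\Gamma_0(4)\backslash\mathcal{H})$: it grows like $y^{(k+1)/2}$ at the cusps $\infty$ and $0$. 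The paper first subtracts the Eisenstein series $E^k_\infty(z,\tfrac{k+1}{2})$ and $E^k_0(z,\tfrac{k+1}{2})$ (or, when $k=\tfrac12$, their Laurent constant terms at $w=\tfrac34$) to produce an $L^2$ function $V$; the subtracted pieces, collected into $\mathfrak{E}_h^k(s)$, are what carry the leading poles at $s=\tfrac{k+1}{2}-m$ and hence the main terms $C_m X^{k-m}$. The double pole when $d=3$ and $h$ is a square also lives in $\mathfrak{E}_h^{1/2}$, not in the spectral continuous part, which is analytic for $\Re s > \tfrac12$ and contributes only to the error.

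Two further corrections. The secondary family of main terms at $X^{k-\frac12-m}$ in Theorem~\ref{thm:hyperboloid:smooth_full} comes from the \emph{old} discrete spectrum (the finitely many Maass forms $\mu_{j,\ell}$ arising from holomorphic cusp forms of weight $\ell\le k$), which has no analogue in Chapter~\ref{c:sums} and which your sketch omits. And your worries about zeta-zero poles and exceptional eigenvalues are unfounded here: Selberg's conjecture holds for weight-$k$ forms on $\Gamma_0(4)$ via the Shimura lift, so the new discrete spectrum has its poles exactly on $\Re s=\tfrac12$, and no zeta-zero residual terms of the Chapter~\ref{c:sums} type appear at all.
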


See Theorem~\ref{thm:hyperboloid:smooth_full} in \S\ref{sec:hyp:proof_main_theorems} for a
more complete statement.
This Theorem suggests that for dimensions greater than $4$, there may be secondary main
terms with lower power contributions.

Theorems~\ref{theorem:hyperboloid_intro_sharp} and~\ref{theorem:hyperboloid_intro_smooth}
can be thought of as average order estimates of the function $r_{d-1}(m^2+h)$.
In particular, for $2m^2 + h \leq R$, the average value of $r_{d-1}(m^2+h)$ is about
$R^{\frac{d-1}{2}-1}$.
In the process of proving~\ref{theorem:hyperboloid_intro_sharp}, we also prove
that this average order estimate holds on short-intervals, i.e.\ intervals
around $R$ of length much less than $R$.

\begin{theorem}\label{theorem:hyperboloid_intro_short}
  Let $k \geq \frac{1}{2}$ be a full or half-integer.
  Then for each dimension $d$, there is a constant $\lambda(d) > 0$ such that
  \begin{equation}
    \sum_{\lvert 2m^2 + h - X \rvert < X^{1 + \epsilon - \lambda(d)}} r_{d-1}(m^2 + h) \ll
    X^{\frac{d}{2} - 1 + \epsilon - \lambda(d)}.
  \end{equation}
  The constant $\lambda(d)$ is the same constant as in
  Theorem~\ref{theorem:hyperboloid_intro_sharp}.
\end{theorem}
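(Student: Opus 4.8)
The plan is to deduce the short-interval estimate from the meromorphic continuation of
\[
  D_{d,h}(s) := \sum_{n \in \mathbb{Z}} \frac{r_{d-1}(n^2+h)}{(2n^2+h)^s}
\]
(Theorem~\ref{thm:hyperboloid:mero_summary}) by means of the concentrating integral transform~\eqref{eq:back:conc_I}, exploiting that $r_{d-1}(n^2+h) \geq 0$. Fix $Y = X^{\lambda(d)-\epsilon}$, so that the effective window $X/Y$ of the transform equals the half-width $X^{1+\epsilon-\lambda(d)}$ of the interval in the theorem. Applying~\eqref{eq:back:conc_I} to $D_{d,h}$ gives
\[
  \frac{1}{2\pi} \sum_{n \in \mathbb{Z}} r_{d-1}(n^2+h)\, \exp\!\Big( -\frac{Y^2 \log^2(X/(2n^2+h))}{4\pi} \Big) = \frac{1}{2\pi i} \int_{(\sigma)} \exp\!\Big(\frac{\pi s^2}{Y^2}\Big) D_{d,h}(s)\, \frac{X^s}{Y}\, ds,
\]
with $\sigma$ in the half-plane of absolute convergence. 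Whenever $\lvert 2n^2+h-X \rvert < X/Y$ we have $Y^2 \log^2(X/(2n^2+h)) \ll 1$, so the Gaussian weight is bounded below by a positive absolute constant there; since every term is non-negative, the left-hand side dominates a fixed constant times the sharp short sum $\sum_{\lvert 2n^2+h-X \rvert < X^{1+\epsilon-\lambda(d)}} r_{d-1}(n^2+h)$. It therefore suffices to bound the contour integral on the right by $X^{\frac d2-1+\epsilon-\lambda(d)}$.

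To do so I would shift the line of integration to the left. The poles of $D_{d,h}(s)$ crossed are those read off from the smoothed estimate of Theorem~\ref{theorem:hyperboloid_intro_smooth}: a leading pole at $s = \tfrac d2-1$ (a double pole, producing a $\log$, exactly when $d=3$ and $h$ is a square) and secondary poles at $s = \tfrac d2-1-\tfrac m2$ coming from the constant terms of the half-integral-weight Eisenstein series. The factor $\exp(\pi s^2/Y^2)$ is bounded at each such pole since $Y \to \infty$, and the residue at the leading pole has size $X^{\frac d2-1}/Y = X^{\frac d2-1-\lambda(d)+\epsilon}$ (the extra $\log X$ in dimension $3$ being absorbed into $X^\epsilon$), which is exactly the bound claimed. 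Each later residue is $\ll Y^{-1} X^{\frac d2-1-m/2} \leq X^{\frac d2-1-\lambda(d)+\epsilon}$, hence harmless.

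Finally I would estimate the shifted integral on a line $\Re s = c$ lying just to the right of the last pole retained. There the Gaussian factor $\lvert \exp(\pi(c+it)^2/Y^2) \rvert = \exp(\pi(c^2-t^2)/Y^2)$ confines the integral essentially to $\lvert t \rvert \ll Y (\log X)^{1/2}$, and on that range the polynomial growth of $D_{d,h}(s)$ in vertical strips (obtained from its spectral expansion in the same manner as the growth analysis in Chapter~\ref{c:sums}, cf.\ Lemma~\ref{lem:DsSfSg_poly_growth}: Stirling for the Gamma factors together with convexity and the $\theta$-bound for the Eisenstein and Maass contributions) bounds $D_{d,h}(c+it)$ by $(1+\lvert t \rvert)^{B(d)}$ for an explicit $B(d)$. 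Thus the shifted integral is $\ll X^{c}\, Y^{B(d)+1}\, X^\epsilon$, which is $\leq X^{\frac d2-1-\lambda(d)+\epsilon}$ precisely when $c + (B(d)+1)\lambda(d) \leq \tfrac d2-1-\lambda(d)$. This inequality, with $c$ pushed as far left as the pole structure permits, is what pins down the admissible range of $\lambda(d)$; optimizing yields $\lambda(3) = \tfrac1{44}$, and $\lambda(d)$ increases toward $\tfrac16$ as $d \to \infty$ because the additional Gamma factors present in higher dimensions improve the effective exponent $B(d)$.

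The main obstacle is the polynomial-growth input of this last step: the value of $\lambda(d)$ is only as good as the exponent $B(d)$ one can establish for $D_{d,h}(s)$ in vertical strips, which in turn requires controlling the continuous-spectrum integral in the spectral decomposition of the shifted convolution sum underlying $D_{d,h}(s)$ (in particular the half-integral-weight Eisenstein series coefficients and their Dirichlet series of Gauss sums computed in Chapter~\ref{c:background}) uniformly in the imaginary parameters, and inserting the best available convexity bounds and the Kim--Sarnak bound $\theta \leq \tfrac7{64}$. The transform identity, the positivity reduction, and the residue bookkeeping are routine; it is this uniform growth estimate, and its interplay with the location of the secondary poles, that carries the real difficulty.
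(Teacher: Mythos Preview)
Your proposal is correct and follows essentially the same route as the paper: apply the concentrating transform to the Dirichlet series, use positivity of $r_{d-1}$ to dominate the sharp short sum by the smoothed one, shift the contour left, collect the residues (whose leading contribution is $X^{d/2-1}/Y$), bound the shifted integral via polynomial growth in vertical strips, and balance the two error sources to pin down $\lambda(d)$. The paper makes two points concrete that you leave implicit: the contour is shifted precisely to $\Re s = \tfrac12 + \epsilon$ in the normalized variable (the natural barrier imposed by the line of poles from the new discrete spectrum $s = \tfrac12 \pm it_j$), and the dominant polynomial-growth exponent comes from the discrete spectrum, yielding $B = 3k + \tfrac{17}{2}$ with $k = \tfrac{d-2}{2}$, whence $\lambda(k) = 1/(6 + 19/k)$; also, since Selberg's eigenvalue conjecture is known on $\Gamma_0(4)$, the Kim--Sarnak input is not needed here.
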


This Theorem can be roughly interpreted to count the number of lattice points $\bm{m}$ on
$\mathcal{H}_{d,h}$ with $\| \bm{m} \|$ very near $X$, or equivalently counting the number
of lattice points within a sphere of radius slightly larger than $\sqrt X$ and outside of
a sphere of radius a slightly smaller than $\sqrt X$.
This Theorem can be compared to the main theorem in short intervals
in~\cite{hkldwShort}, as described in Chapter~\ref{c:sums_apps}.

\begin{remark}
  To make heuristic sense of Theorem~\ref{theorem:hyperboloid_intro_short}, note that
  there are on the order of $X^{\frac{1}{2} + \epsilon - \lambda(d)}$ integers $m$ such
  that $\lvert 2m^2 + h - X \rvert < X^{1 + \epsilon - \lambda(d)}$.
  Therefore, if each of these values of $r_{d-1}(2m^2 + h)$ is approximately the size we
  expect, $X^{\frac{d-1}{2} - 1}$, then the total size should be
  \begin{equation}
    X^{\frac{d-1}{2} - 1} \cdot X^{\frac{1}{2} + \epsilon - \lambda(d)} = X^{\frac{d}{2} -
    1 + \epsilon - \lambda(d)},
  \end{equation}
  which is exactly what is shown in Theorem~\ref{theorem:hyperboloid_intro_short}.
\end{remark}

\subsection*{Overview of Methodology}

In order to count points on hyperboloids, let $d = 2k + 2$ (where $k$ may be a
half-integer).
Then in
\begin{equation}
  X_1^2 + \cdots + X_{2k+1}^2 = X_{2k+2}^2 + h,
\end{equation}
notice that for a point $\bm{X}$ on the hyperboloid,
\begin{equation}
  (X_1^2 + \cdots + X_{2k+1}^2) + X_{2k+2}^2 \leq R \iff 2X_{2k+2}^2 + h \leq R.
\end{equation}
It suffices to consider those points on the hyperboloid with $2X_{2k+2}^2 + h \leq R$.
Recall the notation that $r_d(n)$ is the number of representations of $n$ as a sum of
$d$ squares.
Then, breaking the hyperboloid into each possible value of $X_{2k+2}^2 + h$ and summing
across the number of representations as sums of squares, we have that
\begin{equation}\label{eq:hyp:points_equals_sum}
  N_{d,h}(R) = \sum_{2X_{2k+2}^2 + h \leq R} r_{2k+1}(X_{2k+2}^2 + h) = \sum_{2m^2 + h
  \leq R} r_{2k+1}(m^2 + h).
\end{equation}
We will find the number of points on the hyperboloid by estimating this last sum.

Consider the automorphic function
\index{V@$\widetilde{V}(z)$}
\begin{equation}
  \widetilde{V}(z) = \theta^{2k+1}(z) \overline{\theta(z)}y^{\frac{k+1}{2}},
\end{equation}
where
\index{theta@$\theta(z)$}
\begin{equation}
  \theta(z) = \sum_{n \in \mathbb{Z}} e^{2\pi i n^2 z}
\end{equation}
is the classical Jacobi theta function.
Heuristically, the $h$th Fourier coefficient of $\widetilde{V}(z)$ is a weighted
version of the sum $\sum_m r_{2k+1}(m^2+h)$, and so proper analysis of the the $h$th
Fourier coefficient of $\widetilde{V}(z)$ will give an estimate for $N_{d,h}(R)$.

More completely, let $P_h(z,s)$ denote a Poincar\'e series that isolates the $h$th Fourier
coefficient.
Then we will have that
\begin{equation}
  \frac{(2\pi)^{s + \frac{k-1}{2}}}{\Gamma(s + \frac{k-1}{2})} \langle P_h(\cdot, s),
  \widetilde{V} \rangle = \sum_{m \in \mathbb{Z}}
  \frac{r_{2k+1}(m^2+h)}{(2m^2+h)^{s+\frac{k-1}{2}}}.
\end{equation}
In order to understand the meromorphic properties of this Dirichlet series, we
will use a spectral expansion of the Poincar\'e series and understand each of the terms in
the spectral decomposition.
As $\widetilde{V}(z)$ is not square integrable, it is necessary to modify
$\widetilde{V}(z)$ by cancelling out the growth.
We do that in the next section by subtracting carefully chosen Eisenstein series.

Once the meromorphic properties of this Dirichlet series are understood, it only remains
to perform some classical cutoff integral transforms.
In Section~\ref{sec:hyp:proof_main_theorems}, we apply three Mellin integral
transforms described in Chapter~\ref{c:background} and perform classical
integral analysis in order to prove our main theorems of this chapter.

\section{Altering $\widetilde{V}$ to be Square-Integrable}
\index{V@$\widetilde{V}(z)$}

From the transformation laws of $\theta(z)$, we see that $\widetilde{V}$ satisfies the
transformation law
\begin{equation}\label{eq:V_transformation_law}
  \widetilde{V}(\gamma z) = \frac{\varepsilon_d^{-2k} \kron{c}{d}^{2k} (cz+d)^k}{\lvert cz+d
  \rvert^k} \widetilde{V}(z)
\end{equation}
for $\gamma = \Big( \begin{smallmatrix} a&b\\c&d \end{smallmatrix} \Big) \in \Gamma_0(4)$,
and where
\begin{equation}
  \varepsilon_d = \begin{cases}
    1 & d \equiv 1 \pmod 4 \\
    i & d \equiv 3 \pmod 4
  \end{cases}
\end{equation}
is the sign of the $d$th Gauss sum.
Therefore when $k$ is an integer, $\widetilde{V}$ is a modular form of full-integral
weight $k$ of nebentypus $\chi(\cdot) = \kron{-1}{\cdot}^k$ on $\Gamma_0(4)$.
When $k$ is a half-integer, $\widetilde{V}$ is a modular form of half-integral weight $k$
on $\Gamma_0(4)$ with a normalized theta multiplier system as described
in~\eqref{eq:V_transformation_law}.

Under the action of $\Gamma_0(4)$, the quotient $\Gamma_0(4)\backslash\mathcal{H}$
has three cusps: at $0, \frac{1}{2}$, and $\infty$.
We use $E_\mathfrak{a}^k(z,w)$ to denote the Eisenstein series of weight $k$ associated to
the cusp $\mathfrak{a}$, as detailed extensively in \S\ref{sec:Eisen_summary}.
We will soon see that $\widetilde{V}$ is non-cuspidal, and we will analyze the behavior of
$\widetilde{V}$ at each of the cusps.
In doing so, we will prove the following.

\begin{proposition}
  For $k \geq 1$, define $V(z)$ as
  \begin{equation}
    V(z) := \widetilde{V}(z) - E_\infty^k(z, \tfrac{k+1}{2}) - E_0^k(z, \tfrac{k+1}{2}).
  \end{equation}
  Then $V(z)$ is in $L^2(\Gamma_0(4)\backslash \mathcal{H}, k)$.

  In the case when $k = \frac{1}{2}$, we define
  \begin{equation}
    V(z) := \widetilde{V}(z) - \const_{w = \frac{3}{4}} E_\infty^k(z, w) - \const_{w =
    \frac{3}{4}} E_0^k(z,w),
  \end{equation}
  where $\const_{w=c} f(w)$ refers to the constant term in the Laurent expansion of $f(w)$
  expanded at $w=c$.
  Then $V(z)$ is in $L^2(\Gamma_0(4)\backslash \mathcal{H}, \frac{1}{2})$.
\end{proposition}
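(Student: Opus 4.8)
The plan is to verify square-integrability cusp by cusp. For a weight $k$ automorphic function on $\Gamma_0(4)$, membership in $L^2(\Gamma_0(4)\backslash\mathcal{H},k)$ reduces to checking that, in the local uniformizing coordinate at each of the three cusps $\infty,0,\tfrac12$, the zeroth Fourier coefficient is $O(y^{\beta})$ for some $\beta<\tfrac12$ — the nonzero Fourier coefficients of $\widetilde V$ and of the Eisenstein series all decay rapidly, so the entire argument concerns constant terms — while on a compact core of the fundamental domain $V$ is a finite combination of real-analytic functions and hence bounded. Here one first notes that $E_\infty^k(z,\tfrac{k+1}{2})$ and $E_0^k(z,\tfrac{k+1}{2})$ are genuinely defined when $k\ge 1$: for $k\ge 2$ the point $w=\tfrac{k+1}{2}\ge\tfrac32$ lies beyond the only possible pole of $E_\mathfrak{a}^k(z,w)$ at $w=1$, and for $k=1$ the weight is odd, so $E_\mathfrak{a}^1(z,w)$ is an Eisenstein series twisted by the nontrivial character $\kron{-1}{\cdot}$ and is therefore holomorphic at $w=1$.

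At the cusp $\infty$, since $\theta^{2k+1}(z)\overline{\theta(z)}\to 1$ as $y\to\infty$ with exponentially small error, the constant term of $\widetilde V$ is $y^{(k+1)/2}(1+O(e^{-4\pi y}))$; the constant term of $E_\infty^k(z,\tfrac{k+1}{2})$ at $\infty$ is $y^{(k+1)/2}+\rho_\infty^k(0,\tfrac{k+1}{2})\,y^{(1-k)/2}$, and that of $E_0^k(z,\tfrac{k+1}{2})$ at $\infty$ is a constant times $y^{(1-k)/2}$. The leading powers cancel, leaving a constant term $\ll y^{(1-k)/2}+e^{-4\pi y}$, and $(1-k)/2\le 0<\tfrac12$. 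At the cusp $0$ one repeats this using $\theta(-1/4z)=\sqrt{-2iz}\,\theta(z)$: after stripping the theta automorphy factor, $\theta^{2k+1}\overline\theta$ again tends to $1$, so the constant term of $\widetilde V$ at $0$ equals (a phase factor)$\,\times\, y^{(k+1)/2}(1+O(e^{-cy}))$; by the very definition $E_0^k(z,w)=(z/|z|)^{-k}E_\infty^k(-1/4z,w)$ the series $E_0^k(z,\tfrac{k+1}{2})$ has constant term at $0$ given by the same phase times $y^{(k+1)/2}$ plus an $O(y^{(1-k)/2})$ term, while $E_\infty^k(z,\tfrac{k+1}{2})$ contributes only $O(y^{(1-k)/2})$ there, so the growth cancels exactly. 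At the cusp $\tfrac12$ I would invoke the classical fact that $\theta$ vanishes (exponentially) at $\tfrac12$; hence $\theta^{2k+1}(z)\overline{\theta(z)}$, and therefore $\widetilde V$, decays exponentially at $\tfrac12$, while $E_\infty^k$ and $E_0^k$ have constant term $\ll y^{1-(k+1)/2}=y^{(1-k)/2}$ there. Collecting the three estimates gives $V\in L^2$ for $k\ge 1$.

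For the exceptional case $k=\tfrac12$ the only change is that $\tfrac{k+1}{2}=\tfrac34$ is exactly the pole of $E_\mathfrak{a}^{1/2}(z,w)$ (coming from $\zeta(4w-2)$ at $w=\tfrac34$), so one must instead subtract $C_\mathfrak{a}(z):=\const_{w=3/4}E_\mathfrak{a}^{1/2}(z,w)$. Laurent-expanding the constant term $y^w+\rho_\infty^{1/2}(0,w)y^{1-w}$ of $E_\infty^{1/2}$ at $\infty$ about $w=\tfrac34$, the principal part is $a_{-1}y^{1/4}/(w-\tfrac34)$ — exactly the constant term of the residual form $\Res_{w=3/4}E_\infty^{1/2}(z,w)$, which never enters $V$ — and the Laurent-constant term is $y^{3/4}+(a_0-a_{-1}\log y)\,y^{1/4}$; similarly the constant term of $C_0$ at $\infty$ (and of both at $\tfrac12$) is $O(y^{1/4}\log y)$, while that of $C_0$ at $0$ reproduces $y^{3/4}$ up to an $O(y^{1/4}\log y)$ error. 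Since $\widetilde V$ has constant term $y^{3/4}(1+O(e^{-cy}))$ at $\infty$ and $0$ and decays at $\tfrac12$, forming $V=\widetilde V-C_\infty-C_0$ cancels the $y^{3/4}$ terms and leaves constant terms $O(y^{1/4}\log y+e^{-cy})$ at every cusp; as $\int^\infty (y^{1/4}\log y)^2\,y^{-2}\,dy<\infty$, this yields $V\in L^2(\Gamma_0(4)\backslash\mathcal{H},\tfrac12)$.

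The main obstacle is the precise bookkeeping of the constant terms of $\widetilde V$ at the cusps $0$ and $\tfrac12$: one must carry the theta automorphy factors carefully enough to see that the $y^{(k+1)/2}$ coefficient at $0$ agrees exactly — phase included — with the normalization built into $E_0^k$, and one must confirm that $\theta$, hence $\theta^{2k+1}\overline\theta$, genuinely vanishes at $\tfrac12$ so that no third Eisenstein subtraction is needed. In the half-integral case there is the additional delicate point of checking that only the Laurent-constant parts of the Eisenstein series enter $V$, so that the surviving $y^{1/4}$ and $y^{1/4}\log y$ contributions remain strictly below the $L^2$ threshold $y^{1/2}$.
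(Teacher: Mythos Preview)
Your proposal is correct and follows essentially the same approach as the paper: analyze the growth of $\widetilde V$ at each of the three cusps, observe that it behaves like $y^{(k+1)/2}$ at $\infty$ and $0$ and decays at $\tfrac12$, and then subtract the two Eisenstein series whose leading constant terms cancel this growth; in the $k=\tfrac12$ case, replace the Eisenstein values by their Laurent-constant terms at $w=\tfrac34$. Your write-up is in fact more detailed than the paper's own sketch---you explicitly check regularity of $E_\mathfrak{a}^k(z,\tfrac{k+1}{2})$ for $k\ge 1$, track the surviving $y^{(1-k)/2}$ and $y^{1/4}\log y$ terms, and correctly flag the phase-matching at the cusp $0$ as the point requiring care.
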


\begin{proof}

Writing $\widetilde{V}$ directly as
\begin{equation}
  \widetilde{V}(z) = \sum_{m_1, \ldots, m_{2k+2} \in \mathbb{Z}} y^{\frac{k+1}{2}} e^{2\pi
  i x(m_1^2 + \cdots m_{2k+1}^2 - m_{2k+2}^2)} e^{-2\pi y(m_1^2 + \cdots + m_{2k+2}^2)}
\end{equation}
shows that all terms have significant exponential decay in $y$, except when $m_1 = \cdots
= m_{2k+2} = 0$, in which case there is the term $y^{\frac{k+1}{2}}$.
Correspondingly, at the $\infty$ cusp, $\widetilde{V}(z)$ grows like $y^{\frac{k+1}{2}}$.
However the function $\widetilde{V}(z) - y^{\frac{k+1}{2}}$ has exponential decay as $y
\to \infty$.

At the $0$ cusp, we use $\sigma_0 = \Big(\begin{smallmatrix} 0&-\frac{1}{2} \\ 2&0
\end{smallmatrix}\Big)$, a matrix in $\SL(2, \mathbb{R})$ taking $0$ to $\infty$, and
directly compute
\begin{equation}
  \widetilde{V}\big|_{\sigma_0}(z) = \theta^{2k+1}\Big(\frac{-1}{4z}\Big)
  \overline{\theta\Big(\frac{-1}{4z}\Big)} \Im^{\frac{k+1}{2}} \Big(\frac{-1}{4z}\Big)
  \frac{\lvert -2iz \rvert^k}{(-2iz)^k}
\end{equation}

At the $\tfrac{1}{2}$ cusp, $\widetilde{V}$ has exponential decay because each $\theta(z)$
factor has exponential decay there.

Thus $\widetilde{V}$ grows like $y^{\frac{k+1}{2}}$ at the $\infty$ and $0$ cusps, and has
exponential decay at the $\frac{1}{2}$ cusp.
To cancel and better understand these growth terms, we subtract spectral Eisenstein series
associated to the cusps $0$ and $\infty$ with spectral parameter chosen so that the
leading growth of the Eisenstein series perfectly cancels the growth of $\widetilde{V}$.
We will use the properties of the full and half-integral weight Eisenstein series
associated to the cusp $\mathfrak{a}$, $E_\mathfrak{a}^k(z,w)$, as described more fully in
\S\ref{sec:Eisen_summary}.
In particular, it is shown in \S\ref{sec:Eisen_summary} that the constant terms in the
Fourier series of the Eisenstein series $E_\mathfrak{a}^k(z,w)$, expanded at the cusp
$\mathfrak{a}$, is of the shape\index{E@$E_\mathfrak{a}^k(z,s)$}
\begin{equation}
  y^w + c(w)y^{1-w}
\end{equation}
for a constant $c(w)$ depending on $w$.
Therefore, specializing the parameter $w = \frac{k+1}{2}$, the leading term from the
constant term of each Eisenstein series perfectly cancels the growth of $\widetilde{V}$ at
each cusp.
Further, each Eisenstein series is small at each cusp other than its associated cusp, so
for instance $E^k_\infty(z, \tfrac{k+1}{2})$ cancels the $y^{\frac{k+1}{2}}$ at the
$\infty$ cusp and is otherwise small at each other cusp (see~\cite{Iwaniec97} for more).

However, when $k$ is half-integral weight, the Eisenstein series
$E^k_\mathfrak{a}(z,w)$ has a pole at $w = \frac{3}{4}$.
When $k = \frac{1}{2}$, corresponding to the dimension $3$ hyperboloid,
the two Eisenstein series $E^{\frac{1}{2}}_\infty(z,w)$ and
$E_0^{\frac{1}{2}}(z,w)$ each have poles at
$w = \frac{k+1}{2} = \frac{3}{4}$, and so we cannot subtract
them from $\widetilde{V}$ directly.
Referring again to \S\ref{sec:Eisen_summary}, it is clear that the constant term of the
Laurent expansion at $w = \frac{3}{4}$ of each Eisenstein series contains the leading
growth terms $y^{\frac{3}{4}}$.
Since the constant term in the Laurent expansion is also modular, we conclude the $k =
\frac{1}{2}$ case.
\end{proof}

\section{Analytic Behavior}

Let $P_h^k(z,s)$ denote the weight $k$ Poincar\'e series
\begin{equation}
  P_h^k(z,s) = \sum_{\gamma \in \Gamma_\infty \backslash \Gamma_0(4)} \Im(\gamma z)^s
  e^{2\pi i h \gamma z} J(\gamma, z)^{-2k}
\end{equation}
where
\begin{equation}
  J(\gamma, z) = \frac{j(\gamma, z)}{\lvert j(\gamma, z) \rvert}
\end{equation}
and $j(\gamma, z) = \theta(\gamma z)/\theta(z) = \varepsilon^{-1} \kron{c}{d}
(cz+d)^{\frac{1}{2}}$, exactly as for the Eisenstein series defined in
Chapter~\ref{c:background}.

Our basic strategy is to understand the Petersson inner product $\langle P_h^k(\cdot, s),
V(z) \rangle$ in two different ways.
On the one hand, we will compute it directly, giving a Dirichlet series $D_h^k(s)$ with
coefficients $r_{2k+1}(m^2 + h)$.
On the other hand, we will take a spectral expansion of $P_h^k$ and understand the
meromorphic properties of each part of the spectral expansion.

\subsection{Direct Expansion}\label{ssec:direct_expansion}

We first understand $\langle P_h^k(\cdot, s), V \rangle$ directly, using the method of
unfolding:
\begin{align}
  \langle P_h^k(\cdot, s), V \rangle &= \int \int_{\Gamma_0(4) \backslash \mathcal{H}}
  P_h(z,s) \overline{V(z)} \frac{dx dy}{y^2} \\
  &= \int_0^\infty \int_0^1 y^{s-1} e^{2\pi i h z} \overline{V(z)} dx \frac{dy}{y}.
\end{align}
Initially, we consider the case when $k > \frac{1}{2}$.
We'll consider the three dimensional case, when $k = \frac{1}{2}$, afterwards.

\subsubsection*{Dimension $\geq 4$}

Writing $V = \widetilde{V} - E_\infty^k(z, \frac{k+1}{2}) - E_0^k(z, \frac{k+1}{2})$, we
compute
\begin{align}
  &\langle P_h^k(\cdot, s), E_\infty^k(z, \tfrac{k+1}{2}) + E_0^k(z, \tfrac{k+1}{2})
\rangle \\
  &\quad = \frac{\overline{\rho_\infty^k(h, \frac{k+1}{2}) + \rho_0^k(h,
  \frac{k+1}{2})}}{(4\pi h)^{s-1}} \frac{\Gamma(s + \frac{k}{2} - \frac{1}{2}) \Gamma(s -
\frac{k}{2} - \frac{1}{2})}{\Gamma(s - \frac{k}{2})}.
\end{align}
Expanding $\widetilde{V}$, we can compute the remaining $x$ integral as
\begin{align}
  &\int_0^1 \overline{\widetilde{V}(z)} e^{2\pi i h x} dx = \int_0^1
  \overline{\theta^{2k+1}(z)}\theta(z)y^{\frac{k+1}{2}} e^{2\pi i h x} dx \\
  &\qquad = \sum_{m_1, \ldots, m_{2k+2}} y^{\frac{k+1}{2}} e^{2\pi y(m_1^2 + \cdots +
m_{2k+2}^2)} \int_0^1 e^{-2\pi i x(m_1^2 + \cdots + m_{2k+1}^2 - m_{2k+2}^2 - h)} dx \\
  &\qquad = \quad y^{\frac{k+1}{2}}
  \sum_{\mathclap{\substack{\bm{m} \in \mathbb{Z}^{2k+2} \\
  m_1^2 + \cdots + m_{2k+1}^2 = m_{2k+2}^2 + h}}} e^{-2\pi y(m_1^2 + \cdots m_{2k+2}^2)} =
  \quad y^{\frac{k+1}{2}} \sum_{\mathclap{\substack{\bm{m} \in \mathbb{Z}^{2k+2} \\
  m_1^2 + \cdots + m_{2k+1}^2 = m_{2k+2}^2 + h}}} e^{-2\pi y(2m_{2k+2}^2 + h)} \\
  &\qquad = y^{\frac{k+1}{2}} \sum_{m \in \mathbb{Z}} r_{2k+1}(m^2 + h)
  e^{-2\pi y (2m^2 + h)}.
\end{align}
To go from the penultimate line to the last line, we write $m = m_{2k+2}$ and count the
number of representations of $m^2 + h$.
We compute the remaining $y$ integral
\begin{equation}
  \int_0^\infty y^{s + \frac{k-1}{2}} \sum_{m \in \mathbb{Z}} r_{2k+1}(m^2 + h) e^{-2\pi
  y(2m^2 + h)} \frac{dy}{y} = \sum_{m \in \mathbb{Z}} \frac{r_{2k+1}(m^2 + h)}{(2m^2 +
  h)^{s + \frac{k-1}{2}}} \frac{\Gamma(s + \frac{k-1}{2})}{(2\pi)^{s + \frac{k-1}{2}}}.
\end{equation}

Define\index{D@$D_h^k(s)$}
\begin{equation}\label{eq:hyp:Dh_def}
  D^k_h(s) := \frac{(2\pi)^{s + \frac{k-1}{2}}}{\Gamma(s + \frac{k-1}{2})} \langle
  P_h^k(\cdot, s), V\rangle.
\end{equation}
Then our computation above shows that for $k \geq 1$, $D^k_h(s)$ can be written as
\begin{equation}\label{eq:basic_expansion_final}
  \sum_{m \in \mathbb{Z}}\frac{r_{2k+1}(m^2 + h)}{(2m^2 + h)^{s + \frac{k-1}{2}}} -
  \mathfrak{E}_h^k(s)
\end{equation}
where\index{E@$\mathfrak{E}_h^k(s)$}
\begin{equation}
  \mathfrak{E}_h^k(s) =  \frac{(2\pi)^{\frac{k+1}{2}} (\overline{\rho_\infty^k(h,
  \frac{k+1}{2}) + \rho_0^k(h, \frac{k+1}{2})})}{(2 h)^{s-1}} \frac{\Gamma(s - \frac{k}{2}
- \frac{1}{2})}{\Gamma(s - \frac{k}{2})}.
\end{equation}
Notice that $\mathfrak{E}_h^k(s)$ has poles at $s = \frac{k+1}{2} - m$ for $m \in
\mathbb{Z}_{\geq 0}$, coming from the Gamma function in the
numerator, and clear meromorphic continuation.\footnote{$\mathfrak{E}$ is an E in an old
  German font, which many mathematicians would pronounce as ``fraktur E'' or ``mathfrak
E.'' We use it because those terms come from Eisenstein series.}

After applying Stirling's approximation to estimate the Gamma functions, we have the
following proposition.

\begin{proposition}\label{prop:nonspectral_analytic_props_large_dim}
  With the notation above and with $k \geq 1$, we have
  \begin{equation}
    D^k_h(s) := \frac{(2\pi)^{s + \frac{k-1}{2}}}{\Gamma(s + \frac{k-1}{2})} \langle
    P_h^k(\cdot, s), V\rangle = \sum_{m \in \mathbb{Z}}\frac{r_{2k+1}(m^2 + h)}{(2m^2 +
  h)^{s + \frac{k-1}{2}}} - \mathfrak{E}_h^k(s).
  \end{equation}
  The function $\mathfrak{E}_h^k(s)$ is analytic for $\Re s > \frac{1}{2}$ except for
  simple poles at $s = \frac{k+1}{2} - m$ for $m \in \mathbb{Z}_{\geq 0}$, and has
  meromorphic continuation to the plane.
  For $s$ away from poles with $\Re s > \frac{1}{2}$, we have the bound
  \begin{equation}
    \mathfrak{E}_h^k(s) \ll (1 + \lvert s \rvert)^{-\frac{1}{2}}.
  \end{equation}
\end{proposition}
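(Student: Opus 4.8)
The plan is that, apart from the growth bound, every assertion is read off directly from the explicit formula for $\mathfrak{E}_h^k(s)$ recorded just above the statement,
\[
  \mathfrak{E}_h^k(s) = \frac{(2\pi)^{\frac{k+1}{2}}\,\overline{\rho_\infty^k(h,\tfrac{k+1}{2}) + \rho_0^k(h,\tfrac{k+1}{2})}}{(2h)^{s-1}}\,\frac{\Gamma\!\big(s - \tfrac{k}{2} - \tfrac12\big)}{\Gamma\!\big(s - \tfrac{k}{2}\big)},
\]
together with the unfolding identity~\eqref{eq:basic_expansion_final}, which already supplies the decomposition $D_h^k(s) = \sum_m r_{2k+1}(m^2+h)(2m^2+h)^{-s-\frac{k-1}{2}} - \mathfrak{E}_h^k(s)$ in the region of absolute convergence. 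So only the analytic description of $\mathfrak{E}_h^k$ remains to be established.

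First I would dispatch the meromorphic continuation and the location of the poles factor by factor. The prefactor $(2\pi)^{(k+1)/2}\,\overline{\rho_\infty^k(h,\frac{k+1}{2}) + \rho_0^k(h,\frac{k+1}{2})}$ is a fixed constant (the Eisenstein coefficients are evaluated at $w = \frac{k+1}{2}$, which for $k \geq 1$ is not a pole of $E_\infty^k$ or $E_0^k$, cf.\ \S\ref{sec:Eisen_summary}); the factor $(2h)^{1-s}$ is entire and zero-free; and $1/\Gamma(s - \frac{k}{2})$ is entire. Hence the only source of singularities is $\Gamma(s - \frac{k}{2} - \frac12)$, which contributes simple poles exactly at $s - \frac{k}{2} - \frac12 = -m$, i.e.\ at $s = \frac{k+1}{2} - m$ with $m \in \mathbb{Z}_{\geq 0}$. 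The one point worth checking is that these poles are not cancelled and no new ones are created: since $1/\Gamma(s-\frac{k}{2})$ has no zeros this is immediate. This gives meromorphic continuation of $\mathfrak{E}_h^k$ to all of $\mathbb{C}$ with the stated pole set, and intersecting with $\Re s > \frac12$ leaves the finitely many poles having $m < \frac{k}{2}$ (at least $m=0$ when $k \geq 1$).

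For the bound I would apply Stirling's approximation in the magnitude form recalled in Remark~\ref{rem:extraremark}. Writing $s = \sigma + it$ with $\sigma$ in a fixed vertical strip and $|t|$ large, $|\Gamma(s - \frac{k}{2} - \frac12)| \asymp (1 + |t|)^{\sigma - \frac{k}{2} - 1}e^{-\frac{\pi}{2}|t|}$ and $|\Gamma(s - \frac{k}{2})| \asymp (1 + |t|)^{\sigma - \frac{k}{2} - \frac12}e^{-\frac{\pi}{2}|t|}$, so the exponential factors cancel and the ratio is $\asymp (1 + |t|)^{-1/2}$; since $|(2h)^{1-s}| = (2h)^{1-\sigma}$ is bounded on the strip and $1 + |s| \asymp 1 + |t|$ there, this yields $\mathfrak{E}_h^k(s) \ll_{h,k} (1 + |s|)^{-1/2}$ for large $|t|$, and for bounded $|t|$ with $s$ away from the poles $\mathfrak{E}_h^k$ is continuous (hence bounded) while $(1+|s|)^{-1/2} \asymp 1$, so the estimate holds uniformly on $\Re s > \frac12$ minus neighbourhoods of the poles. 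There is no genuine obstacle here; the proposition is essentially bookkeeping from the explicit formula, and the only places needing a little care are confirming the absence of pole--zero cancellation between the two Gamma factors and making the Stirling estimate uniform by treating the bounded-$|\Im s|$ regime separately from the asymptotic one.
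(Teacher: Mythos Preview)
Your approach is essentially the paper's: the decomposition is already~\eqref{eq:basic_expansion_final}, the poles are read off from $\Gamma(s-\tfrac{k+1}{2})$ in the explicit formula for $\mathfrak{E}_h^k(s)$, and the growth bound is Stirling's approximation on the Gamma ratio. One small correction: your claim that ``$1/\Gamma(s-\tfrac{k}{2})$ has no zeros'' is false --- $1/\Gamma$ has simple zeros at the nonpositive integers, so $1/\Gamma(s-\tfrac{k}{2})$ vanishes at $s=\tfrac{k}{2}-n$ for $n\in\mathbb{Z}_{\geq 0}$; the reason there is no cancellation with the poles of $\Gamma(s-\tfrac{k+1}{2})$ at $s=\tfrac{k+1}{2}-m$ is simply that these two sequences are offset by $\tfrac12$ and hence disjoint.
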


\subsubsection*{Dimension $3$}

We proceed analogously, and write
$V = \widetilde{V} - \const_{w = \frac{3}{4}} E_\infty^{\frac{1}{2}}(z, w) -
\const_{w = \frac{3}{4}} E_0^{\frac{1}{2}} (z, w)$, initially for $\Re s \gg 1$.
We now need to compute
\begin{equation}
  \langle P_h^{\frac{1}{2}} (\cdot, s), \const_{w = \frac{3}{4}}
  E_\mathfrak{a}^{\frac{1}{2}} (\cdot, w) \rangle
  =
  \const_{w = \frac{3}{4}} \; \langle P_h^{\frac{1}{2}} (\cdot, s),
  E_\mathfrak{a}^{\frac{1}{2}} (\cdot, w) \rangle.
\end{equation}
Computing the expression for the inner product
$\langle P_h^{\frac{1}{2}} , E_\mathfrak{a}^{\frac{1}{2}}  \rangle$
for $\Re s \gg 1$ directly and then taking the constant term in $w$ gives that
\begin{equation}\label{eq:dim_3_constant_laurent}
  \const_{w = \frac{3}{4}} \;
  \langle P_h^{\frac{1}{2}} (\cdot, s), E_\mathfrak{a}^{\frac{1}{2}} (\cdot, w) \rangle
  =
  \const_{w = \frac{3}{4}}
  \frac{\overline{\rho_{\mathfrak{a}}^{\frac{1}{2}} (h,w)}}{(4\pi h)^{s-1}}
  \frac{\Gamma(s+\overline{w}-1)\Gamma(s-\overline{w})}{\Gamma(s - \frac{1}{4})}.
\end{equation}
We must now make sense of this constant term.

In \S\ref{sec:Eisen_summary}, it is shown that $\rho_\mathfrak{a}^{\frac{1}{2}}(h,w)$
has a simple pole at $w = \frac{3}{4}$ if and only if $h$ is a positive square,
and otherwise is analytic.
Thus the constant term~\eqref{eq:dim_3_constant_laurent} manifests in three ways:
\begin{enumerate}
  \item The constant terms of $\rho_{\mathfrak{a}}^{\frac{1}{2}}(h,w)$,
    $\Gamma(s + \overline{w} - 1)$, and $\Gamma(s - \overline{w})$
  \item The residue term of $\rho_{\mathfrak{a}}^{\frac{1}{2}}(h,w)$,
    the constant term of $\Gamma(s + \overline{w} - 1)$,
    and the linear term of $\Gamma(s - \overline{w})$
  \item The residue term of $\rho_{\mathfrak{a}}^{\frac{1}{2}}(h,w)$,
    the linear term of $\Gamma(s + \overline{w} - 1)$,
    and the constant term of $\Gamma(s - \overline{w})$.
\end{enumerate}
Together, these mean that~\eqref{eq:dim_3_constant_laurent} can be written as
\begin{equation} \label{eq:dim_3_laurent_simplify_I}
  \begin{split}
    &\const_{w = \frac{3}{4}} \; \langle P_h^{\frac{1}{2}} (\cdot, s),
    E_\mathfrak{a}^{\frac{1}{2}} (\cdot, w) \rangle = \\
    &\quad = \frac{\const_{w = \frac{3}{4}}
    \rho_\mathfrak{a}^{\frac{1}{2}}(h,w)
    \Gamma(s - \frac{1}{4})\Gamma(s - \frac{3}{4})}
    {(4\pi h)^{s-1} \Gamma(s - \frac{1}{4})}
    \\
    &\qquad + \frac{\Res_{w = \frac{3}{4}}
    \rho_{\mathfrak{a}}^{\frac{1}{2}}(h,w)}{(4\pi h)^{s-1}}
    \bigg( \frac{\Gamma'(s - \frac{1}{4}) \Gamma(s - \frac{3}{4})}
      {\Gamma(s - \frac{1}{4})}
      +
    \frac{\Gamma'(s - \frac{3}{4}) \Gamma(s - \frac{1}{4})}{\Gamma(s - \frac{1}{4})}\bigg)
    \\
    &\quad =\frac{\const_{w = \frac{3}{4}} \rho_\mathfrak{a}^{\frac{1}{2}}(h,w)
    \Gamma(s - \frac{3}{4})}{(4\pi h)^{s-1}}
    \\
    &\qquad + \frac{\Res_{w = \frac{3}{4}}
    \rho_{\mathfrak{a}}^{\frac{1}{2}}(h,w)}{(4\pi h)^{s-1}}
    \bigg( \frac{\Gamma'(s - \frac{1}{4}) \Gamma(s - \tfrac{3}{4})}
      {\Gamma(s - \frac{1}{4})}
      +
    \Gamma'(s - \tfrac{3}{4})\bigg).
  \end{split}
\end{equation}
This expression has clear meromorphic continuation to the plane, and the poles and
analytic behavior can be determined from the individual Gamma functions.
This expression has a simple pole at $s = \frac{3}{4}$, and when
$\rho_\mathfrak{a}^{\frac{1}{2}}(s,w)$ has a pole at $w = \frac{3}{4}$, this
expression has a double pole in $s$ at $s = \frac{3}{4}$ coming
from $\Gamma'(s - \frac{3}{4})$.
These are the only poles in this expression when $\Re s > \frac{1}{2}$.

As in the case when $k \geq 1$, we define
\begin{equation}\label{eq:hyp:Dh_def_half}
  D_h^{\frac{1}{2}}(s) :=
  \frac{(2\pi)^{s - \frac{1}{4}}}{\Gamma(s - \frac{1}{4})}
  \langle P_h^{\frac{1}{2}}(\cdot, s), V \rangle.
\end{equation}
At each cusp $\mathfrak{a}$, we also define
\begin{equation}
  \mathfrak{E}_{h, \mathfrak{a}}^{\frac{1}{2}}(s)
  :=
  \frac{(2\pi)^{s - \frac{1}{4}}}{\Gamma(s - \frac{1}{4})} \const_{w = \frac{3}{4}}
  \langle P_h^{\frac{1}{2}}(\cdot, s), E_\mathfrak{a}^{\frac{1}{2}}(\cdot, w) \rangle.
\end{equation}
Notice that this is $(2\pi)^{s-\frac{1}{4}} \Gamma(s - \frac{1}{4})^{-1}$ times
the expression in~\eqref{eq:dim_3_laurent_simplify_I}.
Finally, define
\begin{equation}
  \mathfrak{E}_h^{\frac{1}{2}}(s)
  :=
  \mathfrak{E}_{h, \infty}^{\frac{1}{2}}(s) + \mathfrak{E}_{h, 0}^{\frac{1}{2}}(s).
\end{equation}
Then when $k \geq 1$, we have that
\begin{equation}
  D^{\frac{1}{2}}_h(s) = \frac{(2\pi)^{s - \frac{1}{4}}}{\Gamma(s - \frac{1}{4})}
  \langle P_h^{\frac{1}{2}}(\cdot, s), V\rangle
  =
  \sum_{m \in \mathbb{Z}}
  \frac{r_{3}(m^2 + h)}{(2m^2 + h)^{s - \frac{1}{4}}}
  -
  \mathfrak{E}_h^{\frac{1}{2}}(s).
\end{equation}
Although the intermediate steps are different, this final notation agrees with the
notation for $k \geq 1$.

To roughly understand the growth of $\Gamma'(s)$, it suffices to use Cauchy's
Integral Formula by examining (for $\lvert \Im s \rvert \gg 1$)
\begin{equation}
  \Gamma'(s) = \frac{1}{2\pi i} \int_{\mathcal{B}_1(s)} \frac{\Gamma(z)}{(z - s)^2}dz \ll
  \max_{0 \leq \theta \leq 2\pi} \lvert \Gamma(s + e^{i \theta}) \rvert,
\end{equation}
where $\mathcal{B}_1(s)$ is the circle of radius $1$ around $s$.
It is then straightforward to get rough bounds on $\Gamma'(s)$ through
Stirling's Approximation.

\begin{remark}
  It is possible to get much stronger bounds, but it will turn out that this
  rough bound suffices.
\end{remark}

We gather the relevant details from this section into the following proposition.

\begin{proposition}\label{prop:nonspectral_analytic_props_dim_3}
  With the notation above,
  \begin{equation}
    \begin{split}
      D^{\frac{1}{2}}_h(s) = \frac{(2\pi)^{s - \frac{1}{4}}}{\Gamma(s - \frac{1}{4})}
      \langle P_h^{\frac{1}{2}}(\cdot, s), V\rangle
      =
      \sum_{m \in \mathbb{Z}}
      \frac{r_{3}(m^2 + h)}{(2m^2 + h)^{s - \frac{1}{4}}}
      -
      \mathfrak{E}_h^{\frac{1}{2}}(s).
    \end{split}
  \end{equation}
  Further, $\mathfrak{E}_h^{\frac{1}{2}}(s)$ has meromorphic continuation to the plane,
  and is analytic for $\Re s > \frac{1}{2}$ except for a pole at $s = \frac{3}{4}$.
  If $h$ is a square, this is a double pole.
  If $h$ is not a square, then this is a simple pole.

  For $s$ away from the pole at $\tfrac{3}{4}$, $\Re s > \frac{1}{2}$, we have the bound
  \begin{equation}
    \mathfrak{E}_h^{\frac{1}{2}}(s) \ll (1 + \lvert s \rvert)^{\frac{1}{2}}.
  \end{equation}
\end{proposition}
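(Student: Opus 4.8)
The plan is to assemble the assertions from the computation carried out just above in the Dimension $3$ subsection. For the identity, I would observe that the unfolding of $\langle P_h^{\frac12}(\cdot,s),\widetilde V\rangle$ runs verbatim as in the Dimension $\ge 4$ case: the $x$-integral against $e^{2\pi i hx}$ selects the lattice points with $m_1^2+\cdots+m_{2k+1}^2=m_{2k+2}^2+h$ and hence the coefficient $r_3(m^2+h)$, and the remaining $y$-integral produces $\Gamma(s-\tfrac14)(2\pi)^{-(s-1/4)}\sum_{m\in\mathbb Z}r_3(m^2+h)(2m^2+h)^{-(s-1/4)}$; meanwhile the contribution of the subtracted pieces $\const_{w=3/4}E_\infty^{\frac12}$ and $\const_{w=3/4}E_0^{\frac12}$ is $\Gamma(s-\tfrac14)(2\pi)^{-(s-1/4)}\mathfrak E_h^{\frac12}(s)$ by the definitions of $\mathfrak E_{h,\mathfrak a}^{\frac12}$ and $\mathfrak E_h^{\frac12}$ together with the evaluation of the relevant inner products recorded in \eqref{eq:dim_3_laurent_simplify_I}. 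Multiplying through by $(2\pi)^{s-1/4}/\Gamma(s-\tfrac14)$ gives the displayed identity for $\Re s\gg1$, and it then propagates by meromorphic continuation.

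For the analytic claims I would work directly from \eqref{eq:dim_3_laurent_simplify_I}. After stripping off the factor $(2\pi)^{s-1/4}(4\pi h)^{1-s}$, which is entire, zero-free, and bounded in vertical strips, $\mathfrak E_{h,\mathfrak a}^{\frac12}(s)$ is a fixed-scalar combination — the scalars being $\const_{w=3/4}\rho_\mathfrak a^{\frac12}(h,w)$ and $\Res_{w=3/4}\rho_\mathfrak a^{\frac12}(h,w)$ — of $\Gamma(s-\tfrac34)/\Gamma(s-\tfrac14)$, $\psi(s-\tfrac14)\Gamma(s-\tfrac34)/\Gamma(s-\tfrac14)$, and $\Gamma'(s-\tfrac34)/\Gamma(s-\tfrac14)$, each meromorphic on $\mathbb C$; summing the two cusps $\mathfrak a\in\{0,\infty\}$ gives the meromorphic continuation of $\mathfrak E_h^{\frac12}$. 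On $\Re s>\tfrac12$ the only pole of any of these three functions is at $s=\tfrac34$, since $\Gamma(s-\tfrac14)^{-1}$ is entire and $\psi(s-\tfrac14)$ has its rightmost pole at $s=\tfrac14$, whereas $\Gamma(s-\tfrac34)$ is simply polar and $\Gamma'(s-\tfrac34)$ doubly polar at $s=\tfrac34$. By the facts recalled in \S\ref{sec:Eisen_summary}, $\rho_\mathfrak a^{\frac12}(h,w)$ is holomorphic at $w=\tfrac34$ exactly when $h$ is not a square; in that case the residue-scalar vanishes, only the term $\const_{w=3/4}\rho_\mathfrak a^{\frac12}(h,w)\,\Gamma(s-\tfrac34)/\Gamma(s-\tfrac14)$ survives (nonzero, using $L(1,\chi_{\frac12,h})\ne0$), and $\mathfrak E_h^{\frac12}$ has a simple pole at $s=\tfrac34$. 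When $h$ is a square, $\rho_\infty^{\frac12}(h,w)$ and $\rho_0^{\frac12}(h,w)$ each acquire a genuine simple pole at $w=\tfrac34$, so the $\Gamma'(s-\tfrac34)/\Gamma(s-\tfrac14)$ term is present with a nonzero coefficient and $\mathfrak E_{h,\mathfrak a}^{\frac12}$ has a double pole at $s=\tfrac34$; one checks from the explicit half-integral coefficients in \S\ref{sec:Eisen_summary} that the leading ($\Gamma'$) coefficients at the two cusps do not cancel, so $\mathfrak E_h^{\frac12}$ has a double pole at $s=\tfrac34$.

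The growth bound is the last step, and the only one needing a little care. Since $(2\pi)^{s-1/4}(4\pi h)^{1-s}$ is bounded on vertical strips, it suffices to bound the three Gamma ratios for $\Re s>\tfrac12$ away from $s=\tfrac34$. Stirling gives $\Gamma(s-\tfrac34)/\Gamma(s-\tfrac14)\ll(1+|s|)^{-1/2}$ and $\psi(s-\tfrac14)\ll\log(2+|s|)$, so the first two terms are $O\big((1+|s|)^{-1/2}\log(2+|s|)\big)$. For the third I would invoke the crude estimate $\Gamma'(z)\ll\max_{0\le\theta\le2\pi}|\Gamma(z+e^{i\theta})|$ from Cauchy's integral formula noted above: along the unit circle about $s-\tfrac34$ the real part increases by at most $1$, so Stirling yields $\Gamma'(s-\tfrac34)\ll(1+|s|)^{\Re s-1/4}e^{-\pi|\Im s|/2}$, while $|\Gamma(s-\tfrac14)|^{-1}\ll(1+|s|)^{3/4-\Re s}e^{\pi|\Im s|/2}$; the exponentials cancel and the product is $\ll(1+|s|)^{1/2}$, dominating the other terms. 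Hence $\mathfrak E_h^{\frac12}(s)\ll(1+|s|)^{1/2}$ for $\Re s>\tfrac12$ away from $s=\tfrac34$, as claimed. The only real obstacle here is this crude handling of $\Gamma'$: it is exactly what produces the $(1+|s|)^{1/2}$ rather than a decaying bound, but as remarked above this loss is harmless for the contour-shifting arguments that consume the proposition, so no sharper estimate of $\Gamma'$ is pursued.
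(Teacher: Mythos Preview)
Your proposal is correct and follows essentially the same approach as the paper: you assemble the identity from the unfolding already done for $\widetilde V$ together with the Laurent-expansion computation~\eqref{eq:dim_3_laurent_simplify_I}, read off the pole structure from the three Gamma ratios, and bound $\Gamma'$ via Cauchy's integral formula plus Stirling exactly as the text does. You supply a bit more detail than the paper (explicitly writing $\psi(s-\tfrac14)$, noting $L(1,\chi_{\frac12,h})\neq 0$ for the simple-pole case, and flagging the non-cancellation of the $\Gamma'$ coefficients across the two cusps), but the argument is the same.
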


\subsection{Spectral Expansion}
\index{mu@$\mu_j$}

Fix an orthonormal basis of Maass forms of weight $k$ for $L^2(\Gamma_0(4)\backslash
\mathcal{H}, k)$.
This basis consists of Maass forms $\{\mu_j(z)\}$ of types $\frac{1}{2} + it_j$ and
corresponding eigenvalues $\frac{1}{4} + t_j^2$, each with expansion
\begin{equation}
  \mu_j(z) = \sum_{n \neq 0} \rho_j(n) W_{\frac{n}{\lvert n \rvert} \frac{k}{2},
  it_j}(4\pi \lvert n \rvert y) e^{2\pi i n x},
\end{equation}
as well as a finite number of Maass forms $\mu_{j,\ell}(z)$ with eigenvalues
$\frac{\ell}{2}(1 - \frac{\ell}{2})$ with $1 \leq \ell \leq k$ for $\ell$ a (possibly
half-integer) satisfying $\ell \equiv k \bmod 2$, each with expansion
\begin{equation}
  \mu_{j, \ell}(z) = \sum_{n \neq 0} \rho_{j,\ell}(n) W_{\frac{n}{\lvert n \rvert}
  \frac{k}{2}, \frac{\ell-1}{2}}(4\pi \lvert n \rvert y) e^{2\pi i n x}.
\end{equation}
Note that for each $\ell$, there are finitely many such Maass forms, as these come from
holomorphic cusp forms of weight $\ell$.
These Maass forms contribute the so-called \emph{bottom of the spectrum}, as described
in~\cite[Chapter 3]{GoldfeldHundleyI}.

Then $P_h^k$ has a Selberg Spectral decomposition~\cite{IwaniecKowalski04,
Goldfeld2006automorphic} of the form
\begin{align}
  P_h^k(z,s) &= \sum_j \langle P_h^k(\cdot, s), \mu_j \rangle \mu_j(z) + \sum_{\frac{1}{2}
  \leq \ell \leq k} \sum_j \langle P_h^k(\cdot, s), \mu_{j, \ell}\rangle \mu_{j, \ell}(z)
  \label{line:1:Ph_discrete} \\
  &\quad + \sum_{\mathfrak{a}} \langle P_h^k(\cdot, s), R^k_{\mathfrak{a}} \rangle
  R^k_{\mathfrak{a}}(z) \label{line:2:Ph_residual} \\
  &\quad + \frac{1}{4\pi i} \sum_{\mathfrak{a}} \int_{(\frac{1}{2})} \langle P_h^k(\cdot,
  s), E_{\mathfrak{a}}^k (\cdot, u)\rangle E_{\mathfrak{a}}^k (z, u) du.
  \label{line:3:Ph_continuous}
\end{align}
In this expansion, line~\eqref{line:1:Ph_discrete} is the \emph{discrete part of the
spectrum}, line~\eqref{line:2:Ph_residual} is the \emph{residual part of the spectrum},
and line~\eqref{line:3:Ph_continuous} is the \emph{continuous part of the spectrum}.
\index{spectrum!discrete part}
\index{spectrum!residual part}
\index{spectrum!continuous part}
The sums over $\mathfrak{a}$ are sums over the three cusps of $\Gamma_0(4)$.
Note that the residual part of the spectrum exists only when $k$ is a half-integer, in
which case
\begin{equation}\label{eq:hyp:residual_spec_def}
  R_{\mathfrak{a}}^k(z) = \Res_{w = \frac{3}{4}} E_{\mathfrak{a}}^k(z, w).
\end{equation}

Each of the inner products against $P_h^k$ can be directly evaluated.
These computations are very similar to those computations in \S\ref{sec:Eisen_summary} and
\S\ref{ssec:direct_expansion}, and we omit them.
We collect these together in the following lemma.

\begin{lemma}\label{lem:inner_product_list}
  Maintaining the notation above, we have
  \begin{align}
    \langle P_h^k(\cdot, s), \mu_j \rangle &= \frac{\overline{\rho_j(h)}}{(4\pi h)^{s-1}}
    \frac{\Gamma(s - \tfrac{1}{2} + it_j) \Gamma(s - \tfrac{1}{2} - it_j)}{\Gamma(s -
    \frac{k}{2})} \\
    \langle P_h^k(\cdot, s), \mu_{j,\ell} \rangle &=
    \frac{\overline{\rho_{j,\ell}(h)}}{(4\pi h)^{s-1}} \frac{\Gamma(s + \frac{\ell}{2} -
    1) \Gamma(s - \frac{\ell}{2})}{\Gamma(s - \frac{k}{2})} \\
    \langle P_h(\cdot, s), R^k_{\mathfrak{a}} \rangle &= \frac{\Res_{w =
    \frac{3}{4}}\overline{\rho_{\mathfrak{a}}^k(h,w)}}{(4\pi h)^{s - 1}}  \frac{\Gamma(s -
    \frac{1}{4}) \Gamma(s - \frac{3}{4})}{\Gamma(s - \frac{k}{2})} \\
    \langle P_h(\cdot, s), E_{\mathfrak{a}}^k(\cdot, \tfrac{1}{2} + it)\rangle &=
    \frac{\overline{\rho_\mathfrak{a}(h, \tfrac{1}{2} + it)}}{(4\pi h)^{s-1}}
    \frac{\Gamma(s - \tfrac{1}{2} + it) \Gamma(s - \tfrac{1}{2} - it)}{\Gamma(s -
    \frac{k}{2})}.
  \end{align}
  Here, $\rho_j(h)$ is the $h$th coefficient of $\mu_j$, $\rho_{j, \ell}(h)$ is the $h$th
  coefficient of $\mu_{j,\ell}$, $\rho_{\mathfrak{a}}(h, \tfrac{1}{2} + it)$ is the
  $h$th coefficient of $E_{\mathfrak{a}}^k(z, \tfrac{1}{2} + it)$, and $R_\mathfrak{a}^k$
  is as in~\eqref{eq:hyp:residual_spec_def}.
\end{lemma}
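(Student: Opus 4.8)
The plan is to obtain all four identities by the same unfolding computation, the only difference being which Fourier expansion is inserted and what value the Whittaker index takes.

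First I would unfold the Poincar\'e series. For $\Re s$ large enough that $P_h^k(z,s)$ converges absolutely and for any weight-$k$ automorphic function $F$ on $\Gamma_0(4)\backslash\mathcal{H}$ --- here $F$ is one of $\mu_j$, $\mu_{j,\ell}$, $R_{\mathfrak a}^k$, or $E_{\mathfrak a}^k(\cdot,\tfrac12+it)$ --- the integrand $P_h^k(z,s)\overline{F(z)}$ is $\Gamma_0(4)$-invariant, since the factor $J(\gamma,z)^{-2k}$ picked up by $P_h^k$ is cancelled by $\overline{J(\gamma,z)^{2k}} = J(\gamma,z)^{-2k}$ coming from $\overline{F(\gamma z)}$ (using $|J(\gamma,z)| = 1$). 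Collapsing the sum over $\Gamma_\infty\backslash\Gamma_0(4)$ against a fundamental domain then gives
\begin{equation*}
  \langle P_h^k(\cdot,s), F\rangle = \int_0^\infty\int_0^1 y^{s-1} e^{2\pi i h(x+iy)}\,\overline{F(x+iy)}\,dx\,\frac{dy}{y}.
\end{equation*}

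Next I would insert the Fourier expansion of $F$ at $\infty$ and do the $x$-integral. Each of the four functions expands as $\sum_{n\ne 0} c(n) W_{\frac{n}{|n|}\frac k2,\nu}(4\pi|n|y)e^{2\pi i n x}$, with $\nu = it_j$ for $\mu_j$, $\nu = \tfrac{\ell-1}{2}$ for $\mu_{j,\ell}$, and $\nu = it$ for $E_{\mathfrak a}^k(\cdot,\tfrac12+it)$ (where the Whittaker index is $w-\tfrac12 = it$). For $R_{\mathfrak a}^k(z) = \Res_{w=3/4}E_{\mathfrak a}^k(z,w)$ one takes the residue termwise in the Fourier expansion of $E_{\mathfrak a}^k$ first, so that its $h$th coefficient is $\Res_{w=3/4}\rho_{\mathfrak a}^k(h,w)$ and the Whittaker index is $w-\tfrac12$ evaluated at $w=\tfrac34$, namely $\nu = \tfrac14$. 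Since $h\ge 1$, the $x$-integral $\int_0^1 e^{2\pi i(h-n)x}\,dx$ annihilates every term except $n=h$; and in each case $\nu$ is real or purely imaginary, so $\overline{W_{k/2,\nu}(4\pi hy)} = W_{k/2,\nu}(4\pi hy)$. After the substitution $u = 4\pi h y$ this leaves
\begin{equation*}
  \langle P_h^k(\cdot,s),F\rangle = \frac{\overline{c(h)}}{(4\pi h)^{s-1}}\int_0^\infty u^{s-1} e^{-u/2} W_{k/2,\nu}(u)\,\frac{du}{u}.
\end{equation*}

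Finally I would evaluate this Mellin transform of $e^{-u/2}W_{k/2,\nu}(u)$ by the classical formula \cite[$\S$6.621(3)]{GradshteynRyzhik07} (the same one used for the level-one computations in \eqref{eq:mujP} and \eqref{eq:PhEspecialized}),
\begin{equation*}
  \int_0^\infty u^{s-1} e^{-u/2} W_{k/2,\nu}(u)\,\frac{du}{u} = \frac{\Gamma(s-\tfrac12+\nu)\Gamma(s-\tfrac12-\nu)}{\Gamma(s-\tfrac k2)},
\end{equation*}
valid first for $\Re s$ large and then by analytic continuation. Substituting $\nu = it_j,\ \tfrac{\ell-1}{2},\ \tfrac14,\ it$ in turn and simplifying $s-\tfrac12\pm\tfrac{\ell-1}{2}$ to $s+\tfrac\ell2-1$ and $s-\tfrac\ell2$, and $s-\tfrac12\pm\tfrac14$ to $s-\tfrac14$ and $s-\tfrac34$, produces exactly the four displayed formulas. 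The computation is routine; the only points needing care are the multiplier-system bookkeeping in the unfolding (handled as for the Eisenstein inner products in \S\ref{sec:Eisen_summary} and the direct expansion of \S\ref{ssec:direct_expansion}), tracking the half-plane of convergence in $s$ before continuing, and the interchange of residue and integration in the $R_{\mathfrak a}^k$ case. I expect no real obstacle beyond this bookkeeping.
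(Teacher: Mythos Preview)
Your proposal is correct and matches the paper's approach exactly: the paper omits the proof entirely, saying only that the computations are ``very similar to those computations in \S\ref{sec:Eisen_summary} and \S\ref{ssec:direct_expansion},'' which is precisely the unfolding-plus-Whittaker-Mellin argument you carry out, invoking the same integral formula \cite[\S6.621(3)]{GradshteynRyzhik07} that the paper cites in the analogous level-one computation at~\eqref{eq:mujP}.
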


\subsection{Meromorphic Continuation of $\langle P_h^k, V \rangle$}

In order to provide a meromorphic continuation for $D_h^k(s)$ (defined
in~\eqref{eq:hyp:Dh_def} and~\eqref{eq:hyp:Dh_def_half}), we provide a meromorphic
continuation for the expression coming from the spectral decomposition of $P_h^k(z,s)$.
Inserting the spectral decomposition of $P_h^k(z,s)$ into $\langle P_h^k(\cdot, s), V
\rangle$, we get
\begin{align}
  \langle P_h^k(\cdot, s), V \rangle &= \sum_j \langle P_h^k(\cdot, s), \mu_j \rangle
  \langle \mu_j, V \rangle + \sum_{\frac{1}{2} \leq \ell \leq k} \sum_j \langle
  P_h^k(\cdot, s), \mu_{j,\ell}\rangle \langle \mu_{j,\ell}, V \rangle
  \label{line:Ph_spectral_discrete} \\
  &\quad + \sum_{\mathfrak{a}} \langle P_h^k(\cdot, s), R^k_\mathfrak{a}\rangle \langle
  R^k_\mathfrak{a}, V \rangle \label{line:Ph_spectral_residual} \\
  &\quad + \frac{1}{4\pi i} \sum_{\mathfrak{a}} \int_{(\frac{1}{2})} \langle P_h^k(\cdot,
  s), E^k_{\mathfrak{a}}(\cdot, u) \rangle \langle E^k_{\mathfrak{a}}(\cdot, u), V \rangle
  du, \label{line:Ph_spectral_continuous}
\end{align}
where we have again separated the expressions into separate lines for the discrete
spectrum, the residual spectrum, and the continuous spectrum.
To study the meromorphic continuation, we provide separate meromorphic continuations for
the discrete spectrum, the residual spectrum, and the continuous spectrum in turn.

\subsubsection*{Discrete Spectrum}
\index{spectrum!discrete part}

Consider the discrete spectrum appearing in line~\eqref{line:Ph_spectral_discrete}, which
we rewrite for convenience:
\begin{equation}
  \langle P_h^k(\cdot, s), V \rangle = \sum_j \langle P_h^k(\cdot, s), \mu_j \rangle
  \langle \mu_j, V \rangle + \sum_{\frac{1}{2} \leq \ell \leq k} \sum_j \langle
  P_h^k(\cdot, s), \mu_{j,\ell}\rangle \langle \mu_{j,\ell}, V \rangle.
\end{equation}
Analysis of the discrete spectrum naturally breaks into two categories: analysis of the
finitely many Maass forms $\{\mu_{j, \ell}\}_{j, \ell}$ coming from holomorphic cusp
forms of weight $\ell$, and analysis of the infinitely many Maass forms $\{\mu_j\}_{j}$
with corresponding types $\tfrac{1}{2} = it_j$.
Taking inspiration from~\cite[\S3.10]{GoldfeldHundleyI}, we refer to the Maass
forms $\{\mu_{j, \ell}\}_{j, \ell}$ as the \emph{old discrete spectrum}, and
\index{discrete spectrum!old}
\index{discrete spectrum!new}
the Maass forms $\{\mu_j\}_j$ as the \emph{new discrete spectrum}.
We will prove the following proposition in this section.

\begin{proposition}\label{prop:hyperboloid_discrete_props}
  Write $s = \sigma + it$.
  The discrete spectrum component has meromorphic continuation to the plane, and
  \begin{enumerate}

    \item For $\Re s > \frac{1}{2}$, the new discrete spectrum is analytic and satisfies
      the bound
      \begin{equation}
        \sum_j \langle P_h^k(\cdot, s), \mu_j\rangle \langle \mu_j, V \rangle \ll_{h,
        \epsilon} (1 + \lvert t \rvert)^{\frac{7k}{2} + \frac{15}{2} + \epsilon}
        e^{-\frac{\pi}{2} \lvert t \rvert}. \qquad (\tfrac{1}{2} < \Re s < 1)
      \end{equation}
      The new discrete spectrum has a line of poles on $\Re s = \frac{1}{2}$.

    \item For $\Re s > \frac{k}{2} - 1$, the old discrete spectrum is analytic and
      satisfies the bound
      \begin{equation}
        \sum_{\frac{1}{2} \leq \ell \leq k} \sum_j \langle P_h^k(\cdot, s),
        \mu_{j,\ell}\rangle \langle \mu_{j,\ell}, V \rangle \ll (1 + \lvert t
      \rvert)^{\frac{k}{2} - \frac{3}{2} + \sigma} e^{-\frac{\pi}{2}\lvert t \rvert}.
      \end{equation}
      In the region $\Re s > 0$, the old discrete spectrum has simple poles at $s =
      \frac{k}{2} - 1 - m$ for $m \in \mathbb{Z}_{\geq 0}$.

  \end{enumerate}
\end{proposition}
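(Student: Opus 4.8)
The plan is to insert the spectral expansions from Lemma~\ref{lem:inner_product_list} into the discrete-spectrum piece
\[
  \sum_j \langle P_h^k(\cdot, s), \mu_j \rangle \langle \mu_j, V \rangle
  + \sum_{\frac{1}{2} \leq \ell \leq k} \sum_j
    \langle P_h^k(\cdot, s), \mu_{j,\ell}\rangle \langle \mu_{j,\ell}, V \rangle,
\]
and to analyze the two sums separately, exactly along the ``new'' versus ``old'' split indicated in the statement. For the new discrete spectrum, each term carries the factor
\[
  \frac{\overline{\rho_j(h)}}{(4\pi h)^{s-1}}
  \frac{\Gamma(s - \tfrac12 + it_j)\Gamma(s - \tfrac12 - it_j)}{\Gamma(s - \frac{k}{2})}
  \,\langle \mu_j, V\rangle.
\]
In the region $\Re s > \frac12$ (more precisely $\Re s > \frac12 + \theta$, absorbing possible exceptional eigenvalues of $\Gamma_0(4)$ into $\theta$), the Gamma quotient is holomorphic in $s$ for each fixed $t_j$, so one gets an analytic function provided the sum over $j$ converges. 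The first main step is therefore a convergence/growth estimate: bound $\rho_j(h)$ and $\langle \mu_j, V\rangle$ polynomially in $|t_j|$ (using, e.g., the Rankin--Selberg / Kuznetsov-type bounds on $\rho_j(h)$ and a Watson- or K{\i}ral-type bound on the triple product hidden in $\langle \mu_j, V\rangle = \langle \mu_j, \widetilde V - E_\infty^k - E_0^k\rangle$, just as in Remark~\ref{rem:extraremark}), combine these with Stirling applied to the Gamma quotient — which contributes the exponential gain $e^{-\frac{\pi}{2}|t_j|}$ once $|t_j|$ exceeds $|t|$ — and then split the $j$-sum at $|t_j|\asymp |t|$. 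The terms with $|t_j|\lesssim |t|$ are $O$ of a polynomial in $|t|$ by Weyl's law (counting $\asymp |t|^2$ forms), and the tail $|t_j| > |t|$ converges absolutely with exponential decay; collecting the exponents of the polynomial factors yields the stated bound $(1+|t|)^{\frac{7k}{2} + \frac{15}{2} + \epsilon} e^{-\frac{\pi}{2}|t|}$ for $\frac12 < \Re s < 1$. Meromorphic continuation past $\Re s = \frac12$ then follows by shifting $s$ across the half-integer lines $\Re s = \frac12 - n$ exactly as in the analysis of $Z(s,0,f\times\overline g)$ (cf.\ \cite[pp.\ 481--483]{HoffsteinHulse13} and \S\ref{sec:analyticbehavior}): each Gamma factor $\Gamma(s - \frac12 \pm it_j)$ contributes a line of poles, producing the line of poles on $\Re s = \frac12$ asserted in (i).

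For the old discrete spectrum, the analysis is finite and cleaner: there are only finitely many $\mu_{j,\ell}$ with $1 \le \ell \le k$, and each contributes
\[
  \frac{\overline{\rho_{j,\ell}(h)}}{(4\pi h)^{s-1}}
  \frac{\Gamma(s + \frac{\ell}{2} - 1)\Gamma(s - \frac{\ell}{2})}{\Gamma(s - \frac{k}{2})}
  \,\langle \mu_{j,\ell}, V\rangle.
\]
Here I would simply read off the poles from the Gamma functions: $\Gamma(s - \frac{\ell}{2})$ is holomorphic for $\Re s > \frac{k}{2} \ge \frac{\ell}{2}$ while $\Gamma(s + \frac{\ell}{2} - 1)$ has poles at $s = 1 - \frac{\ell}{2} - m$, which lie to the left of $\Re s = 0$ for $\ell \ge 1$ unless $\ell$ is small; the rightmost genuine pole over all admissible $\ell$ occurs at $s = \frac{k}{2} - 1$ (taking $\ell$ as small as the residual-bottom condition allows and tracking the relation $\ell \equiv k \bmod 2$ with $\ell \ge 1$), with the subsequent simple poles at $s = \frac{k}{2} - 1 - m$, $m \in \mathbb{Z}_{\ge 0}$, matching the statement. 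Analyticity for $\Re s > \frac{k}{2} - 1$ and the polynomial-times-Stirling bound $(1+|t|)^{\frac{k}{2} - \frac32 + \sigma} e^{-\frac{\pi}{2}|t|}$ then come from a direct Stirling estimate of the Gamma quotient, there being no infinite sum to control. Meromorphic continuation to all of $\mathbb{C}$ is immediate since each summand is a ratio of Gamma functions times an entire prefactor.

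The main obstacle is the uniform polynomial bound in $|t_j|$ for the new discrete spectrum together with the bookkeeping needed to get the exact exponent $\frac{7k}{2} + \frac{15}{2} + \epsilon$: one must control $\langle \mu_j, V\rangle$, which by the definition of $V$ is a combination of a genuine triple product $\langle \mu_j, \widetilde V\rangle$ (an integer or half-integral weight theta-power against a Maass form, handled by Watson's formula when $k$ is integral and by K{\i}ral's bound when $k$ is half-integral, as in Remark~\ref{rem:extraremark}) and inner products of $\mu_j$ against the subtracted Eisenstein series $E_\infty^k$, $E_0^k$ (which unfold to shifted-divisor-type Dirichlet series whose size in $|t_j|$ is controlled via the associated Rankin--Selberg $L$-function and convexity). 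Tracking the half-integral weight complications — the $\varepsilon_d$ multiplier, the $\frac{7}{64}$-type bound on $\rho_j(h)$, and the weight-$k$ Whittaker normalization — through all of these and summing the exponents carefully is the real work; the continuation argument and the old-spectrum analysis are then routine adaptations of the techniques already developed in \S\ref{sec:analyticbehavior}.
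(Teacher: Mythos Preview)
Your overall architecture is right: split into new/old spectrum, insert the formulas from Lemma~\ref{lem:inner_product_list}, control the infinite $j$-sum via Stirling plus Weyl's law, and read the old spectrum directly from its finite Gamma quotients. But two substantive points differ from the paper and one of them is a real gap.

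First, you propose bounding $\langle \mu_j, V\rangle$ by appealing to Watson's formula or K{\i}ral's bound ``as in Remark~\ref{rem:extraremark}.'' Those results are stated for \emph{cusp} forms $f,g$; here $\widetilde V = \theta^{2k+1}\overline{\theta}\,y^{(k+1)/2}$ is built from theta functions, which are not cuspidal, so neither bound applies as written. The paper avoids this entirely: since $\mu_j$ is cuspidal it is orthogonal to the subtracted Eisenstein series, so $\langle \mu_j, V\rangle = \langle \mu_j, \widetilde V\rangle$ (a simplification you also missed). Then one recognizes $\theta(z)y^{1/4}$ as a constant multiple of $\Res_{u=3/4} E_\infty^{1/2}(z,u)$, unfolds the resulting Eisenstein series against $\mu_j\,\theta^{2k+1}y^{(2k+1)/4}$ to obtain the Dirichlet series $\sum r_{2k+1}(n)\rho_j(n) n^{-(u+k/2-3/4)}$ times explicit Gamma factors, and bounds this via Phragm\'en--Lindel\"of and Stirling. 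Combining with the on-average Kuznetsov/Proskurin bound $\rho_j(h)\ll e^{\pi|t_j|/2}(1+|t_j|)^\epsilon$ (this part you had) yields the dyadic estimate $\sum_{|t_j|\sim T}\rho_j(h)\langle\mu_j,V\rangle\ll T^{3k+8+\epsilon}$, from which the exponent $\tfrac{7k}{2}+\tfrac{15}{2}$ falls out directly.

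Second, you handle possible exceptional eigenvalues by ``absorbing into $\theta$.'' That would prove analyticity only for $\Re s > \tfrac12 + \theta$, not $\Re s > \tfrac12$ as the proposition asserts. The paper instead proves there are no exceptional eigenvalues for weight-$k$ Maass forms on $\Gamma_0(4)$: in the half-integral case the Shimura lift sends a form of eigenvalue $(1-t^2)/4$ to a level-$1$ Maass form of eigenvalue $\tfrac14 - t^2$, and Selberg's conjecture is known for $\Gamma_0(1)$. Your analysis of the old discrete spectrum is essentially the same as the paper's; note in particular that the apparent pole at $s=\tfrac{k}{2}$ from $\Gamma(s-\tfrac{\ell}{2})$ with $\ell=k$ cancels against $\Gamma(s-\tfrac{k}{2})$ in the denominator, which is why the leading pole sits at $s=\tfrac{k}{2}-1$.
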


In order to prove this proposition, we prove a sequence of lemmata.
We first bound the infinite sum in the new discrete spectrum.

\begin{lemma}\label{lem:discrete_newspectrum_innerproduct_bound}
  With $\mu_j$ coming from the new discrete spectrum, and with the same notation as above,
  we have
  \begin{equation}
    \sum_{T \leq \lvert t_j \rvert \leq 2T} \rho_j(h) \langle \mu_j, V \rangle = \sum_{T
    \leq \lvert t_j \rvert \leq 2T} \rho_j(h) \langle \mu_j, \theta^{2k+1}
    \overline{\theta} y^{\frac{k+1}{2}} \rangle \ll T^{3k + 8 + \epsilon}.
  \end{equation}
\end{lemma}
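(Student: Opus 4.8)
The plan is to estimate this dyadic spectral sum by Cauchy--Schwarz, separating the arithmetic coefficient $\rho_j(h)$ from the period $\langle \mu_j, \widetilde V\rangle$, and to control the two resulting sums by the spectral large sieve and by a pointwise bound on the period respectively. First I would justify the equality $\langle \mu_j, V\rangle = \langle \mu_j, \widetilde V\rangle$ asserted in the statement: by definition $V - \widetilde V$ is a fixed combination of $E_\infty^k(\cdot,\tfrac{k+1}{2})$ and $E_0^k(\cdot,\tfrac{k+1}{2})$ (or, when $k=\tfrac12$, their Laurent constant terms at $w=\tfrac34$), and unfolding each such Eisenstein series against the cuspidal $\mu_j$ produces the integral over $\Gamma_\infty\backslash\mathcal{H}$ of $\overline{\mu_j}|_{\sigma_\mathfrak{a}}$ against $y^w$ times a weight factor, whose inner $x$-integral over $[0,1]$ is the zeroth Fourier--Whittaker coefficient of $\mu_j$ at the cusp $\mathfrak{a}$, which vanishes. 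So only the Fourier expansion of $\widetilde V$ itself enters.

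Weighting by $\cosh(\pi t_j)$ to balance the two sides, Cauchy--Schwarz gives
\begin{equation}
  \Big| \sum_{T \leq |t_j| \leq 2T} \rho_j(h) \langle \mu_j, \widetilde V\rangle \Big|^2
  \leq \Big( \sum_{T \leq |t_j| \leq 2T} \frac{|\rho_j(h)|^2}{\cosh(\pi t_j)} \Big)
  \Big( \sum_{T \leq |t_j| \leq 2T} \cosh(\pi t_j)\, |\langle \mu_j, \widetilde V\rangle|^2 \Big).
\end{equation}
The first factor is handled by the spectral large sieve inequality for half-integral weight forms on $\Gamma_0(4)$ (the analogue of Iwaniec's inequality, obtained from the Kuznetsov formula on $\Gamma_0(4)$), which for fixed $h$ bounds it by $\ll_{h,\epsilon} (T^2 + h^{1+\epsilon})(hT)^\epsilon \ll_{h,\epsilon} T^{2+\epsilon}$.

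The bulk of the work is the bound on the period $\langle \mu_j, \widetilde V\rangle$. I would expand $\widetilde V = \theta^{2k+1}\overline\theta\, y^{(k+1)/2}$ into its Fourier series in $x$, whose $n$-th coefficient is $y^{(k+1)/2}\sum_{\|\mathbf{a}\|^2-b^2=n} e^{-2\pi y(\|\mathbf{a}\|^2+b^2)}$ with $\mathbf{a}\in\mathbb{Z}^{2k+1}$ and $b\in\mathbb{Z}$, pair it against the Whittaker--Fourier expansion of $\mu_j$, and carry out the $y$-integral term by term. Each term becomes $\overline{\rho_j(n)}$ times an archimedean integral of the form $\int_0^\infty W_{\frac{n}{|n|}\frac{k}{2}, it_j}(4\pi|n|y)\, e^{-2\pi\beta y} y^{\frac{k+3}{2}}\,\frac{dy}{y}$, which evaluates by a formula of the type~\cite[6.621]{GradshteynRyzhik07} to a ratio of Gamma functions like $\Gamma(s+it_j)\Gamma(s-it_j)/\Gamma(s-\tfrac{k}{2})$ times a confluent hypergeometric factor; by Stirling this has size $(1+|t_j|)^{O(1)} e^{-\pi|t_j|}$. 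Combined with the (mean-square, Rankin--Selberg) bounds for the $\rho_j(n)$ in $n$, the standard growth $|\rho_j(n)| \ll (1+|t_j|)^{O(1)} e^{\frac{\pi}{2}|t_j|} (1+n)^{\epsilon}$, and the rapid decay of $W_{\cdot,it_j}(4\pi|n|y)$ once $|n|y \gg 1+|t_j|$ (so that the $n$-sum is effectively finite and converges absolutely, uniformly in $t_j$), this yields $\langle \mu_j, \widetilde V\rangle \ll_{h,\epsilon} (1+|t_j|)^{3k+6+\epsilon} e^{-\frac{\pi}{2}|t_j|}$. Rather than grinding through these theta sums and their archimedean integrals by hand, I would instead invoke K\i{}ral's bound for half-integral weight triple-product periods (Proposition~13 of~\cite{mehmet2015}), which is tailored precisely to estimating $\langle \mu_j, \theta^{2k+1}\overline\theta\, y^{(k+1)/2}\rangle$ with an explicit power of $|t_j|$ together with the expected exponential factor. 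The main obstacle is that $\widetilde V$ is not square-integrable --- it grows like $y^{(k+1)/2}$ at the cusps $0$ and $\infty$ --- so the period must be understood through the regularization $V = \widetilde V - (\text{Eisenstein})$ (or via a regularized Rankin--Selberg/triple-product integral), and making K\i{}ral's estimate applicable to the regularized period, with a clean explicit exponent, is where the delicate analysis lies.

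Feeding these estimates back in, the second Cauchy--Schwarz factor is
\begin{equation}
  \sum_{T \leq |t_j| \leq 2T} \cosh(\pi t_j)\, |\langle \mu_j, \widetilde V\rangle|^2
  \ll_{h,\epsilon} \sum_{T \leq |t_j| \leq 2T} (1+|t_j|)^{6k+12+\epsilon}
  \ll_{h,\epsilon} T^{6k+12+\epsilon}\, \#\{ j : T \leq |t_j| \leq 2T \} \ll_{h,\epsilon} T^{6k+14+\epsilon},
\end{equation}
using Weyl's law for $\Gamma_0(4)\backslash\mathcal{H}$. Multiplying by the large sieve bound and taking square roots gives $\sum_{T\leq|t_j|\leq 2T}\rho_j(h)\langle\mu_j,\widetilde V\rangle \ll_{h,\epsilon}(T^{2+\epsilon}\cdot T^{6k+14+\epsilon})^{1/2} = T^{3k+8+\epsilon}$, as claimed. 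A sharper treatment of the period (its $L^2$-averaged rather than pointwise strength, or an application of Kuznetsov directly to $\sum_j\rho_j(h)\overline{\rho_j(n)}$) would likely lower the exponent, but the stated bound is all that is needed for Proposition~\ref{prop:hyperboloid_discrete_props}.
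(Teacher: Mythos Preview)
Your overall architecture is sound but different from the paper's. You split by Cauchy--Schwarz into a large sieve factor and a period factor, and then assemble $T^{2+\epsilon}\cdot T^{6k+14+\epsilon}$ under a square root to reach $T^{3k+8+\epsilon}$. The paper does not use Cauchy--Schwarz at all: it bounds the product $\rho_j(h)\langle\mu_j,\widetilde V\rangle$ directly, obtaining a pointwise estimate $\ll (1+|t_j|)^{3k+6+\epsilon}e^{-\pi|t_j|/2}$ (on average in $j$), and then simply sums over $O(T^2\log T)$ eigenvalues in the window. The two routes give the same exponent.

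The substantive divergence is in how the period $\langle\mu_j,\widetilde V\rangle$ is controlled. The paper's device is to recognize $\theta(z)y^{1/4}$ as a constant multiple of $\Res_{u=3/4}E_\infty^{1/2}(z,u)$, replace the $\overline\theta$ factor in $\widetilde V$ by this Eisenstein residue, and unfold. This turns the period into the residue at $u=\tfrac34$ of a Rankin--Selberg-type series $\sum_{n\geq 1} r_{2k+1}(n)\rho_j(n)\,n^{-(u+\frac{k}{2}-\frac{3}{4})}$ times an explicit ratio of Gamma functions $\Gamma(u+\tfrac{k}{2}-\tfrac14+it_j)\Gamma(u+\tfrac{k}{2}-\tfrac14-it_j)/\Gamma(u+\tfrac14)$, and a Phragm\'en--Lindel\"of bound on this series together with Stirling yields the exponent $3k+6$ directly. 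This trick sidesteps the non-$L^2$ difficulty you identify, because after unfolding the integral is absolutely convergent for $\Re u$ large, and the residue at $u=\tfrac34$ is then bounded via Cauchy's integral formula from the convexity bound.

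Your two proposed substitutes for this step are weaker. Citing K\i ral's Proposition~13 is not immediately licit: that estimate is stated for triple products with cusp forms, and you yourself flag that adapting it to the regularized $\widetilde V$ is ``where the delicate analysis lies'' --- but you do not carry this out. Your alternative, a direct Fourier--Whittaker expansion of $\widetilde V$ paired against $\mu_j$, would work in principle, but your sketch never explains why the polynomial exponent comes out as $3k+6$ rather than merely $O(1)$; the asserted bound is the content of the lemma, not a routine consequence of Stirling. The paper's Eisenstein-residue-and-unfold argument is what supplies this missing link, and is the idea you are lacking.
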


\begin{proof}
  First note that $E^k_\mathfrak{a}(z, \tfrac{k+1}{2})$ is orthogonal to the cusp form
  $\mu_j$, which gives the first equality.
  We recognize $\theta(z) y^{1/4}$ as a constant times $\Res_{u = \frac{3}{4}}
  E_\infty^{\frac{1}{2}}(z,u)$.
  Performing this on $\overline{\theta}$ transforms each inner product into
  \begin{equation}
    \Res_{u = \frac{3}{4}} \langle \mu_j, \theta^{2k+1}
    \overline{E_\infty^{\frac{1}{2}}(\cdot,u)} y^{\frac{2k+1}{4}}\rangle.
  \end{equation}
  Using the standard unfolding argument on the Eisenstein series (which uses similar
  methodology to the computations of $\langle P, V \rangle$ in
  \S\ref{ssec:direct_expansion}), we see that this is equal to
  \begin{equation}
    \Res_{u = \frac{3}{4}}
    \sum_{n \geq 1} \frac{r_{2k+1}(n)\rho_j(n)}{(4\pi n)^{u + \frac{k}{2} - \frac{3}{4}}}
    \frac{%
      \Gamma(u + \frac{k}{2} - \frac{3}{4} + \frac{1}{2} + it_j)
      \Gamma(u + \frac{k}{2} - \frac{3}{4} + \frac{1}{2} - it_j)
    }
    {\Gamma(u + \frac{1}{4})}.
  \end{equation}
  We bound the size of this residue by first proving a bound in $u$ using
  the Phragm\'{e}n-Lindel\"{o}f principle, and then using Cauchy's Residue Theorem to
  bound the sum.

  From the average estimate $r_d(n) \approx n^{\frac{d}{2} - 1}$, one can show that
  the summation converges trivially absolutely for $\Re u > \frac{k}{2} + \tfrac{5}{4}$.
  Applying Phragm\'{e}-Lindel\"{o}f and using Stirling's approximation then shows that
  \begin{equation}
    \rho_j(h) \langle \mu_j, \theta^{2k+1} \overline{\theta} y^{\frac{k+1}{2}} \rangle
    \ll
    \rho_j(h) (1 + \lvert t_j \rvert)^{3k+6+\epsilon} e^{-\pi \lvert t_j \rvert}.
  \end{equation}

  \begin{remark}
    Note that as a residue in $u$ is being taken, the relevant bound from the $t_j$
    contribution, which is entirely determined by the factors
    \begin{equation}
      \rho_j(1)
      \Gamma(u + \tfrac{k}{2} - \tfrac{1}{4} + it_j)
      \Gamma(u + \tfrac{k}{2} - \tfrac{1}{4} - it_j).
    \end{equation}
    Heuristically, the final bound can be attained just from using the Phragm\'{e}n-Lindel\"{o}f
    Convexity principle on this piece.
  \end{remark}

  As noted in~\cite{hulseCountingSquare}, it is possible to understand bound $\rho_j(h)$
  on average over $j$.
  Through the standard Kuznetsof Trace Formula and arguing as in~\cite[Section
  4]{HoffsteinHulse13}, one can show that $\rho_j(h) \ll_{\epsilon, h}
  e^{\frac{\pi}{2}\lvert t_j \rvert}(1 + \lvert t_j \rvert)^\epsilon$ (on average over
  $j$) for full integral weight $k$.
  By using Proskurin's generalization of the Kuznetsof Trace Formula, described
  in~\cite{Duke88}, one can show the same for half integral weight $k$.
  (This is closely related to an argument in~\cite{hulseCountingSquare}).

  \begin{remark}
    For full-integral weight Maass forms, Goldfeld, Hoffstein, and
    Lieman~\cite{goldfeld1994appendix} showed that $\rho_j(h) \ll h^\theta \log(1 + \lvert
    t_j \rvert)  e^{-\frac{\pi}{2} \lvert t_j \rvert}$ for each \emph{individual} $t_j$,
    where $\theta$ denotes the best known progress towards the non-achimedian Ramanujan
    conjecture.
    This is a superior bound than even the on-average bound given by the Kuznetsof Trace
    Formula, but it doesn't immediately generalize to half-integral weight Maass forms.
  \end{remark}

The on-average bounds then give that
  \begin{equation}
    \langle \mu_j, \theta^{2k+1} \overline{\theta} y^{\frac{k+1}{2}} \rangle \ll (1 +
    \lvert t_j \rvert)^{3k + 6 + \epsilon} e^{-\frac{\pi}{2} \lvert t_j \rvert}
  \end{equation}
  on average.
  Recalling that there are $O(T^2 \log T)$ Maass forms with $T \leq \lvert t_j \rvert \leq
  2T$, summing these terms together gives
  \begin{equation}
    \sum_{T \leq \lvert t_j \rvert \leq 2T} \rho_j(h) \langle \mu_j, \theta^{2k+1}
    \overline{\theta} y^{\frac{k+1}{2}} \rangle \ll (1 + \lvert t_j \rvert)^{3k + 8 +
    \epsilon}.
  \end{equation}
  This concludes the proof.
\end{proof}

It is also necessary to note that Selberg's Eigenvalue Conjecture is known for weight $k$
Maass forms on $\Gamma_0(4)$.

\begin{lemma}
  Suppose $\mu_j$ is a Maass form appearing in the old discrete spectrum described above.
  Denote the type of $\mu_j$ by $\frac{1}{2} + it_j$, so that $\mu_j$ is an eigenfunction
  of the Laplacian with eigenvalue $\lambda = \frac{1}{4} + t_j^2$.
  Then $\lambda > \frac{1}{4}$.
  That is, there are no exceptional eigenvalues.\index{Selberg Eigenvalue Conjecture}
\end{lemma}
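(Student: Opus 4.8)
The plan is to deduce the statement from the known absence of exceptional eigenvalues at integral weight and small level, using the \emph{old} discrete spectrum as the bookkeeping device that isolates exactly which forms could be exceptional. First I would pin down the old discrete spectrum precisely. Each $\mu_{j,\ell}$ is obtained by applying the weight-raising Maass operators to a holomorphic cusp form of weight $\ell$, with $1 \le \ell \le k$ and $\ell \equiv k \bmod 2$; since these operators preserve the Casimir eigenvalue, each $\mu_{j,\ell}$ carries spectral parameter $\tfrac12 + it_j$ with $it_j = \pm\tfrac{\ell-1}{2}$, which is exactly the index $\tfrac{\ell-1}{2}$ appearing in its Whittaker expansion. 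Thus the old spectrum is finite, explicit, lies at the bottom of the spectrum with eigenvalues $\tfrac{\ell}{2}(1-\tfrac{\ell}{2}) \le \tfrac14$, and it is precisely what produces the poles at $s = \tfrac{k}{2} - 1 - m$ recorded in Proposition~\ref{prop:hyperboloid_discrete_props}; these holomorphic-derived forms genuinely have non-real type. The content of the lemma is then that the old spectrum \emph{exhausts} the forms of non-real type: every remaining Maass form $\mu_j$ has $t_j \in \mathbb{R}$, so that $\lambda = \tfrac14 + t_j^2 > \tfrac14$ and no further exceptional eigenvalue occurs.

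To establish that no form outside the old spectrum is exceptional, I would descend to minimal weight. The Maass lowering operator $\Lambda_k$ drops the weight in steps of $2$ while preserving the eigenvalue, and it annihilates a form exactly when that form reaches a holomorphic bottom. By construction the old spectrum is precisely the part of the discrete spectrum that bottoms out holomorphically; therefore any $\mu_j$ outside the old spectrum descends to a nonzero, genuinely non-holomorphic Maass form of base weight $0$ (when $k \in \mathbb{Z}$) or base weight $\tfrac12$ (when $k \in \mathbb{Z}+\tfrac12$) carrying the same eigenvalue $\tfrac14 + t_j^2$. It therefore suffices to rule out exceptional eigenvalues at the base weight.

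At the base weight I would invoke established results. For integral weight, the weight-$0$ Maass cusp forms on $\Gamma_0(4)$ decompose through oldform/newform theory into forms induced from levels $1$ and $2$ together with genuine level-$4$ newforms; none of these small levels admits an exceptional eigenvalue, so $\lambda > \tfrac14$. For half-integral weight, I would apply the theta (Shimura--Katok--Sarnak) correspondence, which carries a weight-$\tfrac12$ Maass form on $\Gamma_0(4)$ of type $\tfrac12 + it$ to a weight-$0$ Maass form on $\SL(2,\mathbb{Z})$ of type $\tfrac12 + 2it$. A hypothetical exceptional eigenvalue, with $t = i\beta$ and $\beta \neq 0$, would produce a weight-$0$ form on the full modular group of eigenvalue $\tfrac14 - 4\beta^2 < \tfrac14$; but $\SL(2,\mathbb{Z})$ has no exceptional eigenvalues at all, a contradiction. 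Hence $t$ is real at the base weight, and by eigenvalue preservation under raising, $\lambda > \tfrac14$ for every $\mu_j$ lying outside the old spectrum at weight $k$.

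The main obstacle is the half-integral weight case. I must check that the lowering operators carry such a $\mu_j$ into a \emph{cuspidal, non-residual} weight-$\tfrac12$ form bearing the correct normalized theta multiplier, so that the correspondence applies; this requires cleanly separating the residual spectrum $R_\mathfrak{a}^k$ (present only in half-integral weight, sitting at $w = \tfrac34$) and the old spectrum from the forms being lowered, and confirming that no genuine $\mu_j$ descends to zero unexpectedly. I would also need the eigenvalue normalization $\tfrac12 + 2it$ in the Shimura--Katok--Sarnak correspondence to hold exactly as stated, and to verify that the old spectrum, once identified with the holomorphic-derived forms, is disjoint from the forms handled by the correspondence so that no eigenvalue is double counted. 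By contrast, the integral-weight case is essentially a citation once the descent to weight $0$ is in place.
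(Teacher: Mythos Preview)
Your reading of the lemma is right: despite the word ``old'' in the statement, the notation $\mu_j$ and the surrounding context make clear that the claim is about the genuine (new) Maass forms, and you correctly recast the lemma as the assertion that the old spectrum exhausts the forms of non-real type.

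Your argument is correct and follows essentially the same route as the paper, though you supply considerably more detail. The paper's proof is a two-sentence sketch attributed to Harcos: a half-integral weight Maass form on $\Gamma_0(4)$ admits a Shimura lift to an integral weight Maass form on $\SL(2,\mathbb{Z})$, and since Selberg's Eigenvalue Conjecture is known there, no exceptional eigenvalue can occur. Your version makes the descent to base weight via lowering operators explicit, treats the integral and half-integral weight cases separately, and flags the technical checks (cuspidality and non-residuality of the descended form, compatibility of multiplier systems, the precise eigenvalue normalization under the correspondence) that the paper's sketch leaves implicit. These are genuine points that need verification, but they are routine, and the core mechanism---Shimura lift to level $1$ plus citation---is identical.
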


\begin{proof}
  This is an argument given by Gergely Harcos at the Alfr\'ed R\'enyi Institute of
  Mathematics in an answer at MathOverflow~\cite{GHfromMO}.
  We sketch the argument here.
  A half-integral weight Maass form with eigenvalue $(1 - t^2)/4$ has a Shimura lift to an
  integral weight Maass form on $\Gamma_0(1)$ with eigenvalue $\frac{1}{4} - t^2$.
  As Selberg's Eigenvalue Conjecture is known in this case (see~\cite{Blomer2013}), this
  completes the proof.
\end{proof}

Now fix $s = \sigma + it$ with $\frac{1}{2} < \sigma < 1$.
We write $\lvert t_j \rvert \sim T$ to mean $T \leq \lvert t_j \rvert \leq 2T$ for the
rest of this section.
Then
\begin{equation}
  \sum_{\lvert t_j \rvert \sim T} \langle P_h^k(\cdot, s), \mu_j \rangle \langle \mu_j, V
  \rangle = \sum_{\lvert t_j \rvert \sim T} \frac{\Gamma(s - \frac{1}{2} + it_j) \Gamma(s
- \frac{1}{2} - it_j)}{(4\pi h)^{s - 1} \Gamma(s - \frac{k}{2})} \rho_j(h) \langle \mu_j,
V \rangle.
\end{equation}
In this expression it is clear that the rightmost poles in $s$ are at $s = \frac{1}{2} \pm
it_j$, which occur on the line $\Re s = \frac{1}{2}$.
By Stirling's Approximation, this is asymptotically
\begin{equation}\label{eq:discrete_newspectrum_stepI}
  \sum_{\lvert t_j \rvert \sim T} \frac{(1 + \lvert t + t_j \rvert)^{\sigma - 1} (1 +
  \lvert t - t_j \rvert)^{\sigma - 1}}{(4\pi h)^{\sigma - 1 + it}(1 + \lvert t
\rvert)^{\sigma - \frac{1}{2} - \frac{k}{2}}} e^{-\frac{\pi}{2}(\lvert t + t_j \rvert +
\lvert t-t_j \rvert - \lvert t \rvert)} \rho_j(h) \langle \mu_j, V \rangle.
\end{equation}
Examination of the exponential contribution shows that there is large exponential decay in
$t_j$ when $\lvert t_j \rvert > \lvert t \rvert$, so we only need to investigate the
convergence when $\lvert t_j \rvert \leq \lvert t \rvert$.
Then~\eqref{eq:discrete_newspectrum_stepI} is bounded by
\begin{equation}
  O_h\Big( \sum_{\lvert t_j \rvert \sim T} (1 + \lvert t \rvert)^{\frac{k}{2} -
  \frac{1}{2}} e^{-\frac{\pi}{2} \lvert t \rvert} \rho_j(h) \langle \mu_j, V \rangle
\Big),
\end{equation}
which, by Lemma~\ref{lem:discrete_newspectrum_innerproduct_bound}, is bounded by
\begin{equation}
  O_{h, \epsilon} \Big( (1 + \lvert t \rvert)^{\frac{7}{2}k + \frac{15}{2} + \epsilon}
  e^{-\frac{\pi}{2}\lvert t \rvert}\Big).
\end{equation}
Summing dyadically gives the first part of the proposition.

For the second part of the proposition, recall the inner product from
Lemma~\ref{lem:inner_product_list}
\begin{equation}
  \langle P_h^k(\cdot, s), \mu_{j, \ell} \rangle =
  \frac{\overline{\rho_{j,\ell}(h)}}{(4\pi h)}^{s-1} \frac{\Gamma(s + \frac{\ell}{2} - 1)
  \Gamma(s - \frac{\ell}{2})}{\Gamma(s - \frac{k}{2})}.
\end{equation}
As the sum over $j$ and $\ell$ are each finite for the old discrete spectrum, the analytic
properties in $s$ can be read directly from the inner products $\langle P_h^k, \mu_{j,
\ell} \rangle$.
The leading poles all come from the Gamma function $\Gamma(s - \frac{\ell}{2})$ in the
numerator.
Note that the first apparent pole is at $s = \frac{k}{2}$ is cancelled by the Gamma
function in the denominator, but there are poles at $s = \frac{k}{2} - 1 - m$ for $m \in
\mathbb{Z}_{\geq 0}$.

\subsubsection*{Residual Spectrum}
\index{spectrum!residual part}

Consider the residual spectrum appearing in line~\eqref{line:Ph_spectral_residual}, which
we rewrite for convenience:
\begin{equation}
  \sum_{\mathfrak{a}} \langle P_h^k(\cdot, s), R^k_{\mathfrak{a}} \rangle \langle
  R^k_{\mathfrak{a}}, V \rangle.
\end{equation}
The residual spectrum only occurs when $k$ is a half-integer.
For each cusp, $\langle R^k_{\mathfrak{a}}, V \rangle$ evaluates to some constant and
doesn't affect the analysis in $s$.
Referring to Lemma~\ref{lem:inner_product_list}, we see that
\begin{equation}
  \langle P_h^k(\cdot, s), R_{\mathfrak{a}} \rangle = \frac{\Res_{w = \frac{3}{4}}
  \overline{\rho_\mathfrak{a}^k(h,w)}}{(4\pi h)^{s-1}} \frac{\Gamma(s - \frac{1}{4})
  \Gamma(s - \frac{3}{4})}{\Gamma(s - \frac{k}{2})}.
\end{equation}
As described in~\eqref{eq:Eisenstein_halfweight_generalshape_coeffs}, the residue in
$\rho_\mathfrak{a}^k(h,w)$ comes from a potential pole in
\begin{equation}
  L\Big(2w - \tfrac{1}{2}, \big( \tfrac{h (-1)^{k - \frac{1}{2}}}{\cdot}\big) \Big)
\end{equation}
at $w = \frac{3}{4}$.
This $L$-function has a pole if and only if the character is trivial, which occurs if and
only if $h$ is a square, and $k = \frac{1}{2} + 2m$ for some $m \in \mathbb{Z}_{\geq 0}$.

Therefore, if $h$ is not a square or if $k$ is not of the form $\frac{1}{2} + 2m$, then
the residual spectrum vanishes.
If $h$ is a square and $k = \frac{1}{2} + 2m$, then the residue is nonzero, and the
analytic properties of the residual spectrum can be read directly from
\begin{equation}\label{eq:hyperboloid:residual_analytic_descriptor}
  \frac{\Gamma(s - \frac{1}{4}) \Gamma(s - \frac{3}{4})}{(4\pi h)^{s-1} \Gamma(s -
  \frac{k}{2})}.
\end{equation}
We codify this in a proposition.

\begin{proposition}\label{prop:hyperboloid_residual_props}
  The residual spectrum in line~\eqref{line:Ph_spectral_residual} vanishes unless $h$ is a
  square and $k = \frac{1}{2} + 2m$ for some $m \in \mathbb{Z}_{\geq 0}$, in which case
  the residual spectrum has meromorphic continuation to the plane and is analytic for $\Re
  s > 0$, except possibly for poles at $s = \frac{3}{4}$ and $s = \frac{1}{4}$.
\end{proposition}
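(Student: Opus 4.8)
The plan is to read off the analytic behavior of the residual spectrum directly from the inner product formula in Lemma~\ref{lem:inner_product_list} together with the structure of the half-integral weight Eisenstein coefficients established in \S\ref{sec:Eisen_summary}. First I would recall that the residual spectrum exists only when $k$ is a half-integer, and that for each cusp $\mathfrak{a}$ the residual spectrum contributes a term $\langle P_h^k(\cdot, s), R^k_{\mathfrak{a}} \rangle \langle R^k_{\mathfrak{a}}, V \rangle$, where $\langle R^k_{\mathfrak{a}}, V \rangle$ is a fixed complex number independent of $s$. Thus the dependence on $s$ is entirely carried by $\langle P_h^k(\cdot, s), R^k_{\mathfrak{a}} \rangle$, which by Lemma~\ref{lem:inner_product_list} equals
\begin{equation}
  \frac{\Res_{w = \frac{3}{4}} \overline{\rho_\mathfrak{a}^k(h,w)}}{(4\pi h)^{s-1}}
  \frac{\Gamma(s - \frac{1}{4}) \Gamma(s - \frac{3}{4})}{\Gamma(s - \frac{k}{2})}.
\end{equation}

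Next I would determine exactly when the numerator residue $\Res_{w = \frac{3}{4}} \rho_\mathfrak{a}^k(h,w)$ is nonzero. By the factorization~\eqref{eq:Eisenstein_halfweight_Dirichletseries_factors} (equivalently the shape in~\eqref{eq:Eisenstein_halfweight_generalshape_coeffs}), the only source of a pole of $\rho_\mathfrak{a}^k(h,w)$ at $w = \tfrac{3}{4}$ is the Dirichlet $L$-function $L(2w - \tfrac{1}{2}, \chi_{k,h})$, which has a pole at $w = \tfrac{3}{4}$ (i.e.\ at argument $1$) precisely when $\chi_{k,h}$ is the trivial character. Since $\chi_{k,h}$ is the quadratic character attached to $\mathbb{Q}(\sqrt{\mu_k h})$ with $\mu_k = (-1)^{k - \frac{1}{2}}$, triviality forces $\mu_k h$ to be a perfect square; tracking the sign $\mu_k$ shows this happens exactly when $h$ is a square and $k \equiv \tfrac{1}{2} \bmod 2$, i.e.\ $k = \tfrac{1}{2} + 2m$ for some $m \in \mathbb{Z}_{\geq 0}$. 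Hence if $h$ is not a square or $k$ is not of this form, every residue vanishes and the residual spectrum is identically zero — which certainly has meromorphic continuation and is analytic everywhere.

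In the remaining case — $h$ a square and $k = \tfrac{1}{2} + 2m$ — the residue is a nonzero constant, and the analytic behavior in $s$ is governed entirely by~\eqref{eq:hyperboloid:residual_analytic_descriptor}, namely $\Gamma(s - \tfrac{1}{4}) \Gamma(s - \tfrac{3}{4}) / ((4\pi h)^{s-1} \Gamma(s - \tfrac{k}{2}))$. I would then observe that $(4\pi h)^{1-s}$ is entire and nonvanishing, $1/\Gamma(s - \tfrac{k}{2})$ is entire, and the two Gamma functions in the numerator are meromorphic on $\mathbb{C}$ with poles only at non-positive-shifted arguments; this gives meromorphic continuation to the whole plane at once. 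For $\Re s > 0$ the poles of $\Gamma(s - \tfrac{1}{4})$ lie at $s = \tfrac{1}{4} - n$, all with $\Re s \le \tfrac14$, so only $s = \tfrac14$ survives in $\Re s > 0$; likewise $\Gamma(s - \tfrac{3}{4})$ contributes only $s = \tfrac34$ in that region. One should then note that $1/\Gamma(s - \tfrac{k}{2})$ can cancel some of these apparent poles when $\tfrac{k}{2} = \tfrac14 - n$ or $\tfrac{k}{2} = \tfrac34 - n$ — but since $k = \tfrac12 + 2m \ge \tfrac12$ we have $\tfrac{k}{2} \ge \tfrac14$, and when $\tfrac k2 = \tfrac14$ (i.e.\ $m = 0$, $k = \tfrac12$) the denominator zero at $s = \tfrac14$ exactly cancels the numerator pole there; in all cases the set of surviving poles in $\Re s > 0$ is contained in $\{\tfrac34, \tfrac14\}$, which is precisely the claim. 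The only mild subtlety — and the place to be careful — is making sure the denominator Gamma factor is correctly accounted for so that one does not overclaim a pole at $s = \tfrac14$ when $k = \tfrac12$; this is a short check rather than a real obstacle. Assembling these observations proves the proposition.
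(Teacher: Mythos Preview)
Your proposal is correct and follows essentially the same approach as the paper: both read off the $s$-dependence from the inner product formula in Lemma~\ref{lem:inner_product_list}, identify the vanishing/nonvanishing of $\Res_{w=3/4}\rho_\mathfrak{a}^k(h,w)$ via the pole of $L(2w-\tfrac12,\chi_{k,h})$ (trivial character $\iff$ $h$ a square and $k=\tfrac12+2m$), and then extract the poles in $\Re s>0$ from the Gamma ratio~\eqref{eq:hyperboloid:residual_analytic_descriptor}. Your additional remark about the denominator cancelling the apparent pole at $s=\tfrac14$ when $k=\tfrac12$ is a nice refinement the paper does not make explicit.
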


\subsubsection*{Continuous Spectrum}
\index{spectrum!continuous part}

Consider the continuous spectrum appearing in line~\eqref{line:Ph_spectral_continuous},
which we rewrite for convenience:
\begin{equation}
  \frac{1}{4\pi i} \sum_{\mathfrak{a}} \int_{(\frac{1}{2})} \langle P_h^k(\cdot, s),
  E^k_{\mathfrak{a}}(\cdot, u) \rangle \langle E^k_{\mathfrak{a}}(\cdot, u), V \rangle du.
\end{equation}
Referring to Lemma~\ref{lem:inner_product_list}, we see that
\begin{equation}
  \langle P_h(\cdot, s), E_{\mathfrak{a}}^k(\cdot, \tfrac{1}{2} + it)\rangle =
  \frac{\overline{\rho_\mathfrak{a}(h, \tfrac{1}{2} + it)}}{(4\pi h)^{s-1}} \frac{\Gamma(s
  - \tfrac{1}{2} + it) \Gamma(s - \tfrac{1}{2} - it)}{\Gamma(s - \frac{k}{2})}.
\end{equation}
The Gamma functions can be approximated through Stirling's Approximation.
Each $\rho_\mathfrak{a}(h, \tfrac{1}{2} + it)$ can understood through the
Phragm\'en-Lindel\"of Principle to satisfy the bound $\rho_\mathfrak{a}(h, \frac{1}{2} +
it) \ll (1 + \lvert t \rvert)^{\frac{1}{4} - \frac{k}{2} + \epsilon}$.

The first apparent poles in $s$ can be read from the Gamma functions, and it is clear that
there are no poles in $s$ for $\Re s > \frac{1}{2}$.

The other inner products, $\langle E_\mathfrak{a}^k(\cdot, u), V \rangle$, can be
understood through Zagier normalization~\cite{ZagierRankinSelberg}.
In particular, since $V = c y^{\frac{1-k}{2}} + O(y^{-N})$ for some constant $c$ and any
$N \geq 0$ as $y \to \infty$ (and more generally, at each cusp), Zagier normalization
allows us to identify
\begin{equation}\label{eq:continuous_stepI}
  \langle E_\infty^k(\cdot, u), V \rangle = \int_0^\infty \widetilde{V}_0(y) y^{u-1}
  \frac{dy}{y} = \frac{\Gamma(s + \frac{k+1}{2} - 1)}{(4\pi)^{u + \frac{k+1}{2} - 1}}
  \sum_{n \geq 1} \frac{r_{2k+1}(n)r_1(n)}{n^{u + \frac{k+1}{2} - 1}}
\end{equation}
for $\frac{1-k}{2} < \Re u < \frac{k+1}{2}$ and give meromorphic continuation to the plane.
Here, $\widetilde{V}_0$ is the $0$th Fourier coefficient of $\widetilde{V} = \theta^{2k+1}
\overline{\theta}$, which was first defined in \S\ref{sec:hyperboloid_introduction}.
Notice that the region of identification includes $\Re u = \frac{1}{2}$, as is necessary
for this application.
Similar expressions exist at the cusps $0$ and $\frac{1}{2}$.

\begin{remark}
  Zagier normalization also gives that the Dirichlet series at the right
  in~\eqref{eq:continuous_stepI} has a potential pole at $u = \frac{k+1}{2}$, which agrees
  with on-average estimates.
  The function $r_1(n)$ is essentially a square indicator function, so the Dirichlet
  series can be rewritten as
  \begin{equation}
    2\sum_{n \geq 1} \frac{r_{2k+1}(n^2)}{n^{2(u + \frac{k+1}{2} - 1)}},
  \end{equation}
  in which it is straightforward to use the Gaussian heuristic to confirm the pole from
  Zagier normalization.
\end{remark}

Using the functional equation of the Eisenstein series to give the functional equation of
the Dirichlet series in~\eqref{eq:continuous_stepI} and applying the Phragm\'en-Lindel\"of
Convexity Principle guarantees that
\begin{equation}
  \langle E_\mathfrak{a}^k(\cdot, \tfrac{1}{2} + it), V \rangle \ll (1 + \lvert t
  \rvert)^{2k - 1 + \epsilon}.
\end{equation}
Denote $\Re s = \sigma$, and suppose $\tfrac{1}{2} < \sigma < 1$.
Applying Stirling's Approximation and the bounds above, we can now estimate
\begin{align}
  &\frac{1}{2\pi i} \int_{(\frac{1}{2})} \langle P_h^k(\cdot, s),
E^k_{\mathfrak{a}}(\cdot, u) \rangle \langle E^k_{\mathfrak{a}}(\cdot, u), V \rangle du \\
  &\quad \ll \int_{-\infty}^\infty
  \frac{(1 + \lvert t \rvert)^{(\frac{1}{4} - \frac{k}{2} + \epsilon) + (2k - 1 +
  \epsilon)}}{(4\pi h)^{\sigma - 1}}
  \frac{(1 + \lvert s + t \rvert)^{\sigma - 1} (1 + \lvert s - t \rvert)^{\sigma - 1}}{(1
  + \lvert s \rvert)^{\sigma - \frac{1}{2} - \frac{k}{2}}}
  e^{-\frac{\pi}{2}(\lvert s - t \rvert + \lvert s + t \rvert - \lvert s \rvert)}dt.
\end{align}
When $\lvert t \rvert > \lvert s \rvert$, there is significant exponential decay in $t$,
effectively cutting off the integral to the interval $\lvert t \rvert \leq \lvert s
\rvert$.
Within this interval, the integral can be bounded by
\begin{equation}
  \int_{-\lvert s \rvert}^{\lvert s \rvert} (1 + \lvert t \rvert)^{\frac{3}{2}k -
  \frac{3}{4} + \epsilon} (1 + \lvert s \rvert)^{\frac{k}{2} - \frac{1}{2}}
  e^{-\frac{\pi}{2}\lvert s \rvert} dt \ll (1 + \lvert s \rvert)^{2k - \frac{1}{4} +
  \epsilon} e^{-\frac{\pi}{2}\lvert s \rvert}.
\end{equation}

\begin{proposition}\label{prop:hyperboloid_continuous_props}
  The continuous spectrum in line~\eqref{line:Ph_spectral_continuous} has meromorphic
  continuation to the plane and is analytic for $\Re s > \frac{1}{2}$ and has apparent
  poles at $\Re s = \frac{1}{2}$.
  For $\frac{1}{2} < \Re s < 1$, the continuous spectrum satisfies the bound
  \begin{equation}
    \frac{1}{4\pi i} \sum_{\mathfrak{a}} \int_{(\frac{1}{2})} \langle P_h^k(\cdot, s),
    E^k_{\mathfrak{a}}(\cdot, u) \rangle \langle E^k_{\mathfrak{a}}(\cdot, u), V \rangle
    du \ll (1 + \lvert s \rvert)^{2k - \frac{1}{4} + \epsilon} e^{-\frac{\pi}{2}\lvert s
    \rvert}.
  \end{equation}
\end{proposition}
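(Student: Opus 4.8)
The plan is to establish Proposition~\ref{prop:hyperboloid_continuous_props} by inserting the explicit formulas already assembled above into the continuous-spectrum integral and then carefully estimating the resulting double integral. First I would recall the two ingredients that have already been prepared: the inner product $\langle P_h^k(\cdot, s), E_\mathfrak{a}^k(\cdot, \tfrac12 + it)\rangle$ from Lemma~\ref{lem:inner_product_list}, whose Gamma-factor structure is $\Gamma(s - \tfrac12 + it)\Gamma(s - \tfrac12 - it)/\Gamma(s - \tfrac{k}{2})$ times $\overline{\rho_\mathfrak{a}(h, \tfrac12 + it)}(4\pi h)^{1-s}$; and the other inner product $\langle E_\mathfrak{a}^k(\cdot, \tfrac12 + it), V\rangle$, which via Zagier normalization~\eqref{eq:continuous_stepI} is identified with a Dirichlet series $\sum r_{2k+1}(n) r_1(n) n^{-u - \frac{k+1}{2} + 1}$ times a Gamma factor, meromorphic in the plane with identification valid across $\Re u = \tfrac12$. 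The meromorphic continuation claim then follows because each factor in the integrand continues meromorphically in $s$: the Gamma quotient has its first poles in $s$ at $\Re s = \tfrac12$ (from $\Gamma(s - \tfrac12 \pm it)$, since the pole of $1/\Gamma(s - \tfrac{k}{2})$ cancels nothing here and the $t$-dependent poles sweep out the line $\Re s = \tfrac12$), while the two inner products are analytic in a neighborhood of $\Re u = \tfrac12$ in the region $\tfrac12 < \Re s < 1$. Shifting the $u$-contour is not needed for this range of $s$, so no new polar terms arise.

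Next I would carry out the quantitative bound. The key inputs are: (i) the convexity bound $\rho_\mathfrak{a}(h, \tfrac12 + it) \ll (1 + |t|)^{\frac14 - \frac{k}{2} + \epsilon}$, obtained from the explicit shape of the Eisenstein coefficients in \S\ref{sec:Eisen_summary} together with Phragmén--Lindelöf applied to the $L$-functions appearing in $\rho_\mathfrak{a}^k(h,w)$; (ii) the bound $\langle E_\mathfrak{a}^k(\cdot, \tfrac12 + it), V\rangle \ll (1 + |t|)^{2k - 1 + \epsilon}$, coming from the functional equation of the Eisenstein series (hence of the Zagier-normalized Dirichlet series) and another application of the Phragmén--Lindelöf convexity principle; and (iii) Stirling's approximation for the Gamma quotient, giving a factor
\[
  \frac{(1 + |s + t|)^{\sigma - 1}(1 + |s - t|)^{\sigma - 1}}{(1 + |s|)^{\sigma - \frac12 - \frac{k}{2}}}\, e^{-\frac{\pi}{2}(|s + t| + |s - t| - |s|)},
\]
where $\sigma = \Re s$. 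Multiplying (i), (ii), (iii) together and integrating in $t$, the exponential factor $e^{-\frac{\pi}{2}(|s+t| + |s-t| - |s|)}$ is exponentially small once $|t| > |s|$, so the integral is effectively truncated to $|t| \leq |s|$. On that range the polynomial factors are bounded by $(1 + |t|)^{\frac{3k}{2} - \frac34 + \epsilon}(1 + |s|)^{\frac{k}{2} - \frac12}$, and integrating over an interval of length $2|s|$ and multiplying by $e^{-\frac{\pi}{2}|s|}$ yields the claimed bound $(1 + |s|)^{2k - \frac14 + \epsilon} e^{-\frac{\pi}{2}|s|}$. Summing over the three cusps $\mathfrak{a}$ of $\Gamma_0(4)$ only changes the implied constant.

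The main obstacle, and the step requiring the most care, is the convexity bound for $\langle E_\mathfrak{a}^k(\cdot, \tfrac12 + it), V\rangle$. Here one must justify the functional equation for the Dirichlet series $\sum r_{2k+1}(n) r_1(n) n^{-u - \frac{k+1}{2} + 1}$ via Zagier's Rankin--Selberg unfolding, track the Gamma factors through the unfolding so that the completed object satisfies a clean $u \mapsto 1 - u$ symmetry (with the correct shift by $\frac{k+1}{2} - 1$), and then verify that the pole at $u = \frac{k+1}{2}$ noted in the remark does not interfere with the convexity estimate along $\Re u = \tfrac12$; the square-indicator nature of $r_1(n)$, which lets one rewrite the series as $2\sum r_{2k+1}(n^2) n^{-2(u + \frac{k+1}{2} - 1)}$, is what makes the polar behavior transparent. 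A secondary subtlety is ensuring that, for $\tfrac12 < \Re s < 1$, one may legitimately move the product of the two inner products past the line $\Re u = \tfrac12$ without picking up residues --- this is exactly the content of the region of identification in~\eqref{eq:continuous_stepI} being an open strip containing $\Re u = \tfrac12$, so in fact no contour shift is needed. Once these two points are handled, the remaining estimates are routine applications of Stirling's formula and are not worth grinding through in detail.
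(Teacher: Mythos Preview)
Your proposal is correct and follows essentially the same approach as the paper: both use the explicit Gamma-quotient form of $\langle P_h^k, E_\mathfrak{a}^k\rangle$ from Lemma~\ref{lem:inner_product_list}, the convexity bound $\rho_\mathfrak{a}(h,\tfrac12+it)\ll(1+|t|)^{\frac14-\frac{k}{2}+\epsilon}$, the Zagier-normalized identification~\eqref{eq:continuous_stepI} together with Phragm\'en--Lindel\"of to get $\langle E_\mathfrak{a}^k(\cdot,\tfrac12+it),V\rangle\ll(1+|t|)^{2k-1+\epsilon}$, and then the Stirling-plus-truncation argument (cutting the $t$-integral to $|t|\le|s|$) to reach the stated bound. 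Your intermediate exponent $(1+|t|)^{\frac{3k}{2}-\frac34+\epsilon}(1+|s|)^{\frac{k}{2}-\frac12}$ matches the paper's exactly.
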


\subsection{Analytic Behavior of $D_h^k(s)$}

We are now ready to describe the analytic behavior of $D_h^k(s)$ for each $k \geq
\frac{1}{2}$, for $\Re s > \frac{1}{2}$.
Recall that $D_h^k(s)$ is defined in~\eqref{eq:hyp:Dh_def} and~\eqref{eq:hyp:Dh_def_half}
as
\begin{equation}
  D_h^k(s) = \frac{(2\pi)^{s + \frac{k-1}{2}}}{\Gamma(s + \frac{k-1}{2})} \langle
  P_h^{\frac{1}{2}}(\cdot, s), V \rangle.
\end{equation}
In Propositions~\ref{prop:nonspectral_analytic_props_large_dim} (for $k \geq 1$)
and~\ref{prop:nonspectral_analytic_props_dim_3} (for $k = \frac{1}{2}$), it was shown that
\begin{equation}\label{eq:hyperboloid_analytic_middlestep_I}
  D_h^k(s) = \sum_{m \in \mathbb{Z}} \frac{r_{2k+1}(m^2 + h)}{(2m^2 + h)^{s +
  \frac{k-1}{2}}} - \mathfrak{E}_h^k(s)
\end{equation}
and the analytic properties of $\mathfrak{E}_h^k(s)$ are described.
On the other hand, through the Spectral Expansion of $P_h^k$, we also have an expression
for $\langle P_h^k(\cdot, s), V \rangle$, given
in~\eqref{line:Ph_spectral_discrete}--\eqref{line:Ph_spectral_continuous}.
Multiplying by $(2\pi)^{s + \frac{k-1}{2}} \Gamma(s + \frac{k-1}{2})^{-1}$ and
rearranging~\eqref{eq:hyperboloid_analytic_middlestep_I}, we have
\begin{equation}\label{eq:hyperboloid_dirichlet_decomposition}
  \begin{split}
    &\sum_{m \in \mathbb{Z}} \frac{r_{2k+1}(m^2 + h)}{(2m^2 + h)^{s + \frac{k-1}{2}}} =
    \mathfrak{E}_h^k(s) \\
    &\qquad + \frac{(2\pi)^{s + \frac{k-1}{2}}}{\Gamma(s + \frac{k-1}{2})} \bigg( \sum_j
  \langle P_h^k, \mu_j \rangle \langle \mu_j, V \rangle + \sum_{\ell,j} \langle P_h^k,
\mu_{j,\ell} \rangle \langle \mu_{j, \ell}, V \rangle \bigg) \\
    &\qquad + \frac{(2\pi)^{s + \frac{k-1}{2}}}{\Gamma(s + \frac{k-1}{2})}
\sum_\mathfrak{a} \langle P_h^k, R_\mathfrak{a}\rangle \langle R_\mathfrak{a}, V \rangle
\\
    &\qquad + \frac{(2\pi)^{s + \frac{k-1}{2}}}{\Gamma(s + \frac{k-1}{2})} \frac{1}{4\pi
i} \sum_{\mathfrak{a}} \int_{(\frac{1}{2})} \langle P_h^k, E_\mathfrak{a}^k\rangle \langle
E_\mathfrak{a}^k, V\rangle,
  \end{split}
\end{equation}
where the lines are separated into the Eisenstein correction factors in $V$, the discrete
spectrum, the residual spectrum, and the continuous spectrum, respectively.
The analytic properties of the discrete, residual, and continuous spectra are described in
Propositions~\ref{prop:hyperboloid_discrete_props},~\ref{prop:hyperboloid_residual_props},
and~\ref{prop:hyperboloid_continuous_props}, respectively.
Assembling these propositions together, we have proved the following theorem.

\begin{theorem}\label{thm:hyperboloid:mero_summary}
  Let $h \geq 1$ be an integer and $k \geq \frac{1}{2}$ be either an integer or a
  half-integer.
  The Dirichlet series
  \begin{equation}
    \sum_{m \in \mathbb{Z}} \frac{r_{2k+1}(m^2 + h)}{(2m^2 + h)^{s + \frac{k-1}{2}}}
  \end{equation}
  has meromorphic continuation to the plane, and is analytic for $\Re s > \frac{1}{2}$
  except for
  \begin{itemize}
    \item simple poles at $s = \frac{k}{2} - 1 - m$ for $m \in \mathbb{Z}_{\geq 0}$,
      coming from the discrete spectrum,
    \item simple poles at $s = \frac{k+1}{2} - m$ for $m \in \mathbb{Z}_{\geq 0}$, coming
      from the Eisenstein correction factors $\mathfrak{E}_h^k(s)$, and
    \item a double pole at $s = \frac{3}{4}$ when $k = \frac{1}{2}$ and $h$ is a square,
      also coming from $\mathfrak{E}_h^k(s)$.
  \end{itemize}
\end{theorem}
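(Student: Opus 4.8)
The statement is a summary theorem, so the plan is essentially to \emph{assemble} the pieces already established rather than to prove something new. First I would recall the two expressions for the same quantity: the direct expansion in Proposition~\ref{prop:nonspectral_analytic_props_large_dim} (for $k \geq 1$) and Proposition~\ref{prop:nonspectral_analytic_props_dim_3} (for $k = \tfrac{1}{2}$), which gives
\begin{equation}
  \sum_{m \in \mathbb{Z}} \frac{r_{2k+1}(m^2 + h)}{(2m^2 + h)^{s + \frac{k-1}{2}}}
  = D_h^k(s) + \mathfrak{E}_h^k(s),
\end{equation}
valid first for $\Re s$ large and then, crucially, as an identity of meromorphic functions once each piece is continued. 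The left side is what we want to understand; the right side is where all the analytic information lives. So the whole argument reduces to: (i) continue $\mathfrak{E}_h^k(s)$ and read off its poles, and (ii) continue $D_h^k(s) = (2\pi)^{s + \frac{k-1}{2}} \Gamma(s + \tfrac{k-1}{2})^{-1} \langle P_h^k(\cdot,s), V\rangle$ by substituting the Selberg spectral expansion of the Poincar\'e series and continuing each spectral component.

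\textbf{Key steps.} Step~(i) is immediate: Propositions~\ref{prop:nonspectral_analytic_props_large_dim} and~\ref{prop:nonspectral_analytic_props_dim_3} already state that $\mathfrak{E}_h^k(s)$ has meromorphic continuation to $\mathbb{C}$, is analytic for $\Re s > \tfrac{1}{2}$ apart from simple poles at $s = \tfrac{k+1}{2} - m$ ($m \in \mathbb{Z}_{\geq 0}$) coming from the Gamma function $\Gamma(s - \tfrac{k}{2} - \tfrac{1}{2})$ in the numerator, upgraded to a double pole at $s = \tfrac{3}{4}$ precisely when $k = \tfrac{1}{2}$ and $h$ is a square (because $\rho_{\mathfrak{a}}^{1/2}(h, \tfrac{3}{4})$ acquires a pole there, from the triviality of the quadratic character $\chi_{k,h}$, so one picks up the extra $\Gamma'(s - \tfrac{3}{4})$ contribution in~\eqref{eq:dim_3_laurent_simplify_I}). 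For step~(ii), I would insert the spectral decomposition~\eqref{line:Ph_spectral_discrete}--\eqref{line:Ph_spectral_continuous} into $\langle P_h^k(\cdot,s), V\rangle$ and invoke Propositions~\ref{prop:hyperboloid_discrete_props},~\ref{prop:hyperboloid_residual_props}, and~\ref{prop:hyperboloid_continuous_props}: the new discrete spectrum contributes a line of poles on $\Re s = \tfrac{1}{2}$ and is otherwise analytic for $\Re s > \tfrac{1}{2}$; the old discrete spectrum contributes simple poles at $s = \tfrac{k}{2} - 1 - m$; the residual spectrum (present only when $k$ is a half-integer) is analytic for $\Re s > 0$ except possibly at $s = \tfrac{3}{4}, \tfrac{1}{4}$, and these are dominated by / subsumed in the Eisenstein poles in the region of interest; and the continuous spectrum is analytic for $\Re s > \tfrac{1}{2}$ with at most apparent poles on $\Re s = \tfrac{1}{2}$. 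Multiplying by $(2\pi)^{s + \frac{k-1}{2}} \Gamma(s + \tfrac{k-1}{2})^{-1}$, an entire nonvanishing-in-the-relevant-region factor (its poles are at $s = 1 - \tfrac{k}{2} - n$, to the left of $\Re s = \tfrac{1}{2}$ for $k \geq \tfrac{1}{2}$), does not introduce new poles for $\Re s > \tfrac{1}{2}$. Collecting: in $\Re s > \tfrac{1}{2}$ the only poles of the sum are the simple poles from the old discrete spectrum at $s = \tfrac{k}{2} - 1 - m$, the simple poles from $\mathfrak{E}_h^k$ at $s = \tfrac{k+1}{2} - m$, and the double pole at $s = \tfrac{3}{4}$ in the exceptional case $k = \tfrac{1}{2}$, $h$ a square — exactly the list claimed. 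Meromorphic continuation to the whole plane follows since each spectral piece and $\mathfrak{E}_h^k$ was continued to all of $\mathbb{C}$.

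\textbf{Main obstacle.} The genuinely delicate point — though it is already handled in the cited propositions — is the continuous-spectrum piece: bounding $\langle E_{\mathfrak{a}}^k(\cdot, u), V\rangle$ via Zagier renormalization of the non-$L^2$ function $\widetilde V$ and controlling the resulting $u$-integral through Stirling asymptotics so that no spurious poles appear for $\Re s > \tfrac{1}{2}$ and the exponential decay $e^{-\frac{\pi}{2}|s|}$ holds. In the present assembly step, the only real subtlety is \emph{bookkeeping of coincident poles}: the residual-spectrum poles at $s = \tfrac14, \tfrac34$ and the discrete/Eisenstein poles can collide for small $k$, so I would be careful to check that in the exceptional case $k = \tfrac12$, $h = a^2$ all the half-integer-spaced contributions at $s = \tfrac34$ genuinely combine into one double pole (the $\Gamma'(s - \tfrac34)$ term from $\mathfrak{E}_h^{1/2}$ being the source of the second order) and that the residual spectrum contributes at most a simple pole there, consistent with the statement. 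I would also note, as the theorem implicitly uses, that Selberg's eigenvalue conjecture is known in this setting (via the Shimura lift), so there are no exceptional eigenvalues pushing discrete poles into $\Re s > \tfrac12$ beyond the stated line $\Re s = \tfrac12$.
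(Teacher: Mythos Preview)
Your proposal is correct and follows essentially the same approach as the paper: the theorem is indeed a summary obtained by rearranging the identity $\sum_{m} r_{2k+1}(m^2+h)(2m^2+h)^{-s-\frac{k-1}{2}} = D_h^k(s) + \mathfrak{E}_h^k(s)$, inserting the spectral expansion~\eqref{line:Ph_spectral_discrete}--\eqref{line:Ph_spectral_continuous} into $D_h^k(s)$, and then reading off the polar data from Propositions~\ref{prop:nonspectral_analytic_props_large_dim}, \ref{prop:nonspectral_analytic_props_dim_3}, \ref{prop:hyperboloid_discrete_props}, \ref{prop:hyperboloid_residual_props}, and~\ref{prop:hyperboloid_continuous_props}. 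One minor slip: the factor $(2\pi)^{s+\frac{k-1}{2}}\Gamma(s+\tfrac{k-1}{2})^{-1}$ is entire (not merely ``nonvanishing-in-the-relevant-region with poles to the left''), since $1/\Gamma$ is entire; what you meant is that its \emph{zeros} lie at $s = \tfrac{1-k}{2} - n \leq \tfrac14$, so it neither creates poles nor kills any of the listed ones in $\Re s > \tfrac12$.
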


\section{Integral Analysis}\label{sec:hyp:integral_analysis}

We are now ready to perform the main integral analysis on $\langle P_h^k(\cdot, s),
V\rangle$.
In this section, we handle the $k > \frac{1}{2}$ case.
We will examine
\begin{equation}
  \frac{1}{2\pi i} \int_{(\sigma)} \sum_{n \in \mathbb{Z}} \frac{r_{2k+1}(n^2 + h)}{(2n^2 +
  h)^{s + \frac{k-1}{2}}} X^{s+\frac{k-1}{2}} V_Y(s) ds,
\end{equation}
which is closely related to studying
\begin{equation}
  \frac{1}{2\pi i} \int_{(\sigma)} \langle P_h^k(\cdot, s), V \rangle \frac{(2\pi)^{s +
  \frac{k-1}{2}}}{\Gamma(s + \frac{k-1}{2})} X^{s+\frac{k-1}{2}} V_Y(s) ds.
\end{equation}
We will use three integral kernels $V_Y(s)$ described in Chapter~\ref{c:background}.
We denote the Mellin transform of $V_Y(s)$ by $v_Y(x)$ when appropriate.

From the decomposition in~\eqref{eq:hyperboloid_dirichlet_decomposition}, it will suffice
to consider the integral transforms applied to $\mathfrak{E}_h^k$, the discrete spectrum,
the residual spectrum, and the continuous spectrum separately.
In each integral, we will shift the line of integration to $\frac{1}{2} + \epsilon$ for a
small $\epsilon > 0$ and analyze the poles and residues.

\subsection{Integral Analysis of $\mathfrak{E}_h^k$}

We first study
\begin{equation}
  \frac{1}{2\pi i} \int_{(\sigma)} \mathfrak{E}_h^k(s) X^{s + \frac{k-1}{2}} V_Y(s) ds.
\end{equation}
From Theorem~\ref{thm:hyperboloid:mero_summary} and
Propositions~\ref{prop:nonspectral_analytic_props_large_dim}
and~\ref{prop:nonspectral_analytic_props_dim_3}, we recognize that $\mathfrak{E}_h^k$ has
poles at $s = \frac{k+1}{2} - m$ for $m \in \mathbb{Z}_{\geq 0}$.
All of these poles are simple, except when $k = \frac{1}{2}$ and $h$ is a square, in which
case the leading pole at $s = \frac{k+1}{2} = \frac{3}{4}$ is a double pole.
As $\mathfrak{E}_h^k$ is of moderate growth for $\Re s > \frac{1}{2}$ and each kernel
$V_Y(s)$ is of rapid decay, we may shift the line of integration to $\frac{1}{2} +
\epsilon$, and by Cauchy's Theorem we have
\begin{equation}\label{eq:hyp:integral_MT}
  \begin{split}
    &\frac{1}{2\pi i} \int_{(\sigma)} \mathfrak{E}_h^k(s) X^{s + \frac{k-1}{2}} V_Y(s) ds \\
    &\quad = \frac{1}{2\pi i} \int_{(\frac{1}{2} + \epsilon)} \mathfrak{E}_h^k(s) X^{s +
    \frac{k-1}{2}} V_Y(s) ds + \sum_{0 \leq m < \frac{k}{2}} R_{k-m,h}^k X^{k - m}
    V_Y(\tfrac{k+1}{2} - m) \\
    &\qquad + \delta_{[k=\frac{1}{2}]} \delta_{[h = a^2]} \bigg( R'_h X^{\frac{1}{2}} \log X
  V_Y(\tfrac{3}{4}) + R'_h X^{\frac{1}{2}} V'_Y(\tfrac{3}{4}) \bigg)
  \end{split}
\end{equation}
where
\begin{equation}
  R_{k-m,h}^k = \Res_{s = \frac{k+1}{2} - m} \mathfrak{E}_h^k(s)
\end{equation}
and where $R'_h$ is the coefficient of $(s - \tfrac{3}{4})^{-2}$ in the Laurent expansion
of $\mathfrak{E}_h^{\frac{1}{2}}(s)$.
The Kronecker $\delta$ symbol is used here to mean
\begin{equation}
  \delta_{[k = \frac{1}{2}]} \delta_{[h = a^2]} = \begin{cases}
    1 & \text{if } k = \frac{1}{2} \; \text{and } h \; \text{is a square}, \\
    0 & \text{otherwise}.
  \end{cases}
\end{equation}

To estimate the shifted integral, recall from
Propositions~\ref{prop:nonspectral_analytic_props_large_dim}
and~\ref{prop:nonspectral_analytic_props_dim_3} that for $\Re s > \frac{1}{2}$, we know
the bound $\mathfrak{E}_h^k(s) \ll (1 + \lvert s \rvert)^{\frac{1}{2}}$.
Therefore as long as $V_Y(s) \ll (1 + \lvert s \rvert)^{-\frac{3}{2} - \epsilon}$, the
integral converges absolutely.
With respect to the three integral transforms, this means that
\begin{align}
  \frac{1}{2\pi i} &\int_{(\frac{1}{2} + \epsilon)} \mathfrak{E}_h^k(s) X^{s +
  \frac{k-1}{2}} \Gamma(s + \tfrac{k-1}{2}) ds \ll X^{\frac{k}{2} + \epsilon}
  \label{eq:hyperboloid:nonspectral_gamma_integral_bound} \\
  \frac{1}{2\pi i} &\int_{(\frac{1}{2} + \epsilon)} \mathfrak{E}_h^k(s) X^{s +
  \frac{k-1}{2}} \frac{e^{\pi s^2 / y^2}}{y} ds  \ll X^{\frac{k}{2} + \epsilon}
  Y^{\frac{1}{2}} \label{eq:hyperboloid:nonspectral_concentrating_integral_bound}\\
  \frac{1}{2\pi i} &\int_{(\frac{1}{2} + \epsilon)} \mathfrak{E}_h^k(s) X^{s +
  \frac{k-1}{2}} \Phi_Y(s) ds  \ll X^{\frac{k}{2} + \epsilon} Y^{\frac{1}{2}+\epsilon}.
  \label{eq:hyperboloid:nonspectral_compact_integral_bound}
\end{align}

\subsection{Integral analysis of the discrete spectrum}

We now study the integral of the discrete spectrum.
To condense notation, we introduce the notation
\begin{equation}
  \discrete_h^k(s) := \frac{(2\pi)^{s + \frac{k-1}{2}}}{\Gamma(s + \frac{k-1}{2})} \bigg(
  \sum_j \langle P_h^k, \mu_j \rangle \langle \mu_j, V \rangle + \sum_{\ell,j} \langle
P_h^k, \mu_{j,\ell} \rangle \langle \mu_{j, \ell}, V \rangle \bigg).
\end{equation}
Then the integral of the discrete spectrum can be written as
\begin{equation}\label{eq:integral_analysis_discrete_I}
  \frac{1}{2\pi i} \int_{(\sigma)} \discrete_h^k(s) X^{s + \frac{k-1}{2}} V_Y(s) ds.
\end{equation}
From Theorem~\ref{thm:hyperboloid:mero_summary} we recognize that the integrand has simple
poles at $s = \frac{k}{2} - m$ for $m \in \mathbb{Z}_{\geq 0}$, coming from the finitely
many terms of the ``old'' discrete spectrum.
Therefore, shifting the line of integration to $\frac{1}{2} + \epsilon$ and applying
Cauchy's Theorem shows that~\eqref{eq:integral_analysis_discrete_I} is equal to
\begin{equation}
  \sum_{0 \leq m \leq \frac{k-1}{2}} R^k_{k - \frac{1}{2} - m, h} X^{k - \frac{1}{2} - m}
  V(\tfrac{k+1}{2} - m - \tfrac{1}{2}) + \frac{1}{2\pi i} \int_{(\frac{1}{2} + \epsilon)}
  \discrete_h^k(s) X^{s + \frac{k-1}{2}} V_Y(s) ds,
\end{equation}
where $R^k_{k - \frac{1}{2} - m}$ are the collected residues of the old discrete spectrum
at $s = \frac{k}{2} - m$.

To estimate the shifted integral, note that from applying Stirling's Approximation to
$\Gamma(s+\frac{k-1}{2})^{-1}$ and using the approximations from
Proposition~\ref{prop:hyperboloid_discrete_props} that $\discrete_h^k(s) \ll (1 + \lvert s
\rvert)^{3k + \frac{17}{2} +\epsilon}$.
Therefore as long as $V_Y(s) \ll (1 + \lvert s \rvert)^{-3k - \frac{19}{2} - 2\epsilon}$,
the integral converges absolutely.
The three integral transforms then give
\begin{align}
  &\frac{1}{2\pi i} \int_{(\frac{1}{2} + \epsilon)} \discrete_h^k(s) X^{s + \frac{k-1}{2}}
  \Gamma(s + \frac{k-1}{2})  ds \ll X^{\frac{k}{2} + \epsilon}
  \label{eq:hyperboloid:discrete_gamma_integral_bound}\\
  &\frac{1}{2\pi i} \int_{(\frac{1}{2} + \epsilon)} \discrete_h^k(s) X^{s + \frac{k-1}{2}}
  \frac{e^\pi s^2/Y^2}{Y} ds \ll X^{\frac{k}{2} + \epsilon} Y^{3k + \frac{17}{2} + \epsilon}
  \label{eq:hyperboloid:discrete_concentrating_integral_bound} \\
  &\frac{1}{2\pi i} \int_{(\frac{1}{2} + \epsilon)} \discrete_h^k(s) X^{s + \frac{k-1}{2}}
  \Phi_Y(s) ds \ll X^{\frac{k}{2}+ \epsilon} Y^{3k + \frac{17}{2} + 2\epsilon}.
  \label{eq:hyperboloid:discrete_compact_integral_bound}
\end{align}

\subsection{Integral analysis of the residual spectrum}

We now study the integral of the residual spectrum,
\begin{equation}\label{eq:hyperboloid:integral_analysis_residual_I}
  \frac{1}{2\pi i} \int_{(\sigma)} \frac{(2\pi)^{s + \frac{k-1}{2}}}{\Gamma(s +
    \frac{k-1}{2})} \sum_\mathfrak{a} \langle P_h^k, R^k_\mathfrak{a}\rangle \langle
  R^k_\mathfrak{a}, V \rangle X^{s + \frac{k-1}{2}} V_Y(s) ds.
\end{equation}
From Theorem~\ref{thm:hyperboloid:mero_summary},
Proposition~\ref{prop:hyperboloid_residual_props}, and the analytic properties of the
residual spectrum as described in~\eqref{eq:hyperboloid:residual_analytic_descriptor}, the
residual spectrum is analytic for $\Re s > \frac{1}{2} + \epsilon$ and is bounded by $O((1
+ \lvert s \rvert)^{\frac{1}{2}})$.
By Cauchy's Theorem the integral~\eqref{eq:hyperboloid:integral_analysis_residual_I} is
equal to the shifted integral
\begin{equation}
  \frac{1}{2\pi i} \int_{(\frac{1}{2} + \epsilon)} \frac{(2\pi)^{s +
  \frac{k-1}{2}}}{\Gamma(s + \frac{k-1}{2})} \sum_\mathfrak{a} \langle P_h^k,
  R_\mathfrak{a}\rangle \langle R^k_\mathfrak{a}, V \rangle X^{s + \frac{k-1}{2}} V_Y(s) ds.
\end{equation}
As long as $V_Y(s) \ll (1 + \lvert s \rvert)^{-\frac{3}{2} + \epsilon}$, this converges
absolutely.
The three integral transforms then satisfy the bounds
\begin{align}
  &\frac{1}{2\pi i} \int_{(\frac{1}{2} + \epsilon)} \frac{(2\pi)^{s +
  \frac{k-1}{2}}}{\Gamma(s + \frac{k-1}{2})} \sum_\mathfrak{a} \langle P_h^k,
    R^k_\mathfrak{a}\rangle \langle R^k_\mathfrak{a}, V \rangle X^{s + \frac{k-1}{2}} \Gamma(s
    + \tfrac{k-1}{2}) ds \ll X^{\frac{k}{2} + \epsilon}
  \label{eq:hyperboloid:residual_gamma_integral_bound} \\
  &\frac{1}{2\pi i} \int_{(\frac{1}{2} + \epsilon)} \frac{(2\pi)^{s +
  \frac{k-1}{2}}}{\Gamma(s + \frac{k-1}{2})} \sum_\mathfrak{a} \langle P_h^k,
  R^k_\mathfrak{a}\rangle \langle R^k_\mathfrak{a}, V \rangle X^{s + \frac{k-1}{2}}
  \frac{e^{\pi s^2/Y^2}}{Y} ds \ll X^{\frac{k}{2} + \epsilon} Y^{\frac{1}{2} + \epsilon}
  \label{eq:hyperboloid:residual_concentrating_integral_bound} \\
  &\frac{1}{2\pi i} \int_{(\frac{1}{2} + \epsilon)} \frac{(2\pi)^{s +
  \frac{k-1}{2}}}{\Gamma(s + \frac{k-1}{2})} \sum_\mathfrak{a} \langle P_h^k,
  R^k_\mathfrak{a}\rangle \langle R^k_\mathfrak{a}, V \rangle X^{s + \frac{k-1}{2}} \Phi_Y(s)
  ds \ll X^{\frac{k}{2} + \epsilon} Y^{\frac{1}{2} + 2\epsilon}.
  \label{eq:hyperboloid:residual_compact_integral_bound}
\end{align}

\subsection{Integral analysis of the continuous spectrum}

We now consider the last integral, the integral of the continuous spectrum:
\begin{equation}\label{eq:hyperboloid:integral_analysis_continuous_I}
  \frac{1}{2\pi i} \int_{(\sigma)} \frac{(2\pi)^{s + \frac{k-1}{2}}}{\Gamma(s +
  \frac{k-1}{2})} \frac{1}{4\pi i} \sum_{\mathfrak{a}} \int_{(\frac{1}{2})} \langle P_h^k,
  E_\mathfrak{a}^k\rangle \langle E_\mathfrak{a}^k, V\rangle X^{s+\frac{k-1}{2}} V_Y(s)
  du \, ds.
\end{equation}
Recall that we use $u$ to denote the variable within the Eisenstein series
$E_\mathfrak{a}^k(z,u)$, though we omit this from the notation.
By Theorem~\ref{thm:hyperboloid:mero_summary}, we know that the continuous spectrum is
analytic for $\Re s > \frac{1}{2}$.
By Cauchy's Theorem, the integral~\eqref{eq:hyperboloid:integral_analysis_continuous_I}
is equal to the shifted integral
\begin{equation}
  \frac{1}{2\pi i} \int_{(\frac{1}{2} + \epsilon)} \frac{(2\pi)^{s +
  \frac{k-1}{2}}}{\Gamma(s + \frac{k-1}{2})} \frac{1}{4\pi i} \sum_{\mathfrak{a}}
  \int_{(\frac{1}{2})} \langle P_h^k, E_\mathfrak{a}^k\rangle \langle E_\mathfrak{a}^k,
  V\rangle X^{s+\frac{k-1}{2}} V_Y(s) du \, ds.
\end{equation}
From Stirling's Approximation applied to $\Gamma(s + \frac{k-1}{2})^{-1}$ and the bounds
from Proposition~\ref{prop:hyperboloid_continuous_props}, as long as $V_Y(s) \ll (1 +
\lvert s \rvert)^{-\frac{3}{2}k - \frac{7}{4} - 2\epsilon}$, the shifted integral
converges absolutely.
The three integral transforms thus satisfy the bounds
\begin{align}
  \begin{split}
    &\frac{1}{2\pi i} \int_{(\frac{1}{2} + \epsilon)} \frac{(2\pi)^{s +
    \frac{k-1}{2}}}{\Gamma(s + \frac{k-1}{2})} \frac{1}{4\pi i} \\
    &\quad \times \sum_{\mathfrak{a}} \int_{(\frac{1}{2})} \langle P_h^k,
    E_\mathfrak{a}^k\rangle \langle
    E_\mathfrak{a}^k, V\rangle X^{s+\frac{k-1}{2}} \Gamma(s + \tfrac{k-1}{2}) du \, ds \ll
    X^{\frac{k}{2} + \epsilon} \label{eq:hyperboloid:continuous_gamma_integral_bound}
  \end{split}
  \\
  \begin{split}
    &\frac{1}{2\pi i} \int_{(\frac{1}{2} + \epsilon)} \frac{(2\pi)^{s +
    \frac{k-1}{2}}}{\Gamma(s + \frac{k-1}{2})} \frac{1}{4\pi i} \\
    &\quad \times \sum_{\mathfrak{a}}
    \int_{(\frac{1}{2})} \langle P_h^k, E_\mathfrak{a}^k\rangle \langle E_\mathfrak{a}^k,
    V\rangle X^{s+\frac{k-1}{2}} \frac{e^{\pi s^2 / Y^2}}{Y} du\, ds \ll X^{\frac{k}{2} +
    \epsilon} Y^{\frac{3}{2}k + \frac{3}{4} + \epsilon}
    \label{eq:hyperboloid:continuous_concentrating_integral_bound}
  \end{split}
  \\
  \begin{split}
    &\frac{1}{2\pi i} \int_{(\frac{1}{2} + \epsilon)} \frac{(2\pi)^{s +
    \frac{k-1}{2}}}{\Gamma(s + \frac{k-1}{2})} \frac{1}{4\pi i} \\
    &\quad \times \sum_{\mathfrak{a}}
    \int_{(\frac{1}{2})} \langle P_h^k, E_\mathfrak{a}^k\rangle \langle E_\mathfrak{a}^k,
    V\rangle X^{s+\frac{k-1}{2}} \Phi_Y(s) du \, ds \ll X^{\frac{k}{2} + \epsilon}
    Y^{\frac{3}{2}k + \frac{3}{4} + 2\epsilon}.
    \label{eq:hyperboloid:continuous_compact_integral_bound}
  \end{split}
\end{align}

\section{Proof of Main Theorems}\label{sec:hyp:proof_main_theorems}

Using the integral analysis from the previous section, we now prove the main theorems of
this chapter.

\subsection{Smoothed Main Theorem}

Consider the integral transform
\begin{equation}
  \frac{1}{2\pi i} \int_{(\sigma)} \sum_{m \in \mathbb{Z}} \frac{r_{2k+1}{(m^s +
  h)}}{(2m^2 + h)^{s + \frac{k-1}{2}}} X^{s + \frac{k-1}{2}} \Gamma(s + \tfrac{k-1}{2})
  ds.
\end{equation}
On the one hand, by the standard properties of the transform (in
\S\ref{sec:cutoff_integrals}), this is exactly the sum
\begin{equation}
  \sum_{m \in \mathbb{Z}} r_{2k+1}(m^2 + h) e^{-(2m^2 + h)/X}.
\end{equation}
On the other hand, by~\eqref{eq:hyperboloid_dirichlet_decomposition} and the analysis in
the previous section (and in particular the bounds from
lines~\eqref{eq:hyperboloid:nonspectral_gamma_integral_bound},
\eqref{eq:hyperboloid:discrete_gamma_integral_bound},
\eqref{eq:hyperboloid:residual_gamma_integral_bound},
and~\eqref{eq:hyperboloid:continuous_gamma_integral_bound}, as well as the residual
expression in~\eqref{eq:hyp:integral_MT}), this transform is equal to
\begin{align}
  &\sum_{0 \leq m < \frac{k}{2}} R_{k-m, h}^k \Gamma(k-m) X^{k-m} + \delta_{[k =
\frac{1}{2}]} \delta_{[h = a^2]} \bigg( R'_h X^{\frac{1}{2}} \log X \Gamma(\tfrac{1}{2}) +
R'_h \Gamma'(\tfrac{1}{2}) X^{\frac{1}{2}} \bigg) \\
  &\qquad + \sum_{0 \leq m < \frac{k-1}{2}} R_{k - \frac{1}{2} - m,h}^k \Gamma(k -
  \tfrac{1}{2} - m) X^{k - \frac{1}{2} - m} + O(X^{\frac{k}{2} + \epsilon}).
\end{align}
This proves the following theorem.
\begin{theorem}\label{thm:hyperboloid:smooth_full}
  Let $k \geq \frac{1}{2}$ be a full or half-integer.
  Then
  \begin{align}
    &\sum_{m \in \mathbb{Z}} r_{2k+1}(m^2 + h) e^{-(2m^2 + h)/X} =\\
    &\quad =\delta_{[k = \frac{1}{2}]} \delta_{[h = a^2]} \bigg( R'_h X^{\frac{1}{2}} \log
    X \Gamma(\tfrac{1}{2}) + R'_h \Gamma'(\tfrac{1}{2}) X^{\frac{1}{2}} \bigg) + \sum_{0
    \leq m < \frac{k}{2}} R_{k-m, h}^k \Gamma(k-m) X^{k-m} \\
    &\qquad + \sum_{0 \leq m < \frac{k-1}{2}} R_{k - \frac{1}{2} - m,h}^k \Gamma(k -
    \tfrac{1}{2} - m) X^{k - \frac{1}{2} - m} + O(X^{\frac{k}{2} + \epsilon}).
  \end{align}
  Here, $\delta_{[\text{condition}]}$ is a Kronecker $\delta$ and evaluates to $1$ if the
  condition is true and $0$ otherwise, and the constants $R^k_{\ell, h}$ and $R'_h$ are
  residues as defined in \S\ref{sec:hyp:integral_analysis}.
\end{theorem}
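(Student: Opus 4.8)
The plan is to apply the exponentially smoothed integral transform from \S\ref{sec:cutoff_integrals} to the Dirichlet series $\sum_{m \in \mathbb{Z}} r_{2k+1}(m^2+h)(2m^2+h)^{-s-\frac{k-1}{2}}$, and then to evaluate the resulting contour integral using the meromorphic continuation established in Theorem~\ref{thm:hyperboloid:mero_summary}. First I would record the tautology that, for $\sigma$ in the domain of absolute convergence of the Dirichlet series,
\begin{equation}
  \frac{1}{2\pi i} \int_{(\sigma)} \sum_{m \in \mathbb{Z}} \frac{r_{2k+1}(m^2+h)}{(2m^2+h)^{s+\frac{k-1}{2}}} X^{s+\frac{k-1}{2}} \Gamma(s+\tfrac{k-1}{2}) ds = \sum_{m \in \mathbb{Z}} r_{2k+1}(m^2+h) e^{-(2m^2+h)/X},
\end{equation}
which follows termwise from the Mellin pair $(e^{-x}, \Gamma(s))$ applied with auxiliary parameter $X/(2m^2+h)$, together with absolute convergence to justify interchanging sum and integral. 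This identifies the left side of the claimed asymptotic with the contour integral.

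Next I would move the line of integration from $\Re s = \sigma$ to $\Re s = \frac{k}{2}+\epsilon$, picking up residues at the poles of the integrand in the strip $\frac{k}{2}+\epsilon < \Re s < \sigma$. By Theorem~\ref{thm:hyperboloid:mero_summary}, the poles in $\Re s > \frac12$ are: the Eisenstein-correction poles at $s = \frac{k+1}{2}-m$ (contributing $R^k_{k-m,h}\Gamma(k-m)X^{k-m}$ for $0 \le m < \frac{k}{2}$, since $k-m > \frac{k}{2}$ iff $m < \frac{k}{2}$), the old-discrete-spectrum poles at $s = \frac{k}{2}-1-m$, equivalently $s = \frac{k-1}{2}-m' $ after reindexing, contributing $R^k_{k-\frac12-m,h}\Gamma(k-\frac12-m)X^{k-\frac12-m}$ for $0 \le m < \frac{k-1}{2}$; and, when $k = \frac12$ and $h$ is a square, a double pole at $s=\frac34$ contributing the $X^{1/2}\log X$ and $X^{1/2}$ terms — here one uses that the double pole of $\mathfrak{E}_h^{1/2}(s)$ with leading coefficient $R'_h$, multiplied by $X^{s+\frac{k-1}{2}}\Gamma(s+\frac{k-1}{2})$, has residue $R'_h\big(X^{1/2}\log X\,\Gamma(\tfrac12) + X^{1/2}\Gamma'(\tfrac12)\big)$ by differentiating $X^{s-1/4}\Gamma(s-1/4)$ at $s=\tfrac34$. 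The residue bookkeeping here is exactly the content of~\eqref{eq:hyp:integral_MT} applied with the Gamma kernel $V_Y(s) = \Gamma(s+\frac{k-1}{2})$, combined with the analogous residue extraction from the old discrete spectrum; since $k$ may be half-integral these two families of poles interlace but never collide in $\Re s > \frac{k}{2}$, so no higher-order poles appear there.

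The remaining task is to bound the shifted integral $\frac{1}{2\pi i}\int_{(\frac{k}{2}+\epsilon)}(\cdots)\,ds$. I would treat $\mathfrak{E}_h^k$, the discrete spectrum, the residual spectrum, and the continuous spectrum separately, exactly as in \S\ref{sec:hyp:integral_analysis}. For each piece the bounds~\eqref{eq:hyperboloid:nonspectral_gamma_integral_bound}, \eqref{eq:hyperboloid:discrete_gamma_integral_bound}, \eqref{eq:hyperboloid:residual_gamma_integral_bound}, and~\eqref{eq:hyperboloid:continuous_gamma_integral_bound} already give $O(X^{\frac{k}{2}+\epsilon})$ — the key mechanism being that the exponential decay $e^{-\frac{\pi}{2}|s|}$ coming from $\Gamma(s+\frac{k-1}{2})^{-1}$ against the spectral growth, or the polynomial growth of $\mathfrak{E}_h^k$ against $\Gamma(s+\frac{k-1}{2})$, leaves an absolutely convergent integral once the polynomial factors are dominated. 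Strictly speaking those bounds in the excerpt are stated for the shift to $\frac12+\epsilon$; I would note that shifting only to $\frac{k}{2}+\epsilon$ is the easier regime (we cross fewer poles and stay further right), and the same Stirling estimates apply verbatim with $\frac12+\epsilon$ replaced by $\frac{k}{2}+\epsilon$, still yielding $O(X^{\frac{k}{2}+\epsilon})$. Assembling the residues and this error term gives precisely the statement of Theorem~\ref{thm:hyperboloid:smooth_full}. The main obstacle is not any single estimate but the careful matching of the two interlacing families of polar contributions — verifying that the residue at each $s = \frac{k+1}{2}-m$ and each $s = \frac{k-1}{2}-m$ produces exactly the stated $\Gamma$-factor and power of $X$, and that the double-pole case at $s = \frac34$ is handled correctly — since an off-by-one in the ranges $0 \le m < \frac{k}{2}$ versus $0 \le m < \frac{k-1}{2}$ would corrupt the list of secondary main terms.
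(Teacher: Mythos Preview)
Your approach is exactly the paper's, but there is a slip that breaks the argument as written. You say you shift the contour to $\Re s = \frac{k}{2} + \epsilon$ and claim this ``still yield[s] $O(X^{\frac{k}{2}+\epsilon})$.'' It does not. On the line $\Re s = c$ the factor $X^{s + \frac{k-1}{2}}$ has modulus $X^{c + \frac{k-1}{2}}$, so stopping at $c = \frac{k}{2} + \epsilon$ produces an error of size $X^{k - \frac{1}{2} + \epsilon}$, which swallows every secondary main term you are trying to exhibit. The correct destination is $\Re s = \frac{1}{2} + \epsilon$, exactly where the bounds~\eqref{eq:hyperboloid:nonspectral_gamma_integral_bound}, \eqref{eq:hyperboloid:discrete_gamma_integral_bound}, \eqref{eq:hyperboloid:residual_gamma_integral_bound}, \eqref{eq:hyperboloid:continuous_gamma_integral_bound} are stated; the paper shifts there and not further right precisely because that is what produces $X^{\frac{k}{2}+\epsilon}$.

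Your own residue bookkeeping already silently reflects the correct shift: the pole at $s = \frac{k+1}{2} - m$ is crossed when moving to $\Re s = \frac{1}{2} + \epsilon$ iff $\frac{k+1}{2} - m > \frac{1}{2}$, i.e.\ $m < \frac{k}{2}$, which is exactly the range you wrote down; shifting only to $\frac{k}{2}+\epsilon$ would instead cross just the single pole $m=0$. So your list of main terms does not match the contour you claimed to use. Once you correct the line to $\frac{1}{2} + \epsilon$, the rest of your outline is the paper's proof verbatim.
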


\subsection{Main Theorem in Short-Intervals}

Consider the integral transform
\begin{equation}
  \frac{1}{2\pi i} \int_{(\sigma)} \sum_{m \in \mathbb{Z}} \frac{r_{2k+1}{(m^s +
  h)}}{(2m^2 + h)^{s + \frac{k-1}{2}}} X^{s + \frac{k-1}{2}} \frac{e^{\pi s^2/Y^2}}{Y} ds.
\end{equation}
On the one hand, by the properties of this integral transform as described in
\S\ref{sec:cutoff_integrals}, this is exactly the sum
\begin{equation}
  \sum_{m \in \mathbb{Z}} r_{2k+1}(m^2 + h) \exp\bigg( -\frac{Y^2
  \log^2(X/(2m^2+h))}{4\pi} \bigg).
\end{equation}
When $2m^2 + h \in [X - X/Y, X + X/Y]$, the exponential damping term is almost constant.
But for $m$ with $\lvert 2m^2 + h - X \rvert > X/Y^{1-\epsilon}$, the exponential term
contributes significant exponential decay.
Further, by the positivity of $r_{2k+1}$, we have that
\begin{equation}
  \sum_{\lvert 2m^2 + h - X \rvert < \frac{X}{Y}} r_{2k+1}(m^2 + h) \ll \frac{1}{2\pi i}
  \int_{(\sigma)} \sum_{m \in \mathbb{Z}} \frac{r_{2k+1}{(m^s + h)}}{(2m^2 + h)^{s +
  \frac{k-1}{2}}} X^{s + \frac{k-1}{2}} \frac{e^{\pi s^2/Y^2}}{Y} ds.
\end{equation}

\begin{remark}
  Roughly speaking, this should be interpreted to mean that this integral transform
  concentrates the mass of the integral on those $m$ such that $2m^2 + h$ is within a
  short interval around $X$.
\end{remark}

And on the other hand, by~\eqref{eq:hyperboloid_dirichlet_decomposition} and the analysis
in the previous section (in particular the bounds from
lines~\eqref{eq:hyperboloid:nonspectral_concentrating_integral_bound},
\eqref{eq:hyperboloid:discrete_concentrating_integral_bound},
\eqref{eq:hyperboloid:residual_concentrating_integral_bound},
and~\eqref{eq:hyperboloid:continuous_concentrating_integral_bound}, as well as the
residual expression in~\eqref{eq:hyp:integral_MT}), this transform is
equal to
\begin{align}
  &\delta_{[k = \frac{1}{2}]} \delta_{[h = a^2]} \bigg(R'_h X^{\frac{1}{2}}\log X +R'_h
X^{\frac{1}{2}} \frac{\pi}{Y} \bigg)\frac{\exp(\frac{\pi}{4Y})}{Y} + R_k^k X^k
\frac{\exp(\frac{\pi k^2}{Y})}{Y} \\
  &\quad + O(\frac{X^{k-\frac{1}{2}}}{Y})
  + O(X^{\frac{k}{2} + \epsilon} Y^{3k + \frac{17}{2} + \epsilon}).
\end{align}
In this expression, we only kept the leading poles.
As we are only seeking to create an upper bound, we simplify the above expression into the
bound
\begin{equation}\label{eq:hyp:short_interval_step_I}
  O(\frac{X^{k+\epsilon}}{Y})
  + O(X^{\frac{k}{2}+\epsilon} Y^{3k + \frac{17}{2} + \epsilon}).
\end{equation}

We choose $Y$ to balance the expressions in~\eqref{eq:hyp:short_interval_step_I}.
The two terms are balaned when $Y = X^{1/(6 + \frac{19}{k} + \frac{\epsilon}{k})}$.
This gives the overall bound
\begin{equation}\label{eq:hyp:short_interval_bound_largek}
  O(X^{k + \epsilon - \lambda(k)}), \qquad \text{where } \lambda(k) = \frac{1}{6 +
  \frac{19}{k}}.
\end{equation}

We have now shown that
\begin{equation}
  \sum_{\lvert 2m^2 + h - X \rvert < X^{1 + \epsilon - \lambda(k)}} r_{2k+1}(m^2 + h) \ll
  X^{k + \epsilon - \lambda(k)},
\end{equation}
where $\lambda(k)$ is defined by~\eqref{eq:hyp:short_interval_bound_largek}.
This is the content of the second main theorem in this chapter.
\begin{theorem}\label{thm:hyp:concentrating_theorem_full}
  Let $k \geq \frac{1}{2}$ be a full or half-integer.
  Then
  \begin{equation}
    \sum_{\lvert 2m^2 + h - X \rvert < X^{1 + \epsilon - \lambda(k)}} r_{2k+1}(m^2 + h)
    \ll X^{k + \epsilon - \lambda(k)},
  \end{equation}
  where $\lambda(k)$ is defined as
  \begin{equation}
    \lambda(k) = \frac{1}{6 + \frac{19}{k}}.
  \end{equation}
\end{theorem}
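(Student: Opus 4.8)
The plan is to apply the concentrating integral transform from \S\ref{sec:cutoff_integrals} to the Dirichlet series $\sum_{m \in \mathbb{Z}} r_{2k+1}(m^2+h)(2m^2+h)^{-s-\frac{k-1}{2}}$, whose meromorphic continuation is supplied by Theorem~\ref{thm:hyperboloid:mero_summary}, and then to optimize in the auxiliary parameter $Y$. First I would take $\sigma$ in the region of absolute convergence and consider
\begin{equation}
  \frac{1}{2\pi i} \int_{(\sigma)} \sum_{m \in \mathbb{Z}} \frac{r_{2k+1}(m^2+h)}{(2m^2+h)^{s+\frac{k-1}{2}}} X^{s+\frac{k-1}{2}} \frac{e^{\pi s^2/Y^2}}{Y}\, ds.
\end{equation}
By the standard properties of this transform, the left side equals $\sum_m r_{2k+1}(m^2+h)\exp\!\big(-Y^2\log^2(X/(2m^2+h))/4\pi\big)$. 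The key elementary observation is positivity: since $r_{2k+1}\geq 0$, the terms with $|2m^2+h-X|<X/Y$ each contribute at least a fixed positive constant times $r_{2k+1}(m^2+h)$, so the short-interval sum in the theorem is $\ll$ the value of the transform; simultaneously the terms with $|2m^2+h-X|>X/Y^{1-\epsilon}$ are negligible by the exponential damping.

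Second, I would evaluate the transform from the analytic side via the decomposition~\eqref{eq:hyperboloid_dirichlet_decomposition}: shift the line of integration to $\Re s=\tfrac12+\epsilon$, collecting residues at the poles of $\mathfrak{E}_h^k$ (at $s=\tfrac{k+1}{2}-m$, with the double pole at $s=\tfrac34$ in the $k=\tfrac12$, $h$-square case) and at the poles of the old discrete spectrum (at $s=\tfrac{k}{2}-m$). These residues produce main terms of shape $X^k/Y$, $X^{1/2}(\log X)/Y$, etc.; since I only need an upper bound I retain only the leading contribution $O(X^{k+\epsilon}/Y)$. The four shifted integrals, over $\mathfrak{E}_h^k$, the discrete spectrum, the residual spectrum, and the continuous spectrum, are bounded using lines~\eqref{eq:hyperboloid:nonspectral_concentrating_integral_bound}, \eqref{eq:hyperboloid:discrete_concentrating_integral_bound}, \eqref{eq:hyperboloid:residual_concentrating_integral_bound}, and~\eqref{eq:hyperboloid:continuous_concentrating_integral_bound} respectively; the dominant one is the discrete bound $O(X^{k/2+\epsilon}Y^{3k+\frac{17}{2}+\epsilon})$. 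Hence the transform is $\ll X^{k+\epsilon}/Y + X^{k/2+\epsilon}Y^{3k+\frac{17}{2}+\epsilon}$.

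Finally I would balance the two error terms. Equating them (up to $\epsilon$'s) gives $X^{k/2}=Y^{3k+\frac{19}{2}}$, that is $Y=X^{1/(6+19/k+\epsilon/k)}$, and substituting back yields the bound $X^{k+\epsilon-\lambda(k)}$ with $\lambda(k)=1/(6+19/k)$. Taking the window width equal to $X/Y=X^{1+\epsilon-\lambda(k)}$ then matches the statement. The part requiring genuine care---as opposed to deep new input---is the first step: converting the Gaussian-weighted smooth sum into a legitimate upper bound for the sharp sum through positivity, and checking that the exponential tails are uniformly negligible over the relevant ranges of $X$ and $Y$. All the substantive analytic content, namely the meromorphic continuation and the polynomial-growth bounds on the spectral pieces (which themselves rest on Kuznetsov-type averages for $\rho_j(h)$ and Phragm\'en--Lindel\"of for the Rankin--Selberg factors), is already in place from the preceding sections, so what remains is bookkeeping and this parameter optimization.
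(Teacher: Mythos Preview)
Your proposal is correct and follows essentially the same route as the paper: apply the concentrating transform, use positivity of $r_{2k+1}$ to dominate the sharp short-interval sum by the Gaussian-weighted sum, evaluate the latter via the decomposition~\eqref{eq:hyperboloid_dirichlet_decomposition} after shifting to $\Re s=\tfrac12+\epsilon$, retain the leading residue contribution $O(X^{k+\epsilon}/Y)$ and the dominant shifted-integral bound $O(X^{k/2+\epsilon}Y^{3k+17/2+\epsilon})$ from the discrete spectrum, and balance to obtain $Y=X^{\lambda(k)}$ with $\lambda(k)=1/(6+19/k)$. The paper's argument is exactly this, with the same bookkeeping and the same identification of the discrete spectrum as the bottleneck.
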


\subsection{Sharp Main Theorem}

Finally, consider the integral transform
\begin{equation}
  \frac{1}{2\pi i} \int_{(\sigma)} \sum_{m \in \mathbb{Z}} \frac{r_{2k+1}{(m^s +
  h)}}{(2m^2 + h)^{s + \frac{k-1}{2}}} X^{s + \frac{k-1}{2}} \Phi_Y(s) ds.
\end{equation}
On the one hand, by the description of this integral transform in
\S\ref{sec:cutoff_integrals}, this is exactly the sum
\begin{equation}\label{eq:hyp:sharp_main_estimate_I}
  \sum_{\lvert 2m^2 + h \rvert \leq X} r_{2k+1}(m^2 + h) + \sum_{X \leq \lvert 2m^2+h
  \rvert \leq X+\frac{X}{Y}} r_{2k+1}(m^2+h) \phi_Y(\tfrac{2m^2+h}{X}).
\end{equation}
As $r_{2k+1}$ is always positive, we can bound the second term above by
\begin{equation}\label{eq:hyp:sharp_main_estimate_II}
  \sum_{X \leq \lvert 2m^2+h \rvert \leq X+\frac{X}{Y}} r_{2k+1}(m^2+h)
  \phi_Y(\tfrac{2m^2+h}{X}) \ll \sum_{\lvert 2m^2 + h - X \rvert \leq \frac{X}{Y}}
  r_{2k+1}(m^2 + h).
\end{equation}
Notice that this is a short-interval type estimate, exactly as considered in
Theorem~\ref{thm:hyp:concentrating_theorem_full}.

On the other hand, by~\eqref{eq:hyperboloid_dirichlet_decomposition} and the analysis in
the previous section (and in particular the bounds from
lines~\eqref{eq:hyperboloid:nonspectral_compact_integral_bound},
\eqref{eq:hyperboloid:discrete_compact_integral_bound},
\eqref{eq:hyperboloid:residual_compact_integral_bound},
and~\eqref{eq:hyperboloid:continuous_compact_integral_bound}, as well as the residual
expression in~\eqref{eq:hyp:integral_MT}), this integral
transform is equal to
\begin{align}
  &\sum_{0 \leq m < \frac{k}{2}} R_{k-m, h}^k X^{k-m} \big( \tfrac{1}{k-m} +
O(\tfrac{1}{Y}) \big) \\
  &\quad + \delta_{[k = \frac{1}{2}]} \delta_{[h = a^2]} \bigg( R'_h X^{\frac{1}{2}} \log
X \big(2 + O(\tfrac{1}{Y}) \big) + R'_h X^{\frac{1}{2}} \big( -4 + O(\tfrac{1}{Y}) \big)
\bigg) \\
  &\quad + \sum_{0 \leq m < \frac{k-1}{2}} R_{k - \frac{1}{2} - m,h}^k X^{k - \frac{1}{2}
- m} \big( \tfrac{1}{k - \frac{1}{2} - m} + O(\tfrac{1}{Y})\big) \\
  &\quad
  + O(X^{\frac{k}{2} + \epsilon} Y^{\frac{1}{2} + \epsilon})
  + O(X^{\frac{k}{2} + \epsilon} Y^{3k + \frac{17}{2} + 2\epsilon})
  + O(X^{\frac{k}{2} + \epsilon} Y^{\frac{3}{2}k + \frac{3}{4} + 2\epsilon}).
\end{align}
We note that we have used that $\Phi_Y(s) = \frac{1}{s} + O(\frac{1}{Y})$ and
$\Phi_Y'(s) = -\frac{1}{s^2} + O(\frac{1}{Y})$ from \S\ref{sec:cutoff_integrals}
to simplify the residual terms involving the weight function $\Phi_Y$.
These contribute to the error terms in this expression.

Keeping only the leading terms of growth, we rewrite this as
\begin{align}
  &\delta_{[k = \frac{1}{2}]} \delta_{[h = a^2]} \bigg( 2 R'_h X^{\frac{1}{2}} \log X -4
R'_h X^{\frac{1}{2}} \bigg) + \tfrac{1}{k} R_{k, h}^k X^{k} +  O\big(\frac{X^{\frac{1}{2}}
\log X}{Y}\big) \\
  &\quad + O\big(\frac{X^{k}}{Y}\big)
  + O(X^{\frac{k-1}{2}})
  + O(X^{\frac{k}{2} + \epsilon} Y^{3k + \frac{17}{2} + 2\epsilon}).
\end{align}
The collected error terms can be written as
\begin{equation}
  O(X^{\frac{k-1}{2}})
  + O\big(\frac{X^{k+\epsilon}}{Y}\big)
  + O(X^{\frac{k}{2} + \epsilon} Y^{3k + \frac{17}{2} + 2\epsilon})
\end{equation}
Notice that the terms including $Y$ are the exact same expression as
in~\eqref{eq:hyp:short_interval_step_I}, and so the choice of $Y$ that optimizes the error
bound is the same!
That is, we choose $Y = X^{\lambda(k)}$ where $\lambda(k)$ is defined as in
Theorem~\ref{thm:hyp:concentrating_theorem_full}.

By combining this optimal error term and choice of $Y = X^{\lambda(k)}$ with the
expression~\eqref{eq:hyp:sharp_main_estimate_I} and the
bound~\eqref{eq:hyp:sharp_main_estimate_II}, we have shown that
\begin{align}
  &\frac{1}{2\pi i} \int_{(\sigma)} \sum_{m \in \mathbb{Z}} \frac{r_{2k+1}{(m^s +
h)}}{(2m^2 + h)^{s + \frac{k-1}{2}}} X^{s + \frac{k-1}{2}} \Phi_Y(s) ds \\
  &\quad =\sum_{\lvert 2m^2 + h \rvert \leq X} r_{2k+1}(m^2 + h) +O \bigg( \sum_{\lvert
  2m^2+h - X \rvert \leq X^{1 + \epsilon - \lambda(k)}} r_{2k+1}(m^2+h)\bigg)  \\
  &\quad = \delta_{[k = \frac{1}{2}]} \delta_{[h = a^2]} \bigg( 2 R'_h X^{\frac{1}{2}}
\log X -4 R'_h X^{\frac{1}{2}} \bigg) + \tfrac{1}{k} R_{k, h}^k X^{k} + O(X^{k + \epsilon
- \lambda(k)}).
\end{align}
Rearranging, this shows that
\begin{align}
  &\sum_{\lvert 2m^2 + h \rvert \leq X} r_{2k+1}(m^2 + h) \\
  &\quad = \delta_{[k = \frac{1}{2}]} \delta_{[h = a^2]} \bigg( 2 R'_h X^{\frac{1}{2}}
\log X -4 R'_h X^{\frac{1}{2}} \bigg) + \tfrac{1}{k} R_{k, h}^k X^{k} + O(X^{k + \epsilon
- \lambda(k)}) \\
  &\quad + O \bigg( \sum_{\lvert 2m^2+h - X \rvert \leq X^{1 + \epsilon - \lambda(k)}}
  r_{2k+1}(m^2+h)\bigg).
\end{align}
By Theorem~\ref{thm:hyp:concentrating_theorem_full}, the last big Oh term is bounded by
$O(X^{k + \epsilon - \lambda(k)})$.
This concludes the proof of the following theorem.

\begin{theorem}\label{thm:hyp:sharp_theorem_full}
  Let $k \geq \frac{1}{2}$ be a full or half-integer.
  Then
  \begin{align}
    &\sum_{\lvert 2m^2 + h - X \rvert \leq X} r_{2k+1}(m^2 + h) \\
    &\quad= \delta_{[k = \frac{1}{2}]} \delta_{[h = a^2]} \bigg( 2 R'_h X^{\frac{1}{2}}
  \log X -4 R'_h X^{\frac{1}{2}} \bigg) + \tfrac{1}{k} R_{k, h}^k X^{k} + O(X^{k +
\epsilon - \lambda(k)}),
  \end{align}
  where $\lambda(k)$ is defined as
  \begin{equation}
    \lambda(k) = \frac{1}{6 + \frac{19}{k}}.
  \end{equation}
  In particular, if $k = \frac{1}{2}$ then
  \begin{align}
    &\sum_{\lvert 2m^2 + h - X \rvert \leq X} r_{2k+1}(m^2 + h) \\
    &\quad = \delta_{[h = a^2]} \bigg( 2 R'_h X^{\frac{1}{2}} \log X -4 R'_h
  X^{\frac{1}{2}} \bigg) + 2R_{1/2, h}^{1/2} X^{\frac{1}{2}} + O(X^{\frac{1}{2} -
\frac{1}{44} + \epsilon}),
  \end{align}
  and for $k \geq 1$
  \begin{equation}
    \sum_{\lvert 2m^2 + h - X \rvert \leq X} r_{2k+1}(m^2 + h)  = \tfrac{1}{k} R_{k, h}^k
    X^{k} + O(X^{k + \epsilon - \lambda(k)}).
  \end{equation}
\end{theorem}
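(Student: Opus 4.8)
The plan is to apply the compactly supported smooth cutoff transform $(\Phi_Y(s), \phi_Y(x))$ from \S\ref{sec:cutoff_integrals} to the Dirichlet series $\sum_{m \in \mathbb{Z}} r_{2k+1}(m^2+h)(2m^2+h)^{-s-\frac{k-1}{2}}$ and to evaluate the resulting integral in two ways. On the arithmetic side, the cutoff relation gives
\begin{equation}
  \frac{1}{2\pi i}\int_{(\sigma)} \sum_{m \in \mathbb{Z}} \frac{r_{2k+1}(m^2+h)}{(2m^2+h)^{s+\frac{k-1}{2}}} X^{s+\frac{k-1}{2}} \Phi_Y(s)\, ds = \sum_{2m^2+h \leq X} r_{2k+1}(m^2+h) + \mathcal{S},
\end{equation}
where the spillover $\mathcal{S}$ is supported on $X < 2m^2+h \leq X + X/Y$, and since $r_{2k+1} \geq 0$ it is dominated by the short-interval sum $\sum_{\lvert 2m^2+h-X\rvert \leq X/Y} r_{2k+1}(m^2+h)$, exactly as in~\eqref{eq:hyp:sharp_main_estimate_I} and~\eqref{eq:hyp:sharp_main_estimate_II}. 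On the analytic side, I would substitute the spectral decomposition~\eqref{eq:hyperboloid_dirichlet_decomposition} and shift the $s$-contour to $\Re s = \tfrac12 + \epsilon$; this is justified because $\mathfrak{E}_h^k$, the discrete, residual, and continuous spectra are all of polynomial growth in vertical strips by Propositions~\ref{prop:nonspectral_analytic_props_large_dim}, \ref{prop:nonspectral_analytic_props_dim_3}, \ref{prop:hyperboloid_discrete_props}, \ref{prop:hyperboloid_residual_props}, and~\ref{prop:hyperboloid_continuous_props}, while $\Phi_Y(s) \ll \tfrac1Y (Y/(1+\lvert s\rvert))^m$ provides rapid decay.

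The poles crossed in the shift are those catalogued in Theorem~\ref{thm:hyperboloid:mero_summary}. The pole of $\mathfrak{E}_h^k$ at $s = \tfrac{k+1}{2}$ produces the leading term of order $X^k$; the remaining simple poles of $\mathfrak{E}_h^k$ and of the old discrete spectrum produce the lower-order powers $X^{k-m}$ and $X^{k-\frac12-m}$; and in the case $k=\tfrac12$ with $h$ a square the double pole at $s=\tfrac34$ produces both an $X^{1/2}\log X$ and an $X^{1/2}$ term, the constant and linear Laurent coefficients being extracted using $\Phi_Y(s) = \tfrac1s + O(\tfrac1Y)$ and $\Phi_Y'(s) = -\tfrac1{s^2} + O(\tfrac1Y)$. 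This is precisely the residue bookkeeping already carried out in~\eqref{eq:hyp:integral_MT}, so the explicit coefficients $\tfrac1k R_{k,h}^k$, $2R'_h$, $-4R'_h$ read off there. The remaining contour integral along $\Re s = \tfrac12 + \epsilon$ is estimated piece by piece using the absolute-convergence bounds of \S\ref{sec:hyp:integral_analysis} (inserting $\Phi_Y(s) \ll (1+\lvert s\rvert)^{-A}$ for $A$ large), which total to an error of size $O(X^{\frac{k-1}{2}}) + O(X^{k+\epsilon}/Y) + O(X^{\frac k2 + \epsilon} Y^{3k + \frac{17}{2} + 2\epsilon})$, the dominant $Y$-term coming from the new discrete spectrum.

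Equating the two evaluations and choosing $Y$ to balance the two $Y$-dependent errors forces $X^{k}/Y \asymp X^{k/2}Y^{3k+17/2}$, hence $Y = X^{\lambda(k)}$ with $\lambda(k) = 1/(6 + 19/k)$ — the same exponent as in Theorem~\ref{thm:hyp:concentrating_theorem_full}. With this $Y$ the spillover becomes $\mathcal{S} \ll \sum_{\lvert 2m^2+h-X\rvert \leq X^{1+\epsilon-\lambda(k)}} r_{2k+1}(m^2+h)$, which is $O(X^{k+\epsilon-\lambda(k)})$ by the short-interval Theorem~\ref{thm:hyp:concentrating_theorem_full}. Transposing $\mathcal{S}$ to the other side and keeping only the leading residual terms yields the asymptotic claimed; specializing to $k=\tfrac12$ gives $\lambda(\tfrac12) = \tfrac1{44}$ and recovers (and sharpens) the Oh--Shah estimate.

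The step I expect to be the crux is the compatibility of the two transforms: the sharp cutoff by itself is useless here, because its spillover is \emph{itself} a short-interval sum of the same shape one is trying to estimate, so a naive sharp transform loses the entire power saving. The sharp result survives only because the Gaussian concentrating transform — which supplied Theorem~\ref{thm:hyp:concentrating_theorem_full} — happens to be optimized at the very same $Y = X^{\lambda(k)}$ that optimizes the sharp-transform error, so the spillover is controlled at exactly the level of the main error term. All of the genuinely difficult analytic input (the polynomial growth of the spectral components, in particular the on-average Kuznetsov bound $\rho_j(h) \ll e^{\frac{\pi}{2}\lvert t_j\rvert}(1+\lvert t_j\rvert)^\epsilon$ underlying Proposition~\ref{prop:hyperboloid_discrete_props}) has already been established, so beyond this balancing observation the proof is assembly and optimization.
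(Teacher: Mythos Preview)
Your proposal is correct and follows essentially the same route as the paper: apply the compactly supported cutoff $\Phi_Y$, evaluate both arithmetically (sharp sum plus a spillover bounded by positivity) and analytically (shift to $\Re s = \tfrac12+\epsilon$, collect residues via~\eqref{eq:hyp:integral_MT} and the bounds~\eqref{eq:hyperboloid:nonspectral_compact_integral_bound}--\eqref{eq:hyperboloid:continuous_compact_integral_bound}), balance at $Y = X^{\lambda(k)}$, and dispatch the spillover with Theorem~\ref{thm:hyp:concentrating_theorem_full}. Your remark that the sharp cutoff only succeeds because its optimal $Y$ coincides with that of the concentrating transform is exactly the point the paper makes.
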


\begin{remark}
  It is interesting that the shape of the main term is different in the dimension $3$ case
  (when $k = \frac{1}{2}$) compared to all higher dimensions.
  There is a rough heuristic argument that explains this.
  Counting solutions to $X^2 + Y^2 = Z^2 + h$ is the same as counting solutions to
  $(Z-X)(Z+X) = Y^2 - h$, which can be thought of as counting the number of factorizations
  of $Y^2 - h$ as $Y$ varies.
  The number of factorizations of $Y^2 - h$ depends heavily on whether or not $h$ is a
  square.
  If $h$ is not a square, then there are expected to be relatively few factorizations.
  If $h$ is a square, then there are expected to be logarithmically many factorizations in
  $Y$.
  Thus if $Y$ varies up to size $\sqrt X$ and there are $\log Y$ many factorizations on
  average, we should expect $X^{\frac{1}{2}} \log X$ terms.

  In higher dimensions, this factorization heuristic doesn't apply.
  And in higher dimensions, the regularity of representation of integers as sums of many
  squares should smooth away irregularities present in low dimension.
\end{remark}


\clearpage{\pagestyle{empty}\cleardoublepage}

\chapter{An Application of Counting Lattice Points on Hyperboids}\label{c:hyperboloid_apps}

In this Chapter, we highlight one particular application of counting lattice points on
one-sheeted hyperboloids.
In particular, we describe how to understand sums of the form
\begin{equation}
  \sum_{n \leq X} d (n^2 + 1),
\end{equation}
where $d(n)$ denotes the number of divisors of $n$.

\section*{Connection to the Divisor Sum $\sum d(n^2 + 1)$}

The special three dimensional hyperboloid
\begin{equation}
  X^2 + Y^2 = Z^2 + 1
\end{equation}
is closely related to the divisor sum
\begin{equation}
  \sum_{n \leq R} d(n^2 + 1),
\end{equation}
which has been heavily studied by Hooley~\cite{Hooley63}.
This relationship is visible through the following theorem.

\begin{theorem}
  Let $d_o(n)$ denote the number of positive odd divisors of $n$.
  Then
  \begin{equation}
    \#\{\text{Integer points on } X^2 + Y^2 = Z^2 + 1 \; \text{with } \lvert Z \rvert \leq
    R \} = \sum_{n \leq R} 4 d_o(n^2 + 1).
  \end{equation}
\end{theorem}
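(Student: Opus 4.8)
The plan is to count integer solutions $(X,Y,Z)$ to $X^2 + Y^2 = Z^2 + 1$ with $|Z| \le R$ by fixing $Z$ and counting representations of $N := Z^2 + 1$ as a sum of two squares. That is, $N_{3,1}(R) = \sum_{|Z| \le R} r_2(Z^2 + 1)$ in the notation of the excerpt, so everything reduces to a formula for $r_2$ evaluated at integers of the form $Z^2 + 1$. The key number-theoretic input is the classical formula $r_2(N) = 4\sum_{d \mid N} \chi_{-4}(d)$, where $\chi_{-4}$ is the nontrivial character mod $4$; equivalently $r_2(N) = 4\big(d_1(N) - d_3(N)\big)$, where $d_j(N)$ counts divisors of $N$ congruent to $j \bmod 4$. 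So the whole statement will follow once we show that for $N = Z^2 + 1$ we have $d_1(N) - d_3(N) = d_o(N)$, the number of odd divisors of $N$.

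First I would reduce to odd divisors: since $Z^2 + 1 \equiv 1$ or $2 \pmod 4$ (it is $1$ if $Z$ is even, $2$ if $Z$ is odd), $N = Z^2+1$ is never divisible by $4$, so every divisor of $N$ is either odd or exactly twice an odd number. The factor of $2$, when present, contributes nothing to $\chi_{-4}$-weighted sums since $\chi_{-4}(2d) = 0$. Hence $d_1(N) - d_3(N) = \sum_{d \mid N,\ d \text{ odd}} \chi_{-4}(d)$. Now I would invoke the structural fact that every odd prime power dividing $Z^2 + 1$ sits over a prime $p \equiv 1 \pmod 4$ (because $-1$ is a quadratic residue mod $p$ whenever $p \mid Z^2 + 1$ with $p$ odd), and for such primes $\chi_{-4}(p^a) = 1$. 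Multiplicativity of $\chi_{-4}$ then gives $\chi_{-4}(d) = 1$ for every odd divisor $d$ of $Z^2 + 1$, so $\sum_{d \mid N,\ d \text{ odd}} \chi_{-4}(d) = d_o(N)$. Combining, $r_2(Z^2+1) = 4 d_o(Z^2+1)$ for every integer $Z$.

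Summing over $|Z| \le R$ then yields
\begin{equation}
  \#\{(X,Y,Z) \in \mathbb{Z}^3 : X^2 + Y^2 = Z^2 + 1,\ |Z| \le R\} = \sum_{|Z| \le R} r_2(Z^2 + 1) = \sum_{|Z| \le R} 4 d_o(Z^2 + 1),
\end{equation}
which, after folding the sum over $\pm Z$ together (noting $d_o(Z^2+1) = d_o((-Z)^2+1)$), is the asserted identity $\sum_{n \le R} 4 d_o(n^2 + 1)$ up to the harmless bookkeeping of whether the sum runs over $n \ge 0$ or all $|n| \le R$; I would state the normalization precisely at the start so the two sides match exactly. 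The only genuinely substantive step is the prime-residue observation that odd prime divisors of $Z^2+1$ lie over $p \equiv 1 \pmod 4$; everything else is bookkeeping with multiplicativity and the mod-$4$ size of $Z^2+1$. I expect no real obstacle here — the main care needed is simply keeping the power-of-two divisor and the $\pm$-symmetry of the sum straight so the constant and the range of summation come out exactly as written.
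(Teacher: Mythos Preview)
Your proof is correct, and the core idea matches the paper's: both arguments reduce to showing that every odd prime dividing $Z^2+1$ is congruent to $1 \bmod 4$, which forces $r_2(Z^2+1) = 4\,d_o(Z^2+1)$.

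The packaging differs slightly. The paper proves the prime-residue fact via Gaussian integers (factoring $Z+i$ over $\mathbb{Z}[i]$ and observing no inert prime can divide it), then invokes the product formula $\tfrac{1}{2}\prod(e_j+1)$ for the number of essentially distinct representations and multiplies by $8$ for the sign choices on $(\pm X,\pm Y,\pm Z)$. You instead use the Jacobi formula $r_2(N)=4\sum_{d\mid N}\chi_{-4}(d)$ and the quadratic-residue observation $p\mid Z^2+1 \Rightarrow -1$ is a square mod $p \Rightarrow p\equiv 1\bmod 4$. Your route is a bit cleaner: the character sum handles the even divisors automatically (via $\chi_{-4}(2d)=0$) and sidesteps any convention about what counts as ``the same'' representation, so you avoid the edge-case bookkeeping around $X=0$, $Y=0$, or $|X|=|Y|$ that the paper's $8$-fold multiplication glosses over. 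The paper's Gaussian-integer argument, on the other hand, makes the structural reason for the result more transparent.

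Your caveat about the summation range is apt: as written, the theorem's left side sums over all integers $Z$ with $|Z|\le R$, so the right side $\sum_{n\le R}4\,d_o(n^2+1)$ should be read as a sum over $|n|\le R$ (or over $0\le n\le R$ with the $Z=0$ term counted once and the others doubled). State your chosen convention up front and the identity is exact.
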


To prove this theorem, we first prove this lemma.

\begin{lemma}
  Given an integer $Z$, we first show that $Z^2 + 1$ is not divisible by any prime
  congruent to $3 \bmod 4$.
\end{lemma}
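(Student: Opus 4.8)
The plan is to argue by contradiction using Fermat's little theorem (equivalently, Euler's criterion for the quadratic residue $-1$). Suppose some prime $p \equiv 3 \pmod 4$ divides $Z^2 + 1$. First I would note that $p$ is necessarily odd, since $p \equiv 3 \pmod 4$ forces $p \neq 2$; and also that $p \nmid Z$, because $p \mid Z^2 + 1$ together with $p \mid Z$ would force $p \mid 1$. Thus $Z$ is invertible modulo $p$ and $Z^2 \equiv -1 \pmod p$.

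Next I would raise this congruence to the power $\tfrac{p-1}{2}$, which is a positive integer since $p$ is an odd prime. This gives
\begin{equation}
  Z^{p-1} \equiv (-1)^{\frac{p-1}{2}} \pmod p.
\end{equation}
Because $p \equiv 3 \pmod 4$, the exponent $\tfrac{p-1}{2}$ is odd, so the right-hand side equals $-1$. On the other hand, Fermat's little theorem (applicable since $p \nmid Z$) gives $Z^{p-1} \equiv 1 \pmod p$. Combining, we get $1 \equiv -1 \pmod p$, hence $p \mid 2$, contradicting that $p$ is odd. Therefore no prime $p \equiv 3 \pmod 4$ divides $Z^2 + 1$, which is the claim.

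As for obstacles: there really is none of substance here — this is the classical fact that $-1$ is a quadratic residue modulo an odd prime $p$ only when $p \equiv 1 \pmod 4$, and the only points requiring a word of care are the trivial exclusions $p \neq 2$ and $p \nmid Z$ so that Euler's criterion and Fermat's little theorem apply. An equivalent route, which I would mention as an alternative, observes that $Z^2 \equiv -1 \pmod p$ implies $Z$ has multiplicative order exactly $4$ in $(\mathbb{Z}/p\mathbb{Z})^\times$ (its square is $-1 \neq 1$, its fourth power is $1$), so $4 \mid p - 1$ by Lagrange's theorem, forcing $p \equiv 1 \pmod 4$.
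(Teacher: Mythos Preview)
Your proof is correct, but the paper takes a genuinely different route. You argue via Euler's criterion / Fermat's little theorem: if $p \equiv 3 \pmod 4$ divided $Z^2+1$ then $-1$ would be a quadratic residue mod $p$, contradiction. The paper instead works in the Gaussian integers: it factors $Z+i = \pi_1^{k_1}\cdots \pi_r^{k_r}$ into Gaussian primes, takes norms to get $Z^2+1 = p_1^{k_1}\cdots p_r^{k_r}$, and notes that a rational prime $p \equiv 3 \pmod 4$ is inert in $\mathbb{Z}[i]$; if such an inert $\pi$ divided $Z+i$, it would also divide the conjugate $Z-i$, hence the difference $2i$, which is impossible.

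Your argument is more elementary and entirely self-contained. The paper's argument is chosen because the surrounding theorem counts representations of $Z^2+1$ as a sum of two squares, and that count is read off directly from the same Gaussian prime factorization; so the $\mathbb{Z}[i]$ setup does double duty there, whereas your approach would still need the Gaussian-integer machinery (or an equivalent) to finish the enclosing theorem.
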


\begin{proof}
  Indeed, factorize $Z + i$ as a product of Gaussian primes
  \begin{equation}
    Z + i = \pi_1^{k_1} \cdots \pi_r^{k_r}.
  \end{equation}
  Taking norms and letting $N(\pi_i) = p_i$, we have
  \begin{equation}
    Z^2 + 1 = p_1^{k_1} \cdots p_r^{k_r}.
  \end{equation}
  A Gaussian prime $\pi_j$ satisfies exactly one of the following:
  \begin{enumerate}
    \item $\pi_j = 1 + i$ or $\pi_j = 1 - i$ (in which case $\pi_j \mid 2$),
    \item $N(\pi_j) \equiv 1 \bmod 4$, or
    \item $\pi_j$ is inert, and is a prime congruent to $3 \bmod 4$ in $\mathbb{Z}$.
  \end{enumerate}
  Therefore if $Z^2+1$ is divisible by a prime congruent to $3 \bmod 4$, then there is an
  inert prime $\pi_j$ dividing $Z+i$ and (by conjugation) also $Z-i$.
  But then $\pi_j$ divides $Z+i - (Z-i) = 2i$, which is impossible as $\pi_j$ is inert and
  thus doesn't divide $2$.
  Therefore $Z^2+1$ is not divisible by a prime congruent to $3 \bmod 4$, and this
  concludes the proof of the sublemma.
\end{proof}

Returning to the proof of the theorem, it is a classical result that the number of ways of
writing a non-square $n$ as a sum of two squares is given by
$\frac{1}{2}(e_1+1)(e_2+1)\cdots(e_r+1)$, where $e_j$ is the multiplicity of the prime
$p_j$ congruent to $3 \bmod 4$ dividing $n$.
(This is under the assumption that $n$ can be written as the sum of two squares, which is
the case we are interested in).
In this formulation, note that we consider $X^2 + Y^2$ and $(-X)^2 + Y^2$ to be the same
representation.
Therefore the number of ways of writing $Z^2 + 1$ is $\frac{1}{2}(k_1+1)(k_2+1) \cdots
(k_r+1)$, excluding $2$ and its exponent from the list.
This is exactly half the number of odd divisors of $Z^2 + 1$, which we denote as
$\frac{1}{2}d_o(Z^2 + 1)$.

As each individual representation $X^2 + Y^2 = Z^2 + 1$ comes with the eight lattice
points $(\pm X, \pm Y, \pm Z)$, we see that the number of lattice point solutions to $X^2
+ Y^2 = Z^2 + 1$ with $\lvert Z \rvert \leq R$ is given by
\begin{equation}
  8 \sum_{Z \leq R}\tfrac{1}{2}d_o(Z^2+1).
\end{equation}
This completes the proof of the theorem. \qed{}

Note that when $n$ is even, all the divisors of $n^2 + 1$ are odd and $d(n^2 + 1) =
d_o(n^2 + 1)$.
On the other hand, when $n$ is odd, then $n^2 + 1$ is divisible by $2$ exactly once, and
$d(n^2 + 1)=2d_o(n^2 + 1)$.
Therefore it is possible to convert the summation to $\sum d(n^2 + 1)$.

As a corollary to the proof of the above theorem, one can prove the following.
\begin{corollary}
  \begin{align}
    \#\{\text{Integer points on } X^2 + Y^2 = (2Z)^2 + 1 \; \text{with } \lvert Z \rvert
    \leq R \} &= \sum_{n \leq R} 4 d_o(4n^2 + 1) \\
    &= \sum_{n \leq R} 4 d(4n^2 + 1).
  \end{align}
\end{corollary}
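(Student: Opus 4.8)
The plan is to reuse the proof of the preceding theorem essentially verbatim, with the integer $Z$ replaced throughout by $2Z$. For each fixed $Z$ the defining equation of the hyperboloid is $X^2 + Y^2 = (2Z)^2 + 1$, and $2Z$ is an integer, so the Lemma applies directly: no prime $\equiv 3 \bmod 4$ divides $(2Z)^2 + 1 = 4Z^2 + 1$. (The Lemma is proved for an arbitrary integer and uses only that an inert Gaussian prime cannot divide $2$; substituting $2Z$ for $Z$ changes nothing.) Since $4Z^2 + 1$ is moreover odd, its prime factorization has the form $\prod_j p_j^{k_j}$ with every $p_j \equiv 1 \bmod 4$.

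Next I would count lattice points fibrewise over $Z$, exactly as in the theorem. For $Z \neq 0$, $4Z^2 + 1$ is a non-square — if $w^2 = 4Z^2 + 1$ then $(w - 2Z)(w + 2Z) = 1$, forcing $Z = 0$ — so the classical two-squares count applies and gives $\tfrac12 \prod_j (k_j + 1) = \tfrac12 d_o(4Z^2 + 1)$ representations in the sense used there, each contributing the eight lattice points $(\pm X, \pm Y, \pm Z)$. The degenerate value $Z = 0$ contributes the points with $4Z^2 + 1 = 1$, consistently with $4\, d_o(1) = 4$. Summing over $Z$ with the same bookkeeping for signs and degenerate representations as in the theorem yields
\begin{equation}
  \#\{\text{integer points on } X^2 + Y^2 = (2Z)^2 + 1,\ \lvert Z \rvert \leq R\} = \sum_{n \leq R} 4\, d_o(4n^2 + 1),
\end{equation}
which is the first asserted equality.

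For the second equality I would invoke the observation already used in the remark preceding the corollary: $4n^2 + 1$ is odd for every $n$, so every divisor of $4n^2 + 1$ is odd and $d_o(4n^2 + 1) = d(4n^2 + 1)$. Here, unlike for the sum $\sum_{n \le R} d(n^2 + 1)$, there is no parity dichotomy — the shift by $4n^2$ keeps the argument odd for all $n$ — so the substitution is uniform. Feeding this into the first equality gives $\sum_{n \leq R} 4\, d(4n^2 + 1)$.

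I do not expect a genuine obstacle: this is a specialization of the theorem rather than a new argument. The only points needing care are confirming that the Gaussian-prime divisibility argument of the Lemma is insensitive to replacing $Z$ by $2Z$ (it is), and reproducing the constant-factor bookkeeping of the theorem faithfully — the eightfold symmetry $(\pm X, \pm Y, \pm Z)$, the treatment of $Z = 0$, and the points with $X = 0$, $Y = 0$, or $\lvert X\rvert = \lvert Y\rvert$ — so that the constant $4$ and the range of summation emerge exactly as stated.
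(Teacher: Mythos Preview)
Your proposal is correct and matches the paper's approach exactly: the paper presents this statement as ``a corollary to the proof of the above theorem,'' giving no separate argument, and your specialization of that proof with $Z$ replaced by $2Z$ together with the observation that $4n^2+1$ is always odd is precisely what is intended.
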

While this hyperboloid is not studied in this thesis, the methodology still applies and
it is possible to obtain asymptotics with error term for this lattice counting problem as
well.

Further, if we denote
\begin{align}
  N_1(R) &:= \#\{\text{Integer points on } X^2 + Y^2 = Z^2 + 1 \; \text{with } \lvert Z
  \rvert \leq R \}
  \\
  N_2(R) &:= \#\{\text{Integer points on } X^2 + Y^2 = (2Z)^2 + 1 \; \text{with } \lvert Z
  \rvert \leq R \},
\end{align}
then one can now easily compute that
\begin{equation}
  \sum_{n \leq R} d(n^2 + 1) = \frac{N_1(R)}{2} - \frac{N_2(R/2)}{4}.
\end{equation}
In this way, we convert a classical, and still somewhat mysterious, divisor sum into two
lattice counting problems.

In the future, it would be a good idea to optimize the arguments in
Chapter~\ref{c:hyperboloid} in order to try to improve estimates for divisor sums.
In particular, in the integral analysis for proving the main theorems, it is possible to
further shift lines of integration and handle resulting residue terms (although one would
also need to get a deeper understanding of the underlying analytic behavior).

\begin{remark}
  Similar techniques may be applied to study
  \begin{equation}
    \sum_{n \leq R} d(n^2 + h)
  \end{equation}
  for any positive $h$, although the computations look progressively messier.
\end{remark}


\clearpage{\pagestyle{empty}\cleardoublepage}

\chapter{Conclusion and Directions for Further Investigation}\label{c:conclusion}

In the preceding, we considered two problems closely related to the classical Gauss
Circle problem.

We first considered partial sums $S_f(n) = \sum_{m \leq n} a(n)$ of Fourier coefficients
of a cusp form $f(z) = \sum a(n) e(nz)$, and showed that on average these partial sums
behave like the error term in the Gauss Circle problem.
To study these partial sums, we constructed the Dirichlet series
\begin{equation}
  D(s, S_f \times S_g) = \sum_{n \geq 1} \frac{S_f(n) S_g(n)}{n^{s + k - 1}},
\end{equation}
as well as some related Dirichlet series, and showed that these Dirichlet series have
meromorphic continuation to the plane.

The primary challenge comes from understanding the properties of the shifted convolution
sum
\begin{equation}
  \sum_{n,m \geq 1} \frac{a(n+m) a(n)}{(n+m)^s n^w}.
\end{equation}
In Chapter~\ref{c:sums}, we approached this sum through a spectral expansion of a Poincare
series appearing in an inner product against an appropriately chosen product of cuspforms.
We showed that this spectral expansion is, to a large extent, explicitly understandable.
By relating the properties of partial sums of coefficients of cusp forms to a particular
spectral expansion, we explicitly relate the arithmetic properties of the coefficients to
the very analytic properties of the spectrum of the hyperbolic Laplacian.

It is interesting to reflect on the successes and limitations of this approach.
In Chapters~\ref{c:sums} and~\ref{c:sums_apps}, we showed that we are now able to prove
many improvements to classical results.
But we were unable to improve the estimate for the size of a single partial sum.
Instead, we are only capable of matching the estimate due to Hafner and Ivi\'c that
\begin{equation}
  S_f(X) \ll X^{\frac{k-1}{2} + \frac{1}{3} + \epsilon}.
\end{equation}
From the point of view of this thesis, the obstacle to further improvement was directly
seen to be lack of understanding of the discrete spectrum (in terms of their distribution
and cancellation) and the Lindel\"{o}f Hypothesis (or rather subconvexity) type bounds for
Rankin-Selberg convolutions.
Fundamentally, the techniques in this thesis are very different from previously attempted
techniques --- thus the similarity in the final bounds heuristically seems to represent an
absolute bound on our current understanding.

In Chapter~\ref{c:sums_apps} we noted several other applications of the Dirichlet series
$D(s, S_f \times S_g)$, including several projects that have already led to published
papers~\cite{hkldw, hkldwShort, hkldwSigns}.
We have shown that the Dirichlet series $D(s, S_f \times S_g)$ present powerful tools to
study a variety of questions related to the size, shape, and behavior of sums of
coefficients of cusp forms.
Through analysis of $D(s, S_f \times S_g)$, it is possible to prove results on long
interval estimates, short interval estimates, and sign changes.
More generally, one can apply many different Mellin integral transforms to $D(s, S_f
\times S_g)$ in order to directly study different aspects.

One avenue of exploration that my collaborators and I have begun to explore it to perform
an analogous construction of $D(s, S_f \times S_f)$ in cases when $f$ is not a cusp form.
This is proving to be a very interesting direction, and we will be able to explore more
and different variants of the Gauss Circle problem.
One particular direction is discussed at the very end of Chapter~\ref{c:sums_apps}.

There is another interesting direction for further exploration, based on the techniques
and observations from Chapters~\ref{c:sums} and~\ref{c:sums_apps}.
As an individual Fourier coefficient is roughly of size $a(n) \sim n^{\frac{k-1}{2}}$ and
the partial sum appears to satisfy $S_f(n) \ll n^{\frac{k-1}{2} + \frac{1}{4}}$, there is
a large amount of cancellation among individual coefficients.

But I ask the following question: What if we consider partial sums formed from the partial
sums $S_f(n)$?
That is, what should we expect from the sizes of
\begin{equation}
  \sum_{n \leq X} S_f(n)?
\end{equation}
Further, what if we iterate this process and consider sums of sums of sums, and so on?
Initial numerical investigation suggests that there continues to be extreme amounts of
cancellation, far more than would occur by merely random chance.

In Chapter~\ref{c:hyperboloid}, we considered the question of how many points lie within
the $d$-dimensional sphere of radius $\sqrt R$ and on the one-sheeted hyperboloid
\begin{equation}
  \mathcal{H}_{h,d} = X_1^2 + \cdots + X_{d-1}^2 = X_d^2 + h,
\end{equation}
which is essentially a constrained Gauss Sphere problem.
We were able to prove improved bounds and asymptotics for this number of points.

On comparing the main techniques of Chapter~\ref{c:hyperboloid} with those of
Chapter~\ref{c:sums}, it is clear that there are many similarities.
In both, we reduce the study towards sufficient understanding of a carefully chosen
shifted convolution sum.
And in both, we understand the convolution sum by translating the properties into
properties of functions associated to the spectrum of the hyperbolic Laplacian.

The analysis and proof of the main theorem of Chapter~\ref{c:hyperboloid} is not
completely optimized.
In particular, it is possible to perform a very close and detailed analytic argument,
similar to the argument appearing in~\cite{hkldwShort}, in order to further improve the
bound on the main error term of the lattice point estimate.
It may even be possible to improve estimates for the divisor sum $\sum d(n^2 + 1)$, whose
connection with lattice points on hyperboloids is explained in
Chapter~\ref{c:hyperboloid_apps}.

More generally, the ideas and techniques of Chapter~\ref{c:hyperboloid} can be extended to
more general products of theta functions.
By replacing the Jacobi theta function with theta functions associated to different
quadratic forms, it should be possible to understand a wide variety of quadratic surfaces.


\clearpage{\pagestyle{empty}\cleardoublepage}

\chapter{Original Motivations}\label{c:motivations}

Let me describe briefly what we set out to prove and how different the final results
actually are.\footnote{Jeff Hoffstein once told me that far too few published works
  describe the difference between the original intent and the final version.
I include that here, in this very informal section.}

Chapters~\ref{c:sums} and~\ref{c:sums_apps} were inspired from a single question that Jeff
Hoffstein asked after a talk from Winfried Kohnen during Jeff's Birthday Conference.
Kohnen described new results on sign changes of cusp forms, and Jeff asked whether or not
it was possible to prove that sums of coefficients of cusp forms change sign frequently.

Very na{\"\i}vely, I thought that if I could understand this question, I would probably
use the series $D(s, S_f) = \sum_{n \geq 1} S_f(n) n^{-s}$ and $D(s, S_f \times
\overline{S_f}) = \sum_{n \geq 1} \lvert S_f(n) \rvert^s n^{-s}$.

The reason for this is simple: one way to determine that a sequence changes signs often is
to show that partial sums of squares are large while partial sums are small (indicating
lots of cancellation).
This fundamental idea was included within Kohnen's talk.
As for the use of these Dirichlet series --- it's in some sense the first thing that a
multiplicative number theorist might try.

I doodled on this question for a few talks, and shared my doodles with Tom Hulse.
Tom had two quick ideas: firstly, he knew of an exact set of conditions that guarantee
sign changes in intervals; secondly, he knew a Mellin-Barnes transform to decouple
denominators. This seemed like a short, quick project, so we set out to prove sign
changes.

We were wrong.
This was not short nor quick, and as we delved into the problem it became apparent that
there were significant obstacles in the way of understanding the spectral
analysis.
In some sense, we knew that these were understandable from the general philosophy
of~\cite{HoffsteinHulse13}.
Yet actually demonstrating the extent of cancellation required attention to different
subtleties.
And I engrossed myself into these details.

Several months later, the question had totally shifted away from sign changes, and instead
focused on the Cusp Form analogy to the Gauss Circle problem in various aspects, leading
ultimately to the current line of research.
In hindsight, it turns out that our analysis of $D(s, S_f)$ and $D(s, S_f \times
\overline{S_f})$ have had only limited success in actually proving sign changes --- but
they are great tools otherwise.

Chapters~\ref{c:hyperboloid} and~\ref{c:hyperboloid_apps} accomplished almost exactly what
was originally intended.
In~\cite{hulseCountingSquare}, Hulse, K{\i}ral, Kuan, and Lim studied a problem
inspired from the same work of Oh and Shah~\cite{ohshah2014} that studied lattice points
on hyperboloids through ergodic methods.
Shortly afterwards, a K{\i}ral, Kuan, and I began to look at lattice points on
hyperboloids.
Our initial investigations stalled, though I first learned much about the general
techniques involving shifted convolution sums from those first forays into this problem.

It is interesting to note that the combination of integral transforms that leads to the
main theorem of Chapter~\ref{c:hyperboloid} was noticed by a combination of Kuan, Walker,
and I originally while we were writing our sign changes paper~\cite{hkldwSigns}.
We then forgot about this technique until we began to struggle with our forthcoming
work on the Gauss Sphere problem, until Walker and I slowly realized that we were
vastly overcomplicating a particular difficulty.

Originally, the intention of Chapter~\ref{c:hyperboloid} was simply to prove the
meromorphic continuation of the underlying Dirichlet series and to prove the smoothed sum
result.
This is a rare case of proving exactly what I had set out to prove, and then a little
bit more.\footnote{As opposed to the normal pattern of failing in the original goal, and
  then proving something else entirely.
  As Jeff likes to say, it is important to be able to love the theorem that you can prove,
as rarely can we prove the theorems that we love.}


\clearpage{\pagestyle{empty}\cleardoublepage}


\addcontentsline{toc}{chapter}{Index}
\begin{spacing}{0.9}
  \printindex
\end{spacing}


\addcontentsline{toc}{chapter}{Bibliography}
\bibliographystyle{alpha}
\begin{spacing}{0.9}
  \bibliography{thesisbiblio}
\end{spacing}

\end{document}